\documentclass[12pt,a4paper,intlimits,sumlimits]{amsart}

\usepackage{amsmath, amsthm, mathtools}
\usepackage{amsfonts}
\usepackage[alphabetic]{amsrefs}
\usepackage{graphicx}
\usepackage{framed}
\usepackage{amssymb}
\usepackage{esvect}
\usepackage{xcolor}
\usepackage{eucal}

\usepackage{hyperref}
\usepackage[english]{babel}

\def \N {\mathbb{N}}
\def \R {\mathbb{R}}

\theoremstyle{definition}
\newtheorem{definition}{Definition}[section]
\newtheorem{example}[definition]{Example}
\newtheorem{remark}[definition]{Remark}

\theoremstyle{plain}
\newtheorem{theorem}[definition]{Theorem}
\newtheorem{proposition}[definition]{Proposition}
\newtheorem{lemma}[definition]{Lemma}
\newtheorem{corollary}[definition]{Corollary}

\numberwithin{equation}{section}

\usepackage{geometry}
\geometry{
 a4paper,
 total={170mm,257mm},
 left=20mm,
 top=20mm,
 }

\renewcommand{\epsilon}{\varepsilon}
\newcommand{\e}{\varepsilon}

\renewcommand{\leq}{\leqslant}
\renewcommand{\le}{\leqslant}

\renewcommand{\ge}{\geqslant}

\allowdisplaybreaks

 \title[Homoclinic solutions for nonlocal equations]{Homoclinic solutions for nonlocal equations
and applications to the theory of atom dislocation}
 
 \author{Serena Dipierro, Caterina Sportelli, Enrico Valdinoci}

\address{Serena Dipierro: Department of Mathematics and Statistics, The University of Western Australia, 35 Stirling Highway, Crawley, Perth, WA 6009, Australia}
\email{serena.dipierro@uwa.edu.au}

\address{Caterina Sportelli: Departamento de Análisis Matemático,  Universidad de Granada, 18071 Granada, Spain  \newline\indent \&  \newline\indent Department of Mathematics and Statistics, The University of Western Australia, 35 Stirling Highway, Crawley, Perth, WA 6009, Australia}
\email{caterina.sportelli@uwa.edu.au,  caterina.sp@ugr.es}

\address{Enrico Valdinoci: Department of Mathematics and Statistics, The University of Western Australia, 35 Stirling Highway, Crawley, Perth, WA 6009, Australia}
\email{enrico.valdinoci@uwa.edu.au}

 
 \date{}

\makeindex

\begin{document}

 \maketitle

\begin{abstract}
We establish the existence of homoclinic solutions
for suitable systems of nonlocal equations whose forcing term is of gradient type.

The elliptic operator under consideration is the fractional Laplacian and
the potentials that we take into account are of two types:
the first one is a spatially homogeneous function with a strict local maximum at the origin,
the second one is a spatially inhomogeneous potential satisfying the Ambrosetti--Rabinowitz condition coupled to a quadratic term with spatially dependent growth at infinity.

The existence of these special solutions has interesting consequences for the
theory of atomic edge dislocations in crystals according to the Peierls--Nabarro model and its generalization to fractional equations.

Specifically, for the first type of potentials, the results obtained
give the existence of a crystal configuration with atoms located at both extrema
in an unstable rest position, up to an arbitrarily small modification of the structural potential and a ``pinch'' of a particle at any given position.

For the second type of potentials, the results obtained also entail
the existence of a crystal configuration reaching an equilibrium at infinity,
up to an arbitrarily small superquadratic perturbation of the classical Peierls--Nabarro potential.
\end{abstract}

\tableofcontents
  
\section{Introduction} 
In this paper, we construct homoclinic solutions for systems of equations
under suitable structural conditions on the potential. The results
obtained are confronted with the Peierls--Nabarro theory of crystal dislocations, providing new information on cases of interest.\medskip

{F}rom the technical point of view, the results obtained can be seen as a natural fractional counterpart of some classical discoveries by
Paul H. Rabinowitz and Kazunaga Tanaka~\cite{Rab1, Rab2}.
The proofs, however, deviate substantially from the classical techniques, which
hinged mostly on variational methods and analysis of ordinary differential equations.
Indeed, due to the lack of a convenient Hamiltonian formalism in the nonlocal setting,
an important ingredient in our analysis consists in suitable versions of the elliptic regularity theory, combined with appropriate energy estimates to locate the critical levels, thus
obtaining regularity bootstrap.\medskip

Two types of potentials will be considered. The first scenario, which will be detailed in Section~\ref{POSE1}, focuses on the case of a potential with a strict global maximum.
The second, presented in Section~\ref{POSE2}, allows the potential to have spatial dependency
and accounts for the Ambrosetti--Rabinowitz condition in combination with a quadratic term with spatially dependent growth at infinity.

\subsection{Spatially homogeneous potentials with a global maximum}\label{POSE1}

The setting that we consider here is as follows.
We take a fractional parameter~$s\in (0, 1)$
and, as customary, we define the fractional Laplacian of a function~$u:\R\to\R$ as
\begin{equation}\label{deflap58947049}
(-\Delta)^s u(x):= 2c_s\int_{\R}\frac{u(x)-u(y)}{|x-y|^{1+2s}}\,dy,
\end{equation}
where the integral is intended in the principal value sense.
The positive normalizing constant~$c_s$ is chosen in such a way that
provides consistent limits as~$s\nearrow1$ and as~$s\searrow0$, namely
\[
\lim_{s\nearrow1}(-\Delta)^s u=(-\Delta)^1 u=-\Delta u \qquad{\mbox{and}}\qquad \lim_{s\searrow0}(-\Delta)^s u=(-\Delta)^0 u=u.
\]
The factor~$2$ in~\eqref{deflap58947049} is due to a future renormalization in the energy setting, see the forthcoming formula~\eqref{4836fewfgetuie}.

Let also~$a\in [-\infty, +\infty)$
and~$b\in (-\infty, +\infty]$. With a slight abuse of terminology, we adopt the notation
\begin{equation}\label{ABUNO}
[a,b]:=\begin{dcases}
[a,b] & {\mbox{ if $a\ne-\infty$ and $b\ne+\infty$,}}\\
[a,+\infty) & {\mbox{ if $a\ne-\infty$ and $b=+\infty$,}}\\
(-\infty,b] & {\mbox{ if $a=-\infty$ and $b\ne+\infty$,}}\\
(-\infty,+\infty) & {\mbox{ if $a=-\infty$ and $b=+\infty$.}}
\end{dcases}
\end{equation}
We assume\footnote{For instance, in light of~\eqref{ABUNO},
when~$s\in (0, 1/2]$, the following cases for~$[a,b]$ are allowed: $[a,b]=[0,1]$, $[a,b]=[-1,1]$, $[a,b]=(-\infty,0]$, $[a,b]=(-\infty,1]$, $[a,b]=[0,+\infty)$, $[a,b]=[1,+\infty)$.

When~$s\in (1/2, 1)$, all these cases are allowed, and also
$[a,b]=\{0\}$, $[a,b]=\{1\}$.

The distinction between the fractional exponent ranges~$s\in (0, 1/2]$ and~$s\in (1/2, 1)$
is somewhat unavoidable, as stressed in the forthcoming Proposition~\ref{NASIK},
and it is due to the different capacity theories of fractional Sobolev spaces in dependence
of the fractional exponent~$s$.}
that~$a\le b$ if $s\in (1/2, 1)$ and that $a<b$ if $s\in (0, 1/2]$. 

Given
\begin{equation}\label{ALPHAQ0}
q_0\in C^{\alpha}([a,b], \R^n)\cap\mathcal{D}^{s, 2}([a,b], \R^n)
\quad\mbox{ with } \alpha\in (0, 1),
\end{equation}
we look for homoclinic solutions~$q:\R\to\R^n$ of the fractional Laplacian problem
\begin{equation}\label{prob1}
\begin{dcases}
(-\Delta)^s q = \nabla V(q) &\mbox{ in } \R\setminus [a, b],\\
q=q_0 &\mbox{ in } [a, b].
\end{dcases}
\end{equation}
In this notation, we have that~$n\in\N$ with~$ n\ge 1$:
in particular, problem~\eqref{prob1} corresponds to a system of equations for $n\ge 2$ and to a single equation for $n=1$. 

Moreover, we suppose that the potential $V\in C^1(\R^n, \R)$ satisfies
\begin{equation} \label{V1}
V(q) < V(0) \quad\mbox{ for all } q\in\R^n\setminus\{0\}.
\end{equation}
Without loss of generality, we assume throughout the paper that $V(0)=0$. 

Furthermore, given~$R>0$ sufficiently large, we let $T_R:\R\to\R$ denote the cutoff function defined as
\begin{equation}\label{cut}
T_R (r) :=\begin{cases}
r &\mbox{ if } r\in (-R, R),\\
R &\mbox{ if } r\ge R,\\
-R &\mbox{ if } r\le - R.
\end{cases}
\end{equation}
With this notation, in addition to~\eqref{V1}, we assume also that\footnote{For example,
a function which validates~\eqref{V1} and~\eqref{V2} is~$V(q):= -|q|^2$.}
\begin{equation}\label{V2}
V(q_1, \dots, q_n) \le V(T_R(q_1), \dots, T_R(q_n))\quad \mbox{ for all } q=(q_1,\cdots,q_n)\in\R^n\setminus (-R, R)^n.
\end{equation}

Our goal is to prove existence of homoclinic solutions to problem~\eqref{prob1}, according to the following result:

\begin{theorem}\label{TH1}
There exists a solution $q$ to problem~\eqref{prob1} which is continuous and satisfies
\[
\lim_{x\to\pm\infty} q(x)=0.
\]
\end{theorem}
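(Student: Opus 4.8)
\emph{Strategy.} The plan is to realise $q$ as a minimiser of the nonlocal energy naturally attached to~\eqref{prob1} in the class of finite-energy functions agreeing with~$q_0$ on~$[a,b]$, and then to promote it to a continuous homoclinic via elliptic regularity and an energy blow-up argument at infinity. Introduce
\[
\mathcal E(q):=\frac{c_s}{2}\iint_{\R\times\R}\frac{|q(x)-q(y)|^2}{|x-y|^{1+2s}}\,dx\,dy-\int_\R V(q(x))\,dx
\]
and consider it on
\[
\mathcal A:=\Big\{q\in L^1_{\mathrm{loc}}(\R,\R^n):\ [q]_{H^s(\R)}<\infty,\ \ \textstyle\int_\R(-V(q))<\infty,\ \ q=q_0\ \text{a.e. in }[a,b]\Big\}.
\]
By~\eqref{V1} both summands are nonnegative, so $\mathcal E\geq0$ on $\mathcal A$, and one checks $\mathcal A\neq\emptyset$ by extending $q_0$ to a bounded, compactly supported, finite-energy function; thus $m:=\inf_{\mathcal A}\mathcal E\in[0,\infty)$. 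A first variation, using the symmetrisation that produces the normalising factor in~\eqref{deflap58947049} (compare~\eqref{4836fewfgetuie}), shows that critical points of $\mathcal E$ over $\mathcal A$ solve $(-\Delta)^sq=\nabla V(q)$ in $\R\setminus[a,b]$ through the admissible variations $q+t\varphi$ with $\varphi\in C_c^\infty(\R\setminus[a,b],\R^n)$; I make this rigorous once the minimiser is known to be bounded.

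\emph{Existence of a minimiser.} Let $q_k\in\mathcal A$ be minimising. The seminorms $[q_k]_{H^s}$ are bounded, and the constraint $q_k=q_0$ on $[a,b]$ together with a fractional Poincar\'e inequality yields, for each bounded interval $I$, a bound on $\|q_k\|_{L^2(I)}$ depending only on $I$. This is exactly where the dichotomy $s\in(1/2,1)$ versus $s\in(0,1/2]$ enters, through the $H^s$-capacity of $[a,b]$ (see Proposition~\ref{NASIK}): when $s\leq1/2$ one needs $a<b$, so that $[a,b]$ has positive measure and can carry the constraint. Hence $\{q_k\}$ is bounded in $H^s(I)$ for every bounded $I$; by the compact embedding $H^s(I)\hookrightarrow\hookrightarrow L^2(I)$ and a diagonal argument, along a subsequence $q_k\to q$ in $L^2_{\mathrm{loc}}(\R)$ and a.e.\ in $\R$ (for $s>1/2$ one may instead use $H^s(I)\hookrightarrow\hookrightarrow C(\overline I)$ and impose the constraint even at a single point). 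Then $q=q_0$ a.e.\ in $[a,b]$, Fatou's lemma on the double integral gives $[q]_{H^s}^2\leq\liminf_k[q_k]_{H^s}^2$, and Fatou's lemma on the nonnegative integrand $-V(q_k)$ gives $\int_\R(-V(q))\leq\liminf_k\int_\R(-V(q_k))$. Therefore $q\in\mathcal A$ and $\mathcal E(q)\leq\liminf_k\mathcal E(q_k)=m$, so $q$ is a minimiser.

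\emph{Boundedness, Euler--Lagrange equation, regularity.} Fix $R$ so large that $T_R$ is as in~\eqref{cut}, that~\eqref{V2} holds, and that $R>\|q_0\|_{L^\infty([a,b])}$, and set $\widetilde q:=(T_R(q_1),\dots,T_R(q_n))$. Each $T_R$ being $1$-Lipschitz, $[\widetilde q]_{H^s}\leq[q]_{H^s}$; moreover $\widetilde q=q$ where $q\in(-R,R)^n$, whereas where $q\notin(-R,R)^n$ condition~\eqref{V2} gives $-V(\widetilde q)\leq-V(q)$, so $\int_\R(-V(\widetilde q))\leq\int_\R(-V(q))$. Hence $\widetilde q\in\mathcal A$, $\mathcal E(\widetilde q)\leq m$, so $\widetilde q$ is again a minimiser, still equal to $q_0$ on $[a,b]$; renaming, we may assume $\|q\|_{L^\infty(\R)}\leq R$. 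Now the variations $q+t\varphi$ are legitimate (the perturbation of $\int(-V(q))$ is a finite quantity on the compact set $\operatorname{supp}\varphi$), and minimality yields the weak form of $(-\Delta)^sq=\nabla V(q)$ in $\R\setminus[a,b]$, with $f:=\nabla V(q)\in L^\infty(\R)$. Interior fractional Schauder/elliptic estimates then make $q$ continuous inside $\R\setminus[a,b]$, and boundary regularity for the fractional Dirichlet problem with datum $q_0\in C^\alpha$ (again sensitive to whether $s\leq1/2$ or $s>1/2$) gives continuity up to, and hence across, the points of $\partial[a,b]$; so $q\in C(\R,\R^n)$ solves~\eqref{prob1}.

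\emph{Decay at infinity, and the main difficulty.} Since $\int_\R(-V(q))\leq m<\infty$, it remains to prove $q(x)\to0$ at each end of $\R$ not contained in $[a,b]$ (at an end contained in $[a,b]$ the decay is that of $q_0$, which follows from $\mathcal A\neq\emptyset$). Suppose $q(x)\not\to0$ as $x\to+\infty$ and pick $x_k\to+\infty$ with $|q(x_k)|\geq\delta>0$; the translates $q_k:=q(\,\cdot\,+x_k)$ satisfy $\|q_k\|_{L^\infty}\leq R$ and, for $k$ large, $(-\Delta)^sq_k=\nabla V(q_k)$ on $(-M,M)$, so by the uniform interior estimates the $q_k$ are locally equicontinuous and converge, along a subsequence, locally uniformly to a continuous $q_\infty$ with $|q_\infty(0)|\geq\delta$. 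However $\int_{-M}^{M}(-V(q_k))\,dx=\int_{x_k-M}^{x_k+M}(-V(q))\,dx\to0$ because $\int_\R(-V(q))<\infty$, so $\int_{-M}^{M}(-V(q_\infty))=0$ for all $M>0$; by~\eqref{V1} this forces $q_\infty\equiv0$, contradicting $|q_\infty(0)|\geq\delta$. The same argument applies as $x\to-\infty$, and the proof is complete. I expect the two delicate points to be the compactness of the minimising sequence on the unbounded domain — precisely where the Dirichlet constraint on $[a,b]$ and the capacity-dependent trace theory intervene, which is what forces the hypothesis $a<b$ for $s\leq1/2$ — and the regularity bootstrap up to and across $\partial[a,b]$.
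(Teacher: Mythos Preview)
Your proposal is correct and follows essentially the same route as the paper: minimise the energy~$I$ (your~$\mathcal E$) over the class~$\Gamma$ (your~$\mathcal A$), use the cutoff~$T_R$ together with~\eqref{V2} to reduce to a bounded minimiser, invoke interior and boundary fractional regularity to obtain continuity, and deduce the decay from the finiteness of~$\int_\R(-V(q))$. The only notable variation is the decay step: you pass through a translation--compactness argument (shifting~$q$ by~$x_k$, extracting a locally uniform limit~$q_\infty$, and forcing~$q_\infty\equiv0$), whereas the paper argues more directly (Lemma~\ref{lemma-limite}): if~$|q(x_j)|\geq\varepsilon$ along a sequence~$x_j\to\infty$, uniform continuity gives disjoint intervals of fixed length on which~$|q|\geq\varepsilon/2$, and~$-V(q)$ is then bounded below by a positive constant on each of them, contradicting~$\int_\R(-V(q))<\infty$ outright. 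Your route is equally valid but the shortcut is worth noting. A cosmetic difference: the paper applies the truncation~$\mathcal T_R$ to the minimising sequence before passing to the limit rather than to the minimiser afterwards, but either order works.
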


The solution claimed in Theorem~\ref{TH1} is obtained by performing a variational argument. More precisely, we introduce the set
\begin{equation}\label{gamma}
\Gamma: = \left\lbrace q\in\mathcal{D}^{s, 2}(\R, \R^n): q =q_0 \mbox{ in } [a, b]\right\rbrace
\end{equation}
and we address the minimization problem of the energy functional $I:\Gamma\to\R$ defined as
\begin{equation}\label{functional}
I(q):= \frac12 [q]^2_s -\int_{\R} V(q(x))\, dx.
\end{equation}
Here above and in the rest of this paper we denote the Gagliardo seminorm of~$q$ by
\begin{equation}\label{4836fewfgetuie}
[q]_s:= \left(c_s\iint_{\R^2} \frac{|q(x)-q(y)|^2}{|x-y|^{1+2s}} \, dx\, dy \right)^{1/2}.
\end{equation}

We emphasize that Theorem~\ref{TH1} is a consequence of the next results
(namely Theorems~\ref{main1} and~\ref{THAGG}), in which we also illustrate the basic properties of the minimizer:

\begin{theorem}\label{main1} Let~$s\in(0,1)$.
Let $a\in [-\infty, +\infty)$ and $b\in (-\infty, +\infty]$ and assume that $a<b$.

Let~$\alpha\in (0, 1)$ and $q_0\in C^{\alpha}([a, b], \R^n)\cap\mathcal{D}^{s, 2}([a,b], \R^n)$.  Let $\overline{\beta}\in (0, s)$ and
\begin{equation}\label{beta}
\beta:=\begin{cases}
\min\{\alpha, s\} &\mbox{ if } \alpha\neq s,\\
\overline{\beta} &\mbox{ if } \alpha = s.
\end{cases}
\end{equation}

Then, there exists a solution $q$ to problem~\eqref{prob1} such that~$q\in C^{\beta}(\R, \R^n)$ and
\begin{equation}\label{QCBETA2}
\|q\|_{C^{\beta}(\R, \R^n)}\le C,
\end{equation}
for some $C>0$ depending only on $n, s, \alpha, V$, $a$, $b$ and $q_0$.

Moreover, $I(q)<+\infty$ and
\begin{equation}\label{minimo}
I(q) \le I(\widetilde q) \quad\mbox{ for any } \widetilde q\in\Gamma.
\end{equation}

Furthermore, 
\begin{equation}\label{limite}
\lim_{x\to\pm\infty} q(x)=0.
\end{equation}

Also, if $V\in C^{1+\gamma}(\R^n, \R)$ for some $\gamma\in (\max\{0,1-2s\}, 1)$, then 
\begin{equation}\label{dot_lim}
\lim_{x\to\pm\infty} \dot{q}(x)=0.
\end{equation}

Additionally, if $a=-b$ and $q_0$ is even, then there exists an even solution~$q$.  
\end{theorem}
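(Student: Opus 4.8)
The plan is to run the direct method on the functional $I$ over the constraint set $\Gamma$, and then to upgrade the abstract minimizer to a genuine, globally Hölder-continuous solution via elliptic regularity and a careful analysis of the behaviour at infinity. First I would check that $\Gamma$ is nonempty and that $I$ is bounded below on it: nonemptiness follows by extending $q_0$ from $[a,b]$ to a compactly supported $C^\alpha$ function (using $a<b$, which guarantees enough room for a smooth cutoff; the regularity $q_0\in C^\alpha$ plus compact support puts the extension in $\mathcal{D}^{s,2}$ at least when $2s<1$, and one argues analogously in the remaining range), so $I$ is finite at some point; boundedness below is immediate from~\eqref{V1} with $V(0)=0$, since $-\int_\R V(q)\,dx\ge 0$, so in fact $I(q)\ge\frac12[q]_s^2\ge 0$. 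Hence a minimizing sequence $(q_k)$ has $[q_k]_s$ bounded; after subtracting nothing (the constraint on $[a,b]$ pins down the "constant" ambiguity of the seminorm) one extracts a weak limit $q$ in $\mathcal{D}^{s,2}$, with $q=q_0$ on $[a,b]$ preserved because the trace/restriction is weakly continuous. Lower semicontinuity of $[\cdot]_s^2$ is standard (it is a norm squared), and for the potential term one uses Fatou together with $-V\ge 0$ and pointwise a.e.\ convergence along a subsequence (Rellich-type local compactness of $\mathcal{D}^{s,2}\hookrightarrow L^2_{\rm loc}$) to get $-\int_\R V(q)\le\liminf(-\int_\R V(q_k))$. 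This yields~\eqref{minimo} and $I(q)<+\infty$.

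Next I would establish that the minimizer solves the Euler--Lagrange equation~\eqref{prob1} in the weak/distributional sense outside $[a,b]$: the first variation in directions $\varphi\in C_c^\infty(\R\setminus[a,b],\R^n)$ is legitimate because such perturbations stay in $\Gamma$, and differentiating $I(q+t\varphi)$ at $t=0$ gives $c_s\iint \frac{(q(x)-q(y))\cdot(\varphi(x)-\varphi(y))}{|x-y|^{1+2s}}=\int\nabla V(q)\cdot\varphi$, i.e.\ $(-\Delta)^s q=\nabla V(q)$ weakly on $\R\setminus[a,b]$. Here one needs $\nabla V(q)\in L^1_{\rm loc}$, which holds since $V\in C^1$ and $q\in L^\infty_{\rm loc}$ (from the Sobolev embedding $\mathcal{D}^{s,2}\hookrightarrow L^{2^*_s}$ and De Giorgi--Nash--Moser-type local bounds for the fractional equation). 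Then comes the regularity bootstrap, which I expect to be \emph{the main obstacle}. One first shows $q\in L^\infty(\R)$: this is where~\eqref{V2} enters — comparing $q$ with the truncated competitor $(T_R(q_1),\dots,T_R(q_n))$, the Gagliardo seminorm does not increase under the $1$-Lipschitz truncation $T_R$ while the potential term does not increase by~\eqref{V2}, so minimality forces $q$ to already take values in $[-R,R]^n$ for $R$ large enough (depending on $\|q_0\|_{L^\infty}$). With $q$ bounded, $\nabla V(q)\in L^\infty(\R\setminus[a,b])$; applying the interior Schauder/$C^\sigma$ estimates for $(-\Delta)^s$ on balls away from $[a,b]$, and the boundary regularity near $[a,b]$ coming from the $C^\alpha$ datum $q_0$, one bootstraps to $q\in C^\beta_{\rm loc}$ with $\beta$ as in~\eqref{beta}, and the quantitative bound~\eqref{QCBETA2} follows by tracking the constants (uniform in $x$ because the equation and the data are translation-robust in the relevant norms, and the contribution of far-away values of $q$ to $(-\Delta)^s q$ is controlled by $[q]_s$ and $\|q\|_\infty$).

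For~\eqref{limite}, I would argue that $[q]_s<\infty$ together with $q\in C^\beta$ (equicontinuity) forces $q(x)\to 0$ along sequences $x\to\pm\infty$ — otherwise $q$ stays bounded away from $0$ on arbitrarily long intervals, contradicting finiteness of the Gagliardo energy, or more directly contradicting minimality since one could then "cut and translate" to lower the energy — and then the uniform modulus of continuity promotes this to a genuine limit; alternatively, test the equation against suitable cutoffs to show the tail energy $\int_{|x|>T}|q|^2$ and the far Gagliardo contributions vanish as $T\to\infty$. For~\eqref{dot_lim} under $V\in C^{1+\gamma}$ with $\gamma>\max\{0,1-2s\}$, one differentiates the equation: $\dot q$ satisfies (formally) $(-\Delta)^s\dot q=D^2V(q)\dot q$ with a $C^\gamma$ right-hand side once $q\in C^{2s+\gamma}$ is known, so a further bootstrap gives $\dot q\in C^\sigma$ for some $\sigma>0$ with $[\dot q]$-type control, and the same decay argument as for $q$ yields $\dot q(x)\to 0$; the condition $\gamma>1-2s$ is exactly what is needed for $2s+\gamma>1$ so that $\dot q$ is meaningfully Hölder. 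Finally, for the symmetric statement when $a=-b$ and $q_0$ is even: restrict the minimization to the closed subspace $\Gamma_{\rm ev}$ of even functions in $\Gamma$, which is nonempty and on which the same direct-method argument produces an even minimizer $q$; since $I$ is invariant under $x\mapsto -x$ and the even subspace is convex and closed, this even minimizer is also a critical point of $I$ on all of $\Gamma$ (by the principle of symmetric criticality, or more elementarily because the Euler--Lagrange equation is tested against both $\varphi(x)$ and $\varphi(-x)$ and one averages), hence solves~\eqref{prob1} and inherits all the regularity and decay properties above.
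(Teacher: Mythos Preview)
Your overall strategy matches the paper's: direct method on~$I$ over~$\Gamma$, truncation via~\eqref{V2} to force~$L^\infty$ bounds, then interior and boundary fractional regularity to reach~$C^\beta$. The paper truncates the minimizing \emph{sequence} rather than the minimizer, but your variant works equally well. Two points, however, need repair.

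For~\eqref{limite}, your stated reason (``contradicting finiteness of the Gagliardo energy'') is wrong: a function can equal a nonzero constant on arbitrarily long intervals and still have~$[q]_s<\infty$ (constants have zero seminorm). The correct mechanism is the \emph{potential} term. Since~$I(q)<\infty$ and~$[q]_s^2\ge0$, one has~$-\int_\R V(q)\,dx<\infty$. If~$|q(x_j)|>\varepsilon$ along~$x_j\to\infty$, uniform continuity gives~$|q|>\varepsilon/2$ on disjoint intervals of fixed length, and on these~$-V(q)\ge\sigma_\varepsilon>0$ by~\eqref{V1} and compactness, forcing~$-\int V(q)=+\infty$. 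This is the paper's Lemma~\ref{lemma-limite}.

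For~\eqref{dot_lim}, differentiating the equation and then invoking ``the same decay argument as for~$q$'' does not close, because there is no analogue of~$I(\dot q)<\infty$ to exploit. The paper instead bootstraps the Schauder theory: with~$\nabla V\in C^\gamma$ and~$q\in C^\beta$, the right-hand side is~$C^{\gamma\beta}$, so~$q\in C^{2s+\gamma\beta}$ locally, and iterating (using~$\gamma>1-2s$) eventually yields~$q\in C^\eta$ with~$\eta>1$ uniformly away from~$[a,b]$. Then an elementary calculus lemma (if~$q\in C^{1,\sigma}$ and~$q(x)\to c$, then~$\dot q(x)\to0$, proved by writing~$\dot q(x)=\varepsilon^{-1}(q(x+\varepsilon)-q(x))+O(\varepsilon^\sigma)$) finishes the job.

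On evenness, your symmetric-criticality route is valid but different from the paper's. The paper takes a global minimizer~$q$, sets~$q_*(x):=q(-x)$, and observes that~$M:=\max(q,q_*)$ and~$m:=\min(q,q_*)$ satisfy~$I(M)+I(m)\le I(q)+I(q_*)=2I(q)$; hence both are minimizers, and~$M$ is even. This has the small advantage of showing the even solution is itself a global minimizer over all of~$\Gamma$, not just over~$\Gamma_{\rm ev}$.
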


\begin{theorem}\label{THAGG}
Let~$s\in (1/2, 1)$ and~$q_0\in \R$.  

Then, there exists a solution $q$ to problem~\eqref{prob1} with~$a=b$ such that~$q\in C^{s-\frac12}(\R, \R^n)$
and
\begin{equation}\label{BOH2}
\|q\|_{C^{s-\frac12}(\R, \R^n)}\le C,
\end{equation}
for a positive constant $C$ depending only on $n$, $s$, $V$ and~$q_0$.

Moreover, $I(q)<+\infty$ and~\eqref{minimo} and~\eqref{limite} hold true.

Also,
if $V\in C^{1+\gamma}(\R, \R^n)$ for some~$\gamma\in (\max\{0,1-2s\}, 1)$, then~\eqref{dot_lim} is satisfied as well.

Furthermore, $q$ is symmetric with respect to~$a$, namely~$q(a-x)=q(a+x)$ for all~$x\in\R$.
\end{theorem}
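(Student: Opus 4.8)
\textbf{Plan of proof for Theorem~\ref{THAGG}.}
The strategy is to treat this as the degenerate limit~$b\searrow a$ of the construction carried out in Theorem~\ref{main1}, but with the extra care that is needed because, when~$[a,b]=\{a\}$ collapses to a point, the Dirichlet datum is only meaningful thanks to the positive~$s$-capacity of a point in~$\R$, which holds precisely when~$s>1/2$ (this is the content alluded to in Proposition~\ref{NASIK}). First I would set up the constrained class~$\Gamma$ exactly as in~\eqref{gamma} with~$[a,b]$ replaced by the singleton~$\{a\}$, and observe that~$\Gamma$ is nonempty: since~$s>1/2$, functions in~$\mathcal D^{s,2}(\R,\R^n)$ have well-defined (Hölder continuous) pointwise values by fractional Morrey embedding, so the constraint~$q(a)=q_0$ makes sense and is satisfied, e.g., by a suitable smooth compactly supported bump translated to value~$q_0$ at~$x=a$.

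Next I would run the direct method on~$I$ over this~$\Gamma$. Taking a minimizing sequence~$q_k$, the bound~\eqref{V1} (with~$V(0)=0$) gives~$-\int_\R V(q_k)\ge0$, hence the seminorms~$[q_k]_s$ are bounded; by the pointwise-value continuity of the constraint and weak lower semicontinuity of the Gagliardo seminorm, together with Fatou applied to~$-V(q_k)\ge0$ along a subsequence converging locally (again using the compact Morrey embedding on bounded intervals, available for~$s>1/2$), one extracts a minimizer~$q\in\Gamma$ with~$I(q)<+\infty$ and~\eqref{minimo}. The Euler--Lagrange equation then yields~$(-\Delta)^s q=\nabla V(q)$ in~$\R\setminus\{a\}$ in the weak sense. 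At this stage the regularity bootstrap proceeds as in Theorem~\ref{main1}: using that~$q$ is bounded (which one first has to establish, presumably via a truncation/comparison argument analogous to the one motivated by~\eqref{V2}, or directly from the energy bound and the one-dimensional embedding), $\nabla V(q)\in L^\infty_{\mathrm{loc}}$, so Schauder-type estimates for the fractional Laplacian upgrade~$q$ to~$C^{s-1/2}$ away from~$a$; the key point is that the constrained value at the single point~$a$ is compatible with~$C^{s-1/2}$ regularity up to~$a$ (one does not expect better than~$C^{s-1/2}$ precisely because of the point constraint, which is the origin of the exponent in~\eqref{BOH2}), giving the global bound~\eqref{BOH2} with the stated dependence of~$C$. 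The decay~\eqref{limite} and, under the extra smoothness of~$V$, the derivative decay~\eqref{dot_lim}, are then obtained by the same arguments as in Theorem~\ref{main1}: an energy-at-infinity estimate forces~$q(x)\to0$ (if not, a positive amount of energy would be spread on a sequence of disjoint far-away intervals, contradicting~$I(q)<+\infty$ together with~\eqref{V1}), and a rescaled regularity estimate on unit intervals centered at large~$x$ then controls~$\dot q$.

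Finally, for the symmetry statement~$q(a-x)=q(a+x)$ I would use the uniqueness-free ``symmetrization by reflection'' trick: the reflected function~$\tilde q(x):=q(2a-x)$ lies in~$\Gamma$ (the constraint is at the fixed point~$a$, and both the seminorm and~$\int_\R V(q)$ are invariant under the reflection~$x\mapsto 2a-x$), hence is also a minimizer; then the standard convexity argument on the quadratic form shows that the average~$\frac12(q+\tilde q)$ has strictly smaller seminorm than~$q$ unless~$q=\tilde q$, while~$-\int_\R V$ is handled by choosing, among all minimizers, the symmetric competitor directly—more cleanly, one runs the minimization from the start over the symmetric subclass~$\Gamma_{\mathrm{sym}}=\{q\in\Gamma:\ q(a-\cdot)=q(a+\cdot)\}$, which is closed and nonempty, obtains a symmetric minimizer~$q$ there, and then checks that it is a critical point of~$I$ on all of~$\Gamma$ because admissible variations decompose into symmetric and antisymmetric parts and the antisymmetric part pairs to zero against the symmetric~$q$ in the first variation. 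I expect the main obstacle to be the regularity bootstrap pinned at a single point: one must show that the point Dirichlet condition does not destroy the~$C^{s-1/2}$ estimate and, conversely, that~$s-1/2$ is the right (and attainable) exponent, which requires the fine interplay between fractional capacity of points, the Morrey embedding threshold at~$s=1/2$, and the scaling of the Schauder estimates near the constrained point; a secondary technical point is establishing the a priori~$L^\infty$ bound on the minimizer in the absence of a hypothesis like~\eqref{V2}, which presumably has to be extracted purely from the energy inequality and the one-dimensional nature of the domain.
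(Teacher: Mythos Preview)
Your variational setup, extraction of the minimizer via Fatou, and the decay arguments for~\eqref{limite} and~\eqref{dot_lim} match the paper. Two points deserve comment.

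First, you overcomplicate the regularity step. You propose a Schauder bootstrap and flag ``regularity pinned at a single point'' as the main obstacle. The paper sidesteps this entirely: once the minimizer~$q$ is known to be bounded (via the truncation~$\mathcal T_R$ and assumption~\eqref{V2}, which \emph{is} in force throughout Section~\ref{POSE1}---your closing worry about its absence is misplaced) and to satisfy~$[q]_s<+\infty$ (from~$I(q)<+\infty$ and~$V\le0$), the one-dimensional Morrey embedding~$\mathcal D^{s,2}(\R)\hookrightarrow C^{s-1/2}(\R)$ yields~\eqref{BOH2} directly and globally, with no analysis whatsoever near the point~$a$. The exponent~$s-\tfrac12$ is the embedding exponent, not a boundary-regularity exponent. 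In fact Remark~\ref{remimporego0686} notes that the boundary-regularity machinery of Theorem~\ref{reg2} genuinely breaks down when~$a=b$ because~$I_{a,a}$ is a punctured interval; the embedding route is what replaces it.

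Second, for symmetry the paper uses neither of your two proposals but the max/min trick of Lemma~\ref{lemma-even}: setting~$q_*(x):=q(2a-x)$ one has~$I(q_*)=I(q)$, and since~$I(M)+I(m)\le I(q)+I(q_*)$ for~$M:=\max(q,q_*)$ and~$m:=\min(q,q_*)$, both~$M$ and~$m$ are minimizers and~$M$ is manifestly symmetric about~$a$. Your restricted-minimization alternative over~$\Gamma_{\mathrm{sym}}$ would also work (the parity argument you sketch for antisymmetric variations is correct), but the max/min device avoids the separate verification that a~$\Gamma_{\mathrm{sym}}$-critical point is critical on all of~$\Gamma$.
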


A concrete application of Theorem~\ref{TH1} (or of Theorems~\ref{main1}
and~\ref{THAGG}
in their more precise formulations) arises in the theory of atom dislocation in crystals according to the Peierls--Nabarro model and its generalization to fractional equations.

In this setting, $q:\R\to\R$ represents the atom dislocation along the slip line of an edged-deformed crystal (see~\cite{MR4531940} and Section~2 in~\cite{MR3296170} for a physical explanation from basic principles). Typically, the equation~$\sqrt{-\Delta} \,q = \nabla V (q)$ (or, more generally, $(-\Delta)^s q = \nabla V (q)$ with~$s\in(0,1)$) accounts for the balance between atom mutual attractive interactions (described by the fractional Laplace operator) and the structure of the crystal in the large (expressed by the potential~$V$) which favors special configurations (corresponding to the minima, or more generally to the critical points, of~$V$).

In this context, equation~\eqref{prob1} looks at crystal configurations in which the atoms at both sides of infinity lie in a rest position ($q=0$, corresponding, in light of~\eqref{V1}, to a maximum of the potential~$V$). While in general one cannot expect nontrivial configurations of this type (see the forthcoming Example~\ref{EXAMPLE1}), Theorems~\ref{TH1},
\ref{main1} and~\ref{THAGG}
provide sufficient conditions on the physical modifications of the crystal which produce these kinds of structures. Namely, first of all, the potential~$V$ has to select the equilibrium at~$q=0$ somewhat as a special configuration (namely, a strict global maximum, in view of assumption~\eqref{V1}, with the additional decay\footnote{For instance, an arbitrarily small and low-frequency perturbation of classical periodic potentials used for atom dislocation in crystals leads to potentials \label{FOOKSHDJ} satisfying~\eqref{V1} and~\eqref{V2} (with the choice~$R:=2\pi$), e.g.
$$ V(q)=\cos q-1+\varepsilon\left( e^{-\delta |q|^2}-1\right),$$
for every~$\varepsilon$, $\delta>0$.

{F}rom the physical point of view, it is reasonable that enhancing the maximality properties of the potential value~$q=0$ enhances the chances of having orbits emanating from it (or approaching it), making it an unstable equilibrium for the system.}
property in~\eqref{V2}) and the atom needs to be ``pinched'' away from this
equilibrium at least at a given point~$a$ if~$s\in(1/2,1)$, or at a given interval~$[a,b]$ if~$s\in(0,1/2]$ (interestingly, the point~$a$ and the interval~$[a,b]$ can be arbitrarily prescribed, and the interval can be also arbitrarily small). 

We also recall that constructing solutions with specific behaviors at infinity is typically useful as a first step to construct rather
complicated, and even chaotic, patterns, see~\cite{MR3594365}.

\subsection{Spatially dependent potentials satisfying
the Ambrosetti--Rabinowitz condition coupled to a strong quadratic term}\label{POSE2}

We are now concerned with the existence of homoclinic solutions for some spatial-dependent nonlocal systems. To be more precise, for $s\in (0, 1)$, we look for homoclinic solutions~$q:\R\to\R^n$ of the system of equations given by
\begin{equation}\label{eqn2}
(-\Delta)^s q(x) + L(x)q(x)= \nabla_q W(x, q(x))  \qquad {\mbox{for all }} x\in\R.
\end{equation}
Here, we assume that $L\in C(\R, \R^{n\times n})$ is a symmetric and uniformly positive definite matrix for all $x\in\R$,  that is
\begin{equation}\label{LMAT}
\mbox{there exists $\alpha>0$ such that } L(x) q\cdot q\ge\alpha |q|^2 \mbox{ for any } q\in\R^n.
\end{equation}
Additionally, we require that
\begin{equation}\label{AUTOVAL}\lim_{x\to \pm\infty}
\inf_{|\xi|=1} L(x) \xi\cdot\xi =+\infty.
\end{equation}
Moreover, we consider a potential $W\in C^1(\R\times\R^n, \R)$ such that
\begin{eqnarray}\label{W0IN0}
&&W(x, 0) =0\\
\label{LITTLEO}
{\mbox{and }}&&\lim_{|q|\to 0}\sup_{x\in\R} \frac{|\nabla_q W(x, q)|}{|q|} = 0  .
\end{eqnarray}
We note that, by~\eqref{LITTLEO}, the identically null function is a solution of~\eqref{eqn2},
hence our main focus from now on will be on nontrivial solutions.

Besides, we assume that the potential $W$ satisfies the Ambrosetti--Rabinowitz condition, namely
\begin{equation}\label{AR}
\begin{split}
&\mbox{there exists $\mu>2$ such that }\\
&0<\mu W(x, q)\le \nabla_q W(x, q)\cdot q \quad\mbox{ for any $x\in\R$ and any $ q\in\R^n\setminus\{0\}$}.
\end{split}
\end{equation}

Depending on whether the fractional power $s$ falls into the range $(1/2, 1)$ or $(0, 1/2]$, we provide two distinct results. If $s\in (1/2, 1)$, we have the following:

\begin{theorem}\label{maintheorem2}
Let $s\in (1/2, 1)$. Let~$L\in C(\R, \R^{n\times n})$ satisfy~\eqref{LMAT}
and~\eqref{AUTOVAL} and let~$W\in C^1(\R\times\R^n, \R)$ satisfy~\eqref{W0IN0}, \eqref{LITTLEO} and~\eqref{AR}. 

In addition, assume that\footnote{For example, a function satisfying~\eqref{W0IN0}, \eqref{LITTLEO}, \eqref{AR} and~\eqref{WSEGNATO}
is~$W(x,q)=a(x)|q|^\mu$ with~$a\in L^\infty(\R)$.}
\begin{equation}\label{WSEGNATO}
\begin{split}
&\mbox{for any~$M>0$ there exists~$\kappa_M>0$ such that}\\
&\left|\frac12\nabla_q W(x, q)\cdot q-W(x, q)\right|\le \kappa_M |q|^2\quad\mbox{ for any $x\in\R$ and any $q\in\R^n$ with~$|q|\le M$}.
\end{split}
\end{equation}

Then,  there exists a nontrivial solution $q$ to problem~\eqref{eqn2} such that $q\in C^{s-1/2}(\R, \R^n)$ and
\begin{equation}\label{QREGC}
\|q\|_{C^{s-1/2}(\R, \R^n)}\le C^\star
\end{equation}
for some $C^\star>0$ depending only on $n, s$, $W$ and $L$.

Also, $q\in C^{2s}(B_{1/2}, \R^n)$ and
\begin{equation}\label{REGLOC}
\|q\|_{C^{2s}(B_{1/2}, \R^n)} \le C^{\star\star}
\end{equation}
for some $C^{\star\star}>0$ depending only on $n, s$, $W$ and $L$.

Moreover, 
\begin{equation}\label{LIM}
\lim_{x\to\pm\infty} q(x)=0.
\end{equation}
\end{theorem}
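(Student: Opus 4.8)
The plan is to realize the solution as a mountain pass critical point of the natural energy functional associated to~\eqref{eqn2}. Consider the Hilbert space
\[
E:=\Big\{q\in H^s(\R,\R^n)\ :\ \int_{\R}L(x)\,q(x)\cdot q(x)\,dx<+\infty\Big\},
\]
endowed with the norm $\|q\|_E^2:=[q]_s^2+\int_{\R}L(x)\,q(x)\cdot q(x)\,dx$, which by~\eqref{LMAT} is equivalent to a genuine norm and controls the $H^s$ norm. Since $s>1/2$, in dimension one one has the continuous embeddings $E\hookrightarrow H^s(\R,\R^n)\hookrightarrow C^{s-1/2}(\R,\R^n)\cap L^2(\R,\R^n)$, so every element of $E$ is bounded and vanishes at infinity; using this together with~\eqref{W0IN0} and~\eqref{LITTLEO} one checks that
\[
\mathcal J(q):=\frac12\,\|q\|_E^2-\int_{\R}W(x,q(x))\,dx
\]
is well defined and of class $C^1$ on $E$, with critical points being the weak solutions of~\eqref{eqn2}.

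The crucial ingredient is a compactness lemma: thanks to~\eqref{AUTOVAL}, the embedding $E\hookrightarrow L^p(\R,\R^n)$ is compact for every $p\in[2,+\infty)$. I would argue in the standard way: a bounded sequence in $E$ is bounded in $H^s$, hence converges (up to subsequences) weakly in $E$ and strongly in $L^p_{\mathrm{loc}}$, while $\int_{|x|>R}|q_k|^2\le\big(\inf_{|x|>R}\inf_{|\xi|=1}L(x)\xi\cdot\xi\big)^{-1}\|q_k\|_E^2\to0$ as $R\to+\infty$ by~\eqref{AUTOVAL} makes the tails uniformly small, which upgrades the convergence to strong in $L^2(\R,\R^n)$, and then interpolation with the uniform $L^\infty$ bound gives strong convergence in every $L^p$, $p<+\infty$. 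With this, the Palais--Smale condition follows as usual: boundedness of a Palais--Smale sequence is extracted from~\eqref{AR} via $\mathcal J(q_k)-\frac1\mu\langle\mathcal J'(q_k),q_k\rangle\ge\big(\tfrac12-\tfrac1\mu\big)\|q_k\|_E^2$, and the compact embedding together with the continuity and growth of $\nabla_qW$ turns weak into strong convergence; the auxiliary bound~\eqref{WSEGNATO} is used along the way to keep the Pohozaev-type remainder $\int\big(\tfrac12\nabla_qW(x,q)\cdot q-W(x,q)\big)$ quadratically controlled.

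Next I would verify the mountain pass geometry of $\mathcal J$: one has $\mathcal J(0)=0$ by~\eqref{W0IN0}; by~\eqref{LITTLEO} and $E\hookrightarrow L^2$ one obtains $\int_{\R}W(x,q(x))\,dx\le\frac14\|q\|_E^2$ for $\|q\|_E$ small, whence $\mathcal J(q)\ge\frac14\|q\|_E^2$ for $\|q\|_E$ small; and, fixing any $q^\star\in E\setminus\{0\}$, the elementary consequence $W(x,t\xi)\ge t^\mu W(x,\xi)$ of~\eqref{AR} for $t\ge1$, together with $\mu>2$, gives $\mathcal J(tq^\star)\to-\infty$ as $t\to+\infty$. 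The mountain pass theorem then produces a critical point $q$ at the level $c:=\inf_\gamma\max_{[0,1]}\mathcal J\circ\gamma>0$, which is nontrivial since $c>0=\mathcal J(0)$. Evaluating $\mathcal J$ along the straight path $t\mapsto tq^\star$ shows $c\le\max_{t\ge0}\mathcal J(tq^\star)$, a quantity depending only on $n,s,W,L$ once $q^\star$ is fixed; and since $\langle\mathcal J'(q),q\rangle=0$ the estimate above gives $\|q\|_E^2\le\frac{2\mu}{\mu-2}\,c$. Hence $\|q\|_{H^s}$, and then by the Sobolev embedding $\|q\|_{C^{s-1/2}(\R,\R^n)}$, is bounded by some $C^\star$ depending only on $n,s,W,L$, which is~\eqref{QREGC}.

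It remains to prove the interior estimate~\eqref{REGLOC} and the decay~\eqref{LIM}. Writing the equation as $(-\Delta)^sq=\nabla_qW(x,q)-L(x)q=:f$, the bound~\eqref{QREGC} and the continuity of $W$ and $L$ give $f\in C^0$ with $\|f\|_{L^\infty(B_1)}$ controlled only by $n,s,W,L$; applying the interior regularity theory for the fractional Laplacian iteratively, improving the Hölder exponent of $q$ at each step, yields $q\in C^{2s}(B_{1/2},\R^n)$ with the quantitative bound~\eqref{REGLOC}. Finally,~\eqref{LIM} is immediate: $q\in L^2(\R,\R^n)$ because $q\in E\subset H^s$, and $q$ is globally uniformly continuous by~\eqref{QREGC}, so $q(x)\to0$ as $|x|\to+\infty$. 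I expect the two genuinely delicate points to be the compact embedding $E\hookrightarrow L^p$ — together with the consequent weak continuity of the nonlinear term along Palais--Smale sequences, this being exactly where the confining hypothesis~\eqref{AUTOVAL} is indispensable — and the regularity bootstrap up to the sharp exponent $2s$ in~\eqref{REGLOC}, which requires carefully propagating the merely continuous data $W$ and $L$ through the fractional Schauder-type estimates.
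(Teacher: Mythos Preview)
Your approach is essentially the one in the paper: same Hilbert space, same functional, same mountain pass geometry, the same use of~\eqref{AUTOVAL} to control tails in~$L^2$, and the same Morrey embedding for~\eqref{QREGC} followed by interior Schauder theory for~\eqref{REGLOC} and the $L^2\cap C^0$ argument for~\eqref{LIM}.

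There is one point of packaging that deserves care. You assert that the full Palais--Smale condition holds, invoking the ``growth of $\nabla_q W$'', but under the hypotheses of Theorem~\ref{maintheorem2} there is \emph{no} global-in-$x$ bound on $|\nabla_q W(x,q)|$ for~$|q|$ bounded away from~$0$; \eqref{LITTLEO} only controls the origin and \eqref{WSEGNATO}+\eqref{AR} only yield $|\nabla_q W(x,q)\cdot q|\le C_M|q|^2$ and $|W(x,q)|\le C_M|q|^2$, not a bound on the full gradient. This makes the step ``weak $\Rightarrow$ strong in $E$'' nontrivial, since one cannot directly dominate $\int\nabla_qW(x,q_k)\cdot(q_k-q)$. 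The paper sidesteps this by \emph{not} proving Palais--Smale: it uses the weak mountain pass lemma of Mawhin--Willem to produce a Palais--Smale sequence, shows the weak limit is a critical point by testing only against $C^\infty_0$ functions (where $\nabla_qW(\cdot,q_k)$ is bounded on the compact support), and then proves $q\not\equiv0$ by contradiction. The key identity is
\[
c=\lim_{k}\Big(\mathcal J(q_k)-\tfrac12\langle\mathcal J'(q_k),q_k\rangle\Big)=\lim_k\int_{\R}\Big(\tfrac12\nabla_qW(x,q_k)\cdot q_k-W(x,q_k)\Big)\,dx,
\]
and precisely \eqref{WSEGNATO} (not a growth bound on $\nabla_qW$) dominates the integrand by $\kappa_M|q_k|^2$; if $q_k\to0$ in $L^2(\R)$ (which would follow from~$q=0$ by the compact embedding), this forces $c=0$, contradicting $c>0$. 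So \eqref{WSEGNATO} enters exactly in the nontriviality step, not in verifying Palais--Smale.

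One minor remark: the passage to $C^{2s}(B_{1/2})$ in~\eqref{REGLOC} is a single application of interior regularity (from $f\in L^\infty(B_1)$, thanks to~\eqref{QREGC}), not an iteration.
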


Here below and in the rest of the paper, we use the notation
\begin{equation}\label{2STARESSE}
2^*_s:=\begin{cases}
\dfrac{2}{1-2s} &\mbox{ if } s\in \left(0, \frac12\right),\\
+\infty &\mbox{ if } s = \frac12.
\end{cases}
\end{equation}

In the case~$s\in (0, 1/2]$,
in addition to~\eqref{W0IN0},~\eqref{LITTLEO} and~\eqref{AR}, we also require the following condition to hold true\footnote{For example, a function which satisfies~\eqref{W0IN0}, \eqref{LITTLEO}, \eqref{AR}
and~\eqref{INPIUNUOVA} is~$W(x, q)= \frac{|q|^p}{p}$, with~$p=\mu\in (2, 2^*_s)$.}
\begin{equation}\label{INPIUNUOVA}
\begin{split}
&\mbox{there exist~$a_0>0$ and~$p\in (2, 2^*_s)$ such that}\\
&|\partial_{q_j} W(x, q)|\le a_0\,|q_j|(1+ |q|^{p-2}) \quad\mbox{ for any $j\in\{1,\dots,n\}$, any $x\in\R$ and any~$q\in\R^n$}.
\end{split}
\end{equation}

Then, in this scenario, we have the following result:

\begin{theorem}\label{maintheorem3}
Let~$s\in (0, 1/2]$.  Let~$L\in C(\R, \R^{n\times n})$ satisfy~\eqref{LMAT} and~\eqref{AUTOVAL} and let~$W\in C^1(\R\times\R^n, \R)$ satisfy~\eqref{W0IN0}, \eqref{LITTLEO}, \eqref{AR} and~\eqref{INPIUNUOVA}.

Then, there exists a nontrivial solution~$q$ to problem~\eqref{eqn2}.

Moreover, if~$s\in (0, 1/2)$ and~$L(x)$ has nonnegative entries, then~$q\in L^\infty(\R,\R^n)\cap C^{2s}(B_{1/2}, \R^n)$ and
\begin{equation}\label{cbakbd}\|q\|_{L^\infty(\R,\R^n)}+
\|q\|_{C^{2s}(B_{1/2}, \R^n)} \le C^{\star},
\end{equation}
for a suitable~$C^\star>0$ depending only on $n$, $s$, $W$ and~$L$.

If instead~$s=1/2$ and~$L(x)$ has nonnegative entries, then~$q\in L^\infty(\R,\R^n)\cap C^{1-\varepsilon}(B_{1/2}, \R^n)$ for any~$\varepsilon>0$ and
\begin{equation}\label{cbnklaj}\|q\|_{L^\infty(\R,\R^n)}+
\|q\|_{C^{1-\varepsilon}(B_{1/2}, \R^n)} \le C^{\star\star},
\end{equation}
for a suitable~$C^{\star\star}>0$ depending only on $n$, $s$, $W$, $L$ and~$\varepsilon$.

If, in addition to the previous assumptions, 
\begin{equation}\label{AGGIMPPO}\begin{split}
&{\mbox{there exists~$D>0$ and continuous functions~$d_1,\dots, d_n:\R\to\R$}}\\&
{\mbox{such that, for all~$j\in\{1,\dots,n\}$,}}\quad \lim_{x\to\pm\infty} d_j(x)=+\infty\\
&{\mbox{and, for all~$x\in\R\setminus(-D,D)$,}}\\
&L(x)=\left(\begin{matrix}
d_1(x) & 0&0 &\dots &0\\
0 & d_2(x)& 0&\dots& 0\\ \vdots&
&\ddots\\ \vdots&
& & \ddots\\ 0&
\dots &\dots&0& d_{n}(x) 
\end{matrix}\right)
\end{split}
\end{equation}
then, \eqref{LIM} holds true.
\end{theorem}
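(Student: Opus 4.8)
The plan is to produce the nontrivial solution by the Mountain Pass Theorem, using assumption~\eqref{AUTOVAL} to restore the compactness lost on the unbounded domain~$\R$, and then to run an elliptic bootstrap for the quantitative statements. I would work in the Hilbert space
\[
E:=\Big\{q\in H^s(\R,\R^n)\ :\ \int_\R L(x)\,q(x)\cdot q(x)\,dx<+\infty\Big\},\qquad
\|q\|_E^2:=c_s\iint_{\R^2}\frac{|q(x)-q(y)|^2}{|x-y|^{1+2s}}\,dx\,dy+\int_\R L(x)\,q\cdot q\,dx,
\]
which, by~\eqref{LMAT}, carries a norm controlling~$\|q\|_{H^s(\R,\R^n)}$. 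The crucial point is that the embedding~$E\hookrightarrow L^r(\R,\R^n)$ is \emph{compact} for every~$r\in[2,2^*_s)$: one splits~$\R$ into a bounded interval~$(-T,T)$, where the compactness of~$H^s((-T,T))\hookrightarrow L^2$ applies, and its complement, where~\eqref{AUTOVAL} gives, for~$\|q\|_E\le1$,
\[
\int_{|x|>T}|q(x)|^2\,dx\le\Big(\inf_{|x|>T,\,|\xi|=1}L(x)\xi\cdot\xi\Big)^{-1}\|q\|_E^2\xrightarrow[T\to+\infty]{}0 ;
\]
interpolation with~$H^s\hookrightarrow L^{2^*_s}$ for~$s\in(0,1/2)$, respectively~$H^{1/2}\hookrightarrow L^r$ for all finite~$r$ when~$s=1/2$, then upgrades the~$L^2$-compactness to~$L^r$-compactness for~$r<2^*_s$.

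Next I consider~$\mathcal J(q):=\tfrac12\|q\|_E^2-\int_\R W(x,q(x))\,dx$. By~\eqref{LITTLEO} and~\eqref{INPIUNUOVA}, for every~$\e>0$ there is~$C_\e>0$ with~$|W(x,q)|\le \e|q|^2+C_\e|q|^p$, so, using the embedding~$E\hookrightarrow L^2\cap L^p$ and the subcriticality~$p<2^*_s$, one gets~$\mathcal J\in C^1(E,\R)$; moreover~$\mathcal J(0)=0$ by~\eqref{W0IN0}, and for~$\e$ small there are~$\rho,\delta>0$ with~$\mathcal J\ge\delta$ on~$\partial B_\rho$. From~\eqref{AR} one gets the usual bound~$W(x,q)\ge c_0|q|^\mu$ for~$|q|\ge1$, hence~$\mathcal J(tq_0)\to-\infty$ as~$t\to+\infty$ for a fixed~$q_0\in C^\infty_c(\R,\R^n)\setminus\{0\}\subset E$; fix~$e:=t_0q_0$ with~$\|e\|_E>\rho$ and~$\mathcal J(e)<0$. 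The Palais--Smale condition holds at \emph{every} level: for~$\mathcal J(q_k)\to c$ and~$\mathcal J'(q_k)\to0$, using~\eqref{AR} in
\[
\mathcal J(q_k)-\tfrac1\mu\mathcal J'(q_k)[q_k]=\Big(\tfrac12-\tfrac1\mu\Big)\|q_k\|_E^2+\int_\R\Big(\tfrac1\mu\nabla_qW(x,q_k)\cdot q_k-W(x,q_k)\Big)\,dx
\]
bounds~$\|q_k\|_E$; then the compact embedding yields~$q_k\to q$ in~$L^2\cap L^p$, and feeding this into~$\mathcal J'(q_k)[q_k-q]\to0$ (the nonlinear term being controlled via~\eqref{INPIUNUOVA}) gives~$\|q_k\|_E\to\|q\|_E$, i.e.\ strong convergence. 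The Mountain Pass Theorem then furnishes a critical point~$q\ne0$ of~$\mathcal J$ at level~$c\ge\delta$, which is a weak solution of~\eqref{eqn2}; this proves the first assertion.

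For the quantitative statements, assume~$L(x)$ has nonnegative entries. Writing the~$j$-th equation as~$(-\Delta)^sq_j=\partial_{q_j}W(x,q)-(L(x)q)_j$, the first task is the a priori bound~$\|q\|_{L^\infty(\R,\R^n)}\le C$: I would test with truncations of the components and iterate by a Stampacchia--Moser scheme, starting from~$q\in L^{2^*_s}$ when~$s\in(0,1/2)$ and from~$q\in L^r$ for large finite~$r$ when~$s=1/2$, the iteration closing because~$p<2^*_s$ in~\eqref{INPIUNUOVA}, and the coupling terms produced by~$L$ being absorbed using the positivity of~$L$ and the sign of its entries. Once~$q\in L^\infty(\R,\R^n)$, on~$B_1$ the right-hand side~$\partial_{q_j}W(x,q)-(L(x)q)_j$ is bounded (since~$L$ is continuous on~$\overline{B_1}$ and~$|\nabla_qW(x,q)|\le a_0|q|(1+\|q\|_{L^\infty}^{p-2})$), so the interior Hölder estimates for~$(-\Delta)^s$ give~$q\in C^{2s}(B_{1/2},\R^n)$ if~$s\in(0,1/2)$ and~$q\in C^{1-\e}(B_{1/2},\R^n)$ for all~$\e>0$ if~$s=1/2$ (the~$\e$-loss being unavoidable at the endpoint with merely~$L^\infty$ data), yielding~\eqref{cbakbd} and~\eqref{cbnklaj}. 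Finally, under~\eqref{AGGIMPPO}, outside~$(-D,D)$ the system decouples as~$(-\Delta)^sq_j+d_j(x)q_j=\partial_{q_j}W(x,q)$ with~$|\partial_{q_j}W(x,q)|\le\Theta|q_j|$ for a constant~$\Theta$ (by~\eqref{INPIUNUOVA} and the~$L^\infty$ bound); using~$q\in H^s\subset L^2(\R)$ together with~$d_j\to+\infty$, and the resulting sign~$(-\Delta)^sq_j\le0$ wherever~$q_j>0$ with~$|x|$ large (and symmetrically where~$q_j<0$), a comparison argument with supersolutions that decay to an arbitrary~$\e>0$ outside a large ball forces~$\limsup_{|x|\to\infty}|q(x)|\le\e$, whence~\eqref{LIM}.

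The genuinely delicate parts are the quantitative global ones. The compact embedding~$E\hookrightarrow L^r$ is the single device that makes the variational scheme work on all of~$\R$, ensuring Palais--Smale at every level without any energy threshold; more subtly, since the variational argument only delivers~$q\in H^s$ while~$L$ is unbounded and only continuous, one cannot invoke a standard bootstrap verbatim: the~$L^\infty$ bound must be extracted by truncation in a way compatible with the unbounded coupling term (this is where the sign of the entries of~$L$ enters), and the pointwise decay~\eqref{LIM} must be produced by hand, via barriers powered by the divergence of the~$d_j$ in~\eqref{AGGIMPPO}, as no decay follows from~$H^s$-membership alone. I expect this last step — upgrading ``mass escapes at infinity'' to uniform pointwise decay in the presence of a blowing-up coefficient — to be the main obstacle.
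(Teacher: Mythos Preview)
Your proposal is correct and shares the paper's key ingredients, but packages the variational part differently. You establish a compact embedding~$E\hookrightarrow L^r$ for~$r\in[2,2^*_s)$ and from it the full Palais--Smale condition, then invoke the classical Mountain Pass Theorem; the paper instead applies a weaker Mountain Pass variant that produces only a Palais--Smale \emph{sequence}, passes to a weak limit (automatically a critical point), and proves nontriviality by contradiction using the very same tail estimate~$\int_{|x|>K}|q^j|^2\le\gamma/\beta(K)$ (their Lemma~\ref{lemma_Rab}) that underlies your compactness. Your route is cleaner and more standard; the paper's avoids having to check strong convergence in~$E$.

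For the $L^\infty$ bound and local H\"older regularity the two proofs coincide: a De~Giorgi--Stampacchia iteration on truncations of the components (the nonnegativity of the entries of~$L$ enters exactly where you say, to discard the off-diagonal coupling with the right sign), followed by interior Schauder-type estimates for~$(-\Delta)^s$ with bounded right-hand side. For the decay~\eqref{LIM} the paper makes your heuristic precise by constructing the explicit barrier~$A\beta(x)\pm q_j(x)+\eta(1+x_+^s+x_-^s)$, where~$\beta$ is the derivative of the layer solution of a bistable fractional equation (so that~$\beta(x)\lesssim|x|^{-1-2s}$ at infinity); the diagonal structure~\eqref{AGGIMPPO} and the divergence of the~$d_j$ force any negative minimum of this barrier to lie in a compact set, whence the contradiction. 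As you correctly anticipated, this is the one step where an explicit construction is needed rather than a soft argument.
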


We point out that the additional assumption in~\eqref{AGGIMPPO} is needed in
Section~\ref{PKSJLDNPHFOIKBFUIJBN} to construct an ad-hoc barrier in order
to control the behavior of the solution~$q$ at infinity and thus\footnote{This is an interesting conceptual difference
between the cases~$s\in(0,1/2]$ and~$s\in(1/2,1)$.
Indeed, when~$s\in(1/2,1)$ any bound on the energy functional
ensures uniform continuity and, as a byproduct, the decay
at infinity in~\eqref{LIM}. Instead, when~$s\in(0,1/2]$
one needs to rely on nonlocal elliptic regularity theory.
This is however nontrivial, because the coercivity
of~$L$ makes it impossible, in principle, to have global estimates,
due to the divergence of a term in the main equation
(from the standard theory, one would only obtain local,
rather than global, estimates). To overcome this
difficulty, we construct a suitable barrier at infinity.} establish~\eqref{LIM} when~$s\in(0,1/2]$. When~$n=1$, that is when the system boils down to single equation, this additional assumption
is redundant, as it is clearly implied by~\eqref{AUTOVAL}. It would be interesting to
investigate to what extent condition~\eqref{AGGIMPPO} can be relaxed.

The problem in~\eqref{eqn2} can also be interpreted in the light of atom dislocation in crystals.
In this setting, the linear term modulated by~$L$ plays the role of a confinement potential
(roughly speaking, for solutions of controlled energy, one expects a decay of the solution at infinity, to compensate the coefficient growth in~\eqref{AUTOVAL}). In this framework, homoclinic configurations for
the dislocation functions are shown in Theorems~\ref{maintheorem2} and~\ref{maintheorem3} to be possible
under suitable structural assumptions\footnote{In the spirit of footnote~\ref{FOOKSHDJ}, a small perturbation of a classical periodic potential fulfills all the assumptions of Theorems~\ref{maintheorem2} and~\ref{maintheorem3}. For example, one can take
$$L(x):=\big(1+\varepsilon |x|^2\big) {\rm{Id}}$$ to satisfy~\eqref{LMAT} and~\eqref{AUTOVAL}, and~$$W(x, q):=-\left(1-\cos q\right)^{1+\varepsilon}+ \varepsilon |q|^p$$ to satisfy~\eqref{W0IN0}, \eqref{LITTLEO}, \eqref{AR} and~\eqref{WSEGNATO},
with~$\varepsilon>0$ and~$p\in(2,2+2\varepsilon)$.}
on the potential (which is here denoted by~$W$, not to be confused
with the previous, constructionally different, potential~$V$).
\medskip

The rest of this article is organized as follows. 
Section~\ref{opt:S} presents some careful discussions about the main
assumptions of Theorems~\ref{main1} and~\ref{THAGG}. In particular, it shows that
equation~\eqref{prob1} does not possess a global solution in the
whole of~$\R$ (hence, the assumption that a datum needs to be prescribed
in some possibly degenerate interval~$[a, b]$ with~$a\le b$ cannot be removed).
Moreover, we show that in the range of fractional parameter~$s\in(0,1/2]$,
the minimization problem trivializes, since the infimum is attained at the zero energy level.

The proofs of Theorems~\ref{main1} and~\ref{THAGG} occupy Sections~\ref{OJNDDohnerfweP},
\ref{sec-regularity} and~\ref{section-proof}. Specifically,
Section~\ref{OJNDDohnerfweP} contains some auxiliary observations about
cutoff methods, asymptotics and continuous embeddings and
Section~\ref{sec-regularity} develops some regularity results.
With this preliminary work, the proofs of
Theorems~\ref{main1} and~\ref{THAGG} are thus completed in Section~\ref{section-proof}.

The proofs of Theorems~\ref{maintheorem2} and~\ref{maintheorem3}
are contained in Sections~\ref{ojsnkdD}, \ref{SIPDJOLNDFUOJFOJLN}, \ref{KSPldMFSw}, \ref{PKSJLDNPHFOIKBFUIJBN}
and~\ref{SPKJODLN}.
More precisely, Section~\ref{ojsnkdD} is devoted to some preliminary estimates
on the potential function~$W$.
Section~\ref{SIPDJOLNDFUOJFOJLN} presents the functional setting
which comes in handy to study problem~\eqref{eqn2}, Section~\ref{KSPldMFSw}
contains the estimate in class~$L^\infty(\R,\R^n)$,
and Section~\ref{PKSJLDNPHFOIKBFUIJBN} constructs a useful barrier
to control the behavior of solutions at infinity also when~$s\in (0, 1/2]$
(hence playing an important role in the proof of Theorem~\ref{maintheorem3}).
The proofs of Theorems~\ref{maintheorem2} and~\ref{maintheorem3}
are then completed in Section~\ref{SPKJODLN}.

It is interesting to remark that the uniform bounds
obtained in Section~\ref{KSPldMFSw} are
not completely standard, both because they deal with systems of equations and because the system under consideration presents some unbounded coefficients which require a special treatment.

The paper ends with Appendices~\ref{AGGNORMPROOFSEC}--\ref{interpappe}, where we establish some
useful facts needed in the proofs of the main results.

\section{Optimality of the assumptions in Theorems~\ref{main1} and~\ref{THAGG}}\label{opt:S}

In this section we discuss the optimality of the results stated in Theorems~\ref{main1} and~\ref{THAGG}. 

It is worth pointing out that, in general, the solution provided by Theorem~\ref{main1} is not a solution in the whole of~$\R$.
On this matter, we present the following example:

\begin{example}\label{EXAMPLE1}
Let~$n=1$ and consider the potential~$V(q):=-|q|^2$,
which clearly satisfies assumptions~\eqref{V1} and~\eqref{V2}.

We assume, for the sake of contradiction, that~$q$  is a continuous and bounded solution of the problem
\begin{equation}\label{EQR}
(-\Delta)^s q = V'(q) \quad\mbox{ in } \R
\end{equation}
which satisfies~\eqref{limite}.

By~\eqref{limite}, we have that either~$q=0$ or, without loss of generality,  we can suppose that there exists~$x\in\R$ such that~$q(x)>0$. Let~$\overline x$ be the maximum of~$q$ over~$\R$. In particular, $q(\overline x)>0$.  
Thus, by evaluating~\eqref{EQR} at~$q(\overline x)$, we have that
\[
0 \le (-\Delta)^s \, q(\overline x) = -2 \, q(\overline x) <0,
\]
which is a contradiction. This proves that~$q$ cannot be a solution in the whole of~$\R$.
\end{example}
We emphasize that the non-existence of homoclinic solutions is not an uncommon phenomenon. On this topic, we refer the interested reader, e.g., to the non-existence result obtained in~\cite[Proposition~C.1]{MR4108219} for the case of a multi-well potential.

An interesting feature which distinguishes Theorem~\ref{main1} from Theorem~\ref{THAGG} is that for~$s\in (1/2, 1)$ one is also allowed to take~$a=b$.
One might wonder if the same choice can be made in the complementary case~$s\in (0, 1/2]$. The answer is negative. Indeed, if~$a=b$ and~$s\in (0, 1/2]$, the minimum is not achieved in~$\Gamma$, according to the next result. In particular, this means that Theorem~\ref{main1} is optimal with respect to this feature.


\begin{proposition}\label{NASIK}
Let~$s\in (0, 1/2]$. Then, for any~$\varepsilon>0$ and~$M\in\R$ there exists an even function~$q_\varepsilon\in C(\R, \R)$ such that~$q_\varepsilon(0)=M$ and 
\begin{equation}\label{SPKq}
\lim_{\varepsilon\to 0} I(q_\varepsilon)=0.
\end{equation}
\end{proposition}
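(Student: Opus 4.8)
The plan is to construct explicitly a family of continuous even ``spike'' functions~$q_\e$ that equal~$M$ at the origin and whose energy $I(q_\e)$ tends to~$0$ as~$\e\to0$. Since $V(0)=0$, the contribution of the potential term $-\int_\R V(q_\e)$ will be harmless provided $q_\e$ has small support and is uniformly bounded; the only real issue is to make the Gagliardo seminorm $[q_\e]_s$ vanish. This is exactly the point where the restriction $s\in(0,1/2]$ is used: in this range a single point has zero $s$-capacity, so one can ``pinch'' a function to the value~$M$ at~$x=0$ at arbitrarily small Dirichlet cost.

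First I would fix the profile. A convenient choice is a logarithmic cutoff: for small $\e>0$ set
\begin{equation*}
q_\e(x):=M\,\phi_\e(x),\qquad
\phi_\e(x):=\begin{dcases}
1 & |x|\le \e,\\[2pt]
\dfrac{\log(\e^{1/\e}/|x|)}{\log(\e^{1/\e}/\e)} & \e<|x|<\e^{1/\e},\\[2pt]
0 & |x|\ge \e^{1/\e},
\end{dcases}
\end{equation*}
which is continuous, even, compactly supported, satisfies $0\le \phi_\e\le 1$ and $q_\e(0)=M$. (When $s<1/2$ an even simpler power-type cutoff supported in $|x|<\e$, such as $\phi_\e(x)=(1-|x|/\e)_+$, already works; the logarithmic choice has the advantage of handling the borderline case $s=1/2$ simultaneously.) Then I would estimate
\[
[q_\e]_s^2=c_s\,M^2\iint_{\R^2}\frac{|\phi_\e(x)-\phi_\e(y)|^2}{|x-y|^{1+2s}}\,dx\,dy
\]
by the standard splitting into the region where both points lie in the support, and the region where one point is outside. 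Using $|\phi_\e(x)-\phi_\e(y)|\le\min\{1,|\phi_\e'|\,|x-y|\}$ together with $|\phi_\e'(t)|\le C\big(|t|\log(\e^{1/\e}/\e)\big)^{-1}$ on the annulus, a direct computation gives $[q_\e]_s^2\le C M^2/\log(\e^{1/\e}/\e)=C M^2\big/\big((\tfrac1\e-1)\log\tfrac1\e\big)\to0$ for $s=1/2$, and an even faster decay (a positive power of $\e$) for $s<1/2$; here the key mechanism is that in the annulus $\e<|x|<\e^{1/\e}$ the singular kernel $|x-y|^{-1-2s}$ with $2s\le1$ is just barely integrable, so the seminorm is controlled by the (vanishing) $L^\infty$-oscillation of the profile times a logarithmic capacity factor.

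It remains to treat the potential term. Since $V\in C^1$ and $V(0)=0$, on the ball $\{|q|\le|M|\}$ we have $|V(q)|\le C_M|q|$, hence $\big|\int_\R V(q_\e(x))\,dx\big|\le C_M\,|M|\,\big|\operatorname{supp}q_\e\big|=C_M|M|\,\cdot 2\e^{1/\e}\to0$. Combining, $I(q_\e)=\tfrac12[q_\e]_s^2-\int_\R V(q_\e)\to0$, and since $q_\e\in\mathcal D^{s,2}(\R,\R)$ is admissible (equal to a prescribed value at the single point $a=b=0$), this proves~\eqref{SPKq}.

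The main obstacle is the sharp estimate of $[q_\e]_s$ in the borderline case $s=1/2$: a naive cutoff on $|x|<\e$ gives $[q_\e]_{1/2}^2\sim M^2$ (bounded below, not vanishing), reflecting precisely the fact that points have positive $1/2$-capacity is \emph{false} but only barely so. One must therefore spread the transition layer over a logarithmically long scale — the choice $\e^{1/\e}$ above — so that the logarithmic capacity $\big(\log(\text{outer radius}/\text{inner radius})\big)^{-1}$ is forced to zero. Getting the bookkeeping of the double integral over the three regions right, and verifying that the cross terms (one point in $|x|<\e$, the other in the annulus, or one point outside the support) are also $o(1)$, is the part that requires care; everything else is routine.
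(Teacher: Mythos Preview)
Your overall strategy is the right one --- exploit that a single point has zero $H^s$-capacity when $s\le 1/2$ by building a logarithmic cutoff, and handle the potential term via the smallness of the support. However, your explicit construction is broken: for $\varepsilon\in(0,1)$ one has $\varepsilon^{1/\varepsilon}<\varepsilon$ (since $t\mapsto\varepsilon^t$ is decreasing), so the ``annulus'' $\{\varepsilon<|x|<\varepsilon^{1/\varepsilon}\}$ is empty and $\phi_\varepsilon$ as written is not well-defined. Your subsequent formula $\log(\varepsilon^{1/\varepsilon}/\varepsilon)=(1/\varepsilon-1)\log(1/\varepsilon)$ hides a sign error: the left-hand side is negative. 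The fix is simply to swap the radii --- take inner radius $r=\varepsilon^{1/\varepsilon}$ and outer radius $R=\varepsilon$ --- after which your capacity heuristic $[\phi_\varepsilon]_{1/2}^2\lesssim 1/\log(R/r)\to 0$ and the support bound $|{\rm supp}\,q_\varepsilon|=2\varepsilon\to 0$ both become correct. You should also be aware that the bound $[\phi_\varepsilon]_{1/2}^2\lesssim 1/\log(R/r)$, while true, is not entirely ``routine'' and deserves a careful computation or a reference.

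For comparison, the paper avoids the direct seminorm estimate by quoting the known fact that the unbounded function $q_*(x)=\log(1-\log|x|)$ on $(-1,1)$ lies in $\mathcal D^{1/2,2}(\R)$. It then sets $q_\sharp=\min\{q_*/\theta,M\}$ with $\theta=1/\varepsilon$ (so that $[q_\sharp]_{1/2}\le\varepsilon[q_*]_{1/2}$ by the Lipschitz property of the truncation) and rescales $q_\varepsilon(x)=q_\sharp(x/\varepsilon)$ to shrink the support; for $s<1/2$ one takes $\theta=1$ and relies on the scaling factor $\varepsilon^{1-2s}$ alone. Your direct logarithmic cutoff and the paper's truncated $\log\log$ profile are two implementations of the same capacity idea; the paper's version trades the hands-on double-integral estimate for a citation, while yours (once the radii are corrected) is more self-contained but requires you to actually carry out the $H^{1/2}$ computation you only sketch.
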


\begin{proof}
We first suppose that~$M\ge0$. In this case, let
\[
q_*(x):=\begin{cases}
\log\big(1-\log|x|\big) &\mbox{ if } x\in (-1, 0)\cup (0, 1),\\
{+\infty}&\mbox{ if } x=0,\\
0 &\mbox{ otherwise}.
\end{cases}
\]
By~\cite[Appendix~A]{m3as} we know that~$q_*\in \mathcal{D}^{\frac12, 2}(\R)$. We define
\[
\theta:=\begin{cases}
1 &\mbox{ if } s\in \left(0,\frac12\right),\\
\frac{1}{\varepsilon} &\mbox{ if } s=\frac12
\end{cases}
\]
and
\begin{equation}\label{qsharp}
q_\sharp(x):= \min\left\{\frac{q_*(x)}{\theta}, M \right\}.
\end{equation}
We observe that
\begin{equation}\label{ABC}
|q_\sharp(x)-q_\sharp(y)|\le\frac{|q_*(x)-q_*(y)|}{\theta}
,\end{equation}
whence we infer that~$q_\sharp\in \mathcal{D}^{\frac12, 2}(\R)$.  Moreover,  since~$q_\sharp$ is bounded and compactly supported, we also have that~$q_\sharp\in L^2(\R)$. 

As a matter of fact, we claim that 
\begin{equation}\label{inDS}
q_\sharp\in \mathcal{D}^{s, 2}(\R)\quad\mbox{ for any } s\in (0, 1/2].
\end{equation}
Indeed, when~$s\in(0,1/2)$,
\[
\begin{split}&
[q_\sharp]^2_s = \int_{\R\setminus (-1,1)} |2\pi \xi|^{2s} |\widehat{q_\sharp}(\xi)|^2 \,d\xi + \int_{(-1,1 )} |2\pi \xi|^{2s} |\widehat{q_\sharp}(\xi)|^2 \,d\xi\\
&\qquad \quad\le \int_\R |2\pi\xi| |\widehat{q_\sharp}(\xi)|^2 \,d\xi + 2\pi\int_\R |\widehat{q_\sharp}(\xi)|^2 \,d\xi= [q_{\sharp}]^2_{1/2} + \|q_\sharp\|^2_{L^2(\R)} <+\infty,
\end{split}
\]
which establishes~\eqref{inDS}.

Now, we set
\[
q_\varepsilon (x):= q_\sharp\left(\frac{x}{\varepsilon}\right).
\]
By definition, $q_\varepsilon\in C(\R,\R)$,
$q_\varepsilon$ is even and~$q_\varepsilon(0)=M$. 

\begin{figure}[h]
\begin{center}
\includegraphics[scale=.75]{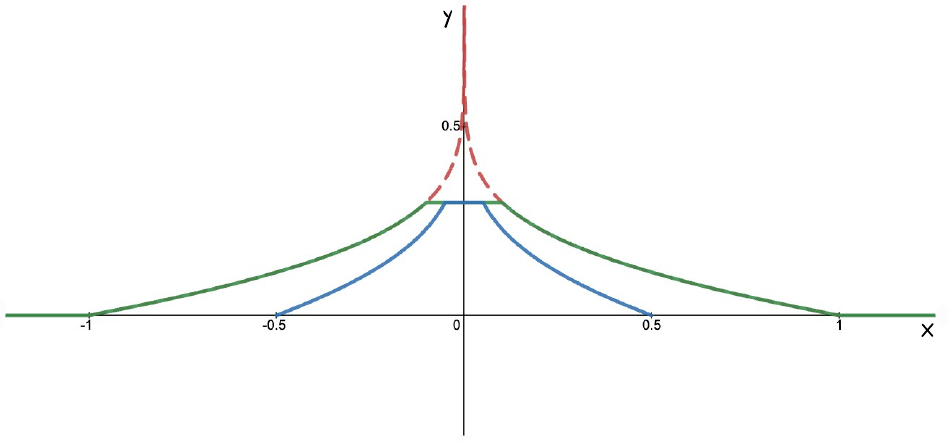}
\end{center}
\caption{The functions~$q_\sharp$ (in green) and~$q_\varepsilon$ (in blue).}
\end{figure}

We now check that~\eqref{SPKq} holds true.
For this, we notice that~$q_\varepsilon(x)=0$ for any~$x\in\R\setminus (-\varepsilon, \varepsilon)$. {F}rom this and recalling that~$V(0)=0$, 
\begin{equation}\label{Vinfty}
\begin{split}&
\int_\R |V(q_\varepsilon(x))| \,dx = \int_{\R\setminus (-\varepsilon, \varepsilon)} |V(q_\varepsilon(x))| \,dx +\int_{(-\varepsilon, \varepsilon)} |V(q_\varepsilon(x))| \,dx\\
&\qquad\qquad= \int_{(-\varepsilon, \varepsilon)} |V(q_\varepsilon(x))| \,dx
 \le 2\varepsilon\, \|V\|_{L^\infty((0, M))}.
\end{split}
\end{equation}
Furthermore,
\begin{equation}\label{26BIS}\begin{split}
\iint_{\R^2} \frac{|q_\varepsilon(x)-q_\varepsilon(y)|^2}{|x-y|^{1+2s}} \,dx\,dy &= \iint_{\R^2} \frac{\left|q_\sharp\left(\frac{x}{\varepsilon}\right)-q_\sharp\left(\frac{y}{\varepsilon}\right)\right|^2}{|x-y|^{1+2s}} \,dx\,dy\\&= \varepsilon^{1-2s}\iint_{\R^2} \frac{|q_\sharp(x)-q_\sharp(y)|^2}{|x-y|^{1+2s}} \,dx\,dy.\end{split}
\end{equation}

Let now distinguish two cases: if~$s\in (0, 1/2)$, recalling~\eqref{functional}
and exploiting~\eqref{26BIS},
\eqref{Vinfty} and~\eqref{inDS}, we find that
\[
I(q_\varepsilon) = \frac{c_s}2
\iint_{\R^2} \frac{|q_\varepsilon(x)-q_\varepsilon(y)|^2}{|x-y|^{1+2s}} \,dx\,dy - \int_\R V(q_\varepsilon(x)) \,dx \le \frac{c_s \varepsilon^{1-2s} [q_\sharp]^2_s}2 +2\varepsilon \|V\|_{L^\infty((0, M))}
\]
and the desired result in~\eqref{SPKq} follows by taking the limit as~$\varepsilon\searrow 0$. 

If instead~$s=\frac12$, by~\eqref{functional}, \eqref{ABC}, \eqref{Vinfty} and~\eqref{qsharp}, and recalling that~$q_*\in \mathcal{D}^{\frac12, 2}(\R)$,
\[
\begin{split}
I(q_\varepsilon) &\le
\frac{c_s}{2\theta^2} \iint_{\R^2} \frac{|q_*(x)-q_*(y)|^2}{|x-y|^2} \,dx\,dy - \int_\R V(q_\varepsilon(x)) \,dx\\
&\le\frac{c_s \varepsilon^2}2 [q_*]^2_{1/2} + 2\varepsilon \|V\|_{L^\infty((0, M))}.
\end{split}
\]
Once again, the desired result in~\eqref{SPKq} follows by taking the limit as~$\varepsilon\searrow 0$.

In the case~$M<0$, one takes
\begin{eqnarray*} q_*(x)&:=&\begin{cases}
-\log\big(1-\log|x|\big) &\mbox{ if } x\in (-1, 0)\cup (0, 1),\\
0 &\mbox{ otherwise},
\end{cases}\\
{\mbox{and }}\quad 
q_\sharp(x)&:=& \max\left\{\frac{q_*(x)}{\theta}, M \right\}.
\end{eqnarray*}
The desired result then follows as in the previous case.
\end{proof}

\section{Preliminary results towards the proofs of Theorems~\ref{main1} and~\ref{THAGG}}\label{OJNDDohnerfweP}

The proof of Theorem~\ref{main1} comes through the study of the functional~$I$ in~\eqref{functional}.  To this aim, we gather in this section some auxiliary observations.

We start by proving that, given a measurable function~$q:\R\to\R^n$, we can decrease the seminorm of~$q$ by performing a cutoff procedure (in the sense of~\eqref{cut}).
This fact, together with assumption~\eqref{V2}, will allow us to ``lower" the total energy.
\begin{lemma}\label{lemma_taglio}
Let~$R>0$.
For any measurable function~$q:\R\to\R^n$, let
\[
\mathcal T_R (q):= (T_R(q_1), \dots, T_R(q_n)),
\]
where, for all~$i\in\{1,\dots, n\}$, $T_R(q_i)$ is defined as in~\eqref{cut}.

Then,
\begin{equation*}
[\mathcal T_R(q)]^2_s \le [q]^2_s.
\end{equation*}
\end{lemma}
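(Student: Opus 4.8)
The key observation is that the cutoff map $T_R:\R\to[-R,R]$ is 1-Lipschitz, and moreover it is the nearest-point projection onto the interval $[-R,R]$. From this, I expect to derive the pointwise inequality that, for every $x,y\in\R$,
\[
\bigl|\mathcal T_R(q)(x)-\mathcal T_R(q)(y)\bigr|^2 \le \bigl|q(x)-q(y)\bigr|^2 .
\]
Indeed, since $\mathcal T_R$ acts componentwise and each component map $T_R$ is 1-Lipschitz on $\R$, we have $|T_R(q_i(x))-T_R(q_i(y))|\le |q_i(x)-q_i(y)|$ for every $i$, and summing the squares over $i\in\{1,\dots,n\}$ gives the displayed bound. (Equivalently, one notes $\mathcal T_R = P_{[-R,R]^n}$ is the Euclidean projection onto the convex set $[-R,R]^n$, which is a nonexpansive map; either formulation works, but the componentwise Lipschitz argument is the cleanest since the paper defines $\mathcal T_R$ coordinatewise.)

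Given this pointwise inequality, the proof of the lemma is immediate: I would plug it into the Gagliardo seminorm defined in~\eqref{4836fewfgetuie}, namely
\[
[\mathcal T_R(q)]^2_s = c_s\iint_{\R^2} \frac{|\mathcal T_R(q)(x)-\mathcal T_R(q)(y)|^2}{|x-y|^{1+2s}}\,dx\,dy
\le c_s\iint_{\R^2} \frac{|q(x)-q(y)|^2}{|x-y|^{1+2s}}\,dx\,dy = [q]^2_s,
\]
using that $c_s>0$ and that the integrand on the left is dominated pointwise by the integrand on the right (both being nonnegative, so monotonicity of the integral applies with no integrability concerns — if $[q]_s=+\infty$ the inequality is trivial, and otherwise both sides are finite).

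There is essentially no obstacle here; the only thing to be careful about is justifying the 1-Lipschitz property of $T_R$ from its explicit definition in~\eqref{cut}. I would verify this by a short case check: if both $r,r'\in(-R,R)$ then $|T_R(r)-T_R(r')|=|r-r'|$; if both lie in $[R,+\infty)$ or both in $(-\infty,-R]$ then $|T_R(r)-T_R(r')|=0\le|r-r'|$; and in the mixed cases one checks (using that $R$ lies between, say, $r$ and the threshold) that clamping only moves points closer together, so the inequality $|T_R(r)-T_R(r')|\le|r-r'|$ persists. This is the routine part that I would state briefly rather than belabor.
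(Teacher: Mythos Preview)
Your proposal is correct and follows essentially the same approach as the paper: establish the pointwise inequality $|T_R(q_i(x))-T_R(q_i(y))|\le|q_i(x)-q_i(y)|$ for each component, sum over~$i$, and integrate against the Gagliardo kernel. The only cosmetic difference is that the paper justifies the $1$-Lipschitz property of~$T_R$ by writing it as the composition $T_R=\overline T_R\circ\underline T_R$ with $\underline T_R(r)=\min\{r,R\}$ and $\overline T_R(r)=\max\{r,-R\}$, each of which is manifestly nonexpansive, whereas you propose a direct case check---both are equally valid.
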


\begin{proof}
We realize the cutoff procedure into two steps. 
For any~$i\in\{1, \dots, n\}$, let 
\[
\underline{T}_R (q_i):= \min\{q_i, R\} \quad\mbox{ and }\quad \overline{T}_R (q_i):= \max\{q_i, -R\}.
\]
By~\eqref{cut} we infer that
\begin{equation*}
T_R(q_i) = \overline{T}_R(\underline{T}_R(q_i)) =\underline{T}_R(\overline{T}_R(q_i)).
\end{equation*}
Moreover, for any~$x$, $y\in\R$, we have that
\begin{eqnarray*}
&&\big|(\underline{T}_R (q_i))(x) - (\underline{T}_R (q_i))(y)\big| \le |q_i(x) -q_i(y)|
\\ {\mbox{and }} &&
\big|(\overline{T}_R (q_i))(x) - (\overline{T}_R (q_i))(y)\big|^2 \le |q_i(x) -q_i(y)|^2.
\end{eqnarray*}
{F}rom these observations we deduce that
\[
\begin{split}
\left[\mathcal T_R(q)\right]^2_s & = \iint_{\R^2} \sum_{i=1}^n \frac{|T_R(q_i(x))-T_R(q_i(y))|^2}{|x-y|^{1+2s}} \, dx\, dy\\
&= \iint_{\R^2} \sum_{i=1}^n \frac{|\overline{T}_R(\underline{T}_R(q_i(x)))-\overline{T}_R(\underline{T}_R(q_i(y)))|^2}{|x-y|^{1+2s}} \, dx\, dy\\
&\le \iint_{\R^2} \sum_{i=1}^n \frac{|\overline{T}_R(q_i(x))-\overline{T}_R(q_i(y))|^2}{|x-y|^{1+2s}}\, dx\, dy\\
&\le \iint_{\R^2} \sum_{i=1}^n \frac{|q_i(x) - q_i(y)|^2}{|x-y|^{1+2s}}\, dx\, dy\\
& = [q]^2_s,
\end{split}
\]
which gives the desired result.
\end{proof}

Now, we point out that if the solution~$q$ of~\eqref{prob1} is sufficiently regular and~$V$ satisfies~\eqref{V1}, then~$q$ tends to~$0$ as~$x$ goes to infinity. This is a standard observation,
but since we need it to prove the claim in~\eqref{limite}, we make it explicit via the following statement.

\begin{lemma}\label{lemma-limite}
Let~$V\in C(\R^n, \R)$ satisfy~\eqref{V1}. Assume that~$q\in L^\infty(\R, \R^n)$ is uniformly continuous and~$I(q)<+\infty$. 

Then,
\begin{equation*}
\lim_{x\to \pm\infty} q(x)=0.
\end{equation*}
\end{lemma}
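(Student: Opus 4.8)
The plan is to combine the finiteness of the energy with the uniform continuity of~$q$ to force the Gagliardo seminorm (hence also the potential term) to vanish at infinity, and then to use~\eqref{V1} to deduce pointwise convergence to~$0$.

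First I would argue by contradiction and compactness. If the conclusion fails, there is a sequence~$x_k\to+\infty$ (the case~$x_k\to-\infty$ is analogous) and~$\delta>0$ with~$|q(x_k)|\ge\delta$ for all~$k$. Passing to a subsequence we may assume~$|x_k-x_{k+1}|\ge 2$. Consider the translated functions~$q_k(\cdot):=q(\cdot+x_k)$. Since~$q\in L^\infty(\R,\R^n)$ is uniformly continuous, the family~$\{q_k\}$ is uniformly bounded and equicontinuous, so by Arzel\`a--Ascoli it converges, along a further subsequence and locally uniformly on~$\R$, to some~$q_\infty\in C(\R,\R^n)$ with~$|q_\infty(0)|\ge\delta$, in particular~$q_\infty\not\equiv 0$.

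Next I would extract quantitative information from~$I(q)<+\infty$. Finiteness of the seminorm means
\[
c_s\iint_{\R^2}\frac{|q(x)-q(y)|^2}{|x-y|^{1+2s}}\,dx\,dy<+\infty,
\]
so the tail contribution over~$\{(x,y):\ x,y\in(x_k-1,x_k+1)\}$ tends to~$0$ as~$k\to\infty$ (these regions are disjoint for large~$k$ by the separation of the~$x_k$'s). Hence
\[
c_s\iint_{(-1,1)^2}\frac{|q_k(x)-q_k(y)|^2}{|x-y|^{1+2s}}\,dx\,dy\longrightarrow 0,
\]
and by Fatou (the integrands converge pointwise by local uniform convergence) the same double integral for~$q_\infty$ over~$(-1,1)^2$ is~$0$. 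Therefore~$q_\infty$ is constant on~$(-1,1)$, so~$q_\infty\equiv c$ there with~$|c|\ge\delta>0$. Likewise, since~$\int_\R|V(q(x))|\,dx<+\infty$ (this is part of~$I(q)<+\infty$, using~$V\in C$ and~$q\in L^\infty$), the integral of~$|V(q_k)|$ over~$(-1,1)$ tends to~$0$, and by local uniform convergence and continuity of~$V$ we get~$\int_{(-1,1)}|V(c)|\,dx=0$, i.e.~$V(c)=0=V(0)$. By assumption~\eqref{V1}, this forces~$c=0$, contradicting~$|c|\ge\delta$.

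The main obstacle is the careful bookkeeping in the second step: ensuring that the localized Gagliardo energies and potential integrals around the points~$x_k$ genuinely go to zero, which requires the separation~$|x_k-x_j|\ge 2$ for~$k\ne j$ so that the windows~$(x_k-1,x_k+1)$ are pairwise disjoint, and then invoking absolute continuity of the (finite) integral defining~$[q]_s^2$ and of~$\int_\R|V(q)|\,dx$ over a sequence of disjoint sets. Everything else (Arzel\`a--Ascoli, Fatou, continuity of~$V$) is routine. One should also note at the outset that~$I(q)<+\infty$ together with~$q\in L^\infty$ indeed gives~$\int_\R|V(q(x))|\,dx<\infty$, since~$|V(q(x))|\le\|V\|_{L^\infty(\overline{B_{\|q\|_\infty}})}$ pointwise but we actually need more — in fact the bound we use is just that this integral is finite, which is exactly the content of~$I(q)\in\R$ once the seminorm term is finite.
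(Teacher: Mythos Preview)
Your proof is correct but takes a more elaborate route than the paper's. The paper dispenses with Arzel\`a--Ascoli and the Gagliardo seminorm entirely: by uniform continuity, $|q|>\varepsilon/2$ on each interval $[x^j-\delta,x^j+\delta]$, and since by~\eqref{V1} the quantity
\[
\sigma:=\inf\big\{-V(p):\ \varepsilon/2\le|p|\le \|q\|_{L^\infty}+\varepsilon\big\}
\]
is strictly positive (it is the infimum of a positive continuous function over a compact set not containing the origin), the potential term alone already gives $I(q)\ge\sum_j 2\delta\sigma=+\infty$, a contradiction. No limit extraction is needed.

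Your compactness route also works, but note that the Gagliardo step (showing $q_\infty$ is constant on $(-1,1)$) is redundant even within your own argument: once you know $\int_{-1}^1|V(q_\infty)|=0$ and $V\circ q_\infty$ is continuous, you get $V(q_\infty(x))=0$ for every $x\in(-1,1)$, and \eqref{V1} then forces $q_\infty\equiv 0$ there directly---no need to first reduce to a constant. So the paper's argument is both shorter and uses strictly less of the structure of $I$.
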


\begin{proof}
We establish the limit as~$x\to +\infty$, the other one being similar.
For this, assume by contradiction that there exist~$\varepsilon>0$ and a sequence~$x^j$ such that~$x^{j}\to +\infty$ as~$j\to +\infty$ and
\[
|q(x^{j})| >\varepsilon.
\]
Without loss of generality, we can suppose that~$x^{j+1}>x^j+1$.

By the uniform continuity of~$q$, one can find~$\delta\in(0,1/2)$ small enough such that, for all~$j\in\N$ and all~$x\in [x^j-\delta, x^j +\delta]$,
\[
|q(x)| >\frac{\varepsilon}{2}.
\]

We set
$$ S:=\varepsilon+\displaystyle\sup_{\R} |q| \qquad {\mbox{and}}\qquad
\sigma_{\varepsilon, S}:= \inf_{q\in B_S\setminus B_{\varepsilon/2}} \, (-V(q)).
$$
By~\eqref{V1}, we have that~$\sigma_{\varepsilon, S}>0$. Thus, recalling~\eqref{functional}, we get that
\[
I(q)=\frac12 [q]^2_s -\int_\R V(q(x)) \,dx\ge - \sum_{j=1}^{+\infty} \int_{x^j-\delta}^{x^j+\delta} V(q(x)) \,dx \ge  \sigma_{\varepsilon, S}  \sum_{j=1}^{+\infty} (2\delta) = +\infty,
\]
which contradicts the assumption~$I(q)<+\infty$. Hence the desired result is proved.
\end{proof}

We now prove that if~$q$ is regular enough, then also~\eqref{dot_lim} is verified. We point out that property~\eqref{dot_lim} holds also in the local setting considered in~\cite{Rab2}. However,  the arguments which will lead to its proof in the nonlocal scenario
are entirely different from the one in~\cite{Rab2} and rely upon a regularity argument (exploiting the forthcoming Lemma~\ref{reg4} and Corollary~\ref{C1}, while~\cite{Rab2} can use elementary methods from ordinary differential equations). To this end, we recall the following elementary observation:

\begin{lemma}[Decay of the derivative]\label{qpunto}
Assume that~$q\in C^{\eta}(\R\setminus [a-1/2, b+1/2], \R^n)$, for some~$\eta>1$, and that
\begin{equation}\label{lim1}
\lim_{x\to\pm\infty} q(x) =c\in\R. 
\end{equation}

Then,
\[
\lim_{x\to \pm\infty} \dot{q}(x)=0.
\]
\end{lemma}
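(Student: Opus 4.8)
The plan is to show that a function which is $C^\eta$ with $\eta>1$ outside a bounded interval and which has a finite limit at infinity must have vanishing derivative at infinity. The key elementary fact is that $C^\eta$ regularity with $\eta>1$ gives a uniform Hölder bound on $\dot q$: writing $\eta=1+\theta$ with $\theta\in(0,1]$ (or handling $\eta>2$ by passing to $\eta=2$), we have $|\dot q(x)-\dot q(y)|\le \|q\|_{C^\eta}\,|x-y|^\theta$ for all $x,y$ in, say, $\R\setminus(-\infty,b+1/2]$. So $\dot q$ is uniformly continuous on a half-line near $+\infty$.

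The main step is then a standard contradiction argument. Suppose $\dot q(x)\not\to 0$ as $x\to+\infty$; then there exist $\delta>0$ and a sequence $x^j\to+\infty$, which we may take with $x^{j+1}>x^j+1$, such that $|\dot q(x^j)|\ge\delta$. By the uniform Hölder continuity of $\dot q$, pick $\rho\in(0,1/2)$ with $\|q\|_{C^\eta}\rho^\theta\le\delta/2$, so that $|\dot q(x)|\ge\delta/2$ on each interval $[x^j-\rho,x^j+\rho]$, and moreover $\dot q$ keeps a constant sign there. Integrating, $|q(x^j+\rho)-q(x^j-\rho)|\ge \delta\rho>0$ for every $j$. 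This contradicts the Cauchy criterion for the limit~\eqref{lim1}: since $q(x)\to c$, for $x$ large the oscillation of $q$ on any interval of length $2\rho$ tends to $0$, in particular it is eventually smaller than $\delta\rho$. The limit as $x\to-\infty$ is handled symmetrically, using the analogous Hölder bound on the left half-line and the hypothesis~\eqref{lim1} in that direction.

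I expect no real obstacle here; the only point requiring a line of care is to make sure the Hölder seminorm of $\dot q$ is finite on an \emph{unbounded} set (a half-line), not merely on compact subsets, so that $\rho$ can be chosen uniformly in $j$ — this is exactly what the hypothesis $q\in C^\eta(\R\setminus[a-1/2,b+1/2],\R^n)$ provides, since that space is understood with a global (uniform) norm. A minor bookkeeping remark is that if $\eta>2$ one simply notes $C^\eta\subset C^2$ on the relevant set and runs the argument with $\theta=1$.
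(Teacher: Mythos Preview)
Your argument is correct. The paper takes a slightly different, more direct route: instead of arguing by contradiction, it writes
\[
\dot q(x)=\frac{q(x+\varepsilon)-q(x)}{\varepsilon}+\frac{1}{\varepsilon}\int_x^{x+\varepsilon}\big(\dot q(x)-\dot q(\tau)\big)\,d\tau,
\]
bounds the integral by $C\varepsilon^{\eta-1}$ via the H\"older estimate on $\dot q$, sends $x\to+\infty$ (so the difference quotient vanishes by~\eqref{lim1}), and then lets $\varepsilon\searrow 0$. Both proofs rest on the same ingredient---the uniform H\"older continuity of $\dot q$ on a half-line---but the paper's identity yields a two-line direct estimate, whereas your contradiction argument (if $|\dot q|$ stays large along a sequence, integrate over short intervals to contradict the Cauchy criterion for $q$) is the classical uniform-continuity-plus-integration technique. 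One small remark: the phrase ``$\dot q$ keeps a constant sign'' is literally correct only for $n=1$; for $n\ge 2$ you should project onto the unit vector $e=\dot q(x^j)/|\dot q(x^j)|$ and observe that $\dot q(\tau)\cdot e\ge\delta/2$ on $[x^j-\rho,x^j+\rho]$, which gives $|q(x^j+\rho)-q(x^j-\rho)|\ge\delta\rho$ just the same.
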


\begin{proof}
We establish the limit as~$x\to +\infty$, the other one being similar. To this end, let~$\varepsilon>0$ and observe that
\[
q(x+\varepsilon) -q(x) =\int_x^{x+\varepsilon} \dot{q} (\tau)\, d\tau = \varepsilon \dot{q}(x) + \int_x^{x+\varepsilon} \big(\dot{q} (\tau) -\dot{q}(x)\big)\, d\tau.
\]
This gives that
\[
\dot{q}(x) = \frac{q(x+\varepsilon) -q(x)}{\varepsilon} +\frac{1}{\varepsilon} \int_x^{x+\varepsilon} \big(\dot{q} (x) -\dot{q}(\tau)\big)\, d\tau.
\]

Now, recalling that~$q\in C^{\eta}(\R\setminus [a-1/2, b+1/2), \R^n)$ with~$\eta>1$,
for~$x$ sufficiently large we have that
\[
\begin{split}
|\dot{q}(x)| &\le \frac{\big|q(x+\varepsilon) -q(x)\big|}{\varepsilon} +\frac{c_1}{\varepsilon} \int_x^{x+\varepsilon} |x-\tau|^{\eta-1} \,d\tau= \frac{\big|q(x+\varepsilon) -q(x)\big|}{\varepsilon} +\frac{c_1}{\eta} \, \varepsilon^{\eta-1},
\end{split}
\]
for some positive constant~$c_1$. 

Hence, by~\eqref{lim1},
\[
\lim_{x\to +\infty} |\dot{q}(x)| \le \frac{c_1}{\eta}\, \varepsilon^{\eta-1}.
\]
The desired result now follows by sending~$\varepsilon\searrow 0$.
\end{proof}

We now deal with the case in which~$a=-b$ and~$q_0$ is even. Thanks to the next result, we will be able to prove that, under these assumptions, the functional~$I$ admits an even minimizer.
This observation will be useful to prove the last claim in Theorem~\ref{main1}.

\begin{lemma}\label{lemma-even}
Let~$q$, $q_*\in\Gamma$ satisfy~$I(q)$, $I(q_*)<+\infty$. Let~$V\in C(\R^n, \R)$. 
Let also
\begin{equation}\label{Mm}
M(x):= \max\{ q(x), q_*(x)\}\quad\mbox{ and }\quad m(x):=\min\{q(x), q_*(x)\}.
\end{equation}

Then,
\[
[M]^2_s +[m]^2_s \le [q]^2_s+
[q_*]^2_s \] and \[ \int_\R V(M(x)) \,dx + \int_\R V(m(x)) \,dx = \int_\R V(q(x)) \,dx+
\int_\R V(q_*(x)) \,dx.
\]
In particular,
\[
I(M) + I(m)\le I(q)+I(q_*).
\]
\end{lemma}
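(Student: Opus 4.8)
The statement decomposes into three parts: a Gagliardo-seminorm inequality for the max/min pair, an exact identity for the potential integrals, and the resulting energy inequality. I would treat them in that order.

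For the seminorm estimate, the key pointwise fact is that, for real numbers $s_1,s_2,t_1,t_2$, setting $M_i=\max\{s_i,t_i\}$ and $m_i=\min\{s_i,t_i\}$, one has
\[
|M_1-M_2|^2 + |m_1-m_2|^2 \le |s_1-s_2|^2 + |t_1-t_2|^2.
\]
This is elementary: if the two coordinates pick their max from the same side (say both from the $s$-slot, or both from the $t$-slot) then $\{M_1-M_2, m_1-m_2\}=\{s_1-s_2, t_1-t_2\}$ as a multiset and equality holds; if they pick from opposite sides, then $\{M_1-M_2,m_1-m_2\}$ and $\{s_1-s_2,t_1-t_2\}$ differ by replacing a pair $(u,v)$ by $(\max\{u,v\},\min\{u,v\})$ type rearrangement — more carefully, one checks $|M_1-M_2|^2+|m_1-m_2|^2 \le |s_1-s_2|^2+|t_1-t_2|^2$ by a short case analysis on the orderings. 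Once this pointwise inequality is in hand, I apply it componentwise with $s_i = q_j(x_i)$, $t_i = q_{*,j}(x_i)$ for each component $j\in\{1,\dots,n\}$ and each pair $x_1,x_2\in\R$, sum over $j$, divide by $|x_1-x_2|^{1+2s}$, integrate over $\R^2$ and multiply by $c_s$. This yields $[M]_s^2 + [m]_s^2 \le [q]_s^2 + [q_*]_s^2$. Since $M$ and $m$ are taken from the same two functions, swapping the roles of $q$ and $q_*$ does not change $M$ or $m$; hence $[q]_s^2 + [q_*]_s^2$ can be replaced by $2[q]_s^2$ — wait, that is only legitimate when $[q]_s = [q_*]_s$, which is not assumed. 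So I should keep the correct bound $[M]_s^2+[m]_s^2 \le [q]_s^2+[q_*]_s^2$; the displayed ``$2[q]_s^2$'' in the lemma is presumably a typo or shorthand for the symmetrized version, and I would state it as $[q]_s^2+[q_*]_s^2$ to be safe, noting this suffices for everything downstream.

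For the potential identity, the crucial (and trivial) observation is that for each fixed $x$, the unordered pair $\{M(x),m(x)\}$ equals the unordered pair $\{q(x),q_*(x)\}$; hence $V(M(x))+V(m(x)) = V(q(x))+V(q_*(x))$ pointwise, for any function $V$. Integrating over $\R$ gives
\[
\int_\R V(M(x))\,dx + \int_\R V(m(x))\,dx = \int_\R V(q(x))\,dx + \int_\R V(q_*(x))\,dx,
\]
which I would again record in this symmetric form rather than as $2\int_\R V(q)$. (One should note the integrals are finite: since $I(q),I(q_*)<+\infty$ and $[q]_s,[q_*]_s$ are finite, the potential terms are integrable, and the identity is an equality of finite quantities.) Combining the two parts: $I(M)+I(m) = \tfrac12([M]_s^2+[m]_s^2) - \int_\R(V(M)+V(m)) \le \tfrac12([q]_s^2+[q_*]_s^2) - \int_\R(V(q)+V(q_*)) = I(q)+I(q_*)$, and in particular each of $I(M),I(m)$ is finite and $M,m\in\Gamma$ (they agree with $q_0$ on $[a,b]$ since $q,q_*$ do, and lie in $\mathcal D^{s,2}$ by the seminorm bound plus the fact that $M,m$ are dominated appropriately in $L^2$ if needed).

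\textbf{Main obstacle.} There is no deep obstacle here; the only thing requiring genuine care is the elementary pointwise inequality $|M_1-M_2|^2+|m_1-m_2|^2\le|s_1-s_2|^2+|t_1-t_2|^2$, which must be verified by an honest case split on the relative orderings of $s_1,t_1$ and $s_2,t_2$ (four cases, of which two give equality and two require checking an inequality of the form $2\max\{u,v\}\min\{u,v\}$-type cross terms have the right sign). A clean way to organize it: write $M_i+m_i = s_i+t_i$ and $M_i - m_i = |s_i-t_i|$, so $|M_1-M_2|^2+|m_1-m_2|^2 = \tfrac12\big((s_1+t_1)-(s_2+t_2)\big)^2 + \tfrac12\big(|s_1-t_1|-|s_2-t_2|\big)^2$, while $|s_1-s_2|^2+|t_1-t_2|^2 = \tfrac12\big((s_1+t_1)-(s_2+t_2)\big)^2 + \tfrac12\big((s_1-t_1)-(s_2-t_2)\big)^2$; the inequality then reduces to $\big||s_1-t_1|-|s_2-t_2|\big| \le \big|(s_1-t_1)-(s_2-t_2)\big|$, which is just the reverse triangle inequality $\big||A|-|B|\big|\le|A-B|$. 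This identity-based route avoids the case analysis entirely and is what I would present.
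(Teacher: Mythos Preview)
Your approach is essentially the same as the paper's: the paper asserts the pointwise inequality $|m(x)-m(y)|^2+|M(x)-M(y)|^2\le|q(x)-q(y)|^2+|q_*(x)-q_*(y)|^2$ without proof, integrates it, and handles the potential term by splitting the real line into $\{q\le q_*\}$ and $\{q>q_*\}$ (which is just your unordered-pair observation written out). Your identity-based proof of the pointwise inequality via $M_i+m_i=s_i+t_i$, $M_i-m_i=|s_i-t_i|$ and the reverse triangle inequality is a genuine addition, since the paper leaves this step to the reader.

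You are also right to flag the ``$2[q]_s^2$'' and ``$2\int V(q)$'' in the displayed conclusions: the paper's own proof in fact establishes $[M]_s^2+[m]_s^2\le[q]_s^2+[q_*]_s^2$ and $\int V(M)+\int V(m)=\int V(q)+\int V(q_*)$, and the text immediately following the lemma explains that in the intended application one takes $q_*(x)=q(-x)$, for which $[q_*]_s=[q]_s$ and $\int V(q_*)=\int V(q)$. So the statement as printed tacitly assumes that specialization, while the proof (and your write-up) give the correct general symmetric form, which is what is actually used for the final inequality $I(M)+I(m)\le I(q)+I(q_*)$.
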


In our setting, we will use this result when~$a=-b$, $q_0$ is an even function
and~$q_*(x):= q(-x)$, so that~$q_*$ is a competitor for the minimization process over~$\Gamma$,
\begin{equation*} 
[q_*]_s^2 = \iint_{\R^{2}} \frac{|q(-x) -q(-y)|^2}{|x-y|^{1+2s}} \, dx\, dy = \iint_{\R^{2}} \frac{|q(x) -q(y)|^2}{|x-y|^{1+2s}} \, dx\, dy= [q]_s^2
\end{equation*}
and
\begin{equation*}
\int_\R V(q_*(x)) \,dx =\int_\R V(q(x)) \,dx,
\end{equation*}giving that~$I(q)=I(q_*)$.
The result in Lemma~\ref{lemma-even} is however of general flavor.

\begin{proof}[Proof of Lemma~\ref{lemma-even}]
Recalling the definitions in~\eqref{Mm}, we have that, for any~$x$, $y\in\R$,
\[
|m(x)-m(y)|^2 + |M(x)-M(y)|^2 \le |q(x)-q(y)|^2 +|q_*(x) -q_*(y)|^2
\]and therefore
\[
\begin{split}
[m]^2_s + [M]^2_s &=\iint_{\R^2} \frac{|m(x)-m(y)|^2}{|x-y|^{1+2s}} \,dx\, dy + \iint_{\R^2} \frac{|M(x)-M(y)|^2}{|x-y|^{1+2s}} \,dx\, dy \\
&\le\iint_{\R^2} \frac{|q(x)-q(y)|^2}{|x-y|^{1+2s}} \,dx\, dy + \iint_{\R^2} \frac{|q_*(x)-q_*(y)|^2}{|x-y|^{1+2s}} \,dx\, dy\\
&\le[q]^2_s + [q_*]^2_s.
\end{split}
\]
Moreover, by~\eqref{Mm},
\[
\int_\R V(m(x)) \,dx = \int_{\{q(x)\le q_*(x)\}} V(q(x)) \,dx + \int_{\{q(x)> q_*(x)\}} V(q_*(x)) \,dx 
\]
and, similarly,
\[
\int_\R V(M(x)) \,dx = \int_{\{q(x)\le q_*(x)\}} V(q_*(x)) \,dx + \int_{\{q(x)> q_*(x)\}} V(q(x)) \,dx.
\]
Hence,
\[
\int_\R V(m(x)) \,dx + \int_\R V(M(x)) \,dx = \int_\R V(q(x)) \,dx + \int_\R V(q_*(x)) \,dx .
\]
These observations lead to the desired result.\end{proof}

\section{Some regularity results}\label{sec-regularity}

The goal of this section is to prove some auxiliary regularity results which are partial steps towards the proof of the
regularity statements in
Theorems~\ref{main1} and~\ref{THAGG}.
Some of the results presented are suitable adaptation of results and methods in the existing literature
to the case under consideration. 

For this purpose, we introduce the notion of weak solution to our problem. In what follows, unless otherwise specified,
we take~$a<b$ and we suppose that~$q_0$
satisfies the regularity assumption in~\eqref{ALPHAQ0}.

\begin{definition}
We say that~$q\in \mathcal D^{s, 2}(\R, \R^n)$ is a weak solution to problem~\eqref{prob1} if
\[
\begin{cases}
\begin{split}
&c_s\displaystyle\iint_{\R^2} \frac{(q_i(x) -q_i(y))(\varphi(x)-\varphi(y))}{|x-y|^{1+2s}} \,dx\, dy = \int_{\R} \partial_i V(q(x))\, \varphi(x) \, dx\\
&\qquad\qquad\qquad\qquad\mbox{ for any } \varphi\in C_0^\infty(\R\setminus [a, b]) \mbox{ and for any } i\in\{1, \dots, n\},\\
&q=q_0 \quad \mbox{ in } [a, b].
\end{split}
\end{cases}
\]
\end{definition}

Also, we use the notation\footnote{We point out that the notation in~\eqref{IAB} also comprises the cases in which~$a=-\infty$ and~$b=+\infty$. For instance, when~$a=-\infty$ the set in~\eqref{IAB} boils down to
$$ I_{a, b} = \left(b, b+\frac12\right),$$
while when~$b=+\infty$ to
$$ I_{a, b} =  \left(a-\frac12, a\right).$$
When~$a=-\infty$ and~$b=+\infty$, then~$I_{a, b}$ is the empty set.}
\begin{equation}\label{IAB}
I_{a, b} := \left(a-\frac12, a\right)\cup \left(b, b+\frac12\right).
\end{equation}
We aim to prove the following theorem:

\begin{theorem}\label{reg2}
Let~$q$ be a bounded weak solution of~\eqref{prob1}, with~$V\in C^1(\R^n, \R)$.

Then, $q\in C^{\beta}(\R, \R^n)$, with~$\beta$ as in~\eqref{beta}, and 
\begin{equation}\label{41IN}
[q]_{C^{\beta}(\R, \R^n)}\le C\, \Big(\|\nabla V(q)\|_{L^\infty(I_{a, b}, \R)} +\|q\|_{L^\infty(\R, \R^n)}\Big),
\end{equation}
for some~$C>0$ depending only on $n$, $s$, $\alpha$, $a$, $b$ and~$q_0$.
\end{theorem}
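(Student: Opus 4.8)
The plan is to establish the regularity statement by splitting the real line into the ``interior'' region $[a,b]$, where $q=q_0$ is prescribed and Hölder by hypothesis, and the ``exterior'' region $\R\setminus[a,b]$, where $q$ solves the fractional equation $(-\Delta)^s q=\nabla V(q)$ with a bounded right-hand side (since $q\in L^\infty$ and $V\in C^1$, the composition $\nabla V(q)$ is bounded on the relevant set). The key point is that the forcing term $f:=\nabla V(q)$ is merely bounded, so one cannot expect better than $C^{\min\{2s,\beta\}}$ interior regularity from the standard Schauder-type estimates for the fractional Laplacian; but one only wants $C^\beta$, and since $\beta\le s<2s$ this is consistent.

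First I would treat the exterior region. Fix a point $x_0\in\R\setminus[a,b]$ and a ball $B_r(x_0)$ that stays in the exterior (taking $r$ comparable to the distance to $[a,b]$, or $r$ of order one if $x_0$ is far). Write $q=q_{\mathrm{in}}+q_{\mathrm{out}}$ where $q_{\mathrm{in}}$ is localized near $B_r(x_0)$ and $q_{\mathrm{out}}$ is the ``far'' contribution. The far part contributes a smooth (indeed analytic) function inside $B_{r/2}(x_0)$ whose $C^1$ norm there is controlled by $\|q\|_{L^\infty(\R,\R^n)}$, because the kernel $|x-y|^{-1-2s}$ is smooth away from the diagonal and $q$ is bounded; this uses the standard decomposition for the fractional Laplacian. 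The near part solves $(-\Delta)^s q_{\mathrm{in}}=f$ with $f$ bounded and compactly supported, so interior elliptic regularity for $(-\Delta)^s$ (e.g. the Hölder estimates of Silvestre, or the local estimates stated elsewhere in the paper) gives $q_{\mathrm{in}}\in C^{\beta}_{\mathrm{loc}}$ with the bound by $\|f\|_{L^\infty}+\|q\|_{L^\infty}$. Covering $\R\setminus[a,b]$ by such balls — with the observation that near the boundary points $a$ and $b$ the region $I_{a,b}$ in \eqref{IAB} is exactly where $\|\nabla V(q)\|_{L^\infty}$ needs to be measured, since further out one can use boundary-distance scaling and the prescribed boundary datum $q_0$ — yields $q\in C^\beta(\overline{\R\setminus[a,b]})$ with the stated bound. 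Here the dependence of $C$ on $a$, $b$, $q_0$ (and not just on the $L^\infty$ quantities on the right) enters through the geometry of $[a,b]$ and the Hölder norm of $q_0$ used as the boundary condition.

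Next I would glue the interior and exterior estimates. On $[a,b]$ one has $\|q_0\|_{C^\alpha}\le \|q_0\|_{C^\alpha([a,b],\R^n)}$, which after the definition \eqref{beta} (where $\beta\le\alpha$ if $\alpha\ne s$, and $\beta=\overline\beta<s\le\alpha$ if $\alpha=s$) gives $q\in C^\beta([a,b],\R^n)$ with norm absorbed into the constant. To promote the two one-sided Hölder estimates to a global one, I would use a standard patching lemma: if a function is $C^\beta$ on each of two closed sets whose union is $\R$ and is bounded globally, then it is $C^\beta$ globally with a comparable seminorm (for two points in different pieces, interpolate through a boundary point of $[a,b]$, using that the distance to such a boundary point is at most the distance between the two points). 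This is where the hypothesis $\beta\le s$ (hence $\beta<1$, so we are genuinely in a Hölder and not a Lipschitz/$C^{1,\gamma}$ regime) makes the patching clean. The resulting seminorm bound is exactly \eqref{41IN}.

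The main obstacle I expect is the careful bookkeeping near the boundary points $a$ and $b$: one must produce a Hölder estimate that is uniform up to $\partial[a,b]$ from the inside of the exterior region, and the natural interior estimates degenerate as one approaches $[a,b]$. The fix is to use a scaled version of the interior estimate on dyadic annuli $\{2^{-k-1}<\mathrm{dist}(x,[a,b])<2^{-k}\}$, observing that on such an annulus the equation rescales to a unit-scale problem whose right-hand side is controlled by $\|\nabla V(q)\|_{L^\infty(I_{a,b})}$ (for $k$ large) or by the global $L^\infty$ bounds (for $k$ bounded), together with the contribution of the prescribed datum $q_0$ on $[a,b]$, which is $C^\alpha$ and hence does not spoil a $C^\beta$ bound; then one sums the dyadic estimates, and the choice $\beta<s$ (or $\beta=\min\{\alpha,s\}<2s$) guarantees the geometric series converges. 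I would also need to verify that the normalization/cases in \eqref{ABUNO} (half-lines, the full line, etc.) are covered — but in the degenerate cases $I_{a,b}$ simply shrinks or becomes empty and the argument only simplifies.
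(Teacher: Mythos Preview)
Your overall strategy is sound and would reach the conclusion, but it differs from the paper's route in one essential respect. The paper does not run a dyadic-annuli scheme near $a$ and $b$. Instead it first applies uniform interior regularity (Proposition~\ref{reg5}) to obtain $q\in C^{s}(\R\setminus[a-\tfrac12,b+\tfrac12])$ controlled by $\|q\|_{L^\infty}$; then, on the collar $I_{a,b}$, it regards $q_i$ as the solution of a Dirichlet problem $(-\Delta)^s q_i=\partial_i V(q)$ with exterior datum $q_i$ itself, which is now known to be $C^{\min\{\alpha,s\}}$ on $\R\setminus I_{a,b}$ (it equals $q_{0,i}\in C^\alpha$ on $[a,b]$ and is $C^s$ further out by the first step). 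A black-box boundary regularity result for the fractional Dirichlet problem with H\"older exterior data (Proposition~\ref{reg3}, which packages Ros-Oton--Serra and Audrito--Ros-Oton) then gives $q\in C^{\beta}(\overline{I_{a,b}})$ directly, and the three regions are patched. So what you propose to do by hand the paper simply cites.

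Your dyadic scheme is essentially the engine behind the Audrito--Ros-Oton theorem, so you would be reproving it. The step you label ``the main obstacle'' is indeed the core of that argument, and your description of it is too thin to stand on its own: after rescaling on an annulus at distance $2^{-k}$ from $\{a,b\}$, the nonlocal tail transports the exterior datum $q_0$ into the unit-scale problem, and one must show quantitatively that this contributes oscillation $O(2^{-k\alpha})$ (not merely that it ``does not spoil'' the bound); the zero-exterior-data piece contributes $O(2^{-ks})$ via the Ros-Oton--Serra barrier, and the bounded source contributes $O(2^{-2ks})$. The minimum of these three exponents is exactly the $\beta$ of~\eqref{beta}, and the borderline case $\alpha=s$ produces a logarithm in the sum, which is why the statement retreats to $\overline\beta<s$ there. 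Your line ``the choice $\beta<s$ guarantees the geometric series converges'' obscures this trichotomy. If you keep the dyadic route, make the three contributions and their exponents explicit; otherwise, invoking the existing Dirichlet estimate as the paper does is both shorter and safer.
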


It is worth noting that the argument which will lead to Theorem~\ref{reg2} is different from the
one exploited in~\cite{Rab1, Rab2}. Indeed, in the classical case the transition from a weak solution to a classical solution is achieved through an ODE argument (see~\cite[Proposition~3.18]{Rab1}). Differently, the argument that we develop here relies upon some regularity results for nonlocal equations stated in~\cite{MR3168912, AR2020}.

We provide the following preliminary regularity result:
\begin{lemma}[Interior regularity]
\label{reg1}
Let~$a\le b$ and~$x_0\in(-\infty,a-1]\cup[b+1,+\infty)$. Let~$q\in \mathcal D^{s, 2}(\R, \R^n)$ be a bounded weak solution to the system
\begin{equation}\label{eqn}
(-\Delta)^s q = \nabla V(q) \quad\mbox{ in } (x_0-1, x_0+1),
\end{equation}
with~$V\in C^1(\R^n, \R)$. 

Then, $q\in C^{\alpha^\star}([x_0-1/2, x_0+1/2], \R^n)$ for any~$\alpha^\star \in(0, \min\{2s, 1\})$ and
\begin{equation}\label{est1}
[q]_{C^{\alpha^\star}([x_0-1/2, x_0+1/2], \R^n)} \le C \Big(\|q\|_{L^\infty(\R, \R^n)} + \|\nabla V(q)\|_{L^\infty((x_0-1, x_0+1))}\Big),
\end{equation}
for some constant~$C>0$ depending only on $n$, $s$ and~$\alpha^\star$.
\end{lemma}

\begin{proof}
We consider the case~$x_0\ge b+1$, the other one being similar.
Moreover, up to a translation, we assume that~$x_0=0$, so that the
proof can be centered around~$0$.

The desired result follows by~\cite[Proposition~5]{MR3161511} taking~$f:=\partial_i V(q)\in L^\infty([-1, 1], \R)$.
\end{proof}

By using Lemma~\ref{reg1} along with a covering argument, one can prove that~$q\in C^{\alpha\star}([a-1/2, b+1/2], \R^n)$,  as stated in the next lemma.

\begin{proposition}[Uniform interior regularity]\label{reg5}
Let~$a\le b$ and~$q\in \mathcal D^{s, 2}(\R, \R^n)$ be a bounded weak solution to the system
\begin{equation*}
(-\Delta)^s q = \nabla V(q) \quad\mbox{ in } \R\setminus [a, b],
\end{equation*}
with~$V\in C^1(\R^n, \R)$. 

Then, $q\in C^{\alpha^{\star}}(\R \setminus [a-1/2, b+1/2], \R^n)$ for any~$\alpha^{\star} \in(0, \min\{2s, 1\})$ and
\begin{equation}\label{SHA}
[q]_{C^{\alpha^{\star}}(\R \setminus [a-1/2, b+1/2], \R^n)} \le C \|q\|_{L^\infty(\R, \R^n)},
\end{equation}
for some constant~$C>0$ depending only on~$n$, $s$, $\alpha^{\star}$ and~$V$.
\end{proposition}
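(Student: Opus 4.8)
The statement is a global (in $\R\setminus[a-1/2,b+1/2]$) Hölder bound with the constant independent of the particular solution, depending only on structural data and the $L^\infty$ norm of $q$. The natural route is: (i) produce a \emph{uniform} local estimate on unit-length intervals at distance $\geq 1$ from $[a,b]$, via Lemma~\ref{reg1}; (ii) convert the pointwise Hölder seminorms on these intervals into a seminorm on the full unbounded set $\R\setminus[a-1/2,b+1/2]$ by a covering plus interpolation argument; and (iii) absorb the $\|\nabla V(q)\|_{L^\infty}$ term using assumption~\eqref{V1} or, more simply, the continuity of $\nabla V$ on a ball of radius $\|q\|_{L^\infty}$.

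\medskip

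First I would cover $\R\setminus[a-1/2,b+1/2]$ by a countable family of open unit intervals $(x_0^k-1,x_0^k+1)$ with $x_0^k\in(-\infty,a-1]\cup[b+1,+\infty)$ chosen so that the half-intervals $(x_0^k-1/2,x_0^k+1/2)$ cover $\R\setminus[a-1/2,b+1/2]$ and consecutive centers are at distance, say, $1/2$ (one takes $x_0^k = b+1+k/2$ for $k\geq 0$ and symmetrically on the left, noting that each such interval stays inside $\R\setminus[a,b]$, so equation~\eqref{eqn} holds there). Lemma~\ref{reg1} applied on each such interval gives, for any fixed $\alpha^\star\in(0,\min\{2s,1\})$,
\[
[q]_{C^{\alpha^\star}([x_0^k-1/2,\,x_0^k+1/2],\R^n)}\le C\Big(\|q\|_{L^\infty(\R,\R^n)}+\|\nabla V(q)\|_{L^\infty((x_0^k-1,x_0^k+1))}\Big),
\]
with $C$ independent of $k$. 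Since $\nabla V\in C(\R^n,\R^n)$, on the ball $B_{\|q\|_{L^\infty}}$ it is bounded by a quantity $C_V$ depending only on $V$ and $\|q\|_{L^\infty}$; hence the right-hand side is bounded by $C(1+C_V)\|q\|_{L^\infty}$ uniformly in $k$ (absorbing constants, and using $\|q\|_{L^\infty}\geq$ any relevant lower bound after possibly enlarging $C$; if $q\equiv 0$ the statement is trivial). This yields a uniform local Hölder bound; simultaneously $\|q\|_{L^\infty}$ controls the sup over each interval.

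\medskip

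The step that requires a little care is patching these overlapping local seminorm bounds into one global seminorm bound on the (disconnected but, on each half-line, connected) set $\R\setminus[a-1/2,b+1/2]$. For two points $x,y$ in the same half-line: if $|x-y|<1/2$ they lie in a common interval of the cover and the local estimate applies directly; if $|x-y|\geq 1/2$ one simply uses $|q(x)-q(y)|\le 2\|q\|_{L^\infty}\le 2\|q\|_{L^\infty}|x-y|^{\alpha^\star}/(1/2)^{\alpha^\star}$. Combining the two regimes gives $|q(x)-q(y)|\le C\|q\|_{L^\infty}|x-y|^{\alpha^\star}$ for all $x,y$ on the same half-line, and the two half-lines are handled identically and independently (the seminorm over a disconnected set being computed by the same sup). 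This produces \eqref{SHA} with $C$ depending only on $n,s,\alpha^\star,V$. The main (minor) obstacle is bookkeeping: ensuring the chosen intervals at distance $\geq 1$ from $[a,b]$ still have their half-length cores covering all of $\R\setminus[a-1/2,b+1/2]$ — this forces the shift from $[a,b]$ to $[a-1/2,b+1/2]$ in the conclusion and is exactly why the half-interval offset of $1/2$ appears — and, in the degenerate cases $a=-\infty$ or $b=+\infty$, checking that the relevant half-line is simply empty or unaffected, which is immediate.
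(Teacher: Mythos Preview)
Your proposal is correct and follows essentially the same approach as the paper's proof. The only cosmetic difference is that the paper, instead of fixing a covering in advance, given two points $x<y$ on the same half-line takes the midpoint $m=(x+y)/2$ and either applies Lemma~\ref{reg1} with $x_0:=m$ (when $|x-m|\le 1/2$, so $|x-y|\le 1$) or uses the trivial $L^\infty$ bound (when $|x-y|>1$); this is the same dichotomy as your ``$|x-y|<1/2$ vs.\ $|x-y|\ge 1/2$'' split, and the absorption of $\|\nabla V(q)\|_{L^\infty}$ into a constant depending on $V$ is handled identically.
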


\begin{proof}
We employ Lemma~\ref{reg1} with~$x_0 := b+1$ (this will take care of the desired result in the region~$(b+1/2,+\infty)$ and a similar argument
with~$x_0 :=a-1$ would provide the result in the region~$(-\infty,a-1/2)$). In particular, we have that~$q\in C^{\alpha^\star}(\R \setminus [b+1/2, b+3/2], \R^n)$ and
$$[q]_{C^{\alpha^\star}([b+1/2, b+3/2], \R^n)} \le C \Big(\|q\|_{L^\infty(\R, \R^n)} + \|\nabla V(q)\|_{L^\infty((b, b+2))}\Big),
$$for some constant~$C>0$ depending only on $n$, $s$ and~$\alpha^\star$.

Let now~$x$, $y\ge b +3/2$ and, without loss of generality, assume that~$x<y$. We set~$m:=(x+y)/2$. We notice that two cases can occur:
\begin{itemize}
\item if~$|x-m|\le 1/2$ (and then also~$|y-m|\le 1/2$),
then~$m\ge b+1$ and therefore we can use Lemma~\ref{reg1} with~$x_0:=m$. Hence,  we obtain that~$q\in C^{\alpha^{\star}}([m-1/2, m+1/2], \R^n)$ and 
\[
[q]_{C^{\alpha^{\star}}([m-1/2, m+1/2], \R^n)}\le c_1 \Big(\|q\|_{L^\infty(\R, \R^n)} +\|\nabla V(q)\|_{L^\infty((m-1, m+1))}\Big),
\]
for a positive constant~$c_1$ depending on $n$, $s$ and~$\alpha^{\star}$.
\item if~$|x-m|>1/2$ (and then also~$|y-m|>1/2$), it follows that~$|x-y|>1$. Hence,
\[
|q(x)-q(y)|\le 2 \| q\|_{L^\infty(\R, \R^n)} \frac{|x-y|^{\alpha^{\star}}}{|x-y|^{\alpha^{\star}}} \le 2 \| q\|_{L^\infty(\R, \R^n)} |x-y|^{\alpha^{\star}}.
\]
\end{itemize}
Consequently, putting together the previous estimates, we have that~$q\in C^{\alpha^{\star}}(\R \setminus [a-1/2, b+1/2], \R^n)$ and~\eqref{SHA} holds true.
\end{proof}

In order to complete the proof of Theorem~\ref{reg2}, we have to take care of the boundary regularity. For this, we present the
following statement result which essentially patches together the 
regularity results obtained in~\cite{MR3168912} and~\cite{AR2020}. For the sake of completeness, we state the result in any dimension.

\begin{proposition}[Regularity up to the boundary]\label{reg3}
Let~$s\in (0, 1)$ and let~$\Omega\subset\R^N$ be a bounded domain satisfying the exterior ball condition. 

Suppose that~$f\in L^\infty(\Omega)$ and~$g\in C^{\alpha^\star}(\overline{\R^N\setminus\Omega})$ for some~$\alpha^\star\in (0, s)\cup(s,2s)$.

In addition, assume that~$\Omega$ is of class~$C^1$ when~$\alpha^\star \in(0, s)$ and of class~$C^{1, \gamma}$ for some~$\gamma>0$ when~$\alpha^\star\in(s,2s)$.  

Let~$\beta^\star:=\min\{\alpha^\star, s\}$.

Let~$u$ be a solution of
\begin{equation}\label{uprob}
\left\{\begin{aligned}
(-\Delta)^s u &= f &&\mbox{ in } \Omega,\\
u&=g &&\mbox{ in } \R^N\setminus\Omega.
\end{aligned}
\right.
\end{equation}

Then, $u\in C^{\beta^\star}(\overline{\Omega})$ and
\begin{equation*}
\|u\|_{C^{\beta^\star}(\overline{\Omega})} \le C\Big(\|f\|_{L^\infty(\Omega)} +\|g\|_{C^{\alpha\star}(\overline{\R^N\setminus\Omega})}\Big),
\end{equation*}
for a positive constant~$C$ depending only on~$N$, $s$, $\alpha^\star$ and~$\Omega$.
\end{proposition}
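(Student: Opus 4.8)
The plan is to reduce Proposition~\ref{reg3} to the two results it is designed to ``patch together'': the interior-plus-boundary H\"older estimate of Ros-Oton and Serra when the exterior datum is smooth, and the version of Audrito and Ros-Oton (or the paper cited as \cite{AR2020}) that allows a nonzero H\"older exterior datum. First I would reduce to the case of homogeneous exterior data. Write $u=v+\tilde g$, where $\tilde g$ is an $C^{\alpha^\star}(\R^N)$-extension of $g$ with $\|\tilde g\|_{C^{\alpha^\star}(\R^N)}\le C\|g\|_{C^{\alpha^\star}(\overline{\R^N\setminus\Omega})}$ (such an extension exists by the standard H\"older extension theorem, with $C$ depending only on $N$, $\alpha^\star$). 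Then $v$ solves
\begin{equation*}
\begin{cases}
(-\Delta)^s v = f-(-\Delta)^s\tilde g=:\tilde f &\mbox{ in }\Omega,\\
v=0 &\mbox{ in }\R^N\setminus\Omega.
\end{cases}
\end{equation*}
The point is that $(-\Delta)^s\tilde g$ is not in general bounded if $\alpha^\star\le 2s$, but it lies in a weighted space that the cited boundary estimates can handle; alternatively, and more cleanly, one keeps the nonzero datum and quotes \cite{AR2020} directly, using the extension only to guarantee $g$ is globally $C^{\alpha^\star}$.

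The core of the argument is then a dichotomy on the size of $\alpha^\star$. When $\alpha^\star\in(0,s)$, one invokes the boundary regularity theory for $C^1$ domains: under the exterior ball condition together with $C^1$-regularity of $\partial\Omega$, a solution of \eqref{uprob} with $f\in L^\infty(\Omega)$ and $g\in C^{\alpha^\star}$ satisfies $u\in C^{\beta^\star}(\overline\Omega)$ with $\beta^\star=\min\{\alpha^\star,s\}=\alpha^\star$, and the quantitative bound follows from the corresponding estimate in \cite{AR2020}. When $\alpha^\star\in(s,2s)$, the interior regularity gives $C^{2s-\e}$ bounds, so the exterior datum is the bottleneck and $\beta^\star=\min\{\alpha^\star,s\}=s$; here one needs the stronger $C^{1,\gamma}$ regularity of $\partial\Omega$ to run the barrier/blow-up argument of \cite{MR3168912, AR2020} that upgrades the boundary behaviour to the optimal $C^s$ up to $\partial\Omega$. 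In both regimes, the interior estimate (say from \cite{MR3161511} or the interior part of \cite{MR3168912}), combined with a covering of $\Omega$ by balls whose radius is comparable to the distance to $\partial\Omega$ and the boundary estimate near $\partial\Omega$, is glued together exactly as in the proof of Proposition~\ref{reg5}: on balls well inside $\Omega$ one has $C^{2s-\e}$ control, near the boundary one has $C^{\beta^\star}$ control, and for pairs of points that are far apart one uses the trivial bound by $2\|u\|_{L^\infty}$ together with $|x-y|\ge c>0$. A preliminary $L^\infty(\Omega)$ bound on $u$ in terms of $\|f\|_{L^\infty(\Omega)}$ and $\|g\|_{L^\infty(\R^N\setminus\Omega)}$ (maximum principle, using boundedness of $\Omega$) feeds into all of these.

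The main obstacle, and the reason the statement is phrased as ``patching together'' two references rather than proved from scratch, is the optimal boundary estimate in the supercritical range $\alpha^\star\in(s,2s)$: one must show that the $C^{1,\gamma}$ assumption is exactly what makes the $C^s$-up-to-the-boundary estimate survive, and that the dependence of the constant is only on $N,s,\alpha^\star,\Omega$ (in particular, uniform over the relevant geometric data of $\Omega$). This is precisely the content of the fine boundary-regularity work in \cite{MR3168912,AR2020}, so the ``proof'' here will consist of carefully matching hypotheses: verifying that the exterior ball condition plus $C^{1}$ (resp.\ $C^{1,\gamma}$) regularity is what those papers require, that $f\in L^\infty$ and $g\in C^{\alpha^\star}$ are the admissible data, and that $\beta^\star=\min\{\alpha^\star,s\}$ is the exponent they produce; then the covering/gluing step is routine and identical in spirit to Proposition~\ref{reg5}. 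I would also remark that the restriction $\alpha^\star\neq s$ is genuine (a logarithmic loss occurs at the threshold), which is why the statement excludes it, and that this is the source of the $(0,1/2]$ versus $(1/2,1)$ dichotomy flagged in the introduction once one tracks how $\alpha$ (the H\"older exponent of $q_0$) interacts with $s$.
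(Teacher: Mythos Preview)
Your overall strategy---split the problem, apply~\cite{AR2020} to the part carrying the exterior datum and~\cite{MR3168912} to the part carrying the right-hand side---is exactly what the paper does. But the \emph{decomposition} you choose is not the one that works, and the correct one is the only real idea in the proof.

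You propose $u=v+\tilde g$ with $\tilde g$ a H\"older extension of $g$. As you yourself note, this forces $v$ to solve $(-\Delta)^s v=f-(-\Delta)^s\tilde g$ with zero exterior data, and $(-\Delta)^s\tilde g$ is in general unbounded when $\alpha^\star<2s$, so~\cite[Proposition~1.1]{MR3168912} does not apply. Your fallback---``keep the nonzero datum and quote~\cite{AR2020} directly''---does not rescue this: the results in~\cite{AR2020} that the paper invokes (Theorem~1.1 and Proposition~1.3) are stated for the \emph{$s$-harmonic} problem $(-\Delta)^s w=0$, not for general $f\in L^\infty$. So neither branch of your proposal closes.

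The paper's decomposition is $u=v+w$ where $w$ is the \emph{$s$-harmonic extension} of $g$: that is, $w$ solves $(-\Delta)^s w=0$ in $\Omega$ with $w=g$ in $\R^N\setminus\Omega$ (obtained by minimising the Gagliardo seminorm with prescribed exterior datum). Then~\cite{AR2020} gives $\|w\|_{C^{\beta^\star}(\overline\Omega)}\le C\|g\|_{C^{\alpha^\star}}$, while $v:=u-w$ solves $(-\Delta)^s v=f$ in $\Omega$ with $v=0$ outside, so~\cite[Proposition~1.1]{MR3168912} gives $\|v\|_{C^s(\R^N)}\le C\|f\|_{L^\infty(\Omega)}$. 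Summing yields the claim. No covering or gluing as in Proposition~\ref{reg5} is needed: both cited estimates are already global on~$\overline\Omega$. Nor is a separate maximum-principle $L^\infty$ bound required; it is absorbed into the cited results.
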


\begin{proof}
Let us consider the problem
\begin{equation}\label{pro67342}
\left\{\begin{aligned}
(-\Delta)^s w &= 0 &&\mbox{ in } \Omega,\\
w&= g &&\mbox{ in } \R^N\setminus \Omega.
\end{aligned}
\right.
\end{equation}
We point out that a solution of this problem can be obtained, e.g., by a standard minimization of the Gagliardo seminorm
with prescribed datum outside~$\Omega$.

Also, since~$g\in C^{\alpha^\star}(\overline{\R^N\setminus\Omega})$, we have that, for all~$x\in \R^N\setminus\Omega$ and~$z\in\partial\Omega$,
\begin{equation}\label{g1}
|g(x) - g(z)|\le [g]_{C^{\alpha^\star}(\overline{\R^N\setminus\Omega})} |x-z|^{\alpha^\star}\le \|g\|_{C^{\alpha^\star}(\overline{\R^N\setminus\Omega})}|x-z|^{\alpha^\star}
\end{equation}
and, in addition, for all~$x\in \R^N\setminus \Omega$,
\begin{equation}\label{g2}
|g(x)| \le \|g\|_{L^\infty(\R^N\setminus\Omega)} (1+|x|^{\alpha^\star})\le \|g\|_{C^{\alpha^\star}(\overline{\R^N\setminus\Omega})}(1+|x|^{\alpha^\star}).
\end{equation}
This entails that we are in the right setting to apply~\cite{AR2020}. More precisely, 
when~$\alpha^\star\in(0,s)$ we use~\cite[Theorem~1.1]{AR2020} and when~$\alpha^\star\in(s,2s)$ we exploit~\cite[Proposition~1.3]{AR2020}.
In this way, we infer that~$w\in C^{\beta^\star}(\overline{\Omega})$ and
\begin{equation}\label{8439ythgfkuest9843ythgiroew}
\|w\|_{C^{\beta^\star}(\overline{\Omega})} \le c_1 \|g\|_{C^{\alpha^\star}(\overline{\R^N\setminus\Omega})},
\end{equation}
for some~$c_1>0$ depending only on~$N$, $s$, $\alpha^\star$ and~$\Omega$. 

We now define~$v:= u-w$.
By construction, $v$ solves
\[
\left\{\begin{aligned}
(-\Delta)^s v &= f &&\mbox{ in } \Omega,\\
v&=0 &&\mbox{ in } \R^N\setminus\Omega.
\end{aligned}
\right.
\]
Since~$f\in L^\infty(\Omega)$, by~\cite[Proposition~1.1]{MR3168912} we have that~$v\in C^{s}(\R^N)$ and 
\begin{equation}\label{8439ythgfkuest9843ythgiroew2}
\|v\|_{C^{s}(\R^N)} \le c_2 \|f\|_{L^\infty(\Omega)},
\end{equation}
for some~$c_2>0$ depending only on~$s$ and~$\Omega$. 

Thus, combining the estimates in~\eqref{8439ythgfkuest9843ythgiroew} and~\eqref{8439ythgfkuest9843ythgiroew2},
we obtain the desired result.
\end{proof}

\begin{remark}
We point out that 
Proposition~\ref{reg3} does not comprise the case~$\alpha^\star=s$. 
This comes from the fact that
when~$\alpha^\star=s$, 
in light of~\cite[Proposition~1.2]{AR2020}, there exist a~$C^\infty$ domain~$\Omega$ and a function~$g$ satisfying~\eqref{g1} and~\eqref{g2} such that the solution~$w$
of~\eqref{pro67342}
does not belong to~$ C^{s}(\overline{\Omega})$.
\end{remark}

We are now in the position of completing the proof of Theorem~\ref{reg2}.

\begin{proof}[Proof of Theorem~\ref{reg2}]
We exploit Proposition~\ref{reg5} with~$\alpha^{\star}:=s$ and we see that 
\begin{equation}\begin{split}\label{L441}
&q\in C^s(\R \setminus [a-1/2, b+1/2], \R^n)
\\{\mbox{and }}\quad &
[q]_{C^s(\R \setminus [a-1/2, b+1/2], \R^n)} \le c_1 \|q\|_{L^\infty(\R, \R^n)}
\end{split}\end{equation}
for some constant~$c_1>0$ depending only on~$n$, $s$ and~$V$.

Moreover, we point out that we are in the setting of Proposition~\ref{reg3}
with~$N=1$, $\Omega:=I_{a, b}$ (recall that~$a<b$ and therefore
the regularity assumptions on~$\Omega$ are satisfied),
$f:= \partial_i V(q)\in L^\infty(I_{a, b}, \R)$ and~$g:=q_i\in C^{\min\{\alpha, s\}}(\R\setminus I_{a, b})$ (thanks to~\eqref{ALPHAQ0}
and~\eqref{L441}) for any~$i\in\{1,\dots, n\}$. 

Then, by Proposition~\ref{reg3} we infer that, if~$\alpha\neq s$,
\begin{equation}\label{CBETAI}\begin{split}
&q\in C^{\min\{\alpha, s\}}(\overline{I_{a, b}}, \R^n)\\
{\mbox{and }}\quad &
\|q\|_{C^{\min\{\alpha, s\}}(\overline{I_{a, b}})} \le c_2 \Big(\|\nabla V(q)\|_{L^\infty(I_{a, b}, \R^n)} +\|q\|_{L^\infty(\R, \R^n)}\Big),\end{split}
\end{equation}
for some~$c_2>0$ depending on~$n$, $s$, $\alpha,$ $a$, $b$ and~$q_0$.

Thus, when~$\alpha\neq s$, \eqref{L441} and~\eqref{CBETAI} entail that~$q\in C^{\min\{\alpha, s\}}(\R, \R^n)$ and
\[
[q]_{C^{\min\{\alpha, s\}}(\R, \R^n)}\le c \Big(\|\nabla V(q)\|_{L^\infty(I_{a, b}, \R)} +\|q\|_{L^\infty(\R, \R^n)}\Big),
\]
for some~$c>0$ depending only on~$n$, $s$, $\alpha$, $a$, $b$ and~$q_0$.

If instead~$\alpha=s$, we use Proposition~\ref{reg3} with~$\alpha^\star:=\overline\beta\in(0,s)$ and, noticing that~$\min\{\overline\beta, s\}=\overline\beta$, we obtain that
\begin{equation*}\begin{split}
&q\in C^{\overline\beta}(\overline{I_{a, b}}, \R^n)\\
{\mbox{and }}\quad &
\|q\|_{C^{\overline\beta}(\overline{I_{a, b}})} \le c_3 \Big(\|\nabla V(q)\|_{L^\infty(I_{a, b}, \R^n)} +\|q\|_{L^\infty(\R, \R^n)}\Big),\end{split}
\end{equation*}
for some~$c_3>0$ depending on~$n$, $s$, $a$, $b$ and~$q_0$.
This and~\eqref{L441} give that~$q\in C^{\overline\beta}(\R, \R^n)$ and
\[
[q]_{C^{\overline\beta}(\R, \R^n)}\le c \Big(\|\nabla V(q)\|_{L^\infty(I_{a, b}, \R)} +\|q\|_{L^\infty(\R, \R^n)}\Big),
\]
for some~$c>0$ depending only on~$n$, $s$, $a$, $b$ and~$q_0$.

Recalling the definition of~$\beta$ in~\eqref{beta} completes the proof
of Theorem~\ref{reg2}.
\end{proof}

\begin{remark}\label{remimporego0686}
We point out that, even though Lemma~\ref{reg1} and Proposition~\ref{reg5} apply also when~$a=b$, Theorem~\ref{reg2} holds only when~$a<b$.

This is due to the fact that if~$a=b$, then 
\[
I_{a,a}= \left(a-\frac12, a\right)\cup \left(a, a+\frac12\right)
\]
does not verify the assumptions in Proposition~\ref{reg3} (due to the lack of domain regularity at the point~$a$).
\end{remark}

The next result states that, if~$V$ is assumed to be slightly more regular than~$C^1$, then one can gain more regularity on the solution~$q$ to~\eqref{prob1}. This fact 
will be useful in the proof of~\eqref{dot_lim}
and is a consequence of the next result and a covering argument.

\begin{lemma}\label{reg4}
Let~$x_0\in\R$ and~$\beta$ be as in~\eqref{beta}. Let~$q\in C^{\beta}(\R, \R^n)$ be a bounded weak solution to the system
\begin{equation}\label{EQNC1}
(-\Delta)^s q = \nabla V(q) \quad\mbox{ in } (x_0-1, x_0+1),
\end{equation}
where~$V\in C^{1+\gamma}(\R^n, \R)$, with~$\gamma\in (\max\{0,1-2s\}, 1)$. 

Then, there exist an interval~$I\subset (x_0-1, x_0+1)$ and~$\eta>1$ such that~$q\in C^\eta (I, \R^n)$.

In addition,
\[
\|q\|_{C^\eta (I, \R^n)}\le c,
\]
for a positive constant~$c$ depending only on~$n$, $s$, $\beta$, $V$ and~$\|q\|_{C^\beta(\R, \R^n)}$.
\end{lemma}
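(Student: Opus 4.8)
The plan is to run a Schauder-type bootstrap for the fractional Laplacian: the equation $(-\Delta)^s q = \nabla V(q)$ improves the regularity of $q$ in two steps, the first lifting $q$ from $C^\beta$ to $C^{2s+\beta'}$ for a suitable $\beta'$, and the second — only needed when $2s\le 1$ — iterating once more to cross the threshold $1$. Up to a translation we may assume $x_0=0$, and we work on nested intervals $I_2\subset\subset I_1\subset\subset (-1,1)$. First I would observe that the composition $x\mapsto \nabla V(q(x))$ lies in $C^{\gamma\beta}_{\mathrm{loc}}$: since $V\in C^{1+\gamma}$, the map $\nabla V$ is $C^\gamma$ on the compact set $q([-1,1])$, and composing with $q\in C^\beta$ gives $\nabla V(q)\in C^{\gamma\beta}((-1,1),\R^n)$ with norm controlled by $\|V\|_{C^{1+\gamma}}$ and $\|q\|_{C^\beta(\R,\R^n)}$. (Here one splits $|x-y|$ into the regimes $\le 1$ and $>1$, using boundedness of $q$ in the latter, exactly as in Proposition~\ref{reg5}.)

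Next I would invoke the interior Schauder estimate for the fractional Laplacian (in the form of \cite[Corollary~2.5]{MR3168912} or \cite[Theorem~1.1(a)]{MR3168912}, applied componentwise with right-hand side $f_i:=\partial_i V(q)$): if $(-\Delta)^s q = f$ in $(-1,1)$ with $f\in C^{\tau}((-1,1))$ and $\tau+2s$ not an integer, then $q\in C^{\tau+2s}(I_1,\R^n)$ with $\|q\|_{C^{\tau+2s}(I_1,\R^n)}\le C(\|f\|_{C^\tau((-1,1))}+\|q\|_{L^\infty(\R,\R^n)})$; one must also feed in the already-known global bound $q\in C^\beta(\R,\R^n)$ to control the tail term $\int_{\R\setminus(-1,1)}|q(y)|/|y|^{1+2s}\,dy$. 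Applying this with $\tau=\gamma\beta$ gives $q\in C^{\gamma\beta+2s}(I_1,\R^n)$. If $\gamma\beta+2s>1$ we are done: take $I:=I_1$ and $\eta:=\gamma\beta+2s$. Otherwise, since $2s+\gamma\beta>\gamma\beta$ (as $\gamma,s>0$), the new exponent strictly exceeds $\beta$, so I would repeat: $\nabla V(q)\in C^{\gamma(2s+\gamma\beta)}(I_1,\R^n)$ and the Schauder estimate on $I_2\subset\subset I_1$ yields $q\in C^{\gamma(2s+\gamma\beta)+2s}(I_2,\R^n)$. A finite iteration of the map $t\mapsto \gamma t+2s$, whose fixed point is $2s/(1-\gamma)$, is needed; since $\gamma>1-2s$ gives $2s/(1-\gamma)>1$, the iterates eventually exceed $1$, and one stops at the first index where this happens (taking care, by shrinking $\gamma$ slightly if necessary, to avoid landing exactly on an integer so that the Schauder estimate remains in the non-integer Hölder scale). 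This produces the interval $I$ and the exponent $\eta>1$, with the quantitative bound following by chaining the finitely many estimates, each constant depending only on $n,s,\beta,\gamma$ — hence on $n,s,\beta,V$ — and on $\|q\|_{C^\beta(\R,\R^n)}$.

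The main obstacle is bookkeeping rather than conceptual: one must ensure at every step of the iteration that the Hölder exponent appearing as input to the fractional Schauder estimate is not an integer (the estimate genuinely fails at integer exponents, which is why the hypothesis is $\gamma>\max\{0,1-2s\}$ rather than $\gamma\ge$), and one must verify that the intervals can be nested finitely many times without exhausting $(-1,1)$ — both are handled by choosing the number of iteration steps first (it depends only on $s$ and $\gamma$) and then allocating the nested intervals accordingly. A secondary point requiring care is the composition estimate for $\nabla V(q)$ when the Hölder exponent of $q$ exceeds $1$: then one differentiates, writing $\partial_i V(q)\in C^{1,\cdot}$ via the chain rule $\partial_x[\partial_i V(q(x))]=\sum_j \partial_{ij}V(q(x))\,\dot q_j(x)$, and bounds this in the appropriate Hölder norm — but in fact, since we stop as soon as $\eta>1$, a single application beyond the threshold already suffices and this subtlety can be sidestepped entirely.
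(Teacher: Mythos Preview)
Your proposal is correct and follows essentially the same approach as the paper: compose to get $\nabla V(q)\in C^{\gamma\beta}$, apply the interior Schauder estimate of Ros-Oton--Serra (the paper invokes \cite[Proposition~2.2]{MR3168912}) to gain $2s$ derivatives, and iterate the map $t\mapsto 2s+\gamma t$ whose fixed point $2s/(1-\gamma)>1$ guarantees termination in finitely many steps. Your bookkeeping remarks on avoiding integer exponents and pre-allocating nested intervals are more explicit than the paper's treatment (which handles only the borderline case $s=\tfrac12$ via a footnote), but the underlying argument is the same.
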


\begin{proof}
Up to a translation, we assume that~$x_0=0$.
We observe that, since~$q\in C^{\beta}((-1, 1), \R^n)$ and~$V\in C^{1+\gamma}(\R^n, \R)$, 
for any~$i\in\{1,\dots, n\}$ and~$x$, $y\in (-1, 1)$, we have that
\[
\left|\partial_i V(q(x)) - \partial_i V(q(y))\right| \le [\partial_i V]_{C^{\gamma}(\R,\R)} |q(x)-q(y)|^\gamma
\le [\partial_i V]_{C^{\gamma}(\R,\R)} \, [q]_{C^{\beta}(\R, \R^n)} |x-y|^{\gamma\beta}.
\]
On this account, it follows that the map~$(-1, 1)\ni x\mapsto
\nabla V(q(x))$ belongs to~$ C^{\gamma\beta}((-1, 1), \R^n)$.

Consequently, by~\cite[Proposition~2.2]{MR3168912} we have\footnote{Here, for simplicity, we are implicitly supposing that~$s\ne\frac12$; when~$s=\frac12$, the regularity would improve by~$1-\epsilon$
instead of~$2s=1$, at any step of the iteration, but the gist of the argument would remain basically unaltered.} that~$q\in C^{2s +\gamma\beta}([-1/2, 1/2], \R^n)$ and 
\[
\| q\|_{C^{2s +\gamma\beta}([-1/2, 1/2], \R^n)}\le c_1 \Big(\|q\|_{C^{\beta}(\R, \R^n)} +\|\nabla V(q)\|_{C^{\gamma\beta}((-1,1))}\Big),
\]
for some~$c_1>0$ depending on~$s$, $\beta$ and~$\gamma$.

Now, if~$2s+\gamma\beta>1$, then the proof of Lemma~\ref{reg4} is concluded.
If not, then we iterate the
previous argument: the gist is that, setting 
\begin{equation}\label{betasequence}
\beta_k:= 
\begin{cases}
2s+\gamma\beta &\mbox{ if } k=1,\\
2s+ \gamma\beta_{k-1} &\mbox{ if } k\ge 2,
\end{cases}
\end{equation}
we can repeat the earlier argument until
\[
q\in C^{2s+\gamma \beta_k} ((-2^{-k}, 2^{-k}), \R^n)
\]
with~$2s+\gamma \beta_k>1$. 

To make this work, we need to show that
\begin{equation}\label{S4B90}
{\mbox{there exists~$k\in\N$ such that~$2s+\gamma \beta_k>1$.}}\end{equation}
Actually, since~$\gamma>\max\{0,1-2s\}$ by assumption, to show~\eqref{S4B90} it suffices to check that there exists~$k\in\N$ such that~$\beta_k>1$. 

For this, we assume, by contradiction, that~$\beta_{k}\le 1$ for each~$k$.
Then, recalling that~$\gamma\in (\max\{0,1-2s\}, 1)$,
\[
\beta_k -\beta_{k-1} = 2s -\beta_{k-1}(1-\gamma)\ge 2s-1 +\gamma >0.
\]
This shows that~$\beta_k$ is an increasing sequence.

As a result, taking the limit in~\eqref{betasequence}, we have that
\[ \lim_{k\to+\infty}\beta_k=
\frac{2s}{1-\gamma}>1.
\]
This is against our contradictory assumption and therefore the proof of~\eqref{S4B90} is complete.
\end{proof}

In view of Lemma~\ref{reg4},
one can perform an argument similar to the one in Proposition~\ref{reg5}
and obtain an improved regularity of the solution of problem~\eqref{prob1}:

\begin{corollary}[Differentiable regularity theory]\label{C1}
Let~$\beta$ as in~\eqref{beta} and~$q\in C^{\beta}(\R, \R^n)$ be a bounded weak solution of
\begin{equation}\label{EQNC1}
(-\Delta)^s q = \nabla V(q) \quad\mbox{ in } \R\setminus [a, b],
\end{equation}
with~$V\in C^{1+\gamma}(\R^n, \R)$ and~$\gamma\in (\max\{0,1-2s\}, 1)$. 

Then, 
\[
q\in C^\eta (\R \setminus [a-1/2, b+1/2], \R^n)
\]
and
\[
\|q\|_{C^\eta (\R \setminus [a-1/2, b+1/2], \R^n)}\le c,
\]
for a positive constant depending only on~$ n$, $s$, $\beta$, $V$ and~$\|q\|_{C^\beta(\R, \R^n)}$.
\end{corollary}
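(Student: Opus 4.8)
The plan is to upgrade the interior higher regularity of Lemma~\ref{reg4} from a single unit interval to the whole exterior region~$\R\setminus[a-1/2,b+1/2]$ by exactly the same covering-plus-scaling device already used in the proof of Proposition~\ref{reg5}. First I would fix~$x_0\in(-\infty,a-1]\cup[b+1,+\infty)$ and apply Lemma~\ref{reg4} directly: this gives an interval~$I=I(x_0)\subset(x_0-1,x_0+1)$ and some~$\eta>1$ together with a bound~$\|q\|_{C^\eta(I,\R^n)}\le c$ depending only on~$n$, $s$, $\beta$, $V$ and~$\|q\|_{C^\beta(\R,\R^n)}$. The subtlety is that Lemma~\ref{reg4}, as stated, only asserts the existence of \emph{some} subinterval~$I$ on which the gain occurs; however, tracing its proof, the iteration starts from the estimate of~\cite[Proposition~2.2]{MR3168912} applied on~$(x_0-1,x_0+1)$ and each step halves the interval around~$x_0$, so after the finitely many steps~$k_0$ needed to reach~$2s+\gamma\beta_{k_0}>1$ one may take~$I=(x_0-2^{-k_0},x_0+2^{-k_0})$, and crucially the number of steps~$k_0$ is fixed (it depends only on~$s$ and~$\gamma$), so the length~$2^{-k_0+1}$ of~$I$ and the constant~$c$ are uniform in~$x_0$. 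I would record this uniformity explicitly as the first step.

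Next, exactly as in Proposition~\ref{reg5}, I would run the covering argument. Choosing a mesh of points~$\{x_0^{(m)}\}$ spaced by~$2^{-k_0}$ along each of the two half-lines~$(-\infty,a-1]$ and~$[b+1,+\infty)$, together with the auxiliary point~$b+1$ (resp.\ $a-1$) to reach the boundary region~$(b+1/2,b+3/2)$ (resp.\ $(a-3/2,a-1/2)$), the intervals~$I(x_0^{(m)})=(x_0^{(m)}-2^{-k_0},x_0^{(m)}+2^{-k_0})$ cover~$\R\setminus[a-1/2,b+1/2]$ with overlaps, and on each the~$C^\eta$ seminorm of~$q$ is bounded by the same constant~$c$. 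For two points~$x,y$ in the same interval this gives~$|q(x)-q(y)|\le c\,|x-y|^{\eta}$ and likewise control on~$\dot q$; for~$x,y$ lying in distinct, non-adjacent pieces one has~$|x-y|\ge 2^{-k_0}$, so the Hölder quotient of~$q$ (and the difference quotient of~$\dot q$) is controlled by~$2\|q\|_{C^\eta}$-type quantities times~$|x-y|^{-\eta}\le 2^{k_0\eta}$, hence absolutely bounded. Patching these estimates yields~$q\in C^\eta(\R\setminus[a-1/2,b+1/2],\R^n)$ with the asserted bound.

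The main obstacle is purely bookkeeping: one must make sure that the interval~$I$ produced by Lemma~\ref{reg4} can be taken of a \emph{fixed} length independent of~$x_0$, which requires looking inside the proof of that lemma rather than using it as a black box; once that is in hand, the covering argument is routine and identical in spirit to Proposition~\ref{reg5}. A minor point to address is the case~$s=1/2$, where (as noted in the footnote to Lemma~\ref{reg4}) each iteration gains~$1-\epsilon$ rather than~$2s$; this only changes the finite number of steps~$k_0$ and the value of~$\eta>1$ one ends up with, and does not affect the structure of the argument. I would therefore state the corollary's proof as: ``Combine Lemma~\ref{reg4}, with the uniform choice of~$I$ noted above, with the covering argument of Proposition~\ref{reg5}.''
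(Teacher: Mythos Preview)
Your proposal is correct and follows exactly the approach the paper indicates: the paper simply states (in the sentence preceding the corollary) that ``in view of Lemma~\ref{reg4}, one can perform an argument similar to the one in Proposition~\ref{reg5},'' and gives no further proof. Your write-up fleshes out precisely this strategy, and your observation that the interval~$I$ from Lemma~\ref{reg4} has uniform length~$2^{-k_0+1}$ (because the number~$k_0$ of bootstrap steps depends only on~$s$ and~$\gamma$) is the key point that makes the covering argument go through.
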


\section{Proofs of Theorems~\ref{main1} and~\ref{THAGG}}\label{section-proof}

In this section we prove Theorems~\ref{main1} and~\ref{THAGG}. The strategy is to employ the results collected in Sections~\ref{OJNDDohnerfweP} and~\ref{sec-regularity} and perform an appropriate minimization argument over the set~$\Gamma$.

We will present a unified proof for Theorems~\ref{main1} and~\ref{THAGG}, with the only difference that the regularity claims
in~\eqref{QCBETA2} and~\eqref{BOH2} have to be dealt separately (indeed, the lack of domain regularity when~$a=b$
prevents us from exploiting Theorem~\ref{reg2} in this case, recall Remark~\ref{remimporego0686}).

\begin{proof}[Proof of Theorems~\ref{main1} and~\ref{THAGG}]
Let~$\Gamma$ be as in~\eqref{gamma} and consider the variational functional~$I$ given in~\eqref{functional}.
We take a minimizing sequence~$q^{j} \in\Gamma$ for the functional~$I$, i.e. 
\[
\lim_{j\to +\infty} I(q^{j}) = \inf_{q\in\Gamma} I(q).
\] 
Up to suitably extending~$q_0$ outside~$[a,b]$, we can suppose without loss of generality that~$q_0$
has finite energy,
hence, by~\eqref{V1} and~\eqref{functional}, we infer that
\begin{equation}\label{LTO}
+\infty> I(q_0) \ge I(q^{j})\ge \frac12 [q^{j}]^2_s.
\end{equation}
This shows that~$[q^{j}]_s$ is bounded independently of~$j\in\N$.  

We now remark that the minimizing sequence~$q^j$ can be taken in such a way that~$q^j\in L^\infty(\R, \R^n)$. Indeed, the argument which leads to~\eqref{LTO} remains valid by replacing~$q^{j}$ with the sequence~$\mathcal T_R\left(q^{j}\right)$, where~$\mathcal T_R$ is defined as in Lemma~\ref{lemma_taglio}. Indeed, $\mathcal T_R\left(q^{j}\right)$ remains a minimizing sequence itself and 
\[
I\left(\mathcal T_R\left(q^{j}\right)\right) \le I(q^{j})
\]
by the assumption in~\eqref{V2} and Lemma~\ref{lemma_taglio}. 

Hence, without loss of generality, we rename~$q^{j}$ as the cutoff sequence~$\mathcal T_R\left(q^{j}\right)$. With this, we notice that~$q^{j}\in L^\infty(\R, \R^n)$ by construction
(and, in fact, $|q_j|\le R$).

Now, by employing a diagonal argument and~\cite[Theorem~7.1]{guidagalattica} we infer that there exists some~$q\in\mathcal{D}^{s, 2}(\R, \R^n)$ such that~$q^j$ converges to~$q$ a.e. in~$\R$ as~$j\to+\infty$.

Moreover, by construction, we have that~$q\in\Gamma$ and also~$q\in L^\infty(\R, \R^n)$ and
\begin{equation}\label{QERRE}
\|q\|_{L^\infty(\R, \R^n)}\le R.
\end{equation}
In addition, by~\eqref{V1}, \eqref{functional} and Fatou's Lemma,
\begin{equation}\label{MNHH}
I(q)\ge \liminf_{j\to +\infty} I(q^{j}) \ge \frac12 [q]^2_s -\int_{\R} V(q(x))\, dx = I(q).
\end{equation}Accordingly, we have that~$q$ minimizes~$I$ over~$\Gamma$, namely~\eqref{minimo} holds. Also, by the previous inequality we deduce that~$I(q)<+\infty$.

We now focus on the regularity claims in~\eqref{QCBETA2} and~\eqref{BOH2}.
We first consider the case~$a<b$. In this case, thanks to Theorem~\ref{reg2}, we have that~$q\in C^{\beta}(\R, \R^n)$, where~$\beta$ as in~\eqref{beta}. Moreover, by~\eqref{41IN} and~\eqref{QERRE}, there exists a positive constant~$C$, depending only on~$n$, $s$, $\alpha$, $a$, $b$, $V$ and~$q_0$, such that
\[
[q]_{C^\beta(\R, \R^n)} \le C,
\]
which shows~\eqref{QCBETA2}.

Besides, if~$a=b$ and~$s\in (1/2, 1)$, we can employ the inequality in equation~(8.8) of~\cite[Theorem~8.2]{guidagalattica}, which entails that
\begin{equation*}
|q(x)-q(y)|\le c_3 [q]_s \, |x-y|^{s-1/2},
\end{equation*}
for a positive constant~$c_3$. Combining this with~\eqref{LTO} and~\eqref{QERRE}, we obtain that
$$
\|q\|_{C^{s-1/2}(\R, \R^n)} = \|q\|_{L^\infty(\R, \R^n)} + \sup_{\substack{x, y\in\R \\ x\neq y}} \frac{|q(x)-q(y)|}{|x-y|^{s-1/2}}
\le R + c_3 [q]_s
\le C,
$$ for a positive constant depending only on~$n$, $s$, $V$ and~$q_0$. This proves~\eqref{BOH2}.

Accordingly, in both cases, we have that~$q$ is uniformly continuous and therefore we can employ Lemma~\ref{lemma-limite} to deduce that~\eqref{limite} holds true. 

In addition, if~$V\in C^{1+\gamma}(\R, \R^n)$ for some~$\gamma\in(\max\{0,1-2s\},1)$,
then by Corollary~\ref{C1} and Lemma~\ref{qpunto} we conclude that also~\eqref{dot_lim} is verified.

Let us now assume that~$a=-b$ and that~$q_0$ is even.
We prove that~$I$ admits an even minimizer.  To this end, let~$q_*(x):= q(-x)$ and~$M(x)$ and~$m(x)$ be as in~\eqref{Mm}. 
By Lemma~\ref{lemma-even},
\begin{equation}\label{due}
I(M) + I(m)\le 2 I(q).
\end{equation}
Hence, either~$I(M)\le I(q)$ or~$I(m)\le I(q)$.  Without loss of generality, we assume that 
\begin{equation}\label{uno}
I(M)\le I(q).
\end{equation}

Now, since~$q$ minimizes~$I$ over~$\Gamma$, we see that~\eqref{uno} holds true with the equality sign, namely~$I(M)= I(q)$. Accordingly, by~\eqref{due}, 
\[
I(m)\le I(q)
\]
and once again we have that~$I(m) = I(q)$. Hence, we infer that
\[
I(M) = I(m)= I(q).
\]
Notice that~$M(-x)=\max\{q(-x),q_*(-x)\}=\max\{q_*(x), q(x)\}=M(x)$, and therefore
there exists an even minimizer.

If instead~$a=b$, one can take~$q_*(x):= q(2a-x)$ and~$M(x)$ and~$m(x)$ as in~\eqref{Mm} and repeat the argument above to show that~$q$
is symmetric with respect to~$a$.

This completes the proof of Theorems~\ref{main1} and~\ref{THAGG}.
\end{proof}

\section{The spatially dependent case of Theorems~\ref{maintheorem2} and~\ref{maintheorem3}: preliminary results}\label{ojsnkdD}

In this section, we derive some auxiliary results which will be useful for carrying out the proof of Theorems~\ref{maintheorem2} and~\ref{maintheorem3}. 

In the first place, we use the structural hypotheses~\eqref{LITTLEO} and~\eqref{AR} to assess the behavior of the potential~$W$ when~$q$ approaches the origin.
\begin{lemma}\label{lemmaW0}
Let~$W\in C^1(\R\times\R^n, \R)$ satisfy~\eqref{LITTLEO} and~\eqref{AR}. 

Then, for any~$\varepsilon>0$ there exists~$\delta=\delta(\varepsilon)>0$ such that,
for any~$x\in\R$ and any~$q\in\R^n\setminus\{0\}$ with~$|q|\le\delta$, 
\begin{equation*}
0<W(x, q)< \varepsilon |q|^2.
\end{equation*}
\end{lemma}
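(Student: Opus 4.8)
The plan is to prove the two inequalities in $0<W(x,q)<\varepsilon|q|^2$ separately: the strict positivity holds with no restriction on the size of $|q|$, while the quadratic upper bound is obtained from a first-order Taylor expansion of $W$ in the $q$ variable at the origin.

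For the positivity I would simply invoke the Ambrosetti--Rabinowitz condition~\eqref{AR}: since $\mu>2>0$, the inequality $0<\mu W(x,q)$ gives $W(x,q)>0$ for every $x\in\R$ and every $q\in\R^n\setminus\{0\}$. For the upper bound, fix $\varepsilon>0$. By~\eqref{LITTLEO} there exists $\delta=\delta(\varepsilon)>0$ such that $|\nabla_q W(x,q)|\le\varepsilon\,|q|$ for all $x\in\R$ and all $q\in\R^n$ with $0<|q|\le\delta$. For such $q$, using $W(x,0)=0$ from~\eqref{W0IN0}, I would write
\[
W(x,q)=\int_0^1\frac{d}{dt}\,W(x,tq)\,dt=\int_0^1\nabla_q W(x,tq)\cdot q\,dt,
\]
and, since $|tq|\le|q|\le\delta$ for every $t\in[0,1]$, estimate
\[
|W(x,q)|\le\int_0^1|\nabla_q W(x,tq)|\,|q|\,dt\le\int_0^1\varepsilon\,t\,|q|^2\,dt=\frac{\varepsilon}{2}\,|q|^2<\varepsilon\,|q|^2.
\]
Combining this with the positivity obtained from~\eqref{AR} gives the claim.

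This argument is essentially routine, so I do not expect a genuine obstacle; the one point to keep track of is that the threshold $\delta$ must be chosen independently of $x$, which is exactly what the supremum over $x\in\R$ in~\eqref{LITTLEO} provides. I would also remark that~\eqref{AR} by itself cannot yield the upper bound — it does not give an absolute bound on $W$ near the origin that is uniform in $x$ — so the hypothesis~\eqref{LITTLEO} is genuinely needed here.
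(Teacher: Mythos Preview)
Your argument is correct. The paper's proof takes a slightly different route for the upper bound: rather than integrating along the ray $t\mapsto tq$, it uses~\eqref{AR} a second time to write
\[
W(x,q)\le\frac{1}{\mu}\,\nabla_q W(x,q)\cdot q\le\frac{1}{\mu}\,|\nabla_q W(x,q)|\,|q|
\]
and then bounds the right-hand side via~\eqref{LITTLEO}, obtaining $W(x,q)\le\frac{\varepsilon}{\mu}|q|^2<\varepsilon|q|^2$. This has the minor advantage of not invoking $W(x,0)=0$: you cite~\eqref{W0IN0}, which is a standing assumption in the paper but is not among the hypotheses listed in the lemma statement. (It does follow from~\eqref{LITTLEO} and~\eqref{AR} by continuity, but verifying that essentially amounts to the paper's argument.) Both proofs are routine and of comparable length; the difference is cosmetic.
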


\begin{proof}
In light of the hypothesis in~\eqref{LITTLEO}, we have that for any~$\varepsilon>0$ there exists~$\delta=\delta(\varepsilon)>0$ such that, for any~$x\in\R$ and any~$q\in\R^n$ with~$|q|\le\delta$,
\[
|\nabla_q W(x, q)\cdot q|\le \varepsilon |q|^2.
\]
Hence, combining this with~\eqref{AR}, we infer that, for any~$x\in\R$ and any~$q\in\R^n\setminus\{0\}$ with~$|q|\le\delta$,
\[
0<W(x, q)\le \frac{1}{\mu} \nabla_q W(x, q)\cdot q \le \frac{\varepsilon |q|^2}{\mu}
< \varepsilon |q|^2,
\]as desired.
\end{proof}

The following results provides some appropriate upper bounds for~$W$ and its gradient and will be used when~$s\in (0, 1/2]$.

\begin{lemma}
Let~$W\in C^1(\R\times\R^n, \R)$ satisfy~\eqref{LITTLEO} and~\eqref{INPIUNUOVA}.

Then, there exist~$a_1$, $a_2>0$ such that, for any $x\in\R$ and any~$q\in\R^n$,
\begin{equation}\label{INPIU}
|\nabla_q W(x, q)|\le a_1+a_2 |q|^{p-1} .\end{equation}
\end{lemma}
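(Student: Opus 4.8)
The plan is to derive the bound by splitting the space of values of $q$ into a neighborhood of the origin, where hypothesis~\eqref{LITTLEO} provides decay, and its complement, where hypothesis~\eqref{INPIUNUOVA} yields polynomial control. First I would fix a threshold $\delta>0$ produced by~\eqref{LITTLEO} (applied with $\varepsilon:=1$, say), so that $|\nabla_q W(x,q)|\le |q|$ whenever $|q|\le\delta$; in particular, on this region $|\nabla_q W(x,q)|\le\delta\le\delta+ 0\cdot|q|^{p-1}$, which already has the desired shape with $a_1=\delta$ and any $a_2>0$ for that part.

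For the region $|q|\ge\delta$, I would sum the componentwise estimate in~\eqref{INPIUNUOVA}: since $|\partial_{q_j}W(x,q)|\le a_0|q_j|(1+|q|^{p-2})\le a_0|q|(1+|q|^{p-2})$ for each $j$, we get $|\nabla_q W(x,q)|\le n\,a_0\,|q|\,(1+|q|^{p-2}) = n a_0 |q| + n a_0 |q|^{p-1}$. When $|q|\ge\delta$ the linear term $n a_0|q|$ can be absorbed into a multiple of $|q|^{p-1}$ because $|q| = |q|^{p-1}\cdot |q|^{2-p}\le \delta^{2-p}|q|^{p-1}$ (here $p>2$, so $2-p<0$ and $|q|^{2-p}\le\delta^{2-p}$). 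Hence on $\{|q|\ge\delta\}$ one obtains $|\nabla_q W(x,q)|\le n a_0(\delta^{2-p}+1)|q|^{p-1}$, again of the required form.

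Combining the two regions, one takes $a_1:=\delta$ and $a_2:= n a_0(\delta^{2-p}+1)$ (or simply the maximum of the constants arising in the two cases) to conclude that $|\nabla_q W(x,q)|\le a_1+a_2|q|^{p-1}$ for all $x\in\R$ and all $q\in\R^n$, uniformly in $x$ since both~\eqref{LITTLEO} and~\eqref{INPIUNUOVA} are uniform in $x$. I do not expect any genuine obstacle here; the only point requiring a little care is the bookkeeping that makes the estimate a single global inequality rather than a piecewise one, which is handled by the elementary absorption of the linear term using $p>2$ on the region bounded away from the origin. (One should also note that $p-1\ge 1>0$, so $|q|^{p-1}$ is well defined and continuous, and that~\eqref{INPIUNUOVA} only applies in the case $s\in(0,1/2]$, which is precisely the regime in which this lemma is invoked.)
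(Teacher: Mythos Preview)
Your argument is correct and follows essentially the same approach as the paper: split according to whether $|q|$ is below or above a threshold coming from~\eqref{LITTLEO}, and on the far region absorb the linear term into $|q|^{p-1}$ via $|q|^{2-p}\le\delta^{2-p}$. The only cosmetic difference is that the paper bounds the Euclidean norm of the gradient via $\bigl(\sum_j|\partial_{q_j}W|^2\bigr)^{1/2}\le a_0(1+|q|^{p-2})\bigl(\sum_j|q_j|^2\bigr)^{1/2}=a_0(1+|q|^{p-2})|q|$, avoiding your factor of $n$, but this affects only the constants.
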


\begin{proof} In light of~\eqref{LITTLEO} we know that there exists~$r_0>0$ such that if~$|q|\le r_0$ then
\begin{equation}\label{INPIU2} \sup_{x\in\R}|\nabla_q W(x,q)|\le|q|.\end{equation}
Hence, taking~$a_0$ and~$p$ as in~\eqref{INPIUNUOVA}, we define
$$ a_1:=1\qquad{\mbox{ and }}\qquad a_2:=a_0(r_0^{2-p}+1).$$
In this setting, if~$|q|\le r_0$ we have that~\eqref{INPIU} follows from~\eqref{INPIU2}; if instead~$|q|>r_0$ we use~\eqref{INPIUNUOVA} and
see that
\begin{equation*}\begin{split}& |\nabla_q W(x, q)|=\left(\sum_{j=1}^n |\partial_{q_j} W(x, q)|^2\right)^{1/2}
\le a_0\,(1+ |q|^{p-2})\left(\sum_{j=1}^n |q_j|^2\right)^{1/2}\\&\qquad=a_0\,(1+ |q|^{p-2})|q|=
a_0\,(|q|^{2-p}+1)|q|^{p-1}\le a_0\,(r_0^{2-p}+1)|q|^{p-1}=a_2 |q|^{p-1}.
\qedhere\end{split}\end{equation*}
\end{proof}

\begin{lemma}\label{lemmaW1}
Let~$W\in C^1(\R\times\R^n, \R)$ satisfy~\eqref{LITTLEO} and~\eqref{INPIU}.

Then, for any~$\varepsilon>0$ there exists~$\sigma=\sigma(\varepsilon)>0$ such that, for any~$x\in\R$ and any~$q\in\R^n$,
\begin{equation*}\begin{split}
&|\nabla_q W(x, q)|\le \varepsilon |q| + \sigma(\varepsilon)\, |q|^{p-1}\\
{\mbox{and }} \qquad &
|W(x, q)|\le \frac{\varepsilon}{2} |q|^2 +\frac{\sigma(\varepsilon)}{p} |q|^p.\end{split}
\end{equation*}
\end{lemma}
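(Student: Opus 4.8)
The statement to prove is Lemma~\ref{lemmaW1}: given the growth bound~\eqref{INPIU}, namely $|\nabla_q W(x,q)|\le a_1 + a_2|q|^{p-1}$, together with the smallness~\eqref{LITTLEO} of $|\nabla_q W(x,q)|/|q|$ near the origin, one wants, for each $\e>0$, a constant $\sigma(\e)$ with $|\nabla_q W(x,q)|\le \e|q| + \sigma(\e)|q|^{p-1}$ and the corresponding integrated bound $|W(x,q)|\le \tfrac\e2|q|^2 + \tfrac{\sigma(\e)}{p}|q|^p$.

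The approach is a two-regime split at a threshold $r_\e>0$ coming from~\eqref{LITTLEO}. First I would fix $\e>0$ and use~\eqref{LITTLEO} to produce $r_\e>0$ such that $\sup_{x\in\R}|\nabla_q W(x,q)|\le \e|q|$ whenever $|q|\le r_\e$; on this inner region the first inequality holds with any choice of $\sigma(\e)\ge 0$, since the $\sigma(\e)|q|^{p-1}$ term is nonnegative. On the outer region $|q|>r_\e$, I would use~\eqref{INPIU} and bound the constant $a_1$ by $a_1 = a_1 |q|^{p-1}/|q|^{p-1} \le a_1 r_\e^{-(p-1)}|q|^{p-1}$ (using $p>2>1$ so $|q|^{p-1}>r_\e^{p-1}$), getting $|\nabla_q W(x,q)|\le \big(a_1 r_\e^{-(p-1)} + a_2\big)|q|^{p-1}$. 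Setting $\sigma(\e):= a_1 r_\e^{-(p-1)} + a_2$ then makes the first displayed inequality valid on both regions (and hence for all $q\in\R^n$, $x\in\R$), after noting the trivial case $q=0$.

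For the second inequality, I would integrate the gradient bound along the segment from $0$ to $q$. Using $W(x,0)=0$ (which follows from~\eqref{W0IN0}, or more directly from the positivity in~\eqref{AR} combined with continuity — in any case $W(x,0)$ is finite and the relevant statements are invariant under the normalization $W(x,0)=0$), one writes $W(x,q) = \int_0^1 \nabla_q W(x,tq)\cdot q\,dt$, so that $|W(x,q)| \le \int_0^1 |\nabla_q W(x,tq)|\,|q|\,dt \le \int_0^1 \big(\e t|q| + \sigma(\e) t^{p-1}|q|^{p-1}\big)|q|\,dt = \tfrac\e2|q|^2 + \tfrac{\sigma(\e)}{p}|q|^p$, which is exactly the claimed bound. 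The only mild subtlety — and the closest thing to an obstacle — is making sure the threshold $r_\e$ is harmless in the inner region of the \emph{gradient} estimate: there, $|\nabla_q W|\le \e|q|$ directly, and the extra $\sigma(\e)|q|^{p-1}\ge 0$ only helps; so no real difficulty arises, and the whole argument is elementary.
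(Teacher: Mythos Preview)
Your proof is correct and follows the standard two-regime split (small $|q|$ via~\eqref{LITTLEO}, large $|q|$ via~\eqref{INPIU}) plus radial integration, which is exactly the argument the paper defers to in~\cite[Lemma~3]{MR2879266}. The only minor quibble is that the bound on $|W|$ is not literally invariant under adding a constant to $W$, but since~\eqref{W0IN0} is in force throughout the paper your use of $W(x,0)=0$ is justified.
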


The proof of Lemma~\ref{lemmaW1}
is the multi-dimensional version of
that in~\cite[Lemma~3]{MR2879266}.

We notice that hypothesis~\eqref{AR} on its own can be used to deduce some suitable lower bounds for the potential~$W$, as stated in the next result:

\begin{lemma}
Let~$W\in C^1(\R\times\R^n, \R)$ satisfy~\eqref{AR} and
\begin{equation}\label{omega12}
\omega_1(x):= \inf_{|\xi|=1} W(x, \xi).
\end{equation}

Then, $\omega_1(x)>0$ for every~$x\in\R$.

Moreover, for any~$x\in\R$ and any~$q\in\R^n$ with~$ |q|\ge 1$,
\begin{equation}\label{WGEQ}
W(x, q)\ge \omega_1(x) |q|^\mu .
\end{equation}
\end{lemma}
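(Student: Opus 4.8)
The plan is to prove the two assertions in sequence, both stemming directly from the Ambrosetti--Rabinowitz condition~\eqref{AR}. First I would establish that $\omega_1(x)>0$ for every fixed $x\in\R$. Since $W(\cdot,\cdot)\in C^1(\R\times\R^n,\R)$, the map $\xi\mapsto W(x,\xi)$ is continuous on the compact set $\{|\xi|=1\}$, so the infimum defining $\omega_1(x)$ in~\eqref{omega12} is attained at some $\xi_x$ with $|\xi_x|=1$. By~\eqref{AR}, applied at $q=\xi_x\ne 0$, we have $W(x,\xi_x)>0$, hence $\omega_1(x)=W(x,\xi_x)>0$. This is the easy half and presents no real obstacle.

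For the growth estimate~\eqref{WGEQ}, the natural idea is to integrate the differential inequality in~\eqref{AR} along rays through the origin. Fix $x\in\R$ and $q\in\R^n$ with $|q|\ge 1$, and write $q=|q|\,\xi$ with $\xi:=q/|q|$, $|\xi|=1$. Consider the scalar function $g(t):=W(x,t\xi)$ for $t\in[1,|q|]$. Then $g$ is $C^1$ with $g'(t)=\nabla_q W(x,t\xi)\cdot\xi=\frac{1}{t}\,\nabla_q W(x,t\xi)\cdot(t\xi)$, and since $t\xi\ne 0$, the lower bound in~\eqref{AR} gives $\nabla_q W(x,t\xi)\cdot(t\xi)\ge \mu\,W(x,t\xi)=\mu\,g(t)$, while~\eqref{AR} also guarantees $g(t)>0$. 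Hence $\frac{g'(t)}{g(t)}\ge \frac{\mu}{t}$ for all $t\in[1,|q|]$. Integrating from $1$ to $|q|$ yields $\log g(|q|)-\log g(1)\ge \mu\log|q|$, that is, $g(|q|)\ge g(1)\,|q|^\mu$. Since $g(|q|)=W(x,q)$ and $g(1)=W(x,\xi)\ge \omega_1(x)$ by the definition in~\eqref{omega12}, we obtain $W(x,q)\ge \omega_1(x)\,|q|^\mu$, which is exactly~\eqref{WGEQ}.

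The only point that requires a little care — and I would expect it to be the main (minor) obstacle — is the justification of dividing by $g(t)$ and integrating: one needs $g(t)>0$ on the whole interval $[1,|q|]$, which is precisely what~\eqref{AR} provides since $t\xi\ne 0$ there, and one needs $g\in C^1$, which follows from $W\in C^1$. With positivity in hand, the logarithmic integration is routine and the constant $\omega_1(x)$ appears naturally as the value at the inner endpoint $t=1$. No compactness or regularity issues beyond continuity of $W$ in the angular variable arise, so the argument is entirely elementary.
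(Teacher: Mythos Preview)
Your proof is correct and essentially identical to the paper's: both attain the infimum by compactness of the unit sphere and positivity from~\eqref{AR}, then integrate the logarithmic derivative of $t\mapsto W(x,t\xi)$ along the ray to obtain the $|q|^\mu$ growth. The only cosmetic difference is that the paper packages the integration via the function $f(x,\rho):=\log\big(W(x,\rho\widehat q)/W(x,\widehat q)\big)$, which amounts to your $\log g(\rho)-\log g(1)$.
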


\begin{proof}
We observe that the infimum in~\eqref{omega12} is attained, due to the continuity of~$W$, and it is strictly positive,
thanks to~\eqref{AR}.

Now, for any~$q\in\R^n\setminus\{0\}$, we set
\begin{equation}\label{HATQ}
\widehat q:= \frac{q}{|q|}.
\end{equation}

Consider the function~$f:\R\times (0, +\infty)\to\R$ defined as
\begin{equation*}
f(x, \rho):=\log \left(\frac{W(x, \rho\widehat q)}{W(x, \widehat q)}\right).
\end{equation*}
We notice that~$f(x, 1)=0$ and that, by~\eqref{AR}, for any~$\rho> 1$,
\begin{equation*}
f(x, \rho) = f(x, \rho) -f(x, 1) =\int_1^{\rho} \frac{\nabla_q W(x, r\widehat q)\cdot r\widehat q}{W(x, r\widehat q)} \, \frac{dr}{r}\ge \mu\int_1^{\rho} \frac{dr}{r} = \log (\rho^{\mu}).
\end{equation*}
This entails that, for any~$\rho\ge1$,
\[
W(x, \rho\widehat q)\ge W(x, \widehat q) \, \rho^{\mu}.
\]

Now, for any~$q\in\R^n$ with~$|q|\ge1$, we can take~$\rho:= |q|$ and, recalling the definition in~\eqref{HATQ}, we get
\begin{equation*}
W(x, q)\ge W\left(x, \frac{q}{|q|}\right) \, |q|^{\mu} \ge \omega_1(x) \, |q|^{\mu} ,
\end{equation*}
being~$\omega_1(x)$ as in~\eqref{omega12}. This proves the inequality in~\eqref{WGEQ}.
\end{proof}

We recall that our goal is to find homoclinic solutions to problem~\eqref{eqn2}. To this end, we observe that if~$q$ is a uniformly continuous function and~$q\in L^r(\R, \R^n)$, then~$q$ tends to zero at infinity. More precisely, the following result holds true:

\begin{lemma}\label{lemmainfty}
Let~$r\in [1, +\infty)$. Assume that~$q\in L^r(\R, \R^n)$ and~$q$ is uniformly continuous.

Then,
\begin{equation*}
\lim_{x\to \pm\infty} q(x)=0.
\end{equation*}
\end{lemma}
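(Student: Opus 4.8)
The statement to prove is Lemma~\ref{lemmainfty}: if $r\in[1,+\infty)$, $q\in L^r(\R,\R^n)$ and $q$ is uniformly continuous, then $\lim_{x\to\pm\infty}q(x)=0$.

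\medskip

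The plan is to argue by contradiction, exploiting the tension between uniform continuity (which forces $q$ to stay large on intervals of \emph{fixed} length once it is large at a point) and membership in $L^r$ (which forbids this from happening along a sequence escaping to infinity). First I would suppose, say for the limit at $+\infty$, that the conclusion fails: then there exist $\varepsilon>0$ and a sequence $x^j\to+\infty$ with $|q(x^j)|>\varepsilon$, and after passing to a subsequence we may assume $x^{j+1}>x^j+1$ so that the relevant intervals are disjoint. Next I would invoke uniform continuity: there is $\delta\in(0,1/2)$ such that $|x-y|\le\delta$ implies $|q(x)-q(y)|<\varepsilon/2$, hence $|q(x)|>\varepsilon/2$ for every $x\in[x^j-\delta,x^j+\delta]$ and every $j$. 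This is exactly the same mechanism already used in the proof of Lemma~\ref{lemma-limite}.

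\medskip

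Then I would compute the $L^r$ norm from below by restricting the integral to these disjoint intervals:
\[
\int_\R |q(x)|^r\,dx \;\ge\; \sum_{j=1}^{+\infty}\int_{x^j-\delta}^{x^j+\delta}|q(x)|^r\,dx \;\ge\; \sum_{j=1}^{+\infty}(2\delta)\left(\frac{\varepsilon}{2}\right)^r \;=\;+\infty,
\]
which contradicts $q\in L^r(\R,\R^n)$. The same argument, with a sequence $x^j\to-\infty$, handles the limit at $-\infty$. I do not anticipate any genuine obstacle here: the only point to be slightly careful about is the passage to a subsequence ensuring the intervals $[x^j-\delta,x^j+\delta]$ are pairwise disjoint (so that the lower bound for the integral is a genuine sum and not an overcount), and the fact that $\delta$ can be chosen once and for all, independently of $j$, precisely because the continuity is \emph{uniform}. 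This is the step where the uniform continuity hypothesis is essential and where a merely continuous $q$ would fail the conclusion.
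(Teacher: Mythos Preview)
Your proposal is correct and matches the paper's proof essentially line for line: the same contradiction setup, the same subsequence with $x^{j+1}>x^j+1$, the same uniform-continuity $\delta$, and the same lower bound on the $L^r$ integral over the disjoint intervals. Even the constant you obtain, $(2\delta)(\varepsilon/2)^r$, is exactly the paper's $\delta\varepsilon^r/2^{r-1}$.
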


\begin{proof}
We establish the limit as~$x\to +\infty$, the other one being similar.
Suppose by contradiction that there exist~$\varepsilon>0$ and a sequence~$x^j$ such that~$x^{j}\to +\infty$ as~$j\to +\infty$ and
\[
|q(x^{j})| >\varepsilon.
\]
Without loss of generality, we can suppose that~$x^{j+1}>x^j+1$.

Since~$q$ is uniformly continuous, we can find~$\delta\in(0,1/2)$ small enough such that, for all~$j\in\N$ and all~$x\in [x^j-\delta, x^j +\delta]$,
\[
|q(x)| >\frac{\varepsilon}{2} .
\]
Hence, we have that
\[
\int_\R |q(x)|^r \,dx \ge \sum_{j=1}^{+\infty} \int_{x^j-\delta}^{x^j+\delta} |q(x)|^r \,dx > \sum_{j=1}^{+\infty} \frac{\delta\,\varepsilon^r}{2^{r-1}} = +\infty,
\]
which contradicts the assumption that~$q\in L^r(\R, \R^n)$. This concludes the proof.
\end{proof}

\section{Functional setting}\label{SIPDJOLNDFUOJFOJLN}
In this section we introduce the main functional analytic setting needed in the study of problem~\eqref{eqn2}.

Let~$s\in (0, 1)$ and assume that~$L\in C(\R, \R^{n\times n})$ is symmetric and satisfies~\eqref{LMAT}. For any measurable function~$q:\R\to\R^n$, we define
\begin{equation}\label{NORMH}
\|q\|_{ \widetilde H^s}:= \left([q]^2_s +\int_\R L(x) q(x)\cdot q(x) \,dx\right)^{1/2}\end{equation}
and we point out that
\begin{equation}\label{AGGNORMPROOF}
{\mbox{$\|\cdot\|_{ \widetilde H^s}$ is a norm.}}
\end{equation}
We postpone the proof of this claim in Appendix~\ref{AGGNORMPROOFSEC}.

Now we define the space~$ \widetilde H^s$ as the 
set of measurable functions~$q:\R\to\R^n$ such that~$\|q\|_{ \widetilde H^s}<+\infty$.

We remark that~$ \widetilde H^s$ is a Hilbert space with scalar product
\begin{equation*}
\langle q, \overline q\rangle_{ \widetilde H^s}= \left( c_s\iint_{\R^2} \frac{(q(x)-q(y))\cdot(\overline q(x)-\overline q(y))}{|x-y|^{1+2s}} \, dx\, dy
+\int_\R L(x) q(x)\cdot \overline q(x) \,dx\right)^{1/2}.\end{equation*}
For completeness, we establish this claim in Appendix~\ref{AGGNORMPROOFSEC22}.

In fact, we will prove also a stronger result, namely:

\begin{proposition}\label{DENNUO:AS0}
The space~$ \widetilde H^s$ coincides with the completion of~$C^\infty_0(\R,\R^n)$ with respect to the norm in~\eqref{AGGNORMPROOF}.
\end{proposition}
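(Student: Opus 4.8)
The plan is to show the two inclusions: the completion of $C^\infty_0(\R,\R^n)$ with respect to $\|\cdot\|_{\widetilde H^s}$ embeds isometrically into $\widetilde H^s$, and every element of $\widetilde H^s$ can be approximated in norm by compactly supported smooth functions. The first inclusion is essentially built in: since $C^\infty_0(\R,\R^n)\subset\widetilde H^s$ and $\widetilde H^s$ is a complete space by the result established in Appendix~\ref{AGGNORMPROOFSEC22}, the closure of $C^\infty_0$ inside $\widetilde H^s$ is a Hilbert space, and the abstract completion coincides with that closure. So the real content is density: I would prove that for every $q\in\widetilde H^s$ and every $\e>0$ there is $\psi\in C^\infty_0(\R,\R^n)$ with $\|q-\psi\|_{\widetilde H^s}<\e$.

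First I would reduce to compactly supported functions via truncation. Fix a smooth cutoff $\chi_R$ with $\chi_R\equiv 1$ on $(-R,R)$, $\chi_R\equiv0$ outside $(-2R,2R)$, and $|\chi_R'|\le C/R$, and set $q_R:=\chi_R q$. The term $\int_\R L(x)q_R\cdot q_R\,dx$ converges to $\int_\R L(x)q\cdot q\,dx$ by dominated convergence (the integrand is dominated by $L(x)q\cdot q\in L^1$ thanks to \eqref{NORMH}). For the Gagliardo seminorm I would use the standard product estimate
\[
[\chi_R q]_s \le \|\chi_R\|_{L^\infty}[q]_s + \text{(lower-order terms)},
\]
more precisely splitting $\chi_R(x)q(x)-\chi_R(y)q(y)=\chi_R(x)(q(x)-q(y))+q(y)(\chi_R(x)-\chi_R(y))$, bounding the first piece by $[q]_s$ on the tail $\{|x|>R\}$ (which is small since $[q]_s<\infty$) and the second piece by a term controlled by $\|q\|_{L^2}[\chi_R]_{W^{s,2}}\to0$; here one needs $q\in L^2$, which follows from \eqref{LMAT} and the finiteness of $\int L q\cdot q$. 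This gives $[q-q_R]_s\to0$ and hence $\|q-q_R\|_{\widetilde H^s}\to0$.

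Second, for a fixed compactly supported $q_R\in\widetilde H^s$ (which in particular lies in the usual fractional space $H^s$ of a bounded domain thanks to the Gagliardo seminorm and the $L^2$ bound), I would mollify: $q_R^\delta:=q_R*\rho_\delta$ with a standard mollifier $\rho_\delta$. These are smooth with compact support, $q_R^\delta\to q_R$ in $L^2$, and $[q_R^\delta-q_R]_s\to0$ by the usual argument that convolution does not increase (and the difference tends to zero in) the Gagliardo seminorm — one writes the seminorm of $q_R^\delta-q_R$ using Young/Minkowski inequalities on the difference quotients. Finally $\int_\R L(x)(q_R^\delta-q_R)\cdot(q_R^\delta-q_R)\,dx\to0$ because $L$ is continuous hence bounded on the fixed compact set containing all relevant supports, so this term is controlled by a constant times $\|q_R^\delta-q_R\|_{L^2}^2\to0$. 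Combining the two approximation steps via a diagonal argument yields the density of $C^\infty_0(\R,\R^n)$ in $\widetilde H^s$, which is exactly the claim.

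The main obstacle I expect is the product rule estimate for the Gagliardo seminorm of $\chi_R q$: one has to make sure that the ``tail'' contributions coming from the region where $q$ is large but $\chi_R$ is not yet $1$ are genuinely small, which relies on the absolute continuity of the integral defining $[q]_s^2$ together with the $L^2$ control of $q$ coming from the uniform positive-definiteness \eqref{LMAT}. The unbounded coefficient $L$ is not an issue for density itself — it only enters through the $L^1$ integrability of $L(x)q\cdot q$, which is part of the definition of $\widetilde H^s$ — but one must be careful that in the truncation step the cross terms are handled with the $L^2$ norm of $q$ (available) and not with any weighted norm.
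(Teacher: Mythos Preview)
Your proposal is correct and follows essentially the same two-step strategy as the paper: first truncate $q$ by a smooth cutoff to reduce to compactly supported functions, then mollify. The paper defers the Gagliardo seminorm estimates in both steps to results in Leoni's book, whereas you sketch them by hand; also note that what you actually need in the truncation step is $\int_\R L(x)(q-q_R)\cdot(q-q_R)\,dx\to 0$ (not merely $\int L\,q_R\cdot q_R\to\int L\,q\cdot q$), but this follows by the same dominated convergence since $(1-\chi_R)^2\,L q\cdot q\le L q\cdot q\in L^1(\R)$.
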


The proof of this result is technically delicate and deferred to Appendix~\ref{DENNUO:AS0ap}
for the facility of the reader.\smallskip

Furthermore, we notice that, in view of~\eqref{LMAT},
\begin{equation}\label{EMB1}\begin{split}
\|q\|^2_{ \widetilde H^s}&= [q]^2_s +\int_\R L(x) q(x)\cdot q(x) \,dx\\& \ge \min\{\alpha, 1\} \left([q]^2_s +\|q\|^2_{L^2(\R, \R^n)}\right)
\\&= \min\{\alpha, 1\} \|q\|^2_{H^s(\R, \R^n)}.\end{split}
\end{equation}

We now present a preliminary embedding result.
\begin{lemma}\label{S1S2}
Let~$0< s_1 < s_2 < 1$. Then, for any measurable function~$q:\R\to\R^n$,
\[
\|q\|^2_{\widetilde H^{s_1}}\le \left(\frac{1}{\alpha}+1\right) \|q\|^2_{\widetilde H^{s_2}},
\]
being~$\alpha$ given in~\eqref{LMAT}.

In particular, the space~$\widetilde H^{s_2}$ is continuously embedded in~$\widetilde H^{s_1}$.
\end{lemma}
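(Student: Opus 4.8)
The plan is to reduce everything to a pointwise comparison of the Fourier multipliers $|2\pi\xi|^{2s_1}$ and $|2\pi\xi|^{2s_2}$, after a harmless reduction to the case in which $q$ is genuinely an $H^{s_2}$ function. First I would dispose of the trivial situation: if $\|q\|_{\widetilde H^{s_2}}=+\infty$ there is nothing to prove, so I may assume $\|q\|_{\widetilde H^{s_2}}<+\infty$. By~\eqref{LMAT} one has $\alpha\|q\|^2_{L^2(\R,\R^n)}\le\int_\R L(x)q(x)\cdot q(x)\,dx<+\infty$, hence $q\in L^2(\R,\R^n)$; together with $[q]_{s_2}<+\infty$ this gives $q\in H^{s_2}(\R,\R^n)$, so $\widehat q$ and the Plancherel-type identities below are legitimate.

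Next I would use the Fourier characterization $[q]^2_s=\int_\R|2\pi\xi|^{2s}|\widehat q(\xi)|^2\,d\xi$, which holds with the normalization of $c_s$ fixed in~\eqref{4836fewfgetuie} and is exactly the identity already exploited in the proof of Proposition~\ref{NASIK}. Splitting the frequency domain into $\{|2\pi\xi|\le1\}$ and $\{|2\pi\xi|>1\}$: on the low-frequency region $|2\pi\xi|^{2s_1}\le1$, so that piece is bounded by $\|q\|^2_{L^2(\R,\R^n)}$; on the high-frequency region $|2\pi\xi|^{2s_1}<|2\pi\xi|^{2s_2}$ since $s_1<s_2$ and $|2\pi\xi|>1$, so that piece is bounded by $[q]^2_{s_2}$. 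Therefore $[q]^2_{s_1}\le\|q\|^2_{L^2(\R,\R^n)}+[q]^2_{s_2}$.

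Finally I would combine this with $\|q\|^2_{L^2(\R,\R^n)}\le\frac1\alpha\int_\R L(x)q(x)\cdot q(x)\,dx$ from~\eqref{LMAT}, obtaining
\[
\|q\|^2_{\widetilde H^{s_1}}=[q]^2_{s_1}+\int_\R L\,q\cdot q\le[q]^2_{s_2}+\Big(1+\tfrac1\alpha\Big)\int_\R L\,q\cdot q\le\Big(1+\tfrac1\alpha\Big)\Big([q]^2_{s_2}+\int_\R L\,q\cdot q\Big)=\Big(\tfrac1\alpha+1\Big)\|q\|^2_{\widetilde H^{s_2}},
\]
where the middle inequality uses $1+\frac1\alpha\ge1$. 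The embedding claim is then immediate: this inequality shows that every $q\in\widetilde H^{s_2}$ also lies in $\widetilde H^{s_1}$ and that the inclusion is a bounded (hence continuous) linear map.

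I do not expect a genuine obstacle here. The only point requiring a little care is the legitimacy of the Fourier-side identity for the class of functions in the statement, which is precisely why the opening reduction to $q\in H^{s_2}(\R,\R^n)$ is worth recording; one could instead argue purely in physical space by splitting the Gagliardo double integral over $\{|x-y|<1\}$ and $\{|x-y|\ge1\}$, but this produces a worse constant depending on $s_1$ and on the ratio $c_{s_1}/c_{s_2}$, so the Fourier route is preferable to recover the sharp constant $\frac1\alpha+1$.
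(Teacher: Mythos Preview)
Your proof is correct and follows essentially the same route as the paper: the Fourier representation of the Gagliardo seminorm, the low/high frequency split at $|2\pi\xi|=1$ yielding $[q]^2_{s_1}\le\|q\|^2_{L^2}+[q]^2_{s_2}$, and then absorption of the $L^2$ term via~\eqref{LMAT}. Your preliminary reduction to $q\in H^{s_2}(\R,\R^n)$ to justify the Fourier identity is a welcome point of rigor that the paper leaves implicit.
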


\begin{proof}
To check this, it is convenient to write the Gagliardo seminorm in terms of the Fourier transform (see~\cite{guidagalattica}) as
\[
[q]_{s_1}=\left(\;\int_{\R} |2\pi \xi|^{2s_1}\,|\widehat q(\xi)|^2\,d\xi\right)^{\frac12}.
\]
{F}rom this, we have that
\begin{eqnarray*}
[q]_{s_1}^2&\le& \int_{B_{1/(2\pi)}} |2\pi \xi|^{2s_1}\,|\widehat q(\xi)|^2\,d\xi+\int_{\R\setminus B_{1/(2\pi)}} |2\pi \xi|^{2s_2}\,|\widehat q(\xi)|^2\,d\xi\\
&\le&  \int_{\R} |\widehat q(\xi)|^2\,d\xi+\int_{\R} |2\pi \xi|^{2s_2}\,|\widehat q(\xi)|^2\,d\xi
\\&=& \|q\|^2_{L^2(\R, \R^n)}+[q]^2_{s_2}.
\end{eqnarray*}
This, \eqref{NORMH} and~\eqref{LMAT} give that
\[
\begin{split}
\|q\|^2_{\widetilde H^{s_1}}&= [q]^2_{s_1} +\int_\R L(x) q(x)\cdot q(x) \,dx\\
& \le [q]^2_{s_2} +\|q\|^2_{L^2(\R, \R^n)} +\int_\R L(x) q(x)\cdot q(x) \,dx\\
&\le [q]^2_{s_2} +\left(\frac{1}{\alpha}+1\right) \int_\R L(x) q(x)\cdot q(x) \,dx\\
&\le \left(\frac{1}{\alpha}+1\right) \|q\|^2_{\widetilde H^{s_2}},
\end{split}
\]
as desired.
\end{proof}

It is worth noting that the space~$ \widetilde H^s$ is continuously embedded in suitable Lebesgue spaces,  depending on where the parameter~$s$ falls within the fractional range~$(0, 1)$. Precisely, we have the next result:

\begin{lemma}\label{lemmaembeddcontinua}
Let~$\alpha$ be as in~\eqref{LMAT}, $s\in(0,1)$ and~$q\in \widetilde H^s$. 

Then, the following statements hold true:
\begin{enumerate}
\item[i)] if~$s\in (0, 1/2)$, there exists a positive constant~$\overline c$ depending only on~$n$, $s$ and~$\alpha$ such that
\begin{equation}\label{FIN1}
\|q\|_{L^{r}(\R, \R^n)}\le \overline c \,\|q\|_{ \widetilde H^s} \quad\mbox{ for any } r\in [2, 2^*_s],
\end{equation}
namely the space~$ \widetilde H^s$ is continuously embedded in~$L^r(\R, \R^n)$ for any~$r\in [2, 2^*_s]$;
\item[ii)] if~$s=1/2$, there exists a positive constant~$\widehat c$ depending only on~$n$, $s$ and~$\alpha$ such that
\begin{equation}\label{FIN3}
\|q\|_{L^r(\R, \R^n)}\le \widehat c \,\|q\|_{ \widetilde H^s} \quad\mbox{ for any } r\in [2, +\infty),
\end{equation}
i.e. the space~$ \widetilde H^s$ is continuously embedded in~$L^r(\R, \R^n)$ for any~$r\in [2, +\infty)$;
\item[iii)] if~$s\in (1/2,1)$, there exists a positive constant~$\widetilde c$ depending only on~$n$, $s$ and~$\alpha$ such that
\begin{equation}\label{FIN2}
\|q\|_{L^r(\R, \R^n)}\le \widetilde c \,\|q\|_{ \widetilde H^s} \quad\mbox{ for any } r\in [2, +\infty],
\end{equation}
i.e. the space~$ \widetilde H^s$ is continuously embedded in~$L^r(\R, \R^n)$ for any~$r\in [2, +\infty]$.
\end{enumerate}
\end{lemma}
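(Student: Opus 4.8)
The plan is to reduce each of the three cases to the corresponding fractional Sobolev embedding for the standard space $H^s(\R,\R^n)$ and then invoke the coercivity estimate~\eqref{EMB1}, which says $\|q\|^2_{H^s(\R,\R^n)}\le \frac{1}{\min\{\alpha,1\}}\|q\|^2_{\widetilde H^s}$. Thus, once we know $\|q\|_{L^r(\R,\R^n)}\le c\,\|q\|_{H^s(\R,\R^n)}$ for the relevant range of $r$, composing the two inequalities gives the claim with a constant depending only on $n$, $s$ and $\alpha$. So the real content is the scalar-valued embedding $H^s(\R)\hookrightarrow L^r(\R)$, applied componentwise.

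First, for case~i), $s\in(0,1/2)$: the Sobolev inequality $\|q\|_{L^{2^*_s}(\R,\R^n)}\le C(n,s)[q]_s$ (see, e.g., the reference~\cite{guidagalattica} already cited in the excerpt) holds, and since $2^*_s=\frac{2}{1-2s}$ one has $2<2^*_s$; interpolating between $L^2$ and $L^{2^*_s}$ — using $\|q\|_{L^r}\le \|q\|_{L^2}^{1-\vartheta}\|q\|_{L^{2^*_s}}^{\vartheta}$ for the appropriate $\vartheta=\vartheta(r)\in[0,1]$ — and then bounding each factor by $\|q\|_{H^s}$ (which controls both $\|q\|_{L^2}$ and $[q]_s$), one obtains~\eqref{FIN1} uniformly in $r\in[2,2^*_s]$, with $\overline c$ depending only on $n,s$. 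Combining with~\eqref{EMB1} finishes this case. For case~ii), $s=1/2$: here $H^{1/2}(\R)$ embeds into $L^r(\R)$ for every $r\in[2,+\infty)$; this is again classical, and after the same interpolation (between $L^2$ and any fixed large $L^{r_0}$, or directly from the known embedding) one gets~\eqref{FIN3}. One should note that the constant $\widehat c$ does \emph{not} blow up as $r\to+\infty$ is false — indeed $H^{1/2}$ does not embed into $L^\infty$ — so the statement is correctly restricted to $r<+\infty$; a clean way is to fix, for each $r$, the interpolation exponent and keep the dependence of $\widehat c$ on $r$ absorbed into the "depending only on $n,s,\alpha$" only after observing the estimate is in fact uniform on compact subsets of $[2,+\infty)$, which is all that is needed later. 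For case~iii), $s\in(1/2,1)$: now $H^s(\R)$ embeds into $C^{s-1/2}(\R)\subset L^\infty(\R)$ — this is the Morrey-type estimate $\|q\|_{L^\infty(\R,\R^n)}\le C(n,s)\,\|q\|_{H^s(\R,\R^n)}$, which follows from equation~(8.8) of~\cite[Theorem~8.2]{guidagalattica} together with the $L^2$ bound — and interpolating between $L^2$ and $L^\infty$ yields $\|q\|_{L^r}\le\|q\|_{L^2}^{2/r}\|q\|_{L^\infty}^{1-2/r}$ for all $r\in[2,+\infty]$, giving~\eqref{FIN2} after one more application of~\eqref{EMB1}.

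The step I expect to require the most care is case~ii), the endpoint $s=1/2$, because the embedding constant for $H^{1/2}(\R)\hookrightarrow L^r(\R)$ genuinely degenerates as $r\to+\infty$ (there is no $L^\infty$ bound), so one cannot state a single constant valid for \emph{all} $r\in[2,+\infty)$ in a scale-invariant way; the honest statement is that for each fixed $r$ there is such a constant depending on $n,s,\alpha$ (and, implicitly, on $r$), and the interpolation argument must be organized accordingly — e.g. by writing, for $r\ge 2$, $\|q\|_{L^r(\R)}^r = \int_\R |q|^{r-2}|q|^2 \le \|q\|_{L^\infty}^{r-2}\|q\|_{L^2}^2$ is \emph{not} available, so instead one uses the Fourier-side characterization of $\dot H^{1/2}$ and the Gagliardo–Nirenberg–Sobolev inequality in one dimension, or simply cites the known continuous embedding $H^{1/2}(\R)\hookrightarrow L^r(\R)$ for each $r\in[2,\infty)$ and feeds it through~\eqref{EMB1}. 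In all three cases, the multi-dimensional (i.e. $\R^n$-valued) statement is immediate from the scalar one by applying it to each component $q_i$ and summing, since $\|q\|_{L^r(\R,\R^n)}$ and $\big(\sum_i\|q_i\|_{L^r(\R)}^2\big)^{1/2}$ are comparable with constants depending only on $n$.
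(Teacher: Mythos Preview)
Your proposal is correct and follows essentially the same route as the paper: reduce to the standard fractional Sobolev embeddings for $H^s(\R,\R^n)$ (the paper cites Theorems~6.7, 6.10 and~8.2 of~\cite{guidagalattica} for cases i), ii), iii) respectively, while you spell out the interpolation a bit more explicitly) and then compose with~\eqref{EMB1}. Your observation that in case~ii) the constant must in fact depend on~$r$ is well taken; the paper's proof simply cites~\cite[Theorem~6.10]{guidagalattica} without commenting on this, so your more careful reading is, if anything, an improvement.
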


\begin{proof}
Notice that inequality~\eqref{EMB1} entails that the space~$ \widetilde H^s$ is continuously embedded~$H^s(\R, \R^n)$.

Let us consider the case~$s\in (0, 1/2)$. By~\cite[Theorem~6.7]{guidagalattica} and~\eqref{EMB1}, there exists a constant~$c_1$, depending on~$n$ and~$s$, such that
\[
\|q\|_{L^r(\R, \R^n)}\le c_1 \|q\|_{H^s(\R, \R^n)} \le \frac{c_1}{\min\{\sqrt{\alpha}, 1\}} \|q\|_{ \widetilde H^s},
\]
for any~$r\in [2, 2^*_s]$. Thus, \eqref{FIN1} holds true with~$\overline c:= c_1/\min\{\sqrt{\alpha}, 1\}$. 

Similarly, when~$s=1/2$, \eqref{FIN3} follows from~\eqref{EMB1} and~\cite[Theorem~6.10]{guidagalattica}.

We address the case~$s\in (1/2, 1)$. By~\cite[Theorem~8.2]{guidagalattica} we have that~$q\in L^\infty(\R, \R^n)$
and there exists a constant~$c_2$ depending only~$n$ and~$s$ such that
\[
\|q\|_{L^\infty(\R, \R^n)} \le c_2 \|q\|_{H^s(\R, \R^n)}.
\]
{F}rom this and the fact that~$q\in L^2(\R, \R^n)$, by interpolation we have that~$q\in L^r(\R, \R^n)$ for any~$r\in [2, +\infty]$ and
\begin{equation*}
\|q\|_{L^r(\R, \R^n)} \le c_3 \|q\|_{H^s(\R, \R^n)} \quad\mbox{ for any } r\in [2, +\infty],
\end{equation*}
with~$c_3>0$ depending only on~$n$ and~$s$.  

Hence, combining the latter inequality with~\eqref{EMB1}, we obtain that
\[
\|q\|_{L^r(\R, \R^n)} \le c_3 \|q\|_{H^s(\R, \R^n)}\le \frac{c_3}{\min\{\sqrt{\alpha}, 1\}} \|q\|_{ \widetilde H^s} \quad\mbox{ for any } r\in [2, +\infty],
\]
namely,~\eqref{FIN2} holds true
with~$\widetilde c:= c_3/\min\{\sqrt{\alpha}, 1\}$. This concludes the proof.
\end{proof}

In addition, we provide a convergence result for bounded sequences in~$ \widetilde H^s$.

\begin{proposition}\label{proposition_conv}
Let~$q^j$ be a bounded sequence in~$ \widetilde H^s$.

Then, the following statements hold true:
\begin{itemize}
\item[i)] if~$s\in (0, 1/2)$, there exists~$q\in L^r (\R, \R^n)$ such that, up to subsequences, 
\begin{equation}\label{SOTTOUNMEZZO}
q^j\to q \quad\mbox{ in } L^r_{\rm{loc}}(\R, \R^n)\quad \mbox{ for any } r\in [2, 2^*_s);
\end{equation}
\item[ii)] if~$s = 1/2$, there exists~$q\in L^r  (\R, \R^n)$ such that, up to subsequences, 
\begin{equation}\label{UGUALEUNMEZZO}
q^j\to q \quad\mbox{ in } L^r_{\rm{loc}}(\R, \R^n)\quad \mbox{ for any } r\in [2, +\infty);
\end{equation}
\item[iii)] if~$s\in (1/2, 1)$, there exists~$q\in L^r (\R, \R^n)$ such that, up to subsequences, 
\begin{equation}\label{SOPRAUNMEZZO}
q^j\to q \quad\mbox{ in } L^r_{\rm{loc}}(\R, \R^n)\quad \mbox{ for any } r\in [2, +\infty].
\end{equation}
\end{itemize}
\end{proposition}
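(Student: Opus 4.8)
The statement is a standard compactness result: a bounded sequence in $\widetilde H^s$ has a subsequence converging in $L^r_{\rm loc}$ for the appropriate range of exponents. The natural route is to combine the continuous embedding $\widetilde H^s \hookrightarrow H^s(\R,\R^n)$ (from~\eqref{EMB1}) with the classical local Rellich--Kondrachov-type compactness for fractional Sobolev spaces, and then upgrade the range of $r$ in the cases $s=1/2$ and $s\in(1/2,1)$ by interpolation with the already-established continuous embeddings of Lemma~\ref{lemmaembeddcontinua}.

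First I would observe that, since $q^j$ is bounded in $\widetilde H^s$, inequality~\eqref{EMB1} shows that $q^j$ is bounded in $H^s(\R,\R^n)$. Fix any $k\in\N$ and consider the restrictions to the bounded interval $(-k,k)$: the sequence is bounded in $H^s((-k,k),\R^n)$, so by the compact embedding $H^s((-k,k),\R^n)\hookrightarrow\hookrightarrow L^2((-k,k),\R^n)$ (see~\cite[Theorem~7.1]{guidagalattica}) there is a subsequence converging strongly in $L^2((-k,k),\R^n)$. A diagonal argument over $k$ produces a single subsequence, still denoted $q^j$, and a measurable limit $q$ such that $q^j\to q$ in $L^2_{\rm loc}(\R,\R^n)$, with $q$ belonging to $L^r(\R,\R^n)$ for the relevant range of $r$ by Lemma~\ref{lemmaembeddcontinua} applied to the limit (using weak lower semicontinuity of the $\widetilde H^s$-norm, or simply noting the uniform $L^r$ bounds pass to the a.e.\ limit via Fatou). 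This already gives case~i) for $r=2$, and the extension to $r\in[2,2^*_s)$ in case i) follows by interpolating $L^r_{\rm loc}$ between the strong $L^2_{\rm loc}$ convergence and the uniform bound in $L^{2^*_s}$ coming from~\eqref{FIN1}: on any bounded interval $B$, $\|q^j-q\|_{L^r(B)}\le \|q^j-q\|_{L^2(B)}^{1-\theta}\,\|q^j-q\|_{L^{2^*_s}(B)}^{\theta}$ with $\theta\in[0,1)$ determined by $\tfrac1r=\tfrac{1-\theta}{2}+\tfrac{\theta}{2^*_s}$, and the first factor tends to zero while the second stays bounded.

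For case~ii), the exact same interpolation works using the uniform bound in $L^{r_0}$ for arbitrarily large finite $r_0$ provided by~\eqref{FIN3}: given any $r\in[2,+\infty)$, pick $r_0>r$ and interpolate $L^r$ between $L^2$ and $L^{r_0}$ as above. For case~iii), one interpolates between strong $L^2_{\rm loc}$ convergence and the uniform bound in $L^\infty$ from~\eqref{FIN2}; since $\widetilde H^s\hookrightarrow L^\infty(\R,\R^n)$ when $s\in(1/2,1)$, the endpoint $r=+\infty$ is reached because $L^r_{\rm loc}$ convergence for all finite $r$ together with a uniform $L^\infty$ bound, combined with the Hölder continuity furnished by~\cite[Theorem~8.2]{guidagalattica} (uniform in $j$), actually gives local uniform convergence — alternatively, one can simply note that bounded-in-$\widetilde H^s$ implies bounded in $C^{s-1/2}_{\rm loc}$ uniformly in $j$, so Ascoli--Arzelà yields a locally uniformly convergent subsequence directly, which is the cleanest way to obtain the $r=+\infty$ case.

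**Main obstacle.** There is no deep difficulty here; the only points requiring care are (a) making the diagonal extraction clean so that a single subsequence works simultaneously on every bounded interval and for every admissible $r$, and (b) verifying that the limit $q$ indeed lies in the claimed global Lebesgue space $L^r(\R,\R^n)$ — this is where one must invoke lower semicontinuity of the $\widetilde H^s$-norm (so $q\in\widetilde H^s$) and then Lemma~\ref{lemmaembeddcontinua}, rather than merely $L^r_{\rm loc}$ information. Everything else is a routine interpolation between strong $L^2_{\rm loc}$ convergence and uniform higher-integrability (or $L^\infty$) bounds.
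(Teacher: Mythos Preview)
Your proposal is correct and follows essentially the same architecture as the paper: reduce to boundedness in $H^s(\R,\R^n)$ via~\eqref{EMB1}, invoke local compactness (the paper cites \cite[Corollary~7.2]{guidagalattica} directly for case~i), while you unpack it as \cite[Theorem~7.1]{guidagalattica} plus interpolation), use Ascoli--Arzel\`a in case~iii), and get $q\in L^r(\R,\R^n)$ from Lemma~\ref{lemmaembeddcontinua} plus Fatou.

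The one genuine methodological difference is in case~ii): you interpolate between strong $L^2_{\rm loc}$ convergence and the uniform $L^{r_0}$ bound from~\eqref{FIN3} for large finite $r_0$, whereas the paper instead lowers the fractional exponent, applying Lemma~\ref{S1S2} with $s_1=\tfrac12-\varepsilon$ and $s_2=\tfrac12$ to embed $\widetilde H^{1/2}$ into $\widetilde H^{1/2-\varepsilon}$, then invokes case~i) with critical exponent $2^*_{1/2-\varepsilon}=1/\varepsilon$ and sends $\varepsilon\searrow0$. Both routes are standard and short; your interpolation argument is arguably more direct since it avoids the auxiliary embedding lemma, while the paper's trick has the mild advantage of reusing case~i) as a black box.
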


\begin{proof}
Let~$q^j$ be a bounded sequence in~$ \widetilde H^s$. Hence, we see that~$q^j\in H^s(\R, \R^n)$ and, by inequality~\eqref{EMB1},
\[
\|q^j\|^2_{H^s(\R, \R^n)} \le\frac{1}{\min\{\alpha, 1\}} \|q^j\|^2_{ \widetilde H^s},
\]
which implies that~$q^j$ is bounded in~$H^s(\R, \R^n)$ as well. Now, if~$s\in (0, 1/2)$,  then~\eqref{SOTTOUNMEZZO} follows from~\cite[Corollary~7.2]{guidagalattica}.

Let us prove the statement in~$ii)$.
To this end, let~$q^j$ be a bounded sequence in~$ \widetilde H^{\frac12}$ and let~$\varepsilon\in(0,1/2)$. By applying Lemma~\ref{S1S2} with 
\[
s_1:= \frac12 -\varepsilon\qquad\mbox{and}\qquad s_2:=\frac12,
\]
we infer that~$q^j$ is bounded in~$\widetilde H^{\frac12-\varepsilon}$ as well. 
Hence, we can exploit the statement in~$i)$, thus obtaining that there exists~$q\in L^r_{\rm{loc}}(\R, \R^n)$ such that
\[
q^j\to q \quad\mbox{ in } L^r_{\rm{loc}}(\R)\quad \mbox{ for any } r\in \left[1, 2^*_{\frac12-\varepsilon}\right).
\]
Now, since
\[
2^*_{\frac12-\varepsilon} =\frac{2}{1-2\left(\frac12-\varepsilon\right)}= \frac{1}{\varepsilon},
\]
by sending~$\varepsilon\searrow 0$ we deduce~\eqref{UGUALEUNMEZZO}.

Finally, if~$s\in (1/2, 1)$, we have that~\eqref{SOPRAUNMEZZO} is a consequence of~\cite[Theorem~8.2]{guidagalattica} and Ascoli-Arzel\`a Theorem.

We remark that~$q\in L^r(\R,\R^n)$, since, by Lemma~\ref{lemmaembeddcontinua} and Fatou's Lemma, 
$$ \|q\|^r_{L^r(\R,\R^n)}\le \liminf_{j\to+\infty}\|q^j\|^r_{L^r(\R,\R^n)}\le C
\liminf_{j\to+\infty}\|q^j\|^r_{\widetilde{H}^s}\le C,$$
up to renaming~$C>0$.
\end{proof}

We now introduce the variational formulation of problem~\eqref{eqn2}. 
The energy functional~$I: \widetilde H^s\to\R$ associated with problem~\eqref{eqn2} is given by
\begin{equation}\label{FUN2}
I(q)=\frac{c_s}2 \iint_{\R^2} \frac{|q(x)-q(y)|^2}{|x-y|^{1+2s}} \,dx\, dy +\frac12\int_\R L(x) q(x)\cdot q(x) \,dx -\int_\R W(x, q(x)) \,dx.
\end{equation}
Moreover, it is easy to check that~$I\in C^1( \widetilde H^s, \R)$ and, for any~$q$, $\varphi\in \widetilde H^s$,
\begin{equation}\label{DIFF2}
\begin{split}
\langle I'(q),  \varphi\rangle &=c_s \iint_{\R^2} \frac{(q(x)-q(y))\cdot(\varphi(x)-\varphi(y))}{|x-y|^{1+2s}} \,dx\, dy +\int_\R L(x) q(x)\cdot \varphi(x) \,dx\\
&\qquad\qquad -\int_\R \nabla_q W(x, q(x))\cdot\varphi(x) \,dx.
\end{split}
\end{equation}

We now show that, in this framework, the geometry of the Mountain Pass Theorem is respected. 

It is worth noting that the proof of the next geometric property is influenced by the location of the exponent~$s$ within the fractional range~$(0, 1)$. 
Indeed, as stated in Lemma~\ref{lemmaembeddcontinua}, if~$s\in (1/2, 1)$, we can take advantage of the embedding into the space~$L^\infty(\R, \R^n)$, unlike the complementary case~$s\in (0, 1/2]$. 
For this reason, we distinguish two cases, according to whether~$s\in (0, 1/2]$ or~$s\in (1/2, 1)$.
Clearly, this has also influenced our choice in the assumptions on the potential~$W$, as it is evident from the next two results. 

\begin{proposition}\label{geom3}
Let~$s\in (0, 1/2]$. Let~$W\in C^1(\R\times\R^n, \R)$ satisfy~\eqref{LITTLEO} and~\eqref{INPIUNUOVA}.

Then, there exist~$\rho_1$, $\beta_1>0$ such that~$I(q)\ge \beta_1$
for any~$q\in  \widetilde H^s$ with~$\|q\|_{ \widetilde H^s}=\rho_1$.
\end{proposition}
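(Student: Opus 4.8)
The plan is to establish the Mountain Pass geometry near the origin by combining the control on the nonlinear term $W$ provided by Lemma~\ref{lemmaW1} with the Sobolev-type embedding of Lemma~\ref{lemmaembeddcontinua}. First I would fix a small parameter, say $\varepsilon:=\alpha/2$ (with $\alpha$ as in~\eqref{LMAT}), and invoke Lemma~\ref{lemmaW1} to obtain $\sigma=\sigma(\varepsilon)>0$ so that, for every $x\in\R$ and $q\in\R^n$,
\[
|W(x, q)|\le \frac{\varepsilon}{2}|q|^2 + \frac{\sigma}{p}|q|^p.
\]
Integrating this pointwise bound over $\R$ gives
\[
\int_\R W(x, q(x))\,dx \le \frac{\varepsilon}{2}\|q\|_{L^2(\R,\R^n)}^2 + \frac{\sigma}{p}\|q\|_{L^p(\R,\R^n)}^p,
\]
which is the key estimate on the "bad" term in the functional~\eqref{FUN2}.

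Next I would convert the Lebesgue norms into $\widetilde H^s$-norms. Since $s\in(0,1/2]$ and $p\in(2,2^*_s)$ by~\eqref{INPIUNUOVA}, Lemma~\ref{lemmaembeddcontinua} (cases i) and ii)) gives a constant $c=c(n,s,\alpha)$ with $\|q\|_{L^2}\le c\|q\|_{\widetilde H^s}$ and $\|q\|_{L^p}\le c\|q\|_{\widetilde H^s}$; note that when $s=1/2$ the range $[2,+\infty)$ indeed contains $p$, and when $s\in(0,1/2)$ the interval $[2,2^*_s]$ does too. Moreover $\|q\|_{L^2}^2\le \frac1\alpha\int_\R L(x)q\cdot q\,dx\le\frac1\alpha\|q\|_{\widetilde H^s}^2$, so the $L^2$ contribution can be absorbed directly with the sharp constant: choosing $\varepsilon=\alpha/2$ makes $\frac\varepsilon2\|q\|_{L^2}^2\le\frac14\|q\|_{\widetilde H^s}^2$. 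Plugging everything into~\eqref{FUN2} yields
\[
I(q)\ge \frac12\|q\|_{\widetilde H^s}^2 - \frac14\|q\|_{\widetilde H^s}^2 - \frac{\sigma c^p}{p}\|q\|_{\widetilde H^s}^p
= \frac14\|q\|_{\widetilde H^s}^2 - C_1\|q\|_{\widetilde H^s}^p,
\]
with $C_1:=\sigma c^p/p$.

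Finally, since $p>2$, the function $t\mapsto \frac14 t^2 - C_1 t^p$ is strictly positive for $t>0$ small; concretely, for $\rho_1>0$ small enough that $C_1\rho_1^{p-2}\le \frac18$ we get $I(q)\ge \frac18\rho_1^2=:\beta_1>0$ whenever $\|q\|_{\widetilde H^s}=\rho_1$. This proves the claim. I do not expect a serious obstacle here: the only point requiring a little care is making sure the exponent $p$ lies in the admissible embedding range for both cases $s\in(0,1/2)$ and $s=1/2$, which is exactly guaranteed by the hypothesis $p\in(2,2^*_s)$ in~\eqref{INPIUNUOVA} together with the definition~\eqref{2STARESSE}; and keeping the coefficient of $\|q\|_{\widetilde H^s}^2$ strictly positive after absorbing the quadratic part of $W$, which is why the flexibility of $\varepsilon$ in Lemma~\ref{lemmaW1} is used.
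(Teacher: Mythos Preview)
Your proof is correct and follows essentially the same approach as the paper: bound $W$ via Lemma~\ref{lemmaW1}, convert the $L^2$ and $L^p$ norms into $\widetilde H^s$-norms, absorb the quadratic part by a suitable choice of $\varepsilon$, and then pick $\rho_1$ small. The only cosmetic difference is that the paper uses the embedding constant $\overline c$ from Lemma~\ref{lemmaembeddcontinua} for both the $L^2$ and $L^p$ terms and sets $\varepsilon=1/(2\overline c^{2})$, whereas you handle the $L^2$ term directly via~\eqref{LMAT} and set $\varepsilon=\alpha/2$; either choice works.
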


\begin{proof}
Let~$q\in \widetilde H^s$. By~\eqref{FUN2}, \eqref{FIN1}, \eqref{FIN3} and Lemma~\ref{lemmaW1}
(recall that here~$p\in (2, 2^*_s)$), we infer that for any~$\varepsilon>0$,
\[
\begin{split}
I(q) & \ge \frac12 \|q\|^2_{ \widetilde H^s} -\frac{\varepsilon}{2} \int_\R |q|^2 \,dx -\frac{\sigma(\varepsilon)}{p}\int_\R |q|^p \,dx\\
& = \frac12 \|q\|^2_{ \widetilde H^s} -\frac{\varepsilon}{2} \|q\|^2_{L^2(\R, \R^n)} -\frac{\sigma(\varepsilon)}{p}\|q\|^p_{L^p(\R,\R^n)}\\
&\ge\frac12\big(1 -\varepsilon\, {\overline{c}}^2\big)\,\|q\|^2_{ \widetilde H^s} -\frac{\sigma(\varepsilon)\,{\overline{c}}^p}{p} \|q\|^p_{ \widetilde H^s}.
\end{split}
\]
Therefore, taking~$\varepsilon:=1/(2{\overline{c}}^{2})$, we deduce that
\[
I(q) \ge \frac14 \|q\|^2_{ \widetilde H^s} \left( 1-\frac{4\sigma}{p}
\|q\|^{p-2}_{ \widetilde H^s}\right),
\]
where now~$\sigma=\sigma(1/(2{\overline{c}}^{2}))$.

Now we choose~$\rho_1>0$ sufficiently small such that
\[
1-\frac{4\sigma}{p}\, \rho_1^{p-2}>0.
\]
In this way, if~$\|q\|_{ \widetilde H^s} =\rho_1$, we have that
\[ I(q)\ge \frac14 \rho_1^2 \left(1-\frac{4\sigma}{p}\, \rho_1^{p-2}\right)=:\beta_1 >0.
\]
Hence, the desired result is proved.
\end{proof}

\begin{proposition}\label{geom1}
Let~$s\in (1/2, 1)$. Let~$W\in C^1(\R\times\R^n, \R)$ satisfy~\eqref{LITTLEO} and~\eqref{AR}.

Then, there exist~$\rho_2$, $\beta_2>0$ such that~$I(q)\ge \beta_2$
for any~$q\in  \widetilde H^s$ with~$\|q\|_{ \widetilde H^s}=\rho_2$.
\end{proposition}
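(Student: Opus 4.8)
The plan mirrors that of Proposition~\ref{geom3}: one shows that near the origin the potential term in the energy~\eqref{FUN2} is negligible with respect to the quadratic part $\tfrac12\|q\|_{\widetilde H^s}^2$. The structural novelty in the range $s\in(1/2,1)$ is that we do not impose the polynomial growth condition~\eqref{INPIUNUOVA}, so $\int_\R W$ cannot be controlled by $L^p$ norms globally; instead we exploit the embedding $\widetilde H^s\hookrightarrow L^\infty(\R,\R^n)$ from Lemma~\ref{lemmaembeddcontinua}(iii). On a small sphere of $\widetilde H^s$ the functions are uniformly small, hence confined to the region $\{|q|\le\delta\}$ in which Lemma~\ref{lemmaW0} (which uses precisely~\eqref{LITTLEO} and~\eqref{AR}) gives the quadratic bound $0<W(x,q)<\varepsilon|q|^2$.

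In detail, I would first fix $\varepsilon:=\dfrac{1}{4\widetilde c^2}$, where $\widetilde c$ is the embedding constant appearing in~\eqref{FIN2}, and let $\delta=\delta(\varepsilon)>0$ be as in Lemma~\ref{lemmaW0}. Then I would set $\rho_2:=\delta/\widetilde c$. For any $q\in\widetilde H^s$ with $\|q\|_{\widetilde H^s}=\rho_2$, applying~\eqref{FIN2} with $r=+\infty$ yields $\|q\|_{L^\infty(\R,\R^n)}\le\widetilde c\,\rho_2=\delta$, so that $|q(x)|\le\delta$ for a.e.\ $x\in\R$. By Lemma~\ref{lemmaW0}, together with $W(x,0)=0$ from~\eqref{W0IN0}, we get $0\le W(x,q(x))\le\varepsilon|q(x)|^2$ for a.e.\ $x$; integrating and using~\eqref{FIN2} with $r=2$,
\[
\int_\R W(x,q(x))\,dx\le\varepsilon\,\|q\|_{L^2(\R,\R^n)}^2\le\varepsilon\,\widetilde c^2\,\|q\|_{\widetilde H^s}^2=\tfrac14\,\|q\|_{\widetilde H^s}^2 .
\]
Rewriting~\eqref{FUN2} via~\eqref{NORMH} as $I(q)=\tfrac12\|q\|_{\widetilde H^s}^2-\int_\R W(x,q(x))\,dx$ and inserting the last estimate then gives
\[
I(q)\ge\tfrac12\,\rho_2^2-\tfrac14\,\rho_2^2=\tfrac14\,\rho_2^2=:\beta_2>0 ,
\]
which is the desired conclusion.

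I do not expect any genuine obstacle here; the argument is short and self-contained once Lemma~\ref{lemmaembeddcontinua}(iii) and Lemma~\ref{lemmaW0} are available. The only points that require a little care are the order in which the constants are chosen — $\varepsilon$ first, which then determines $\delta$, and only afterwards $\rho_2=\delta/\widetilde c$ — and the observation that the whole scheme rests on the $L^\infty$ embedding valid for $s>1/2$, which is exactly why the complementary range $s\in(0,1/2]$ needs the extra hypothesis~\eqref{INPIUNUOVA} and the separate Proposition~\ref{geom3}.
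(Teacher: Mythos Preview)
Your proof is correct and follows essentially the same approach as the paper: both use the $L^\infty$ embedding~\eqref{FIN2} to confine $q$ to the region $\{|q|\le\delta\}$, apply Lemma~\ref{lemmaW0}, and then bound $\int_\R W$ by a small multiple of $\|q\|_{\widetilde H^s}^2$. The only cosmetic difference is that the paper controls $\|q\|_{L^2}^2$ via~\eqref{EMB1} (choosing $\varepsilon=\min\{\alpha,1\}/4$) whereas you use~\eqref{FIN2} with $r=2$ (choosing $\varepsilon=1/(4\widetilde c^2)$); both routes yield $I(q)\ge\tfrac14\rho_2^2$.
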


\begin{proof}
Let~$\alpha$ be as given by~\eqref{LMAT}
and take
\begin{equation}\label{EPS1}
\varepsilon:=\frac{\min\{\alpha, 1\}}{4}.
\end{equation}
By Lemma~\ref{lemmaW0}, there exists~$\delta>0$ such that, for any~$x\in\R$ and any~$q\in\R^n\setminus\{0\}$ with~$|q|\le\delta$,
\begin{equation}\label{OPICCOLO}
0<W(x, q)< \varepsilon |q|^2.
\end{equation}

Now, let~$\widetilde c$ be as in~\eqref{FIN2} and set~$
\rho_2:=\delta/\widetilde c$.
If~$q\in \widetilde H^s$ with~$\|q\|_{ \widetilde H^s} =\rho_2$, from the embedding result in~\eqref{FIN2} we deduce that, for a.e.~$x\in\R$,
\[ |q(x)|\le
\|q\|_{L^\infty(\R, \R^n)}\le \widetilde c\, \rho_2 = \delta.
\]
Accordingly, by~\eqref{OPICCOLO} and~\eqref{EMB1} we infer that
\begin{equation*}
\int_\R W(x, q(x)) \,dx < \varepsilon \int_\R |q(x)|^2 \,dx\le \frac{\varepsilon}{\min\{\alpha, 1\}} \,\|q\|_{ \widetilde H^s}
=\frac{\varepsilon}{\min\{\alpha, 1\}}\,\rho_2^2.
\end{equation*}
Recalling~\eqref{FUN2}, this and~\eqref{EPS1} entail that,
for all~$q\in \widetilde H^s$ with~$\|q\|_{ \widetilde H^s} =\rho_2$,
\begin{equation*}
I(q) =\frac12 \, \rho_2^2 -\int_\R W(x, q(x)) \,dx > \left(\frac12 - \frac{\varepsilon}{\min\{\alpha, 1\}}\right) \, \rho_2^2 =\frac14 \, \rho_2^2 =:\beta_2>0,
\end{equation*}
as desired. 
\end{proof}

Let~$\rho_1$ and~$\rho_2$ be as in Propositions~\ref{geom3} and~\ref{geom1}, respectively, and set
\begin{equation}\label{SPIJLndeiwohgeriujkghvnei89}
\rho:=\begin{cases}\displaystyle \rho_1 &{\mbox{ if }} s\in\left(0,\frac12\right], \\
\displaystyle  \rho_2 &{\mbox{ if }} s\in\left(\frac12,1\right).\end{cases}
\end{equation}
We now prove another geometric property of the functional~$I$. We notice that, contrary to what occurred in the previous result, in the present framework there is no need to distinguish between the cases~$s\in (0, 1/2]$ and~$s\in (1/2, 1)$.

\begin{proposition}\label{geom2}
Let~$s\in (0, 1)$ and~$\rho$ be as in~\eqref{SPIJLndeiwohgeriujkghvnei89}. Let~$W\in C^1(\R\times\R^n, \R)$ satisfy~\eqref{AR}.

Then, there exists~$q_0\in \widetilde H^s$ such that~$\|q_0\|_{ \widetilde H^s}>\rho$ and~$I(q_0)<0$.
\end{proposition}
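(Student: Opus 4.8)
The strategy is the standard Mountain Pass ``large ball'' construction: fix a direction and show that the energy $I$ becomes negative along a ray, with a norm exceeding $\rho$. Concretely, I would pick any fixed nonzero $\varphi\in C_0^\infty(\R,\R^n)\subset\widetilde H^s$ and evaluate $I(t\varphi)$ for $t>0$. Expanding via~\eqref{FUN2}, the quadratic part contributes $\frac{t^2}{2}\|\varphi\|_{\widetilde H^s}^2$, while the potential part is $-\int_\R W(x,t\varphi(x))\,dx$. The key input is the superquadratic growth coming from~\eqref{AR}: the bound~\eqref{WGEQ}, namely $W(x,q)\ge\omega_1(x)|q|^\mu$ for $|q|\ge1$, with $\omega_1(x)>0$ and $\mu>2$. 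Using this on the set where $|t\varphi(x)|\ge1$ (which, for $t$ large, is essentially the support of $\varphi$ minus a null-ish region), one gets $\int_\R W(x,t\varphi(x))\,dx\ge c\,t^\mu$ for a constant $c>0$ depending on $\varphi$ and on $\inf$ of $\omega_1$ over the compact support of $\varphi$ (which is positive by continuity). Hence $I(t\varphi)\le \frac{t^2}{2}\|\varphi\|_{\widetilde H^s}^2 - c\,t^\mu + (\text{lower order})$, and since $\mu>2$ this tends to $-\infty$ as $t\to+\infty$.

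The remaining work is bookkeeping to control the ``lower order'' terms and the region $\{|t\varphi|<1\}$. On the set where $|t\varphi(x)|<1$ one has $|\varphi(x)|<1/t$, so that part of the domain of integration has measure bounded by $|\{|\varphi|\ge 1/t\}|\to 0$, but more simply one just notes $W\ge0$ there by~\eqref{AR} (since $\mu W(x,q)\le\nabla_q W(x,q)\cdot q$ together with positivity forces $W(x,q)>0$ for $q\ne0$, and $W(x,0)=0$), so discarding it only helps the upper bound on $I$. Thus in fact $\int_\R W(x,t\varphi)\,dx\ge \int_{\{|t\varphi|\ge1\}}\omega_1(x)|t\varphi(x)|^\mu\,dx = t^\mu\int_{\{|\varphi|\ge1/t\}}\omega_1(x)|\varphi(x)|^\mu\,dx$, and by monotone convergence the integral on the right converges to $\int_{\{\varphi\ne0\}}\omega_1(x)|\varphi(x)|^\mu\,dx=:2c>0$ as $t\to+\infty$; hence for $t$ large it is $\ge c$, giving $\int_\R W(x,t\varphi)\,dx\ge c\,t^\mu$. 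Therefore $I(t\varphi)\le\frac{t^2}{2}\|\varphi\|_{\widetilde H^s}^2 - c\,t^\mu\to-\infty$.

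Finally, choose $t_*$ large enough that simultaneously $t_*\|\varphi\|_{\widetilde H^s}>\rho$ and $I(t_*\varphi)<0$; this is possible because the first condition is met for all $t$ past a threshold and the second holds for all large $t$ by the displayed estimate. Setting $q_0:=t_*\varphi$ yields the claim. I do not expect a genuine obstacle here: the only mild subtlety is to make sure $\varphi$ has nonempty support so that $\int\omega_1|\varphi|^\mu>0$ (automatic for $\varphi\not\equiv0$) and to invoke monotone/dominated convergence correctly for the truncated integral; everything else is elementary. One should also note the proof is uniform in whether $s\in(0,1/2]$ or $s\in(1/2,1)$, consistent with the statement, since only~\eqref{AR} (hence~\eqref{WGEQ}) and the Hilbert structure of $\widetilde H^s$ are used, not any embedding into $L^\infty$.
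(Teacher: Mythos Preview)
Your proposal is correct and follows essentially the same route as the paper: both use the superquadratic lower bound~\eqref{WGEQ} coming from~\eqref{AR} to show $I(t\varphi)\to-\infty$ along a ray in~$\widetilde H^s$, and then pick $t$ large. The only cosmetic difference is that the paper chooses a specific test function $q_\diamondsuit$ with $|q_\diamondsuit|=1$ on $(-1,1)$, which lets~\eqref{WGEQ} apply directly for all $t\ge1$ and bypasses your monotone convergence step, but this is not a substantive distinction.
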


\begin{proof}
Let~$t\ge1$ and
\begin{equation}\label{OVERT0}
{\mbox{$q_\diamondsuit\in \widetilde H^s$ such that~$|q_\diamondsuit|=1$ in~$(-1,1)$.}}\end{equation}
By~\eqref{FUN2}, \eqref{NORMH} and~\eqref{WGEQ}, we infer that
\[
\begin{split}
I(t q_\diamondsuit) &= \frac{c_s t^2}{2} \iint_{\R^2}\frac{|q_\diamondsuit(x)-q_\diamondsuit(y)|^2}{|x-y|^{1+2s}} \,dx\, dy +\frac{t^2}{2} \int_\R L(x) q_\diamondsuit(x)\cdot q_\diamondsuit(x) \,dx -\int_\R W(x, tq_\diamondsuit(x)) \,dx\\
&\le \frac{t^2}{2}\|q_\diamondsuit\|_{ \widetilde H^s}^2
- t^\mu \int_{-1}^1\omega_1(x) |q_\diamondsuit(x)|^\mu \,dx\\
&= \frac{t^2}{2}\|q_\diamondsuit\|_{ \widetilde H^s}^2
- t^\mu \int_{-1}^1\omega_1(x)\,dx,
\end{split}
\]
with~$\omega_1(x)$ as in~\eqref{omega12}. 

Let now
\begin{equation}\label{OVERT}
\overline t:= \max\left\lbrace \rho, \, \left(\frac2{\|q_\diamondsuit\|_{ \widetilde H^s}^2}\int_{-1}^1 \omega_1(x)\,dx\right)^{\frac1{2-\mu}}\right\rbrace+1.
\end{equation}
Thus, taking~$q_0:= \frac{\overline t q_\diamondsuit}{\|q_\diamondsuit\|_{ \widetilde H^s}   }$, we have that
\[
\|q_0\|_{ \widetilde H^s}>\rho \qquad\mbox{and} \qquad I(q_0) < 0,
\]
as desired.
\end{proof}

Now we obtain a uniform bound for sequences approaching a given energy level.
This will be a useful ingredient in combination with the elliptic regularity theory,
in order to allow for a suitable bootstrap argument and provide uniform estimates on the
solutions of the main results.

\begin{proposition}\label{PROPBOUNDED} Let~$s\in(0,1)$.
Let~$W\in C^1(\R\times\R^n, \R)$ satisfy~\eqref{AR}. Let~$c\in\R$ and let~$q^j$ be a sequence in~$ \widetilde H^s$ satisfying
\begin{equation}\label{C01}
\lim_{j\to +\infty} I(q^j) = c
\end{equation}
and
\begin{equation}\label{C02}
\lim_{j\to +\infty} \sup_{{\varphi\in \widetilde H^s}\atop{ \|\varphi\|_{ \widetilde H^s}=1}} |\langle I'(q^j), \varphi\rangle| =0.
\end{equation}

Then, $c\ge0$ and~$q^j$ is bounded in~$ \widetilde H^s$.

In particular, there exists a positive constant~$c_1$, depending on~$c$ and~$\mu$, such that
\begin{equation}\label{LIMSUP}
\limsup_{j\to +\infty} \|q^j\|_{ \widetilde H^s}\le c_1.
\end{equation}
Explicitly, one can take
\begin{equation}\label{STIMA1}
c_1:=  \left( 
\frac{2\mu \,c}{\mu-2} \right)^{1/2}.
\end{equation}
\end{proposition}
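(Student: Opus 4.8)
The plan is to run the classical Ambrosetti--Rabinowitz boundedness argument for Palais--Smale-type sequences, adapted to the functional~$I$ in~\eqref{FUN2}. The starting point is the algebraic identity obtained by combining~\eqref{FUN2} and~\eqref{DIFF2}: for every~$q\in\widetilde H^s$,
\[
I(q)-\frac1\mu\langle I'(q),q\rangle=\left(\frac12-\frac1\mu\right)\|q\|_{\widetilde H^s}^2+\int_\R\left(\frac1\mu\nabla_q W(x,q(x))\cdot q(x)-W(x,q(x))\right)dx .
\]
By the Ambrosetti--Rabinowitz condition~\eqref{AR}, the integrand on the right-hand side is nonnegative (it vanishes where~$q=0$ and is~$\ge0$ where~$q\ne0$, since there~$\nabla_q W(x,q)\cdot q\ge\mu W(x,q)$), so that
\[
I(q^j)-\frac1\mu\langle I'(q^j),q^j\rangle\ge\frac{\mu-2}{2\mu}\,\|q^j\|_{\widetilde H^s}^2 .
\]

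Next I would estimate the left-hand side. By~\eqref{C01} we have~$I(q^j)=c+o(1)$; and, testing~\eqref{C02} with~$\varphi:=q^j/\|q^j\|_{\widetilde H^s}$ when~$q^j\ne0$ (the case~$q^j=0$ being trivial), one gets~$|\langle I'(q^j),q^j\rangle|\le\e_j\,\|q^j\|_{\widetilde H^s}$, where~$\e_j:=\sup_{\|\varphi\|_{\widetilde H^s}=1}|\langle I'(q^j),\varphi\rangle|\to0$. Writing~$t_j:=\|q^j\|_{\widetilde H^s}$, the two displays above combine into the quadratic inequality
\[
\frac{\mu-2}{2\mu}\,t_j^2\le c+\frac{\e_j}{\mu}\,t_j+o(1) .
\]
Since~$\mu>2$, the coefficient of~$t_j^2$ is strictly positive, so this forces~$\{t_j\}$ to be bounded: if a subsequence had~$t_j\to+\infty$, the left-hand side would grow like~$t_j^2$ while the right-hand side grows at most linearly in~$t_j$, a contradiction.

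Once boundedness of~$\{t_j\}$ is secured, the term~$\e_j t_j/\mu$ becomes~$o(1)$, so that~$\frac{\mu-2}{2\mu}t_j^2\le c+o(1)$. Taking the liminf yields~$\frac{\mu-2}{2\mu}\liminf_j t_j^2\le c$, and since the left-hand side is nonnegative this gives~$c\ge0$; taking instead the limsup yields~$\frac{\mu-2}{2\mu}\limsup_j t_j^2\le c$, i.e.~$\limsup_j\|q^j\|_{\widetilde H^s}\le\big(2\mu c/(\mu-2)\big)^{1/2}$, which is precisely~\eqref{LIMSUP} with the explicit constant~\eqref{STIMA1}. I do not expect any genuine obstacle here; the only point requiring a little care is the bookkeeping of the~$o(1)$ and~$\e_j t_j$ contributions — one must first deduce the boundedness of~$\{t_j\}$ before absorbing~$\e_j t_j$ into~$o(1)$, and only afterwards extract the sharp constant by passing to the limsup.
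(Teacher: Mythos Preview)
Your proof is correct and follows essentially the same approach as the paper: both combine the Ambrosetti--Rabinowitz condition~\eqref{AR} with the identity linking~$I(q^j)$ and~$\langle I'(q^j),q^j\rangle$ to obtain a quadratic-versus-linear inequality in~$\|q^j\|_{\widetilde H^s}$, deduce boundedness, and then pass to the limit to extract the explicit constant~\eqref{STIMA1}. The only differences are cosmetic (you work with~$I-\tfrac1\mu\langle I',\cdot\rangle$ and~$o(1)$ notation, while the paper uses~$\mu I-\langle I',\cdot\rangle$ and an explicit~$\varepsilon$).
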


\begin{proof}
Let~$\varepsilon>0$. By~\eqref{C01} and~\eqref{C02} there exists~$\overline j=\overline j(\varepsilon)\in\N$ such\footnote{Note that we are assuming here that~$\|q^j\|_{ \widetilde H^s}\neq0$, otherwise we are done.}that, for any~$j\ge\overline j$,
\[
|I(q^j)-c|\le \varepsilon \qquad\mbox{and}\qquad \left|\left\langle I'(q^j), \frac{q^j}{\|q^j\|_{ \widetilde H^s}} \right\rangle \right|\le\varepsilon.
\]
In light of this, recalling~\eqref{FUN2} and~\eqref{DIFF2}, we find that
\begin{equation}\label{II}
\mu I(q^j) - \langle I'(q^j), q^j\rangle 
\le \mu(c+\varepsilon)+\varepsilon\|q^j\|_{ \widetilde H^s}.
\end{equation}

On the other hand, exploiting the assumption in~\eqref{AR}, we see that
\begin{equation*}
\begin{split}
\mu I(q^j) - \langle I'(q^j), q^j\rangle &=\left(\frac{\mu}{2} -1\right) \|q^j\|^2_{ \widetilde H^s} +\int_\R \Big(\nabla_q W(x, q(x))\cdot q(x) -\mu W(x, q(x)) \Big) \,dx\\
&\ge \left(\frac{\mu}{2} -1\right) \|q^j\|^2_{ \widetilde H^s}
\end{split}
\end{equation*}
Hence, combining this with~\eqref{II}, we deduce that, for any~$j\ge\overline j$,
\begin{equation}\label{BDD1}
\|q^j\|^2_{ \widetilde H^s}\le  \frac{2\mu(c+\varepsilon)}{\mu-2}+\frac{2\varepsilon}{\mu-2}\|q^j\|_{ \widetilde H^s}
.\end{equation}
This proves that~$q^j$ is bounded in~$ \widetilde H^s$.

We now want to be more accurate and find an explicit upper bound for~$\|q^j\|_{ \widetilde H^s}$. For this, we send~$\varepsilon\searrow 0$ in~\eqref{BDD1} and we find that 
\[
\limsup_{j\to +\infty}\|q^j\|_{ \widetilde H^s}\le \left( 
\frac{2\mu \,c}{\mu-2} \right)^{1/2}.
\]
This gives that~$c\ge0$ and establishes the bound in~\eqref{LIMSUP},
as desired.
\end{proof}

We now show that if~$q^j$ is a bounded sequence in~$ \widetilde H^s$ verifying~\eqref{C02}, then it converges to a critical point of the functional in~\eqref{FUN2}, namely the following result holds true:

\begin{proposition}\label{PROPOINT1}
Let~$s\in (0, 1)$ and let~$W\in C^1(\R\times\R^n, \R)$. 
If~$s\in(0,1/2]$ suppose in addition that~$W$ that satisfies~\eqref{INPIUNUOVA}.

Let~$q^j$ be a bounded sequence in~$ \widetilde H^s$.

Then, there exists~$q\in \widetilde H^s$ such that, for any~$\varphi\in C_0^\infty(\R, \R^n)$,
\begin{equation}\label{P1}
\lim_{j\to+\infty}
\langle I'(q^j), \varphi\rangle = \langle I'(q), \varphi\rangle .
\end{equation}

In addition, if the sequence~$q^j$ verifies~\eqref{C02}, then~$q$ is a critical point of the functional~$I$.
\end{proposition}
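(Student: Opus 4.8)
The plan is to extract a weak limit from the bounded sequence, identify it as the candidate critical point $q$, and then pass to the limit in each of the three terms defining $\langle I'(q^j),\varphi\rangle$ in~\eqref{DIFF2}. Since $q^j$ is bounded in the Hilbert space $\widetilde H^s$, by weak compactness there exists $q\in\widetilde H^s$ such that, up to a subsequence, $q^j\rightharpoonup q$ weakly in $\widetilde H^s$. By Proposition~\ref{proposition_conv}, the same subsequence converges to $q$ strongly in $L^r_{\rm loc}(\R,\R^n)$ for $r$ in the appropriate range (and, passing to a further subsequence, also pointwise a.e.\ in $\R$). Fix $\varphi\in C_0^\infty(\R,\R^n)$ and let $K:=\operatorname{supp}\varphi$, a compact set.

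The first term in~\eqref{DIFF2} is the bilinear form $\langle q^j,\varphi\rangle$ associated with the Gagliardo seminorm; since $\varphi$ is a fixed element of $\widetilde H^s$ and this bilinear form is continuous on $\widetilde H^s\times\widetilde H^s$, weak convergence $q^j\rightharpoonup q$ gives
\[
c_s\iint_{\R^2}\frac{(q^j(x)-q^j(y))\cdot(\varphi(x)-\varphi(y))}{|x-y|^{1+2s}}\,dx\,dy
\longrightarrow
c_s\iint_{\R^2}\frac{(q(x)-q(y))\cdot(\varphi(x)-\varphi(y))}{|x-y|^{1+2s}}\,dx\,dy.
\]
Similarly, $\int_\R L(x)q^j(x)\cdot\varphi(x)\,dx$ is a continuous linear functional of $q^j$ on $\widetilde H^s$ (indeed it equals $\langle q^j,\varphi\rangle_{\widetilde H^s}$ minus the Gagliardo piece, or can be seen directly via~\eqref{LMAT} and Cauchy--Schwarz), hence it converges to $\int_\R L(x)q(x)\cdot\varphi(x)\,dx$. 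The only term requiring real work is the nonlinear one, $\int_\R\nabla_q W(x,q^j(x))\cdot\varphi(x)\,dx=\int_K\nabla_q W(x,q^j(x))\cdot\varphi(x)\,dx$.

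For the nonlinear term I would argue by dominated convergence on $K$. Pointwise a.e.\ convergence $q^j\to q$ on $K$ together with the continuity of $\nabla_q W$ gives $\nabla_q W(x,q^j(x))\cdot\varphi(x)\to\nabla_q W(x,q(x))\cdot\varphi(x)$ a.e.\ on $K$. For an integrable majorant I split into the cases of the exponent. When $s\in(1/2,1)$, Lemma~\ref{lemmaembeddcontinua}(iii) and the boundedness of $q^j$ in $\widetilde H^s$ give a uniform $L^\infty$ bound $\|q^j\|_{L^\infty}\le M$, so $|\nabla_q W(x,q^j(x))|$ is bounded uniformly on $K$ by $\sup_{x\in\R,|q|\le M}|\nabla_q W(x,q)|$, which is finite by continuity of $\nabla_q W$; then $\mathbf 1_K(x)\,(\text{const})\,|\varphi(x)|$ is the desired majorant. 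When $s\in(0,1/2]$, I use~\eqref{INPIUNUOVA} (equivalently~\eqref{INPIU}) to bound $|\nabla_q W(x,q^j(x))|\le a_1+a_2|q^j(x)|^{p-1}$ with $p\in(2,2^*_s)$; since $q^j\to q$ strongly in $L^{p}_{\rm loc}$ (as $p<2^*_s$, this is within the range in~\eqref{SOTTOUNMEZZO}--\eqref{UGUALEUNMEZZO}), the sequence $|q^j|^{p-1}$ is equi-integrable on $K$, and Vitali's convergence theorem applies; alternatively one passes to a further subsequence dominated by a fixed $L^{p/(p-1)}(K)$ function and uses Hölder with $\varphi\in L^\infty$. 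Either way,
\[
\int_\R\nabla_q W(x,q^j(x))\cdot\varphi(x)\,dx\longrightarrow\int_\R\nabla_q W(x,q(x))\cdot\varphi(x)\,dx,
\]
which combined with the two linear limits above establishes~\eqref{P1}.

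Finally, suppose $q^j$ also satisfies~\eqref{C02}. Fix $\varphi\in C_0^\infty(\R,\R^n)$; if $\varphi\equiv0$ there is nothing to prove, so assume $\varphi\ne0$ and set $\psi:=\varphi/\|\varphi\|_{\widetilde H^s}$, which has unit norm. Then $|\langle I'(q^j),\varphi\rangle|=\|\varphi\|_{\widetilde H^s}\,|\langle I'(q^j),\psi\rangle|\le\|\varphi\|_{\widetilde H^s}\sup_{\|\eta\|_{\widetilde H^s}=1}|\langle I'(q^j),\eta\rangle|\to0$ by~\eqref{C02}. Combining this with~\eqref{P1} yields $\langle I'(q),\varphi\rangle=0$ for every $\varphi\in C_0^\infty(\R,\R^n)$; since $C_0^\infty(\R,\R^n)$ is dense in $\widetilde H^s$ by Proposition~\ref{DENNUO:AS0} and $I'(q)\in(\widetilde H^s)^*$, we conclude $I'(q)=0$, i.e.\ $q$ is a critical point of $I$. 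The main obstacle is the passage to the limit in the nonlinear term: one must choose the subsequence and the local integrability exponent carefully so that the growth control~\eqref{INPIUNUOVA} (for $s\le1/2$) or the $L^\infty$-embedding (for $s>1/2$) produces a legitimate dominating function on the compact support of $\varphi$.
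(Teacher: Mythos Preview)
Your argument is essentially the paper's own proof: weak compactness in the Hilbert space~$\widetilde H^s$ handles the two linear terms via~\eqref{WEAK1}, and the nonlinear term is treated by dominated convergence on~$K=\operatorname{supp}\varphi$, splitting on whether~$s>1/2$ (use the~$L^\infty$-bound) or~$s\le1/2$ (use~\eqref{INPIU} and an~$L^p$-dominating function). Two small points deserve tightening. First, in the case~$s\in(1/2,1)$ you bound~$|\nabla_q W(x,q^j(x))|$ by~$\sup_{x\in\R,\,|q|\le M}|\nabla_q W(x,q)|$ and assert this is finite ``by continuity''; that fails in general because~$\R\times\overline{B_M}$ is not compact. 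You only need the supremum over~$K\times\overline{B_M}$, which is compact, and that is exactly what the paper uses. Second, you write that Proposition~\ref{proposition_conv} gives~$q^j\to q$ in~$L^r_{\rm loc}$, but that proposition only produces convergence to \emph{some} limit~$\overline q$; one must check~$\overline q=q$. The paper singles this out as claim~\eqref{f847t5467tyreoghrwoli987654} and devotes Appendix~\ref{f847t5467tyreoghrwoli987654SEC} to it, though a shorter route is available: weak convergence in~$\widetilde H^s$ implies weak convergence in~$L^2(\R,\R^n)$ (the~$L^2$-pairing is a bounded linear functional on~$\widetilde H^s$ by~\eqref{EMB1}), and strong~$L^2_{\rm loc}$-convergence to~$\overline q$ identifies the limits against compactly supported test functions.
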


\begin{proof}
We know that~$ \widetilde H^s$ is a Hilbert space, thanks to Lemma~\ref{LemmaHilbert}.
Therefore, since~$q^j$ is a bounded sequence in~$ \widetilde H^s$, there exists~$q\in \widetilde H^s$ such that~$q^j$ weakly converges to~$q$
in~$\widetilde H^s$ as~$j\to+\infty$.
Namely, for any~$\varphi\in C_0^\infty(\R, \R^n)$,
\begin{equation}\label{WEAK1}
\begin{split}\lim_{j\to+\infty}
&c_s \iint_{\R^2} \frac{(q^j(x)-q^j(y))\cdot(\varphi(x)-\varphi(y))}{|x-y|^{1+2s}} \,dx\, dy +\int_\R L(x) q^j(x)\cdot\varphi(x) \,dx\\
&\quad =c_s \iint_{\R^2} \frac{(q(x)-q(y))\cdot(\varphi(x)-\varphi(y))}{|x-y|^{1+2s}} \,dx\, dy +\int_\R L(x) q(x)\cdot\varphi(x) \,dx.
\end{split}
\end{equation}

Furthermore, Proposition~\ref{proposition_conv} gives that there exists~$\overline{q}\in L^{2} (\R,\R^n)$ such that, up to a subsequence, 
\begin{equation}\label{conv2}
q^j \to \overline q \quad\mbox{in~$ L^{1}_{\rm{loc}}(\R,\R^n)$ and a.e. in } \R.
\end{equation}
We point out that
\begin{equation}\label{f847t5467tyreoghrwoli987654}
q=\overline q.
\end{equation} 
Not to interrupt the flow of the argument, we postpone the proof of~\eqref{f847t5467tyreoghrwoli987654}
to Appendix~\ref{f847t5467tyreoghrwoli987654SEC}.

As a consequence of~\eqref{conv2} and~\eqref{f847t5467tyreoghrwoli987654}, we have that
\begin{equation}\label{conv2BIS}
q^j \to q \quad\mbox{in~$ L^{1}_{\rm{loc}}(\R,\R^n)$ and a.e. in } \R.
\end{equation}

Now we claim that, for any~$\varphi\in C_0^\infty(\R, \R^n)$,
\begin{equation}\label{WTOPROVE}\lim_{j\to+\infty}
\int_\R \nabla_q W(x, q^j(x))\cdot\varphi(x) \,dx = \int_\R \nabla_q W(x, q(x))\cdot\varphi(x) \,dx.
\end{equation}
For this, let~$\varphi\in C_0^\infty(\R, \R^n)$ and
set~$K:=\mbox{supp}(\varphi)$. In this way, we have that
\begin{equation}\label{nour8576dgd35Awqru}
\int_\R \nabla_q W(x, q^j(x))\cdot\varphi(x) \,dx = \int_K \nabla_q W(x, q^j(x))\cdot\varphi(x) \,dx.
\end{equation}
Moreover, since~$W\in C^1(\R\times\R^n, \R)$, by~\eqref{conv2BIS},
\begin{equation}\label{CONVK}\lim_{j\to+\infty}
\nabla_q W(x, q^j(x)) = \nabla_q W(x, q(x)) \quad\mbox{ for a.e. }x\in K.
\end{equation}

We deal now with the cases~$s\in (0,1/2]$ and~$s\in (1/2, 1)$ separately.
Let us consider first the case~$s\in (1/2, 1)$. By~\eqref{SOPRAUNMEZZO} and~\eqref{f847t5467tyreoghrwoli987654},
we get that~$q^j\to q$ in~$L^\infty(K)$ as~$j\to +\infty$. Thus, setting 
\[
M:= \sup_{j\in\N}\|q^j-q\|_{L^\infty(K, \R^n)}, 
\]
we obtain that
\[
\|q^j\|_{L^\infty(K, \R^n)}\le \|q^j-q\|_{L^\infty(K, \R^n)} +\|q\|_{L^\infty(K, \R^n)}\le M +\|q\|_{L^\infty(K, \R^n)}. 
\]
Now, let~$R:=M+\|q\|_{L^\infty(K, \R^n)}$. Hence, for a.e.~$x\in K$,
\[
\left| \nabla_q W(x, q^j(x)) \cdot\varphi(x)\right| \le \|\varphi\|_{L^\infty(K, \R^n)} \sup_{(y, q)\in K\times \overline{B_R}} \left| \nabla_q W(y, q) \right|.
\]
Since the right hand side is bounded uniformly in~$j\in\N$, by~\eqref{nour8576dgd35Awqru},
\eqref{CONVK} and the Dominated Convergence Theorem, we infer that~\eqref{WTOPROVE} holds true. 

If~$s\in (0, 1/2]$, by~\eqref{SOTTOUNMEZZO}, \eqref{UGUALEUNMEZZO} and~\eqref{f847t5467tyreoghrwoli987654},
we have that~$q^j\to q$ in~$L^p(K,\R^n)$ for any~$p\in [1, 2^*_s)$. In addition, there exists~$h\in L^p (K,\R^n)$ such that 
\[
|q^j(x)|\le |h(x)| \quad\mbox{ a.e. in~$K$, for any~$j\in\N$}.
\]
Hence, by~\eqref{INPIU},
\begin{eqnarray*}&&
|\nabla_q W(x, q^j(x))|\le a_1+a_2 |q^j(x)|^{p-1} \le a_1+a_2 |h(x)|^{p-1}
\end{eqnarray*}
{F}rom this, \eqref{nour8576dgd35Awqru}, \eqref{CONVK} and the Dominated Convergence Theorem, we obtain that~\eqref{WTOPROVE} holds true
in this case as well.

Thus, combining~\eqref{DIFF2},
\eqref{WEAK1} and~\eqref{WTOPROVE}, we have that~\eqref{P1} is satisfied.

In addition, if also~\eqref{C02} holds, we have that
\[ \lim_{j\to +\infty}
\langle I'(q^j), \varphi\rangle = 0 .
\]
This, together with~\eqref{P1}, gives that~$q^j$ converges to a critical point to the functional~$I$, as desired.
\end{proof}

As a useful step towards the proofs of Theorems~\ref{maintheorem2} and~\ref{maintheorem3}, we provide the following result (see~\cite[Proposition~5.14]{Rab2} for the classical counterpart):

\begin{lemma}\label{lemma_Rab}
Let~$K>0$ and let~$q^j$ be a bounded sequence in~$ \widetilde H^s$ such that
\begin{equation}\label{L2K}
\mbox{$q^j\to 0\;$ in $L^2([-K, K], \R^n)\;$ as~$j\to+\infty$.}
\end{equation}

Then,
\[
\limsup_{j\to +\infty} \|q^j\|^2_{L^2(\R, \R^n)} \le \frac{\gamma}{\beta(K)},
\]
where
\begin{equation}\label{BKDEFN} \gamma:= \sup_{j\in\N} \|q^j\|^2_{ \widetilde H^s}\qquad {\mbox{and}}\qquad
\beta(K):= \inf_{\substack{|\xi|=1\\ |x|> K}} L(x) \xi\cdot\xi.
\end{equation}
\end{lemma}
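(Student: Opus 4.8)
The plan is to decompose the $L^2(\R,\R^n)$ norm of $q^j$ into its contribution on the compact interval $[-K,K]$, which is controlled directly by the hypothesis~\eqref{L2K}, and its contribution on the complement $\R\setminus[-K,K]$, where the uniform positivity of $L$ provided by~\eqref{LMAT} and~\eqref{AUTOVAL} lets us bound the Lebesgue norm by the weighted quadratic form appearing in $\|\cdot\|_{\widetilde H^s}$.

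First I would observe that $\beta(K)$ in~\eqref{BKDEFN} is strictly positive: by~\eqref{LMAT} one has $L(x)\xi\cdot\xi\ge\alpha$ for every $x\in\R$ and every $\xi$ with $|\xi|=1$, hence $\beta(K)\ge\alpha>0$. In particular, for every $x$ with $|x|>K$ and every $q\in\R^n\setminus\{0\}$, writing $\xi:=q/|q|$ and using homogeneity, $L(x)q\cdot q=|q|^2\,L(x)\xi\cdot\xi\ge\beta(K)\,|q|^2$, and this trivially also holds for $q=0$. Moreover, again by~\eqref{LMAT}, the integrand $x\mapsto L(x)q^j(x)\cdot q^j(x)$ is nonnegative on all of $\R$.

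Next I would write, for each $j$,
\[
\|q^j\|^2_{L^2(\R,\R^n)}=\|q^j\|^2_{L^2([-K,K],\R^n)}+\int_{\R\setminus[-K,K]}|q^j(x)|^2\,dx
\le \|q^j\|^2_{L^2([-K,K],\R^n)}+\frac{1}{\beta(K)}\int_{\R\setminus[-K,K]}L(x)q^j(x)\cdot q^j(x)\,dx,
\]
using on $\R\setminus[-K,K]$ the pointwise inequality from the previous step. Since the integrand is nonnegative, enlarging the domain of integration to all of $\R$ and then discarding the (nonnegative) Gagliardo seminorm term in~\eqref{NORMH} only increase the right-hand side, so
\[
\|q^j\|^2_{L^2(\R,\R^n)}\le \|q^j\|^2_{L^2([-K,K],\R^n)}+\frac{1}{\beta(K)}\int_{\R}L(x)q^j(x)\cdot q^j(x)\,dx
\le \|q^j\|^2_{L^2([-K,K],\R^n)}+\frac{1}{\beta(K)}\,\|q^j\|^2_{\widetilde H^s}.
\]
By~\eqref{BKDEFN} the second summand is at most $\gamma/\beta(K)$ uniformly in $j$, while by~\eqref{L2K} the first summand tends to $0$ as $j\to+\infty$. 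Taking $\limsup_{j\to+\infty}$ then yields $\limsup_{j\to+\infty}\|q^j\|^2_{L^2(\R,\R^n)}\le\gamma/\beta(K)$, as claimed.

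There is essentially no serious obstacle here; the argument is a direct splitting estimate. The only points that need care are verifying $\beta(K)>0$ (so that dividing by it is legitimate) and the nonnegativity of $L(x)q\cdot q$ everywhere (so that extending the integral from $\R\setminus[-K,K]$ to $\R$ and dropping the seminorm term are both lawful), both of which follow at once from~\eqref{LMAT}.
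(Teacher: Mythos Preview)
Your proof is correct and follows essentially the same approach as the paper: split the $L^2$ norm into the contributions on $[-K,K]$ and on $\{|x|>K\}$, bound the latter pointwise via $|q^j|^2\le\beta(K)^{-1}L(x)q^j\cdot q^j$, and then control by $\|q^j\|_{\widetilde H^s}^2\le\gamma$ before passing to the $\limsup$. The only cosmetic difference is that you explicitly enlarge the domain of integration to all of $\R$ and drop the seminorm, whereas the paper goes directly from the integral over $\{|x|>K\}$ to $\gamma$; the justification is the same in either case.
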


\begin{proof}
By~\eqref{BKDEFN},
\[
\begin{split}
\|q^j\|^2_{L^2(\R, \R^n)} & =\int_{-K}^K |q^j (x)|^2 \,dx +\int_{\{|x|>K\}} |q^j (x)|^2 \,dx\\
&\le \int_{-K}^K |q^j (x)|^2 \,dx + \frac{1}{\beta(K)} \int_{\{|x|>K\}} L(x) q^j(x)\cdot q^j (x) \,dx\\
& \le \int_{-K}^K |q^j (x)|^2 \,dx + \frac{\gamma}{\beta(K)}.
\end{split}
\]
Thus, sending~$j\to +\infty$ and recalling the assumption in~\eqref{L2K}, we obtain the desired result.
\end{proof}

\section{Uniform bounds for a system of nonlocal equations with unbounded coefficients}\label{KSPldMFSw}

In this section, we provide a bound in the $L^\infty$-class for solutions of~\eqref{eqn2}.
This is an important step in our construction, and also a rather delicate issue.
Indeed, on the one hand, this type of results can be seen as an extension, for instance,
of a result in~\cite[Theorem 2.3]{MR3635980} to the case of systems of equations. 
On the other hand, this extension is nontrivial and technically involved,
due to the system structure, which appears to be new in the literature.

Also, even when~$n=1$, this type of extension requires some care since,
differently from the existing literature, presents a linear term of order zero which gets multiplied by an unbounded coefficient (in the case~$n=1$ however the proof would be simpler,
since this term would present ``the right sign'' for the estimate).

Our result, which can be applied to problem~\eqref{eqn2}, goes as follows.

\begin{theorem}[Boundedness of solutions]\label{THDMPV}
Let~$L\in C(\R, \R^{n\times n})$ satisfy~$(1.18)$ and~$(1.19)$. Moreover, assume that~$L(x)$ has nonnegative entries. 

Let
\begin{equation}\label{TI}
t:=\begin{cases}
\dfrac{2}{1-2s} &\mbox{ if } s\in (0, 1/2)\\
\mbox{any value within } (2, +\infty) &\mbox{ if } s=1/2.
\end{cases}
\end{equation}
Let~$F\in C(\R\times\R^n, \R^n)$ be such that, for any~$j\in\{1, \dots, n\}$,
\begin{equation}\label{HPF}
|F_j(x,q)|\le \sum_{\kappa=1}^K h_\kappa(x) \, |q_j(x)| |q(x)|^{\gamma_\kappa}
\end{equation}
where, for all~$\kappa\in\{1,\dots,K\}$,
\begin{equation}\label{DIST}
\begin{split}
&\gamma_\kappa\in [0,t-2)\qquad
{\mbox{and}}\qquad h_\kappa\in L^{m_\kappa}(\R,[0,+\infty)), \\
&{\mbox{ with }} \; m_\kappa \in \left( \underline{m}_\kappa,\,+\infty\right] \qquad
{\mbox{ and }}\qquad
\underline{m}_\kappa:= \displaystyle\frac{t}{t-2-\gamma_\kappa}.
\end{split}
\end{equation}
Let~$q\in\widetilde H^s(\R, \R^n)$ be a weak solution of
\[
(-\Delta)^s q(x) + L(x)q(x)= F(x, q(x))  \qquad {\mbox{for all }} x\in\R.
\]

Then, for any~$j\in\{1,\dots, n\}$,
\begin{equation}\label{vjlinfty}
\|q_j\|_{L^\infty(\R)}\le C,
\end{equation}
where~$C$ denotes a positive constant depending on~$n$, $s$, $\gamma_\kappa$, $m_\kappa$, $\|q\|_{L^{t}(\R)}$ and~$\|h_\kappa\|_{L^{m_\kappa}(\R)}$.
\end{theorem}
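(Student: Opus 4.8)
The plan is to run a De Giorgi--Stampacchia iteration, componentwise. Fix~$j\in\{1,\dots,n\}$ and, for a level~$\ell>0$ to be chosen, set~$w_\ell:=(q_j-\ell)_+$ and use~$\varphi:=(0,\dots,w_\ell,\dots,0)$ (with~$w_\ell$ in the~$j$-th slot) as a test function in the weak formulation of the~$j$-th equation. The key algebraic point, which exploits the nonnegativity of the entries of~$L(x)$, is that the zero-order term contributes with a favourable sign on the super-level set: on~$\{q_j>\ell\}$ one has~$\bigl(L(x)q(x)\bigr)_j\,w_\ell = \sum_k L_{jk}(x)q_k(x)w_\ell$, and although the off-diagonal terms~$L_{jk}q_k$ need not have a sign, the crucial diagonal term~$L_{jj}(x)q_j(x)w_\ell\geq L_{jj}(x)\ell\, w_\ell\geq 0$ can be discarded, while the off-diagonal contributions can be absorbed into the right-hand side by a Cauchy--Schwarz/Young argument since~$L$ is continuous hence locally bounded --- but here it is cleaner to note that on the super-level set the sign structure together with~$L\ge0$ lets us simply drop the whole zero-order term after observing it is~$\ge$ (something controllable). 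The nonlocal term contributes the standard coercive piece: one has the pointwise inequality
\[
\bigl(q_j(x)-q_j(y)\bigr)\bigl(w_\ell(x)-w_\ell(y)\bigr)\ge \bigl(w_\ell(x)-w_\ell(y)\bigr)^2,
\]
so that~$c_s\iint \frac{(q_j(x)-q_j(y))(w_\ell(x)-w_\ell(y))}{|x-y|^{1+2s}}\,dx\,dy\ge [w_\ell]_s^2$.

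Next I would estimate the right-hand side. Writing~$A_\ell:=\{x\in\R:\ q_j(x)>\ell\}$, the forcing term is controlled by~\eqref{HPF} as
\[
\int_\R F_j(x,q)\,w_\ell\,dx\le \sum_{\kappa=1}^K\int_{A_\ell} h_\kappa(x)\,|q_j(x)|\,|q(x)|^{\gamma_\kappa}\,w_\ell\,dx.
\]
On~$A_\ell$ one bounds~$|q_j|\le \ell + w_\ell$ (and more crudely~$|q_j|\le |q|$), and the strategy is to use the a priori information~$q\in L^t(\R,\R^n)$ (which, for~$s\in(0,1/2)$ with~$t=2^*_s$, is exactly the Sobolev exponent of~$\widetilde H^s$, and for~$s=1/2$ is any~$t\in(2,\infty)$, available by Lemma~\ref{lemmaembeddcontinua}) together with H\"older's inequality with exponents dictated by~\eqref{DIST}: the factor~$h_\kappa$ goes in~$L^{m_\kappa}$, the factor~$|q|^{1+\gamma_\kappa}$ goes in~$L^{t/(1+\gamma_\kappa)}$, and~$w_\ell$ goes in~$L^{2^*_s}$ (or~$L^t$), using the Sobolev embedding~$[w_\ell]_s\gtrsim\|w_\ell\|_{L^{2^*_s}}$ to close on the left. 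The condition~$m_\kappa>\underline m_\kappa=t/(t-2-\gamma_\kappa)$ is precisely what guarantees the three H\"older exponents are conjugate and, more importantly, produces a genuine decay factor~$\|q\|_{L^t(A_\ell)}^{1+\gamma_\kappa}$ which tends to~$0$ as~$\ell\to\infty$ by absolute continuity of the integral, plus an extra power of~$|A_\ell|$ that makes the iteration superlinear. One arrives at an inequality of the form
\[
\|w_\ell\|_{L^{2^*_s}}^2\le \Lambda\,\sum_\kappa \|h_\kappa\|_{L^{m_\kappa}}\Big(\|q\|_{L^t(A_\ell)}^{\gamma_\kappa}+\ell\,|A_\ell|^{\text{(small positive power)}}\Big)\|w_\ell\|_{L^{2^*_s}},
\]
and after dividing and using~$\|w_k\|_{L^{2^*_s}}\ge (\ell-k)\,|A_\ell|^{1/2^*_s}$ for levels~$k<\ell$, one obtains the classical nonlinear recursion~$\phi(\ell)\le \frac{C\,\phi(k)^{1+\varepsilon}}{(\ell-k)^{\delta}}$ for a suitable monotone quantity~$\phi$ (e.g. a combination of~$|A_\ell|$ and~$\|w_\ell\|_{L^{2^*_s}}$).

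Then I would invoke the standard fast-geometric-convergence lemma (a De Giorgi iteration lemma; see e.g.\ the reference already used in the text, or Appendix~\ref{interpappe}): if~$\phi(\ell_0)$ is small enough for some starting level~$\ell_0$, then~$\phi(\ell)\to0$ for~$\ell=\ell_0+d$ with~$d$ explicitly controlled, which yields~$q_j\le \ell_0+d$ a.e.; applying the same to~$-q_j$ (the structure~\eqref{HPF} is even in~$q_j$) gives the two-sided bound~\eqref{vjlinfty}, with~$C$ depending only on the listed quantities. The smallness of~$\phi(\ell_0)$ is arranged by choosing~$\ell_0$ large, again using~$q\in L^t(\R,\R^n)$.

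The main obstacle, and the genuinely new point compared with the scalar literature such as~\cite{MR3635980}, is the treatment of the zero-order term~$L(x)q(x)$: the coefficient is unbounded (by~\eqref{AUTOVAL}) and the system coupling means that~$(L(x)q(x))_j$ involves all components~$q_k$, not just~$q_j$, so it does not automatically have the right sign when tested against~$w_\ell$. The resolution is the nonnegativity hypothesis on the entries of~$L$ combined with the observation that on the super-level set~$\{q_j>\ell\}$ with~$\ell>0$ one is at positive values of~$q_j$, so the diagonal term is a nonnegative bad term that can be dropped, while the off-diagonal terms, though of uncertain sign, are only evaluated against the truncated competitor and can be absorbed: one has to be careful that this absorption does not destroy the~$\ell$-dependence that powers the iteration, which is why the componentwise truncation~$(q_j-\ell)_+$ (rather than a vector truncation) is the right device --- it keeps the estimate one-dimensional on the left while letting the full vector~$q\in L^t$ carry the nonlinear right-hand side. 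Verifying that the unboundedness of~$L$ never actually enters the final constant (because the term it multiplies is discarded with the correct sign, rather than estimated) is the delicate bookkeeping step.
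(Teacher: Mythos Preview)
Your overall De Giorgi iteration scheme is right, but the componentwise test function is where the argument breaks. When you test the $j$-th equation with $\varphi=(0,\dots,w_\ell,\dots,0)$, the zero-order contribution is $\int_\R \sum_i L_{ji}(x)q_i(x)\,w_\ell(x)\,dx$. You correctly note that the diagonal piece $L_{jj}q_j w_\ell\ge0$ can be discarded, but your proposed handling of the off-diagonal pieces $L_{ji}q_i w_\ell$ ($i\ne j$) does not work: $q_i$ has no sign on $\{q_j>\ell\}$, so these terms can be negative, and once moved to the right-hand side they contribute $\int L_{ji}(x)|q_i(x)|w_\ell(x)\,dx$. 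Your suggestion to absorb them ``since $L$ is continuous hence locally bounded'' fails because the integral is over all of~$\R$ and, by assumption~\eqref{AUTOVAL}, the entries of $L$ are \emph{unbounded} at infinity; no Cauchy--Schwarz/Young step can produce a constant independent of $\sup_\R|L|=+\infty$. The concluding remark that ``the unboundedness of $L$ never actually enters'' is exactly what cannot be verified with a single-component truncation.

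The paper's device is to truncate \emph{all} components simultaneously: it sets (after a normalisation $\phi=\delta q/\|q\|_{L^t}$) $w_{k+1}:=\big((\phi_1-A_{k+1})^+,\dots,(\phi_n-A_{k+1})^+\big)$ with $A_k=1-2^{-k}$, and tests the full system with this vector. Then the zero-order term on the left is the full quadratic form $\int_\R L(x)w_{k+1}\cdot w_{k+1}\,dx$, which is part of the coercive norm~$\|w_{k+1}\|_{\widetilde H^s}^2$ and is therefore never estimated from above---it is kept on the left. The nonnegativity of the entries of~$L$ enters only to show that $\int L\,w_{k+1}\cdot w_{k+1}\le \sum_{i,j}\int_{\{w_{i,k+1}>0\}\cap\{w_{j,k+1}>0\}}L_{ji}\phi_i\,w_{j,k+1}$, since on that intersection $w_{i,k+1}=\phi_i-A_{k+1}$ and subtracting $A_{k+1}L_{ji}w_{j,k+1}\ge0$ only helps. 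The iteration quantity is then $U_k:=\|w_k\|_{L^t(\R,\R^n)}^t$ (summed over all components), and the recursion $U_{k+1}\le \widetilde C^{\,k}U_k^{\vartheta}$ with $\vartheta>1$ follows. This vector-truncation trick is precisely what sidesteps the unbounded off-diagonal coupling that your componentwise test cannot handle.
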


\begin{proof}
We will prove a stronger statement, namely that if, for all~$j\in\{1,\dots,n\}$,
\[
(-\Delta)^s q_j (x)  + \sum_{i=1}^n L_{ji}(x) q_i(x) \le F_j(x,  q(x))
\]
in the weak sense, then~$q_j$ is bounded from above (the bound from below can be obtained similarly under the corresponding growth assumptions).

Throughout the proof 
\begin{equation}\label{CCCACC}
\begin{split}
&{\mbox{we will denote by~$C>0$ a quantity that}}\\&{\mbox{may depend on~$n$, $s$, $\alpha$, $\gamma_\kappa$, $m_\kappa$, $\|q\|_{L^{t}(\R)}$ and~$\|h_\kappa\|_{L^{m_\kappa}(\R)}$,}}
\end{split}
\end{equation}
and which may change from line to line.

We also suppose, without loss of generality, that~$q_j$ does not vanish identically
for all~$j\in\{1,\dots,n\}$.

Also, we rewrite the condition on~$m_i$ in~\eqref{DIST} as
\begin{equation}\label{DIST2} 
\frac{1}{m_\kappa}<\frac{t-2-\gamma_\kappa}{t}.
\end{equation}
Now, let~$\delta\in (0, 1)$ to be chosen later (see~\eqref{deltachoice}), and, for any~$j\in\{1,\dots, n\}$, define
\begin{equation}\label{def aggiunta}
\phi_j:=\frac{\delta q_j}{\|q\|_{L^{t} (\R, \R^n)}}
\end{equation}
and set~$\phi(x):=(\phi_1(x),\dots, \phi_n(x))$.

Thus, using the notation in~\eqref{CCCACC},  we can write
\begin{equation}\label{notation}
\phi_j=C\delta q_j.
\end{equation}
Also, by~\eqref{def aggiunta},
\begin{equation}\label{0990990}
\|\phi\|_{L^t(\R, \R^n)}^t= \sum_{j=1}^n \|\phi_j\|_{L^t(\R)}^t = \frac{\delta^t}{\|q\|_{L^{t} (\R,\R^n)}^t} \sum_{j=1}^n \|q_j\|_{L^t(\R)}^t = \delta^t.
\end{equation}
Moreover, we have that~$\phi$ weakly solves
\begin{equation}\label{eqPhi}
(-\Delta)^s \phi (x)  +L(x) \phi(x) = G(x)
\end{equation}
where~$G=(G_1,\dots,G_n)$ and, for all~$j\in\{1,\dots,n\}$,
\begin{equation}\label{gjdefn}
G_j(x)\le g_j(x):=\frac{\delta}{\|q\|_{L^{t} (\R, \R^n)}} \, F_j(x, q(x)).
\end{equation}
We observe that, in light of~\eqref{HPF}, for all~$x\in\R$,
\begin{equation}\label{PG1}
|G_j(x)|\le|g_j(x)|\le \frac{\delta}{\|q\|_{L^{t} (\R, \R^n)}} |F_j(x, q(x))| \le C\delta \sum_{\kappa=1}^K h_\kappa(x) \,|q_j(x)| |q(x)|^{\gamma_\kappa}.
\end{equation}

Now, for every~$k\in\mathbb{N}$, let us define~$A_k:=1-2^{-k}$ and
\begin{equation}\label{wjkdefn}
w_{j, k}(x):=(\phi_j(x)-A_k)^+,\quad\hbox{ for all }x\in\R.
\end{equation}
Let also~$w_{k}:= (w_{1, k},\dots, w_{n, k+1})$.
By Proposition~\ref{proppartepositiva} we infer that
\begin{equation}\label{wjknellospazio}
w_{k}\in \widetilde H^s (\R, \R^n).
\end{equation}
Moreover, by construction, 
\begin{equation}\label{ww}
w_{j, k+1}\leq w_{j, k} \quad {\mbox{ a.e. in }}\R.
\end{equation}

In addition, we have that
\begin{equation}\label{subsets}
\{w_{j, k+1}>0\}\subseteq \{w_{j, k}>2^{-(k+1)}\}\quad\mbox{ for any~$k\in\mathbb{N}$}.
\end{equation}
To check this, let~$x\in\{w_{j, k+1}>0\}$. Thus,
\[
0< \phi_j(x)-A_{k+1} = \phi_j(x)- (1-2^{-(k+1)}).
\]
{F}rom this, we infer that
\[
w_{j, k}(x) =  \phi_j(x)- (1-2^{-k})> 2^{-(k+1)},
\]
and this proves~\eqref{subsets}.

Also, we point out that
\begin{equation}\label{boundPhi}
0<\phi_j(x)<2^{k+1}w_{j, k}(x)\quad\hbox{ for any }x\in \{w_{j, k+1}>0\}.
\end{equation}
Indeed, taking~$x\in\{w_{j, k+1}>0\}$ and using~\eqref{ww}, we get that
\[
\phi_j(x)- (1-2^{-(k+1)}) =w_{j, k+1}(x) \le w_{j, k}(x).
\]
This and~\eqref{subsets} entail that
\[
\begin{split}
\phi_j(x) &\le w_{j, k}(x) + 1-2^{-(k+1)} = w_{j, k}(x)  + 2^{-(k+1)}(2^{k+1}-1)\\
& < w_{j, k}(x)  + w_{j, k}(x)(2^{k+1}-1)\\
&= 2^{k+1} w_{j, k}(x),
\end{split}
\]
which completes the proof of~\eqref{boundPhi}.

Now, we recall~\eqref{TI} and we set, for any~$k\in\N$,
\begin{equation}\label{Ukkk}
U_{k}:=\|w_{k}\|_{L^{t} (\R, \R^n)}^{t}.
\end{equation}
By~\eqref{NORMH}, we see that
\begin{equation}\label{csdqDKSo0o0CL}
\begin{split}
&\|w_{k+1}\|^2_{\widetilde H^s} \\
=\;& [w_{k+1}]_s^2 + \int_{\R} L(x)w_{k+1}(x)\cdot w_{k+1}(x)\, dx\\
=\;&c_s\iint_{\R^2} \sum_{j=1}^n \frac{|w_{j, k+1}(x)-w_{j, k+1}(y)|^2}{|x-y|^{1+2s}} \,dx \,dy
+ \int_{\R} \sum_{i,j=1}^n L_{ji}(x)w_{i, k+1}(x) w_{j, k+1}(x)\, dx.
\end{split}
\end{equation}
Moreover, exploiting formula~(8.10) in~\cite{DFV}, we obtain that
\begin{equation}\label{wqpgftuiqweg}
\begin{split}
[w_{k+1}]_s^2&=c_s\iint_{\R^2}\sum_{j=1}^n \frac{|w_{j, k+1}(x)-w_{j, k+1}(y)|^2}{|x-y|^{1+2s}}\,dx\,dy \\
&\le c_s\iint_{\R^2} \sum_{j=1}^n \frac{(\phi_j(x)-\phi_j(y))(w_{j,k+1}(x)-w_{j,k+1}(y))}{|x-y|^{1+2s}}\,dx\,dy.
\end{split}
\end{equation}
Also, since~$L(x)$ consists of positive entries,
\begin{equation*}
\begin{split}
&\sum_{i,j=1}^n \int_{\R} L_{ji}(x)w_{i, k+1}(x) w_{j, k+1}(x) \,dx\\
&=\sum_{i,j=1}^n \;\int_{  \{w_{j, k+1}>0\}\cap\{w_{i, k+1}>0\}  } L_{ji}(x)(\phi_i(x)-A_{k+1}) w_{j, k+1}(x) \,dx\\
&= \sum_{i,j=1}^n  \; \int_{  \{w_{j, k+1}>0\}\cap\{w_{i, k+1}>0\}  } L_{ji}(x) \phi_i(x) w_{j, k+1}(x)\, dx\\
&\qquad- A_{k+1}  \sum_{i,j=1}^n \; \int_{  \{w_{j, k+1}>0\}\cap\{w_{i, k+1}>0\}  } L_{ji}(x) w_{j, k+1}(x) \,dx\\
&\le \sum_{i,j=1}^n \; \int_{  \{w_{j, k+1}>0\}\cap\{w_{i, k+1}>0\}  } L_{ji}(x) \phi_i(x) w_{j, k+1}(x) \,dx.
\end{split}
\end{equation*}
Hence, from this, \eqref{csdqDKSo0o0CL} and~\eqref{wqpgftuiqweg}, we infer that
\begin{equation}\label{adhjo0o0bo0o0}
\begin{split}
\|w_{k+1}\|^2_{\widetilde H^s} 
&\le c_s \iint_{\R^2}\sum_{j=1}^n\frac{(\phi_j(x)-\phi_j(y))(w_{j,k+1}(x)-w_{j,k+1}(y))}{|x-y|^{1+2s}}\,dx\,dy\\
&\qquad+ \sum_{i,j=1}^n \; \int_{  \{w_{j, k+1}>0\}\cap\{w_{i, k+1}>0\}  } L_{ji}(x) \phi_i(x) w_{j, k+1}(x) \,dx.
\end{split}
\end{equation}

We now use~\eqref{notation} and~\eqref{boundPhi} to see that,
for all~$ x\in \{w_{j, k+1}>0\}$,
\[
\delta |q_j(x)|< C^k w_{j, k}(x). 
\]
Thus, by \eqref{PG1}, for all~$x\in \{w_{j, k+1}>0\}$,
\begin{equation}\label{vqeipjo0o0vn}
|g_j(x)|\le C\delta \sum_{\kappa=1}^K h_\kappa(x) \,|q_j(x)| |q(x)|^{\gamma_\kappa} \le C^k \sum_{\kappa=1}^K h_\kappa(x)\, w_{j, k}(x) |q(x)|^{\gamma_\kappa}.
\end{equation}

Now, thanks to~\eqref{wjknellospazio}, we can use~$w_{k+1}$ as a test function
in~\eqref{eqPhi}. In this way,
employing also~\eqref{vqeipjo0o0vn}, we infer that
\begin{equation*}
\begin{split}
&c_s\iint_{\R^2}\sum_{j=1}^n\frac{(\phi_j(x)-\phi_j(y))(w_{j,k+1}(x)-w_{j,k+1}(y))}{|x-y|^{1+2s}}\,dx\,dy\\
&\qquad + \sum_{i,j=1}^n \; \int_{  \{w_{j, k+1}>0\}\cap\{w_{i, k+1}>0\}  } L_{ji}(x) \phi_i(x) w_{j, k+1}(x) \,dx\\
&\le \sum_{j=1}^n \; \int_{  \{w_{j, k+1}>0\}  } g_j(x)\,w_{j, k+1}(x)\,dx\\
&\le C^k \, \sum_{{j=1,\dots,n}\atop{\kappa=1,\dots,K}} \; \int_{  \{w_{j, k+1}>0\}  }h_\kappa(x)\, w_{j, k}(x) \, w_{j, k+1}(x)\,|q(x)|^{\gamma_\kappa}\, dx.
\end{split}
\end{equation*}
Hence, combining this and~\eqref{adhjo0o0bo0o0}, we obtain that
\begin{equation}\label{hicbw}
\|w_{k+1}\|^2_{\widetilde H^s}\le C^k \, \sum_{{j=1,\dots,n}\atop{\kappa=1,\dots,K}} \; \int_{  \{w_{j, k+1}>0\}  }h_\kappa(x)\, w_{j, k}(x) \, w_{j, k+1}(x)\,|q(x)|^{\gamma_\kappa}\, dx.
\end{equation}

In order to estimate the term in the right-hand side of~\eqref{hicbw}, we introduce a new set of parameters
\begin{equation}\label{XII}
\xi_\kappa := \frac{t-2-\gamma_\kappa}{t}-\frac{1}{m_\kappa} =1-\frac{2}{t}-\frac{\gamma_\kappa}{t}-\frac{1}{m_\kappa}.
\end{equation}
In light of~\eqref{DIST2}, we have that
\begin{equation}\label{adbqvlo0o0djb}
\xi_\kappa\in (0,1).
\end{equation}
Therefore, using~\eqref{ww}
and the H\"older inequality with exponents
\[
m_\kappa, \quad \frac{t}{2},\quad\frac{t}{\gamma_\kappa},\quad \frac{1}{\xi_\kappa},
\]
and recalling~\eqref{Ukkk},
we find that, for any~$\kappa\in\{1,\cdots,K\}$ and~$j\in\{1,\dots, n\}$,
\begin{equation}\label{P0}
\begin{split}
&\int_{  \{w_{j, k+1}>0\}  }h_\kappa(x)\, w_{j, k}(x) \, w_{j, k+1}(x)\,|q(x)|^{\gamma_\kappa}\, dx\\
\le\;&\int_{\{w_{j,k+1}>0\}} h_\kappa(x)\,w_{j, k}^2(x)\,|q(x)|^{\gamma_\kappa} \,dx\\
\le\;& \left(\;\int_{\{w_{j, k+1}>0\}} h_\kappa^{m_\kappa}(x)\,dx\right)^{\frac{1}{m_\kappa}}\,\left(\;
\int_{\{w_{j, k+1}>0\}}w_{j, k}^{t}(x)\,dx\right)^{\frac{2}{t}} \\ 
&\qquad\qquad\times
\left(\;\int_{\{w_{j, k+1}>0\}} |q(x)|^{t} \,dx\right)^{\frac{\gamma_\kappa}{t}} \left(\;\int_{\{w_{j, k+1}>0\}} 1\,dx\right)^{\xi_\kappa}
\\=\;&\|h_\kappa\|_{L^{m_\kappa}(\R)}\; \|w_{j,k}\|_{L^{t}(\R)}^{2}
\;\|q\|_{L^{t}(\R,\R^n)}^{\gamma_\kappa}\;\big| \{w_{j, k+1}>0\} \big|^{\xi_\kappa}
\\
\leq\;& \|h_\kappa\|_{L^{m_\kappa}(\R)}\; U_{k}^{\frac{2}{t}} \;\|q\|_{L^{t}(\R,\R^n)}^{\gamma_\kappa}\;\big| \{w_{j, k+1}>0\} \big|^{\xi_\kappa}.
\end{split}
\end{equation}

On the other hand, by~\eqref{Ukkk} and~\eqref{subsets}, we deduce that,
for all~$j\in\{1,\dots,n\}$,
\begin{equation*}
\begin{split}
&U_{k}=\|w_{k}\|_{L^{t}(\R, \R^n)}^{t} = \sum_{\ell=1}^n
\int_\R  |w_{\ell, k}(x)|^t\, dx=\sum_{\ell=1}^n
\int_{\{w_{\ell, k}>0\}}  w_{\ell, k}^t(x)\, dx
\\&\qquad\ge \int_{\{w_{j, k}>0\}}  w_{j, k}^t(x)\, dx
\ge \int_{\{w_{j, k}>2^{-(k+1)}\}}{w_{j, k}^{t}(x)\,dx}\\
&\qquad
\ge 2^{-t(k+1)}\big|\{w_{j, k}>2^{-(k+1)}\}\big|
\ge 2^{-t(k+1)}\big|\{w_{j, k+1}>0\}\big|,
\end{split}
\end{equation*}
and thus, for all~$j\in\{1,\dots,n\}$,
\[
\big|\{w_{j, k+1}>0\}\big|^{\xi_\kappa}\leq \big(2^{t(k+1)}U_{k}\big)^{\xi_\kappa}
= 2^{\xi_\kappa t(k+1)}\, U_{k}^{\xi_\kappa}.
\]
Using this into~\eqref{P0} we thereby find that
\begin{equation}\label{TO2Y}
\int_{  \{w_{j, k+1}>0\}  }h_\kappa(x)\, w_{j, k}(x) \, w_{j, k+1}(x)\,|q(x)|^{\gamma_\kappa}\, dx\le C \, 2^{\xi_\kappa t(k+1)}\, U_{k}^{\tau_\kappa},
\end{equation}
where
\[
\tau_\kappa:=\frac{2}{t}+\xi_\kappa.
\]
We remark that~\eqref{adbqvlo0o0djb} entails that
\begin{equation}\label{TO9}
\tau_\kappa >\frac{2}{t}.
\end{equation}

Now, combining~\eqref{hicbw} and~\eqref{TO2Y}, and recalling~\eqref{CCCACC} and~\eqref{XII}, we
conclude that
\begin{eqnarray*}&&
\|w_{k+1}\|^2_{\widetilde H^s} \le C^k  \sum_{{j=1,\dots,n}\atop{\kappa=1,\dots,K}}  2^{\xi_\kappa t(k+1)}U_{k}^{\tau_\kappa}
\le C^k \, 2^{(t-2)(k+1)}\sum_{{j=1,\dots,n}\atop{\kappa=1,\dots,K}}  U_{k}^{\tau_\kappa}\\
&&\qquad \qquad=n \,C^k \, 2^{(t-2)(k+1)}\sum_{\kappa=1}^{K}U_{k}^{\tau_\kappa}
\le (C+1)^{k} \sum_{\kappa=1}^K U_{k}^{\tau_\kappa}.
\end{eqnarray*}
That is, we can write
\begin{equation*}
\|w_{k+1}\|^2_{\widetilde H^s} \le \widetilde{C}^{k} \sum_{\kappa=1}^K U_{k}^{\tau_\kappa},
\end{equation*}
with
\begin{equation}\label{C>1}
\widetilde{C}>1.
\end{equation}
As a consequence of this, by~\eqref{TI}, Lemma~\ref{lemmaembeddcontinua}
and~\eqref{Ukkk},
\begin{equation}\label{TO5}
U_{k+1}^{\frac{2}{t}} \le \widetilde{C}^{k} \sum_{i=\kappa}^K U_{k}^{\tau_\kappa}.
\end{equation}

Now, by~\eqref{0990990},
\begin{equation}\label{step 0}
\begin{split}
&U_{0}= \|w_0\|_{L^t(\R, \R^n)}^t =\sum_{j=1}^n \|w_{j, 0}\|_{L^t(\R)}^t
=\sum_{j=1}^n\|\phi_j^+\|_{L^{t}(\R)}^{t}\\
&\qquad\le \sum_{j=1}^n \|\phi_j\|_{L^{t}(\R)}^{t}
= \|\phi\|_{L^t(\R, \R^n)}^t=\delta^{t}.
\end{split}
\end{equation}
As a matter of fact, we have that~$U_{k}\le U_{0}\le 1$ for all~$k\in\N$.

As a result, setting
\begin{equation}\label{tau-bis}
\tau:=\min\{\tau_1,\dots,\tau_K\},
\end{equation}
we deduce from~\eqref{TO5} that, for all~$ k\in\N$,
\[
U_{k+1}^{\frac{2}{t}} \le \widetilde{C}^{k} U_{k}^{\tau}.
\]
This gives that, for all~$ k\in\N$,
\begin{equation}\label{forallk}
U_{k+1} \le \widetilde{C}^{k} \, U_{k}^{\vartheta},
\end{equation}
being~$\vartheta:= t \tau/2$. 
We point out that, thanks to~\eqref{TO9} and~\eqref{tau-bis},
\begin{equation}\label{vartheta>1}
\vartheta>1.
\end{equation}

We now take
\begin{equation}\label{deltachoice}
\delta:= \widetilde{C}^{-\frac1{t(\vartheta-1)^2}}\qquad
{\mbox{and}}\qquad
\mu:=\widetilde{C}^{-\frac{1}{\vartheta-1}}.
\end{equation}
Notice that, thanks~\eqref{C>1} and~\eqref{vartheta>1},
we have that~$\mu\in (0, 1)$. 

We claim that, for all~$k\in\N$,
\begin{equation}\label{claimfin}
U_{k}\le \delta^{t} \mu^k.
\end{equation}
To check this, we argue by induction over~$k\in\N$.
If~$k=0$, then~\eqref{claimfin} follows by~\eqref{step 0}.

We now assume that~\eqref{claimfin} holds true for~$k$ and we prove it for~$k+1$.
For this, we use~\eqref{forallk} and the inductive assumption, we recall
the definitions in~\eqref{deltachoice} and we find that
\[
U_{k+1}\le  \widetilde{C}^{k} \, U_{k}^{\vartheta}\le
\widetilde{C}^k (\delta^{t} \mu^k)^{\vartheta} = \widetilde{C}^k (\delta^{t} \mu^k)^{1+(\vartheta-1)} = \delta^{t} \mu^k (\widetilde{C}\, \mu^{\vartheta-1})^k \, \delta^{t(\vartheta-1)} = \delta^{t} \mu^{k+1},
\]
which completes the proof of~\eqref{claimfin}.

Now, since~$\mu\in (0, 1)$, by~\eqref{claimfin} it follows that
\begin{equation}\label{agvdsbhjk}
\lim_{k\to+\infty} U_{k}=0.
\end{equation}
Since~$0\leq w_{j, k}\leq |\phi_j|\in L^{t}(\R)$ for any~$k\in\mathbb{N}$ and 
\[
\lim_{k\rightarrow+\infty}{w_{j, k}}=(\phi_j-1)^+ \quad\mbox{ a.e. in } \R,
\]
by~\eqref{agvdsbhjk} and the Dominated Convergence Theorem, we get that
\[
0=
\lim_{k\rightarrow+\infty}U_{k}= \lim_{k\rightarrow+\infty} \|w_k\|^t_{L^t(\R, \R^n)} = \lim_{k\rightarrow+\infty} \sum_{j=1}^n \|w_{j, k}\|^t_{L^t(\R)}= \sum_{j=1}^n \|(\phi_j-1)^+\|_{L^{t}(\R)}^{t}.
\]
Therefore, for any~$j\in\{1,\dots, n\}$, it holds that~$\phi_j\leq 1$ a.e. in~$\R$.

Thus, recalling the definition of~$\phi_j$ in~\eqref{def aggiunta}, we gather that,
for all~$j\in\{,1\dots,n\}$,
\[
q_j(x)\le \frac{\|q\|_{L^{t}(\R, \R^n)}}{\delta}<+\infty,
\]
as desired.
\end{proof}

\section{Setting up a barrier}\label{PKSJLDNPHFOIKBFUIJBN}

The goal of this section is to construct a suitable barrier which will be useful to study the behaviour at infinity of the solution~$q$ to problem~\eqref{eqn2} when~$s\in (0, 1/2]$. 
Our strategy relies upon the results presented in~\cite[Theorem~2]{MR3081641}.

Throughout this section, we suppose that~$s\in(0,1/2]$
and that~$L$ satisfies the assumption in~\eqref{AGGIMPPO}.

We apply~\cite[Theorem~2]{MR3081641} with~$\mathcal W:\R\to [0, +\infty)$ given by
\[
\mathcal W(x):=\frac{(1-x^2)^2}4.
\]
It is easy to check that it meets all the assumptions required by~\cite[Theorem~2]{MR3081641}, namely~$\mathcal W$ is a smooth double-well potential with wells only at~$\pm 1$ and it satisfies~$\mathcal W''(\pm 1)=2>0$.

Hence, there exists a strictly increasing function~$\widetilde{\beta}\in C^2(\R)$ which solves
\begin{equation}\label{problematilde}
(-\Delta)^s \widetilde{\beta} = \widetilde{\beta} -\widetilde{\beta}^3\quad\mbox{ in } \R.
\end{equation}
In addition, $\widetilde\beta$ is unique (up to translations) in the class of monotone solutions of~\eqref{problematilde} and there exists~$C\ge 1$ such that,
for any large~$x\in\R$,
\[
|\widetilde\beta(x) -\mbox{sign}(x)|\le \frac{C}{|x|^{2s}}.
\]

Let~$\beta:= \widetilde{\beta}'$ and notice that~$\beta$ is a solution of
\begin{equation}\label{betaa}
(-\Delta)^s \beta = a (x)\beta \quad\mbox{ in }\R,
\end{equation}
where
\[
a(x):=1-3\widetilde{\beta}^2(x).
\]
We have that~$\beta>0$ and, by~\cite[Theorem~2]{MR3081641}, for any large~$x\in\R$,
\begin{equation}\label{betaatinfinity}
\beta(x)\le\frac{C}{|x|^{1+2s}}.
\end{equation}
Also, we point out that
\begin{equation}\label{limitea}
a(x)\in (-2, 1] \quad\mbox{ and }\quad \lim_{x\to\pm\infty} a(x) = -2.
\end{equation}

Now, let~$j\in\{1, \dots, n\}$ and~$\eta>0$. We define
\begin{eqnarray}\label{vjmu}
v_{j, \eta}(x)&:=& A \beta(x) -q_j(x) +\eta(1+x_+^s+x_-^s)
\\
{\mbox{and }}
\label{wjmu}\quad
w_{j, \eta}(x)&:=& A \beta(x) +q_j(x) +\eta(1+x_+^s+x_-^s),
\end{eqnarray}
with~$A>0$ (later on, $A$ will be chosen sufficiently large).

\begin{lemma}\label{teoremabarriera}
Let~$W\in C^1(\R\times\R^n, \R)$ satisfy~\eqref{LITTLEO} and~\eqref{INPIUNUOVA} and
let~$q\in L^\infty(\R, \R^n)$ be a solution of~\eqref{eqn2}.

Let~$v_{j, \eta}$ and~$w_{j, \eta}$ be
as in~\eqref{vjmu} and~\eqref{wjmu}, respectively. 

Then, there exists~$A_0>0$ such that if~$A\ge A_0$, for any $j\in\{1,\dots, n\}$, any~$\eta>0$ and any~$x\in\R$, we have that
\begin{equation}\label{claimvjmu}
v_{j, \eta}(x)\ge 0 
\qquad {\mbox{and}}\qquad
w_{j, \eta}(x)\ge 0.
\end{equation}
\end{lemma}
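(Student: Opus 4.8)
The strategy is to show that $v_{j,\eta}$ (and symmetrically $w_{j,\eta}$) is a supersolution of a suitable fractional equation on the set where it could potentially become negative, and then invoke a maximum principle. Since $q\in L^\infty(\R,\R^n)$, say $\|q\|_{L^\infty}\le M_0$, and since $\beta>0$ with $\beta(x)\le C|x|^{-1-2s}$ for large $x$, the term $\eta(1+x_+^s+x_-^s)$ dominates at infinity; hence $v_{j,\eta}(x)\to+\infty$ as $|x|\to\infty$, so $v_{j,\eta}$ attains its infimum at some finite point $\bar x$. Suppose, for contradiction, that this infimum is negative, i.e.\ $v_{j,\eta}(\bar x)<0$. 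Then $q_j(\bar x)>A\beta(\bar x)+\eta(1+\bar x_+^s+\bar x_-^s)\ge 0$, so in particular $|q_j(\bar x)|\le |q(\bar x)|\le M_0$.

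First I would compute $(-\Delta)^s v_{j,\eta}(\bar x)$. Using that $v_{j,\eta}$ has a global minimum at $\bar x$, we have $(-\Delta)^s v_{j,\eta}(\bar x)\le 0$ (indeed $(-\Delta)^s v_{j,\eta}(\bar x)=2c_s\int_\R \frac{v_{j,\eta}(\bar x)-v_{j,\eta}(y)}{|\bar x-y|^{1+2s}}\,dy\le 0$, and it is strictly negative unless $v_{j,\eta}$ is constant, which it is not). On the other hand, I would expand $(-\Delta)^s v_{j,\eta}$ using linearity: $(-\Delta)^s v_{j,\eta}=A(-\Delta)^s\beta-(-\Delta)^s q_j+\eta(-\Delta)^s(1+x_+^s+x_-^s)$. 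By~\eqref{betaa}, $(-\Delta)^s\beta=a(x)\beta$; by~\eqref{eqn2}, $(-\Delta)^s q_j=-\sum_i L_{ji}q_i+\partial_{q_j}W(x,q)$, and under~\eqref{AGGIMPPO} for $|\bar x|\ge D$ this is just $-d_j(\bar x)q_j(\bar x)+\partial_{q_j}W(\bar x,q(\bar x))$; and a direct computation (or a known identity) gives $(-\Delta)^s(x_\pm^s)=0$ away from the origin while $(-\Delta)^s(1)=0$, so $(-\Delta)^s(1+x_+^s+x_-^s)$ is a bounded function, call it $\psi(x)$, with $\psi\to 0$ as $|x|\to\infty$. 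Assembling these at $\bar x$ yields
\[
0\ge (-\Delta)^s v_{j,\eta}(\bar x)= A\,a(\bar x)\beta(\bar x)+d_j(\bar x)q_j(\bar x)-\partial_{q_j}W(\bar x,q(\bar x))+\eta\,\psi(\bar x).
\]

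The key point is then to show the right-hand side is strictly positive, giving the contradiction. I would split into two regimes. For $|\bar x|$ large: here $d_j(\bar x)\to+\infty$ and $q_j(\bar x)\ge\eta(1+\bar x_+^s+\bar x_-^s)\ge\eta$, so the term $d_j(\bar x)q_j(\bar x)$ is large and positive; meanwhile $a(\bar x)\beta(\bar x)\to 0$ (even $a(\bar x)<0$ but $\beta(\bar x)$ decays fast, so $|A\,a(\bar x)\beta(\bar x)|$ is small uniformly in large $\bar x$ for fixed $A$), $\psi(\bar x)\to 0$, and $|\partial_{q_j}W(\bar x,q(\bar x))|$ is bounded using~\eqref{LITTLEO}--\eqref{INPIUNUOVA} together with $|q(\bar x)|\le M_0$; hence positivity holds once $|\bar x|$ exceeds some threshold depending only on $n,s,W,L,A,\eta$. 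For $|\bar x|$ in a bounded set $[-R_0,R_0]$: here I would use that $\beta$ is continuous and strictly positive, so $\beta(\bar x)\ge c_0>0$ on $[-R_0,R_0]$, while $a(\bar x)\in(-2,1]$; the dangerous term $A\,a(\bar x)\beta(\bar x)$ could be as negative as $-2Ac_0$, but I can absorb it by instead re-examining the sign structure — more carefully, on $[-R_0,R_0]$ one uses $d_j(\bar x)q_j(\bar x)\ge 0$ (entries nonnegative in~\eqref{AGGIMPPO}) plus $-\partial_{q_j}W(\bar x,q(\bar x))\ge -c_1$, and notes that choosing $A_0$ is actually the wrong lever there; the correct observation is that the term $A\,a(\bar x)\beta(\bar x)$ with $a$ possibly negative is the obstruction, so one must instead exploit that $\beta$ solves a linearized equation and use a substitution $v_{j,\eta}/\beta$ to kill the bad sign — this is the standard trick.

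\emph{The main obstacle.} The delicate step is handling the potentially negative coefficient $a(x)$ in $(-\Delta)^s\beta=a(x)\beta$ on the bounded region, since there the decay of $\beta$ cannot be used. The clean fix is to work with the quotient: set $\zeta:=v_{j,\eta}/\beta$ (legitimate since $\beta>0$) and use the ground-state substitution / generalized maximum principle for $\mathcal L_a w:=(-\Delta)^s w-a(x)w$, which is nonnegative on functions vanishing suitably at infinity because $\beta>0$ is a positive supersolution (indeed solution) of $\mathcal L_a$. Concretely, from $\mathcal L_a(A\beta)=0$ one gets $\mathcal L_a v_{j,\eta}=-\mathcal L_a q_j+\eta\mathcal L_a(1+x_+^s+x_-^s)$, and at a negative minimum point $\bar x$ of $\zeta$ one deduces $(-\Delta)^s v_{j,\eta}(\bar x)-a(\bar x)v_{j,\eta}(\bar x)\le 0$ by the same minimum-principle computation applied to $\zeta$ against the weight $\beta$ (this is exactly Lemma-type input one may cite from~\cite{MR3081641} or reprove in two lines). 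Then $-a(\bar x)v_{j,\eta}(\bar x)>0$ (as $a\le 1$... wait, one needs $a(\bar x)\ge 0$ at $\bar x$, which fails in general — so the honest route is: $\mathcal L_a v_{j,\eta}(\bar x)\le 0$ forces $d_j(\bar x)q_j(\bar x)-\partial_{q_j}W(\bar x,q(\bar x))+\eta\psi_a(\bar x)+a(\bar x)v_{j,\eta}(\bar x)\le 0$ where $\psi_a$ involves $\mathcal L_a$ of the slowly growing term; since $v_{j,\eta}(\bar x)<0$ and $a(\bar x)$ is bounded, the term $a(\bar x)v_{j,\eta}(\bar x)$ has a controlled sign, and one chooses $A_0$ large enough that $A\beta(\bar x)$ forces $q_j(\bar x)$ large wherever $v_{j,\eta}<0$, making $d_j(\bar x)q_j(\bar x)$ beat everything). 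I would therefore organize the final write-up as: (i) reduce to a finite interior minimum using the growth of $\eta(1+x_+^s+x_-^s)$ and boundedness of $q$; (ii) plug into the equation via~\eqref{betaa},~\eqref{eqn2},~\eqref{AGGIMPPO}; (iii) estimate $(-\Delta)^s$ of the auxiliary profile $1+x_+^s+x_-^s$ (bounded, vanishing at infinity — this is where one small computation is unavoidable); (iv) choose $A_0$ so that at any putative negative-minimum point the confining term $L(x)q$ dominates $W$ and the $\beta$-terms, and conclude; then (v) note the argument for $w_{j,\eta}$ is identical with the sign of $q_j$ flipped.
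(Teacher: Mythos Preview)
Your overall architecture (contradiction, interior minimum, plug into the equation) matches the paper, but the key mechanism that makes the argument close is missing, and your attempts to repair it on the bounded region do not work.

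\textbf{The missing idea: use $A$ to push $\bar x$ out of compacts.} You treat ``$|\bar x|$ large'' and ``$|\bar x|$ bounded'' as two genuinely separate cases and then struggle with the second one (your ground-state substitution discussion never lands, and the suggestion that on $[-R_0,R_0]$ one can make $d_j(\bar x)q_j(\bar x)$ ``beat everything'' fails because $q_j(\bar x)$ is bounded by $M_0$ while $d_j$ is merely continuous there, so there is nothing large to play against the potentially negative term $A\,a(\bar x)\beta(\bar x)\ge -2A c_0$). The paper's trick is that the bounded case never occurs: one first fixes $R_0$ large enough that for $|x|\ge R_0$ both $a(x)\le -1$ and $d_j(x)>3+a_0(1+\|q\|_{L^\infty}^{p-2})$, and such that $R_0> C_s^{1/(2s)}$. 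Then, with $\varsigma_{R_0}:=\min_{[-R_0,R_0]}\beta>0$, one takes $A_0:=\|q\|_{L^\infty}/\varsigma_{R_0}$. If $A>A_0$ and $v_{j,\eta}(\bar x)<0$, then $\beta(\bar x)<\|q\|_{L^\infty}/A<\varsigma_{R_0}$, which forces $|\bar x|>R_0$. So only the ``far'' regime has to be analyzed.

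\textbf{The second missing idea: kill $A$ by substituting back.} Even once $|\bar x|>R_0$, your inequality still carries the dangerous term $A\,a(\bar x)\beta(\bar x)$ with $a(\bar x)<0$, and you never explain how to control it uniformly in $A$. The paper eliminates $A$ by writing $A\beta(\bar x)=v_{j,\eta}(\bar x)+q_j(\bar x)-\eta(1+|\bar x|^s)$ and using that $a(\bar x)\,v_{j,\eta}(\bar x)\ge 0$ (negative times negative) together with $-a(\bar x)\eta(1+|\bar x|^s)\ge 0$. The remaining contribution is $a(\bar x)q_j(\bar x)$, which combines with $d_j(\bar x)q_j(\bar x)-a_0(1+\|q\|_{L^\infty}^{p-2})q_j(\bar x)$ to give $\ge q_j(\bar x)>0$ by the choice of $R_0$, and one finishes with $\eta\,|\bar x|^{s}>\eta\,C_s\,|\bar x|^{-s}$, contradicting $|\bar x|>R_0\ge C_s^{1/(2s)}$.

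\textbf{A minor but genuine error.} It is not true that $(-\Delta)^s(x_\pm^s)=0$ ``away from the origin'': one has $(-\Delta)^s x_+^s=0$ on $(0,\infty)$ but $(-\Delta)^s x_-^s(\bar x)=-C_s/|\bar x|^s$ for $\bar x>0$ (and symmetrically). In particular your $\psi$ is not bounded near the origin; in the paper this causes no trouble precisely because $\bar x$ has already been pushed out of $[-R_0,R_0]$, and the exact formula $-C_s/|\bar x|^s$ is what produces the final numerical contradiction.
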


\begin{proof}
We prove the first claim in~\eqref{claimvjmu}. For this, we suppose by contradiction that
there exist~$j\in\{1,\dots, n\}$
and~$\eta>0$ such that
\begin{equation}\label{infvjmu}
\inf_{x\in \R} v_{j, \eta} (x) <0.
\end{equation}
Let~$x_m \in\R$
be a minimizing sequences for the function~$v_{j, \eta}$. That is
\begin{equation}\label{89BIS}
\lim_{m\to +\infty} v_{j, \eta}(x_m) = \inf_{x\in \R} v_{j, \eta}(x)<0
.\end{equation}

We notice that there exists~$M_{j,\eta}>0$ such that
\begin{equation}\label{5476bvthffbkjewgkjera9876543}
x_m\in[-M_{j,\eta}, M_{j,\eta}].
\end{equation}
Indeed, if~$x_m\to \pm\infty$ as~$m\to +\infty$, then by~\eqref{vjmu} and~\eqref{betaatinfinity}, and recalling that~$q\in L^\infty(\R, \R^n)$, it would follow that
\[
\lim_{m\to +\infty} v_{j, \eta}(x_m) =\lim_{m\to +\infty}\Big(
 A \beta(x_m) -q_j(x_m) +\eta\big(1+(x_m)_+^s+(x_m)_-^s\big)\Big)= +\infty,
\]
which gives a contradiction with~\eqref{89BIS}. Hence, \eqref{5476bvthffbkjewgkjera9876543} is established.

Furthermore, we point out that~$(-\Delta)^s q_j \in L^\infty([-M_{j,\eta}-1, M_{j,\eta}+1])$
by~\eqref{eqn2}, and therefore, by fractional elliptic regularity theory, we have that~$q_j$
is continuous in~$[-M_{j,\eta}, M_{j,\eta}]$. 
Consequently, in light of~\eqref{5476bvthffbkjewgkjera9876543}, 
we infer that there exists~$\overline{x}_{j, \eta}\in \R$ such that
\begin{equation}\label{minimonegativo}
v_{j, \eta}(\overline{x}_{j, \eta})=\inf_{x\in \R} v_{j, \eta}(x) < 0.\end{equation}
This and~\eqref{vjmu} entail that
\begin{equation}\label{qjpositiva}
q_j(\overline{x}_{j, \eta}) > A \beta(\overline{x}_{j, \eta}) +\eta (1+|\overline{x}_{j, \eta}|^s) >0.
\end{equation}

In view of~\eqref{AGGIMPPO}, we pick~$\overline R\ge D$ such that,
if~$|x|\ge \overline R$, we have that~$d_j(x)> 3 +a_0\big(1+\|q\|^{p-2}_{L^\infty (\R, \R^n)}\big)$.
Recall that~$a_0>0$ and~$p\in(2,2^*_s)$ here are given by~\eqref{INPIUNUOVA}.

Moreover, due to~\eqref{limitea},
there exists~$\widetilde R>0$ such that, if~$|x|\ge \widetilde R$, then $a(x)\in(-2,-1]$.

We also recall that, for all~$x>0$,
\begin{equation}\label{qwertyuioasdfghjkzxcvbnmYTFDSHGFD}
(-\Delta)^s x^s_-= -\frac{C_s}{|x|^s},\end{equation}
for some~$C_s>0$, depending only on~$s$, see e.g.~\cite[Theorem~3.4.1]{MR3469920}.

We now choose
\begin{equation}\label{choiurefs1232099078675654} R_0:=\max\left\{ \overline R, \widetilde R, 1+C_s^{\frac1{2s}}\right\}\end{equation} and
we see that, if~$|x|\ge R_0$, 
\begin{equation}\label{Asufflarge0}
d_j(x)> 3 +a_0\Big(1+\|q\|^{p-2}_{L^\infty (\R, \R^n)}\Big)\qquad{\mbox{and}}\qquad
a(x)\in(-2,-1].
\end{equation}

We now define
\begin{equation}\label{choiurefs12320990786756542}\varsigma_{R_0}:=\min_{x\in[-R_0,R_0]}\beta(x)>0
\qquad{\mbox{and}}\qquad A_{R_0}:=\frac{ \|q\|_{L^\infty (\R, \R^n)}}{\varsigma_{R_0}}.\end{equation}
By~\eqref{qjpositiva} we know that~$\|q\|_{L^\infty(\R, \R^n)} > A\beta(\overline{x}_{j, \eta})$ and consequently,
if~$A> A_{R_0}$, for all~$x\in[-R_0,R_0]$,
\[
\beta(\overline{x}_{j, \eta})< \frac{\|q\|_{L^\infty(\R, \R^n)}}{A}
< \frac{\|q\|_{L^\infty(\R, \R^n)}}{A_{R_0}}=
\varsigma_{R_0}\le\beta(x)
.\]
For this reason, if~$A> A_{R_0}$,
we have that~$|\overline{x}_{j, \eta}|> R_0$.

This and~\eqref{Asufflarge0} give that, if~$A> A_{R_0}$,
\begin{equation}\label{f8o4erytghldsdrehbbgfmnbvcLLLL} 
d_j(\overline{x}_{j, \eta})> 3 +a_0\Big(1+\|q\|^{p-2}_{L^\infty (\R, \R^n)}\Big)\qquad{\mbox{and}}\qquad
a(\overline{x}_{j, \eta})\in(-2,-1].\end{equation}

Now, suppose that~$\overline{x}_{j, \eta}>R_0$ (the case~$\overline{x}_{j, \eta}<-R_0$ is analogous
and can be dealt with replacing~$(-\Delta)^s x_-^s$ with~$(-\Delta)^s x_+^s$ in
the forthcoming formula~\eqref{74865497iuewgfgfffjewdjskafkjgfs}).
In this case, by~\eqref{vjmu}, \eqref{betaa} and~\eqref{eqn2}, we infer that
\begin{equation}\label{74865497iuewgfgfffjewdjskafkjgfs}
\begin{split}
0&\ge (-\Delta)^s v_{j, \eta} (\overline{x}_{j, \eta}) \\&= A \, a(\overline{x}_{j, \eta}) \beta(\overline{x}_{j, \eta}) +\sum_{i=1}^n L_{i, j}(\overline{x}_{j, \eta}) q_i (\overline{x}_{j, \eta}) -\partial_{q_j} W(\overline{x}_{j, \eta}, q(\overline{x}_{j, \eta}))+\eta(-\Delta)^s x_-^s(\overline{x}_{j,\eta}).
\end{split}\end{equation}

Now we point out that, in light of the assumption in~\eqref{AGGIMPPO},
\begin{equation}\label{BISINPIUNUOVA} \sum_{i=1}^n L_{i, j}(\overline{x}_{j, \eta}) q_i (\overline{x}_{j, \eta}) =
\sum_{i=1}^n \delta_{ij}d_i(\overline{x}_{j, \eta}) q_i (\overline{x}_{j, \eta}) =
d_j(\overline{x}_{j, \eta}) q_j (\overline{x}_{j, \eta}).
\end{equation}
Plugging this information and~\eqref{qwertyuioasdfghjkzxcvbnmYTFDSHGFD}
into~\eqref{74865497iuewgfgfffjewdjskafkjgfs}, we conclude that
$$ \frac{\eta\,C_s}{\overline{x}_{j, \eta}^s}\ge
A \, a(\overline{x}_{j, \eta}) \beta(\overline{x}_{j, \eta}) +d_j(\overline{x}_{j, \eta}) q_j (\overline{x}_{j, \eta}) -\partial_{q_j} W(\overline{x}_{j, \eta}, q(\overline{x}_{j, \eta})).$$
Using~\eqref{INPIUNUOVA}, we thus get that
\begin{eqnarray*}
\frac{\eta\,C_s}{\overline{x}_{j, \eta}^s}&\ge&
A \, a(\overline{x}_{j, \eta}) \beta(\overline{x}_{j, \eta})
+d_j(\overline{x}_{j, \eta}) q_j (\overline{x}_{j, \eta})
- a_0\, q_j(\overline{x}_{j, \eta})\Big(1+ |q(\overline{x}_{j, \eta})|^{p-2}\Big)\\
&\ge&A \, a(\overline{x}_{j, \eta}) \beta(\overline{x}_{j, \eta})
+d_j(\overline{x}_{j, \eta}) q_j (\overline{x}_{j, \eta})
- a_0\, q_j(\overline{x}_{j, \eta})\Big(1+  \|q\|^{p-2}_{L^\infty(\R, \R^n)}\Big).
\end{eqnarray*}

Thus, recalling
the definition of~$v_{j,\eta}$ in~\eqref{vjmu}, and also~\eqref{minimonegativo} and the second claim in~\eqref{f8o4erytghldsdrehbbgfmnbvcLLLL},
we obtain that
\begin{eqnarray*}&&
\frac{\eta\,C_s}{\overline{x}_{j, \eta}^s}\\
&\ge& a(\overline{x}_{j, \eta}) \Big(v_{j, \eta}(\overline{x}_{j, \eta})+q_j(\overline{x}_{j, \eta}) -\eta(1+\overline{x}_{j, \eta}^s)
\Big)
+d_j(\overline{x}_{j, \eta}) q_j (\overline{x}_{j, \eta})
- a_0\, q_j(\overline{x}_{j, \eta})\Big(1+  \|q\|^{p-2}_{L^\infty(\R, \R^n)}\Big)\\
&\ge&a(\overline{x}_{j, \eta}) q_j(\overline{x}_{j, \eta}) +\eta(1+\overline{x}_{j, \eta}^s)
+d_j(\overline{x}_{j, \eta}) q_j (\overline{x}_{j, \eta})
- a_0\, q_j(\overline{x}_{j, \eta})\Big(1+  \|q\|^{p-2}_{L^\infty(\R, \R^n)}\Big)\\&=&
q_j(\overline{x}_{j, \eta})\left( a(\overline{x}_{j, \eta}) 
+d_j(\overline{x}_{j, \eta}) 
- a_0\Big(1+  \|q\|^{p-2}_{L^\infty(\R, \R^n)}\Big)\right) +\eta(1+\overline{x}_{j, \eta}^s)
.
\end{eqnarray*}
Hence, exploiting also the first claim in~\eqref{f8o4erytghldsdrehbbgfmnbvcLLLL} we deduce that
\begin{eqnarray*}
\frac{\eta\,C_s}{\overline{x}_{j, \eta}^s}&\ge& 
q_j(\overline{x}_{j, \eta})\left( a(\overline{x}_{j, \eta}) 
+3+a_0\Big(1+  \|q\|^{p-2}_{L^\infty(\R, \R^n)}\Big)
- a_0\Big(1+  \|q\|^{p-2}_{L^\infty(\R, \R^n)}\Big)\right) +\eta(1+\overline{x}_{j, \eta}^s)\\&\ge& q_j(\overline{x}_{j, \eta}) +\eta(1+\overline{x}_{j, \eta}^s)\\&>&\eta \,\overline{x}_{j, \eta}^s.
\end{eqnarray*}

All in all, recalling the choice of~$R_0$ in~\eqref{choiurefs1232099078675654}, we conclude that
$$ C_s^{\frac1{2s}}>\overline{x}_{j, \eta}> R_0\ge 1+C_s^{\frac1{2s}},$$
which gives the desired contradiction and establishes the first claim
in~\eqref{claimvjmu}.

We now focus on the second claim in~\eqref{claimvjmu}. The proof follows the same line as the one of the first claim, but we provide
the details here for the facility of the reader.

We argue by contradiction and suppose that there exist~$j\in\{1,\dots,n\}$ and~$\eta>0$
such that
$$    \inf_{x\in\R} w_{j, \eta} (x) <0.$$
We consider a minimizing sequence~$y_m$ and, arguing as for~\eqref{5476bvthffbkjewgkjera9876543}, we point out that~$y_m$ is a bounded sequence
and therefore there exists~$\overline{y}_{j,\eta}\in\R$ such that
$$ w_{j, \eta}(\overline{y}_{j,\eta})=
\inf_{x\in\R} w_{j, \eta} (x) <0.$$
In light of~\eqref{wjmu}, this says that
\begin{equation*}
-q_j(\overline{y}_{j, \eta}) > A\beta(\overline{y}_{j, \eta}) +\eta  (1+\overline{y}_{j, \eta}^s) >0.
\end{equation*}

We pick now~$R_0$ as in~\eqref{choiurefs1232099078675654} and~$A_{R_0}$ as in~\eqref{choiurefs12320990786756542}
and we see that, if~$A>A_{R_0}$ then~$|\overline{y}_{j, \eta}|>R_0$.
This and~\eqref{Asufflarge0} give that, if~$A> A_{R_0}$,
\begin{equation*} 
d_j(\overline{y}_{j, \eta})> 3 +a_0\Big(1+\|q\|^{p-2}_{L^\infty (\R, \R^n)}\Big)\qquad{\mbox{and}}\qquad
a(\overline{y}_{j, \eta})\in(-2,-1].\end{equation*}

Now, 
we suppose without loss of generality that~$\overline{y}_{j, \eta}>R_0$ and,
by~\eqref{wjmu}, \eqref{betaa} and~\eqref{eqn2}, we infer that
\begin{eqnarray*}
0&\ge& (-\Delta)^s w_{j, \eta} (\overline{y}_{j, \eta}) \\&=& A \, a(\overline{y}_{j, \eta}) \beta(\overline{y}_{j, \eta}) - \sum_{i=1}^{n}
L_{i, j}(\overline{y}_{j, \eta}) q_i (\overline{y}_{j, \eta}) +\partial_j W(\overline{y}_{j, \eta}, q(\overline{y}_{j, \eta}))+\eta(-\Delta)^s x_-^s(\overline{y}_{j,\eta}).
\end{eqnarray*}
Thus, recalling~\eqref{INPIUNUOVA}, \eqref{qwertyuioasdfghjkzxcvbnmYTFDSHGFD} and~\eqref{BISINPIUNUOVA},
\begin{eqnarray*}&&
\frac{\eta \,C_s}{ \overline{y}_{j,\eta}^s}\\&\ge& 
A \, a(\overline{y}_{j, \eta}) \beta(\overline{y}_{j, \eta}) - d_j(\overline{y}_{j, \eta})
q_j (\overline{y}_{j, \eta}) +a_0\,q_j (\overline{y}_{j, \eta}) \Big(1+  \|q\|^{p-2}_{L^\infty(\R, \R^n)}\Big)\\
&\ge& a(\overline{y}_{j, \eta}) 
\Big(w_{j, \eta}(\overline{y}_{j, \eta})-q_j(\overline{y}_{j, \eta}) -\eta(1+\overline{y}_{j, \eta}^s)
\Big)
- d_j(\overline{y}_{j, \eta})
q_j (\overline{y}_{j, \eta}) +a_0\,q_j (\overline{y}_{j, \eta}) \Big(1+  \|q\|^{p-2}_{L^\infty(\R, \R^n)}\Big)\\&\ge&
- a(\overline{y}_{j, \eta}) q_j(\overline{y}_{j, \eta}) +\eta(1+\overline{y}_{j, \eta}^s)
- d_j(\overline{y}_{j, \eta})
q_j (\overline{y}_{j, \eta}) +a_0\,q_j (\overline{y}_{j, \eta}) \Big(1+  \|q\|^{p-2}_{L^\infty(\R, \R^n)}\Big)\\&=&
- q_j(\overline{y}_{j, \eta})\left(a(\overline{y}_{j, \eta})+ d_j(\overline{y}_{j, \eta})-a_0 \Big(1+  \|q\|^{p-2}_{L^\infty(\R, \R^n)}\Big)\right)
+\eta(1+\overline{y}_{j, \eta}^s)\\&\ge&
- q_j(\overline{y}_{j, \eta})\left( a(\overline{y}_{j, \eta})+ 3+a_0 \Big(1+  \|q\|^{p-2}_{L^\infty(\R, \R^n)}\Big)-a_0 \Big(1+  \|q\|^{p-2}_{L^\infty(\R, \R^n)}\Big)\right)
+\eta(1+\overline{y}_{j, \eta}^s)\\&\ge&
- q_j(\overline{y}_{j, \eta})+\eta(1+\overline{y}_{j, \eta}^s)\\&>&\eta \overline{y}_{j, \eta}^s
.\end{eqnarray*}
As a consequence,
$$  C_s^{\frac1{2s}}>\overline{y}_{j, \eta}> R_0\ge 1+C_s^{\frac1{2s}},$$
which gives the desired contradiction and establishes the second claim
in~\eqref{claimvjmu} as well.
\end{proof}

By Lemma~\ref{teoremabarriera} we deduce the following result:

\begin{corollary}\label{COR1}
Let~$W\in C^1(\R\times\R^n, \R)$ satisfy~\eqref{LITTLEO} and~\eqref{INPIUNUOVA} and let~$q\in L^\infty(\R, \R^n)$ be a solution of~\eqref{eqn2}.  

Then,
\[
\lim_{x\to \pm\infty} q(x) =0.
\]
\end{corollary}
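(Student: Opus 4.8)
The plan is to squeeze each component $q_j$ between a decaying barrier from above and below, using the functions $v_{j,\eta}$ and $w_{j,\eta}$ constructed in Lemma~\ref{teoremabarriera}, and then let $\eta\searrow 0$. First I would fix $A\ge A_0$ as provided by Lemma~\ref{teoremabarriera}, so that for every $j\in\{1,\dots,n\}$, every $\eta>0$ and every $x\in\R$ we have $v_{j,\eta}(x)\ge0$ and $w_{j,\eta}(x)\ge0$. Recalling the definitions~\eqref{vjmu} and~\eqref{wjmu}, the inequality $v_{j,\eta}(x)\ge0$ reads $q_j(x)\le A\beta(x)+\eta(1+x_+^s+x_-^s)$, while $w_{j,\eta}(x)\ge0$ reads $-q_j(x)\le A\beta(x)+\eta(1+x_+^s+x_-^s)$. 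Combining the two, for all $x\in\R$ and all $\eta>0$,
\[
|q_j(x)|\le A\beta(x)+\eta\big(1+x_+^s+x_-^s\big).
\]

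The subtlety is that the barrier term $\eta(1+x_+^s+x_-^s)$ does not itself decay; it blows up at infinity. However, it can be made as small as we like at any \emph{fixed} scale by choosing $\eta$ small. So next I would argue as follows: let $\varepsilon>0$ be given. For $x\ge1$, the bound above gives $|q_j(x)|\le A\beta(x)+\eta(1+2x^s)$. Since $\beta(x)\le C|x|^{-1-2s}\to 0$ as $x\to+\infty$ by~\eqref{betaatinfinity}, there is $R_1=R_1(\varepsilon)$ such that $A\beta(x)\le\varepsilon/2$ for all $x\ge R_1$. On the other hand, the term $\eta(1+2x^s)$ is not yet controlled uniformly; to remedy this, one should revisit the proof of Lemma~\ref{teoremabarriera} and notice that the contradiction there was derived purely from the sign of the equation at the minimum point and did not require the full $x_\pm^s$ growth — but for a clean write-up it is simpler to keep the barrier as is and handle the limit differently: for each fixed $x$, send $\eta\searrow0$ in the displayed inequality to obtain $|q_j(x)|\le A\beta(x)$ for \emph{every} $x\in\R$. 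Then $|q(x)|^2=\sum_{j=1}^n|q_j(x)|^2\le nA^2\beta(x)^2$, and by~\eqref{betaatinfinity},
\[
\lim_{x\to\pm\infty}|q(x)|\le \lim_{x\to\pm\infty} \sqrt{n}\,A\,\beta(x)=0,
\]
which is exactly the claim.

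The main obstacle — and the point that must be checked carefully — is the legitimacy of letting $\eta\searrow0$ pointwise. This is immediate here because, for fixed $x$, the right-hand side $A\beta(x)+\eta(1+x_+^s+x_-^s)$ is continuous in $\eta$ and tends to $A\beta(x)$ as $\eta\to0^+$, while the left-hand side $|q_j(x)|$ does not depend on $\eta$; so the inequality $|q_j(x)|\le A\beta(x)$ passes to the limit trivially. One should also double-check that $q\in L^\infty(\R,\R^n)$ (which is an explicit hypothesis of the corollary, and is guaranteed in the application by Theorem~\ref{THDMPV}/\eqref{cbakbd}), since this boundedness was used in Lemma~\ref{teoremabarriera} to confine the minimizing sequences and produce the barrier in the first place. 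With that in hand, the argument is complete and the decay rate $|q(x)|\le \sqrt{n}\,A\,C|x|^{-1-2s}$ at infinity comes for free.
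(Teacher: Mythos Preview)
Your proposal is correct and follows essentially the same route as the paper: combine the two inequalities from Lemma~\ref{teoremabarriera} to get $|q_j(x)|\le A\beta(x)+\eta(1+x_+^s+x_-^s)$, send $\eta\searrow0$ at each fixed $x$ to obtain $|q_j(x)|\le A\beta(x)$, and then invoke~\eqref{betaatinfinity}. The detour you sketch (choosing $R_1$ and then $\eta$) is unnecessary, and you correctly recognize this and settle on the clean pointwise-limit argument, which is exactly what the paper does.
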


\begin{proof}
Lemma~\ref{teoremabarriera} gives that, if~$A$ is sufficiently large, for any $j\in\{1,\dots, n\}$, any~$\eta>0$ and any~$x\in\R$,
$$ |q_j(x)|\le A \beta(x) +\eta(1+x_+^s+x_-^s).$$
Taking the limit as~$\eta\searrow0$,
$$ |q_j(x)|\le A \beta(x).$$
As a consequence,
recalling~\eqref{betaatinfinity}, we obtain the desired limit.
\end{proof}

\section{Proofs of Theorems~\ref{maintheorem2} and~\ref{maintheorem3}}\label{SPKJODLN}

In this section we suppose that the assumptions of Theorems~\ref{maintheorem2} (when~$s\in(1/2,1)$)
and~\ref{maintheorem3} (when~$s\in(0,1/2]$) are satisfied.

We introduce the following notation. We recall the space~$\widetilde{H}^s$ introduced at the beginning
of Section~\ref{SIPDJOLNDFUOJFOJLN} and the functional~$I$ in~\eqref{FUN2} and we
let~$q_0$ be given by Proposition~\ref{geom2}. We set
\begin{equation}\label{CALH}
\mathcal H:=\Big\{ h\in C([0, 1],  \widetilde H^s) \;{\mbox{ s.t. }}\; h(0)=0 \mbox{ and } h(1)= q_0 \Big\}
\end{equation}
and
\begin{equation}\label{INFMAX}
c:= \inf_{h\in\mathcal H} \max_{\eta\in [0, 1]} I(h(\eta)).
\end{equation}

We notice that the function~$[0, 1]\ni\eta\mapsto \eta q_0$ belongs to~$\mathcal H$. Hence, by~\eqref{INFMAX}, we see that
\begin{equation}\label{CSUP}
c \le \max_{\eta\in [0, 1]} I(\eta q_0).
\end{equation}
We now prove that~$c$ is bounded by a constant depending only on the structural parameters of the problem.

\begin{proposition}\label{ICANDOIT}
Let~$q_0$ be given by Proposition~\ref{geom2}.
Then, there exists a positive constant~$c_1$ depending only on~$n$, $s$, $W$ and~$L$ such that
\begin{equation*}
\max_{\eta\in [0, 1]}  \left(\frac{c_s\eta^2}{2} \iint_{\R^2} \frac{|q_0(x)-q_0(y)|^2}{|x-y|^{1+2s}} \,dx\, dy +\frac{\eta^2}{2}\int_\R L(x) q_0(x)\cdot q_0(x) \,dx -\int_\R W(x, \eta q_0(x)) \,dx\right) \le c_1.
\end{equation*}
In particular,
\begin{equation}\label{CC1}
c\le c_1.
\end{equation}
\end{proposition}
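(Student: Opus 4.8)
The plan is to exploit the explicit form of the competitor~$q_0$ produced in the proof of Proposition~\ref{geom2}, together with the positivity of~$W$ forced by the Ambrosetti--Rabinowitz condition. First I would recall that, in that proof, $q_0$ is taken of the form $q_0=\overline t\, q_\diamondsuit/\|q_\diamondsuit\|_{\widetilde H^s}$, where $q_\diamondsuit$ is \emph{any} fixed function with $|q_\diamondsuit|=1$ in~$(-1,1)$ (see~\eqref{OVERT0}) and $\overline t$ is the threshold in~\eqref{OVERT}. Choosing $q_\diamondsuit$ once and for all to be a universal object depending only on~$n$ --- for instance a fixed smooth cutoff, supported in~$(-2,2)$ and identically~$1$ on~$(-1,1)$, multiplied by a fixed unit vector of~$\R^n$ --- one checks that $\|q_\diamondsuit\|_{\widetilde H^s}^2=[q_\diamondsuit]_s^2+\int_{\R}L(x)q_\diamondsuit(x)\cdot q_\diamondsuit(x)\,dx$ is finite and depends only on~$n$, $s$ and~$L$. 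Since the remaining ingredients of~\eqref{OVERT}, namely $\rho$ (recall~\eqref{SPIJLndeiwohgeriujkghvnei89} and Propositions~\ref{geom3} and~\ref{geom1}), $\mu$ (from~\eqref{AR}) and $\int_{-1}^{1}\omega_1(x)\,dx$ (recall~\eqref{omega12}), also depend only on~$n$, $s$, $W$ and~$L$, I would conclude that $\overline t\le c_0$ for some $c_0>0$ depending only on~$n$, $s$, $W$ and~$L$. I would also record that, by homogeneity of $\|\cdot\|_{\widetilde H^s}$, one has $\|q_0\|_{\widetilde H^s}=\overline t$.

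The second ingredient is the elementary observation that $W\ge 0$ on~$\R\times\R^n$: indeed~\eqref{AR} forces $W(x,q)>0$ whenever $q\neq 0$, while $W(x,0)=0$ by~\eqref{W0IN0}. Hence, for every $\eta\in[0,1]$ the term $-\int_{\R}W(x,\eta q_0(x))\,dx$ is nonpositive, and therefore, recalling~\eqref{FUN2} and~\eqref{NORMH},
\begin{equation*}
\begin{split}
&\frac{c_s\eta^2}{2}\iint_{\R^2}\frac{|q_0(x)-q_0(y)|^2}{|x-y|^{1+2s}}\,dx\,dy+\frac{\eta^2}{2}\int_{\R}L(x)q_0(x)\cdot q_0(x)\,dx-\int_{\R}W(x,\eta q_0(x))\,dx\\
&\qquad\le\frac{\eta^2}{2}\,\|q_0\|^2_{\widetilde H^s}\le\frac{\overline t^{\,2}}{2}\le\frac{c_0^2}{2},
\end{split}
\end{equation*}
uniformly in $\eta\in[0,1]$. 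Setting $c_1:=c_0^2/2$ gives the first assertion, and combining it with~\eqref{CSUP} yields~\eqref{CC1}.

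I do not anticipate a genuine obstacle here: the statement is essentially a quantitative version of the first mountain-pass geometry, and the only point that requires a little care is making sure, in the first step, that the auxiliary function~$q_\diamondsuit$ may indeed be chosen independently of~$W$, so that the resulting bound $\overline t\le c_0$ --- and hence $c_1$ --- depends only on the structural parameters $n$, $s$, $W$ and~$L$.
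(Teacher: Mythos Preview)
Your argument is correct and in fact slightly more direct than the paper's. Both proofs rely on the same two structural facts: the explicit form of~$q_0$ from Proposition~\ref{geom2} (which makes all the relevant quantities depend only on~$n,s,W,L$ once~$q_\diamondsuit$ is fixed universally) and the nonnegativity of~$W$ coming from~\eqref{AR} and~\eqref{W0IN0}. Where you simply discard the term~$-\int_\R W(x,\eta q_0)\,dx\le 0$ and bound what remains by~$\tfrac{\eta^2}{2}\|q_0\|_{\widetilde H^s}^2=\tfrac{\eta^2\overline t^{\,2}}{2}\le\tfrac{\overline t^{\,2}}{2}$, the paper instead adds and subtracts to write~$g(\eta)=\eta^2 I(q_0)+\eta^2\int_\R W(x,q_0)\,dx-\int_\R W(x,\eta q_0)\,dx$, uses~$I(q_0)<0$ to drop the first term, and uses~$W\ge0$ to drop the third, arriving at the bound~$\int_\R W(x,q_0)\,dx$. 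Your route avoids invoking~$I(q_0)<0$ at all and yields an explicit constant~$c_1=\overline t^{\,2}/2$; the paper's route gives~$c_1=\int_\R W(x,q_0)\,dx$, which is a different (and not obviously comparable) expression but equally admissible.
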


\begin{proof}
We consider the function~$g:[0,1]\to\R$ defined as
\[
g(\eta):= \frac{\eta^2}{2} \iint_{\R^2} \frac{|q_0(x)-q_0(y)|^2}{|x-y|^{1+2s}} \,dx\, dy +\frac{\eta^2}{2}\int_\R L(x) q_0(x)\cdot q_0(x) \,dx -\int_\R W(x, \eta q_0(x)) \,dx.
\]
By~\eqref{FUN2}, \eqref{AR} and the fact that~$I(q_0)<0$
(recall Proposition~\ref{geom2}), we have that
\[
\begin{split}
g(\eta)&= \eta^2 I(q_0) +\eta^2 \int_\R W(x, q_0(x)) \,dx -\int_\R W(x, \eta q_0(x)) \,dx\\
&\le \eta^2 \int_\R W(x, q_0(x)) \,dx -\int_\R W(x, \eta q_0(x)) \,dx\\
&\le \eta^2 \int_\R W(x, q_0(x)) \,dx.
\end{split}
\]
{F}rom this, we conclude that
\begin{equation}\label{ADZ}
\max_{\eta\in [0, 1]} g(\eta)\le \int_\R W(x, q_0(x)) \,dx.
\end{equation}

We recall that~$q_0:=\overline t q_\diamondsuit$, being~$q_\diamondsuit$ and~$\overline t$ be as in~\eqref{OVERT0}
and~\eqref{OVERT}, respectively. Then, the first claim in Proposition~\ref{ICANDOIT} follows from~\eqref{ADZ}.

Moreover, this also entails the claim in~\eqref{CC1} thanks to~\eqref{CSUP}. 
\end{proof}

With this, we are in the position of completing the proof of Theorems~\ref{maintheorem2} and~\ref{maintheorem3}.

\begin{proof}[Proof of Theorems~\ref{maintheorem2} and~\ref{maintheorem3}]
We will first show that we are in the position of employing~\cite[Theorem 4.3]{MAW}. 
To this end, we observe that,
for any~$h\in\mathcal H$, the function~$[0, 1]\ni \eta\mapsto \|h(\eta)\|_{\widetilde H^s}$ is continuous.
Moreover, it satisfies~$ \|h(0)\|_{\widetilde H^s} = 0$ and~$ \|h(1)\|_{\widetilde H^s} = \|q_0\|_{\widetilde H^s} >\rho$, with~$\rho$ as in Proposition~\ref{geom2}. {F}rom this, we have that
there exists~$\overline{\eta}_h\in (0, 1)$ such that~$\|h(\overline{\eta}_h)\|_{\widetilde H^s} =\rho$.
Thus, by~\eqref{INFMAX} and~\eqref{SPIJLndeiwohgeriujkghvnei89} we get that
\begin{equation}\label{C>BETA00}
c\ge \inf_{{q\in \widetilde H^s}\atop{\|q\|_{ \widetilde H^s} =\rho}} I(q).
\end{equation}

We consider~$\beta_1$ and~$\beta_2$ as given by Propositions~\ref{geom3} and~\ref{geom1} and set
\begin{equation*}
\beta:=\begin{cases}\displaystyle \beta_1 &{\mbox{ if }} s\in\left(0,\frac12\right], \\
\displaystyle  \beta_2 &{\mbox{ if }} s\in\left(\frac12,1\right).\end{cases}
\end{equation*}
With this notation, we deduce from~\eqref{C>BETA00} and Propositions~\ref{geom3} and~\ref{geom1} that
\begin{equation}\label{C>BETA} c\ge \beta>0.\end{equation}
Accordingly, \cite[Theorem~4.3]{MAW} provides the existence of a sequence~$q^j\subset \widetilde H^s$ satisfying~\eqref{C01} (with~$c$ as in~\eqref{INFMAX}) and~\eqref{C02}.

Moreover, since~\eqref{C01} and~\eqref{C02} hold true,  Proposition~\ref{PROPBOUNDED} entails that~$q^j$ is a bounded sequence in~$ \widetilde H^s$. In addition, thanks to Proposition~\ref{PROPOINT1}, $q^j$ converges to a critical point~$q$ of~$I$ as~$j\to+\infty$.

We claim that
\begin{equation}\label{NOTZERO}
q\not\equiv 0.
\end{equation}
To this aim, we notice that since~$q^j$ is a bounded sequence in~$ \widetilde H^s$, by Proposition~\ref{proposition_conv} it converges to some~$\overline q$ in~$L^2_{\rm{loc}}(\R,\R^n)$ as~$j\to+\infty$.
We point out that~$q=\overline q$ (the proof of this fact follows the lines of the proof of formula~\eqref{f847t5467tyreoghrwoli987654}
contained in Appendix~\ref{f847t5467tyreoghrwoli987654SEC}),
and therefore~$q^j$ converges to~$q$ in~$L^2_{\rm{loc}}(\R,\R^n)$ as~$j\to+\infty$.

In light of this, to prove~\eqref{NOTZERO}, it suffices to show that
\begin{equation}\label{KAPPA}
\mbox{there exists~$K>0$ such that } q^j\nrightarrow 0 \mbox{ in } L^2 ([-K, K], \R^n) \mbox{ as } j\to+\infty. 
\end{equation}
Assume by contradiction that for all~$K>0$
\begin{equation*}
{\mbox{$q^j\to 0\;$ in~$L^2 ([-K, K], \R^n)\;$ as~$j\to+\infty$.}}\end{equation*}
This gives that we are in the position 
of using Lemma~\ref{lemma_Rab}, thus obtaining that
\begin{equation}\label{lkjhgfdsmnbvcx09876543}
\limsup_{j\to +\infty} \|q^j\|^2_{L^2(\R, \R^n)} \le \frac{\gamma}{\beta(K)},
\end{equation}
with~$\gamma$ and~$\beta(K)$ as in~\eqref{BKDEFN}.

Now, by~\eqref{FUN2}, \eqref{DIFF2}, \eqref{C01} and~\eqref{C02}, we get that
\begin{equation}\label{LIMTOC}
\begin{split}
c &= \lim_{j\to +\infty} \left(I(q^j)-\frac12 \langle I'(q^j), q^j\rangle\right)\\
&= \lim_{j\to +\infty} \int_\R \left(\frac12\nabla_q W(x, q^j (x))\cdot q^j (x)- W(x, q^j (x))\right) \,dx.
\end{split}
\end{equation}

Let us consider first the case~$s\in (1/2, 1)$, namely we establish Theorem~\ref{maintheorem2}.
We recall the~$L^\infty$-bound given by 
Lemma~\ref{lemmaembeddcontinua}
and we exploit the
assumption on~$W$ in~\eqref{WSEGNATO} with
\[
M:= \sup_{j\in\N} \|q^j\|_{L^\infty(\R, \R^n)}.
\]
Accordingly, there exists~$\kappa_M>0$ such that
\begin{equation*} 
\int_\R \left|\frac12 \nabla_q W(x, q^j (x))\cdot q^j (x) -W(x, q^j (x))\right| \,dx \le \kappa_M \int_\R |q^j(x)|^2 \,dx .
\end{equation*}
Owing to this, \eqref{lkjhgfdsmnbvcx09876543} and~\eqref{LIMTOC}, we obtain that
\begin{equation*}
c\le \kappa_M \limsup_{j\to +\infty} \|q^j\|^2_{L^2(\R, \R^n)}
\le \frac{\kappa_M \, \gamma}{\beta(K)}.
\end{equation*}

Now, by the assumption in~\eqref{AUTOVAL} we have that
\begin{equation}\label{421215236y5yfdsgdsger65476ujtr}
\lim_{K\to+\infty}\beta(K)= +\infty\end{equation}
and so we conclude that~$c\le 0$, which contradicts~\eqref{C>BETA}.
This proves~\eqref{KAPPA}, which in turn establishes~\eqref{NOTZERO} when~$s\in (1/2, 1)$.

Let us now consider the case~$s\in (0, 1/2]$. By Lemma~\ref{lemmaW1} (used here with~$\varepsilon:=1$), we see that
\begin{equation}\label{P22}
\begin{split}
& \int_\R \left|\frac12\nabla_q W(x, q^j (x))\cdot q^j (x)- W(x, q^j (x))\right| \,dx\\
&\le\int_\R \left(\frac12 |q^j (x)|^2 +\frac{\sigma(1)}{2} |q^j (x)|^p +\frac12 |q^j (x)|^2 +\frac{\sigma(1)}{p} |q^j (x)|^p \right) \,dx\\
&\le \|q^j\|^2_{L^2(\R, \R^n)} + \sigma(1) \|q^j\|^p_{L^p(\R, \R^n)}.
\end{split}
\end{equation}
We recall the definition in~\eqref{2STARESSE} and, for any~$t\in (p, +\infty)$, we set
\[
\theta:=\begin{cases}
\displaystyle\dfrac{p-2}{2^*_s -2} &\mbox{ if } s\in \left(0, \frac12\right),\\
\displaystyle\dfrac{p-2}{t -2} &\mbox{ if } s=\frac12.
\end{cases}
\]
Since~$p\in (2, 2^*_s)$, we infer that~$\theta\in (0, 1)$. Hence, by~\eqref{INTERP1} we have that, for any~$q\in\widetilde H^s$,
\[
\|q\|^p_{L^p(\R, \R^n)}\le 
\begin{cases}\displaystyle
\|q\|^{2(1-\theta)}_{L^2(\R, \R^n)} \|q\|^{2^*_s \theta}_{L^{2^*_s}(\R, \R^n)} &\mbox{ if } s\in \left(0, \frac12\right),\\
\\
\displaystyle
\|q\|^{2(1-\theta)}_{L^2(\R, \R^n)} \|q\|^{t \theta}_{L^t (\R, \R^n)} &\mbox{ if } s=\frac12.
\end{cases}
\]
Accordingly, by using~\eqref{FIN1} and~\eqref{FIN3},
and recalling that~$q^j$ is bounded in~$ \widetilde H^s$,
we have that there exists~$C>0$ depending only on~$n$, $s$, $\alpha$ and~$\theta$ such that, for any~$j\in\N$,
\[
\|q^j\|^p_{L^p(\R, \R^n)}\le C \|q^j\|^{2(1-\theta)}_{L^2(\R, \R^n)}.
\]

Plugging this information into~\eqref{P22}, we obtain that
\begin{equation*}
\int_\R \left|\frac12\nabla_q W(x, q^j (x))\cdot q^j (x)- W(x, q^j (x))\right| \,dx
\le \|q^j\|^2_{L^2(\R, \R^n)} + C\sigma(1)\|q^j\|^{2(1-\theta)}_{L^2(\R, \R^n)}.
\end{equation*}
This, together with~\eqref{LIMTOC} and~\eqref{lkjhgfdsmnbvcx09876543}, entails that
\begin{equation*}
c \le \limsup_{j\to +\infty} \|q^j\|^2_{L^2(\R, \R^n)} +C \sigma(1) \limsup_{j\to +\infty} \|q^j\|^{2(1-\theta)}_{L^2(\R, \R^n)}
\le \frac{C}{\beta(K)},
\end{equation*}
being~$C>0$ a constant depending only on~$n$, $s$, $\alpha$, $\theta$ and~$ \gamma$,
with~$\gamma$ as in~\eqref{BKDEFN}. 
Therefore, recalling~\eqref{421215236y5yfdsgdsger65476ujtr}, we obtain that~$c\le 0$,
which contradicts inequality~\eqref{C>BETA}.
Thus, \eqref{KAPPA} and~\eqref{NOTZERO} are established when~$s\in (0,1/2]$ as well.

Moreover, in light of~\eqref{LIMSUP} and~\eqref{STIMA1}, taking~$c$ as in~\eqref{INFMAX} and recalling
the estimate in~\eqref{CC1}, we infer that there exists~$c_2>0$ depending on~$n$, $s$, $W$ and~$L$ such that
\begin{equation}\label{STIMAINONDIP2}
\|q\|_{\widetilde H^s}\le c_2.
\end{equation}

The existence statements in Theorems~\ref{maintheorem2} and~\ref{maintheorem3} are thereby established.
We now focus on the regularity statements.

Suppose first that~$s\in(1/2,1)$. To prove~\eqref{QREGC}, we use~\cite[Theorem 8.2]{guidagalattica} to find that
\begin{equation}\label{CREG}
\|q\|_{C^{s-1/2}(\R, \R^n)}\le c_3 \left(\|q\|^2_{L^2(\R, \R^n)}+ [q]^2_s\right)^{\frac12},
\end{equation}
for some~$c_3>0$ depending only on~$n$ and~$ s$.  Now,
by~\eqref{LMAT} and~\eqref{NORMH}, we get
$$
\|q\|^2_{L^2(\R, \R^n)}+ [q]^2_s \le \frac{1}{\alpha}\int_\R L(x) q(x)\cdot q(x) \,dx + [q]^2_s
\le\max\left\{1, \frac{1}{\alpha}\right\} \|q\|^2_{\widetilde H^s}.
$$
Hence, combining this inequality with~\eqref{CREG} and~\eqref{STIMAINONDIP2}, we infer that 
\[
\|q\|_{C^{s-1/2}(\R, \R^n)}\le c_3 \max\left\{1, \frac{1}{\sqrt{\alpha}}\right\} \|q\|_{\widetilde H^s} \le C^\star,
\]
being~$C^\star:= c_3 \, c_2\, \max\{1, 1/\sqrt{\alpha}\}$. 
This proves~\eqref{QREGC}.

It is worth noting that the solution~$q$ exhibits (locally) a higher regularity, as stated in~\eqref{REGLOC}. Indeed, 
\begin{equation}\label{LINFLOC}
W\in L_{\rm{loc}}^\infty(\R\times\R^n, \R)\quad\mbox{ and } \quad L\in L^\infty_{\rm{loc}}(\R, \R^{n\times n}). 
\end{equation}
Moreover, by~\eqref{CREG} we have in particular that~$q\in L^\infty(\R, \R^n)$. Thus, we are in the position of applying~\cite[Theorem~2.4.3]{ROSOTON} which entails that~$q\in C^{2s}(B_1, \R^n)$ and
\begin{equation*}
\|q\|_{C^{2s}(B_{1/2}, \R^n)} \le C \Big(\|q\|_{L^\infty(\R, \R^n)} +\|Lq\|_{L^\infty(B_1, \R^n)} +\|\nabla_q W\|_{L^\infty(B_1, \R^n)}\Big)
\le C^{\star\star},
\end{equation*}
for a suitable~$C^{\star\star} >0$ depending only on~$n$, $s$, $W$ and~$L$. Hence, \eqref{REGLOC} holds true.  

Furthermore, by~\eqref{CREG}, in particular, $q$ is uniformly continuous and then, by Lemma~\ref{lemmainfty}, we deduce~\eqref{LIM}.
This finishes the proof of Theorem~\ref{maintheorem2}.

We now focus on completing the proof of Theorem~\ref{maintheorem3}
As an intermediate step towards the proof of~\eqref{cbakbd} and~\eqref{cbnklaj}, we show that
\begin{equation} \label{QQINF}
\|q\|_{L^\infty(\R, \R^n)}\le C,
\end{equation}
for some constant~$C>0$ depending on~$n$, $s$, $W$ and~$\alpha$.

For this, 
recalling~\eqref{FIN1}, \eqref{FIN3} and~\eqref{STIMAINONDIP2}, we see that
\begin{equation*}
\begin{split}
&
\mbox{either } \;
\|q\|_{L^{2^*_s}(\R, \R^n)}\le \overline c \|q\|_{\widetilde H^s} \le c_3  \ \ \mbox{ if } s\in (0, 1/2),\\
&\mbox{or } \;\|q\|_{L^{t}(\R, \R^n)}\le \widehat c  \|q\|_{\widetilde H^s}\le c_4\quad \, \mbox{ if } s=1/2
\end{split}
\end{equation*}
being~$c_3:=\overline c\, c_2$ and~$c_4:=\widehat c\, c_2$.

As a result, Theorem~\ref{THDMPV} applies in our setting with~$F(x, q(x)):= \nabla_q W(x, q(x))$, $K=1$, $h_1(x) = b_1$ and~$\gamma_1=p-1$, yielding~\eqref{QQINF}, as desired.

Now, if~$s\in (0, 1/2)$, by~\eqref{QQINF}, \eqref{LINFLOC} and~\cite[Theorem~2.4.3]{ROSOTON} we infer that~$q\in C^{2s}(B_1, \R^n)$ and
\[
\|q\|_{C^{2s}(B_{1/2}, \R^n)} \le C \Big(\|q\|_{L^\infty(\R, \R^n)} +\|Lq\|_{L^\infty(B_1, \R^n)} +\|\nabla_q W\|_{L^\infty(B_1, \R^n)}\Big)
\le C^{\star},
\]
for a suitable~$C^\star>0$ depending only on~$n$, $s$, $W$ and~$L$, namely~\eqref{cbakbd} holds true.

On the other hand, if~$s=1/2$, we can apply~\cite[Theorem~2.4.3]{ROSOTON} with any large~$p\in (1, +\infty)$. In this case,
we have that~$q\in C^{1-\varepsilon}(B_1, \R^n)$ for any~$\varepsilon\in(0,1)$ and 
\[
\|q\|_{C^{1-\varepsilon}(B_{1/2}, \R^n)}\le C^{\star\star},
\]
for a suitable~$C^{\star\star} >0$ depending only on~$n$, $s$, $W$ and~$L$.
This proves~\eqref{cbnklaj}.

Finally, thanks to~\eqref{QQINF}, we are in the position of applying Corollary~\ref{COR1}, this obtaining~\eqref{LIM}. 
This concludes the proof of Theorem~\ref{maintheorem3}.
\end{proof}

\begin{appendix}

\section{Proof of formula~\eqref{AGGNORMPROOF}}\label{AGGNORMPROOFSEC}

Here we check that~$\|\cdot\|_{ \widetilde H^s}$ is a norm, thus establishing the claim in formula~\eqref{AGGNORMPROOF}.

We first check the triangle inequality. To this aim, we claim that, for any~$q$, $\overline q:\R\to\R^n$,
\begin{equation}\label{kjhgfds12345678909876543}\left|
\int_\R L(x) q(x)\cdot\overline q(x)\,dx\right|\le 
\left( \int_\R L(x) q(x)\cdot q(x) \,dx\right)^{1/2}
\left( \int_\R L(x) \overline q(x)\cdot \overline q(x) \,dx\right)^{1/2}.
\end{equation}
Indeed, we suppose that~$q\not\equiv0$, otherwise we are done, and therefore~$\int_\R L(x) q(x)\cdot q(x) \,dx\neq0$.
Thus, we can define
$$ \beta:=\left( \frac{\displaystyle\int_\R L(x) \overline q(x)\cdot \overline q(x) \,dx}{\displaystyle \int_\R L(x) q(x)\cdot q(x) \,dx}
\right)^{1/4}.$$
In light of~\eqref{LMAT} and the symmetry property of the matrix~$L$, we have that
$$ 0\le L\left(\beta q\pm\frac{\overline q}{\beta}\right)\cdot\left(\beta q\pm\frac{\overline q}{\beta}\right)
= \beta^2 Lq\cdot q+\frac1{\beta^2}L\overline q\cdot\overline q \pm 2Lq\cdot\overline q.
$$
Accordingly,
\begin{eqnarray*}
2\left| \int_\R L(x)q(x)\cdot\overline q(x)\,dx\right| \le \beta^2 \int_\R L(x)q(x)\cdot q(x)\,dx+\frac1{\beta^2}\int_\R L(x)\overline q(x)\cdot\overline q(x)\,dx.
\end{eqnarray*}
Substituting the expression of~$\beta$, we obtain~\eqref{kjhgfds12345678909876543}. 

Exploiting~\eqref{kjhgfds12345678909876543}, we see that
\begin{equation*}\begin{split}&
\int_\R L(x) \big(q(x)+\overline q(x)\big)\cdot \big(q(x)+\overline q(x)\Big) \,dx
\\&\qquad=
\int_\R L(x) q(x)\cdot q(x)\,dx+\int_\R L(x) \overline q(x)\cdot \overline q(x)\,dx+2\int_\R L(x) q(x)\cdot\overline q(x)\,dx
\\&\qquad\le
\int_\R L(x) q(x)\cdot q(x)\,dx+\int_\R L(x) \overline q(x)\cdot \overline q(x)\,dx \\&\qquad\qquad\qquad+2
\left( \int_\R L(x) q(x)\cdot q(x) \,dx\right)^{1/2}
\left( \int_\R L(x) \overline q(x)\cdot \overline q(x) \,dx\right)^{1/2} \\&\qquad=
\left(\left( \int_\R L(x) q(x)\cdot q(x) \,dx\right)^{1/2} +
\left( \int_\R L(x) \overline q(x)\cdot \overline q(x) \,dx\right)^{1/2}
\right)^2
.\end{split}
\end{equation*}

As a consequence of this, we have that, for any~$q$, $\overline q:\R\to\R^n$,
\begin{eqnarray*}
&&\|q+\overline q\|_{ \widetilde H^s}=\left(
[q+\overline q]^2_s +\int_\R L(x) \big(q(x)+\overline q(x)\big)\cdot \big(q(x)+\overline q(x)\Big) \,dx\right)^{1/2}\\
&&\le
\left(
\big([q]_s+[\overline q]_s\big)^2 +\left(\left( \int_\R L(x) q(x)\cdot q(x) \,dx\right)^{1/2} +
\left( \int_\R L(x) \overline q(x)\cdot \overline q(x) \,dx\right)^{1/2}
\right)^2\right)^{1/2}\\&&\le
\left([q]_s^2+ \int_\R L(x) q(x)\cdot q(x) \,dx\right)^{1/2} 
+
\left([\overline q]_s^2+ \int_\R L(x)\overline q(x)\cdot \overline q(x) \,dx\right)^{1/2} 
\\&&=\|q\|_{ \widetilde H^s}+\|\overline q\|_{ \widetilde H^s},
\end{eqnarray*}
which completes the proof of the triangle inequality.

The absolute homogeneity follows from the fact that, for any~$t\in\R$ and~$q:\R\to\R^n$,
\begin{eqnarray*} &&
\|tq\|_{ \widetilde H^s}= \left([tq]^2_s +\int_\R L(x) (tq)(x)\cdot (tq)(x) \,dx\right)^{1/2}
=\left(t^2 [q]^2_s +t^2\int_\R L(x) q(x)\cdot q(x) \,dx\right)^{1/2}\\&&\qquad\qquad=|t|\left([q]^2_s +\int_\R L(x) q(x)\cdot q(x) \,dx\right)^{1/2}
=|t|\|q\|_{ \widetilde H^s}.
\end{eqnarray*}

It remains to check the positivity property.
For this, we observe that, in light of~\eqref{LMAT},
\begin{eqnarray*}
\|q\|_{ \widetilde H^s}= \left([q]^2_s +\int_\R L(x) q(x)\cdot q(x) \,dx\right)^{1/2}\ge \left([q]^2_s +\alpha\int_\R |q(x)|^2 \,dx\right)^{1/2}\ge0.
\end{eqnarray*}
This also implies that if~$\|q\|_{ \widetilde H^s}=0$, then~$[q]_s=\|q\|_{L^2(\R,\R^n)}=0$, and therefore~$q\equiv0$.

\section{Hilbert space properties for~$\widetilde{H}^s$}\label{AGGNORMPROOFSEC22}

Here we check that the space~$\widetilde{H}^s$, introduced
at the beginning of
Section~\ref{SIPDJOLNDFUOJFOJLN}, is a Hilbert space.

\begin{lemma}\label{LemmaHilbert}
$( \widetilde H^s, \|\cdot\|_{ \widetilde H^s})$ is a Hilbert space.
\end{lemma}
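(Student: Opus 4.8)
The plan is to verify the Hilbert space axioms for $\widetilde H^s$ directly, decomposing the task into the standard pieces: bilinearity and symmetry of $\langle\cdot,\cdot\rangle_{\widetilde H^s}$, positive definiteness, and completeness. The first two are essentially bookkeeping. For bilinearity and symmetry, I would observe that the Gagliardo part $c_s\iint \frac{(q(x)-q(y))\cdot(\overline q(x)-\overline q(y))}{|x-y|^{1+2s}}\,dx\,dy$ is manifestly symmetric and bilinear in $(q,\overline q)$ as an integral of a bilinear form in the pointwise differences, and the term $\int_\R L(x)q(x)\cdot\overline q(x)\,dx$ is symmetric because $L(x)$ is a symmetric matrix for every $x$ (hypothesis~\eqref{LMAT}) and bilinear by linearity of the integral. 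That $\langle q,q\rangle_{\widetilde H^s}=\|q\|_{\widetilde H^s}^2$ is immediate from comparing with~\eqref{NORMH}, and positive definiteness is exactly the positivity property already proved in Appendix~\ref{AGGNORMPROOFSEC}: if $\|q\|_{\widetilde H^s}=0$ then $[q]_s=\|q\|_{L^2(\R,\R^n)}=0$, hence $q\equiv 0$. So the content of the lemma is really the completeness claim.

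For completeness, I would argue as follows. Let $(q^k)$ be a Cauchy sequence in $\widetilde H^s$. By inequality~\eqref{EMB1}, namely $\|q\|_{\widetilde H^s}^2\ge \min\{\alpha,1\}\,\|q\|_{H^s(\R,\R^n)}^2$, the sequence $(q^k)$ is Cauchy in the classical fractional Sobolev space $H^s(\R,\R^n)$, which is complete; hence there exists $q\in H^s(\R,\R^n)$ with $q^k\to q$ in $H^s$. In particular $q^k\to q$ in $L^2(\R,\R^n)$ and, passing to a subsequence, $q^k\to q$ a.e.\ in $\R$, and also the Gagliardo seminorms converge, $[q^k-q]_s\to 0$. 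It remains to show that $\int_\R L(x)(q^k-q)(x)\cdot(q^k-q)(x)\,dx\to 0$ as well, and that $q\in\widetilde H^s$. For the former, set $g^k(x):=L(x)(q^k(x)-q(x))\cdot(q^k(x)-q(x))\ge 0$. Since $(q^k)$ is Cauchy in $\widetilde H^s$, the quantities $\int_\R L(x)(q^k-q^m)\cdot(q^k-q^m)\,dx$ are uniformly small for $k,m$ large; fixing $k$ large and letting $m\to\infty$, Fatou's Lemma applied to $L(x)(q^k-q^m)(x)\cdot(q^k-q^m)(x)\to g^k(x)$ a.e.\ (along the a.e.-convergent subsequence, using the symmetry and positive semidefiniteness of $L(x)$ so that $v\mapsto L(x)v\cdot v$ is continuous and nonnegative) gives $\int_\R g^k\,dx\le \liminf_m \int_\R L(x)(q^k-q^m)\cdot(q^k-q^m)\,dx \le \varepsilon$ for $k$ large. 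Combined with $[q^k-q]_s\to0$ this yields $\|q^k-q\|_{\widetilde H^s}\to 0$, and in particular $q=(q-q^k)+q^k\in\widetilde H^s$ since $\widetilde H^s$ is a vector space and $q^k\in\widetilde H^s$. (Here one also uses that convergence is for the full sequence: a Cauchy sequence with a convergent subsequence converges.)

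I expect the main obstacle to be the handling of the unbounded coefficient $L$ in the completeness argument: one cannot simply dominate $L(x)|q^k(x)-q(x)|^2$ by a fixed integrable function, so the Fatou-type argument via the Cauchy property must be set up carefully, and one must be scrupulous about which subsequence carries the a.e.\ convergence while keeping the Cauchy estimate along the full sequence. A mild alternative, which I would mention as a remark, is to invoke Proposition~\ref{DENNUO:AS0} (identifying $\widetilde H^s$ with the completion of $C_0^\infty$ under $\|\cdot\|_{\widetilde H^s}$): a completion is by construction a Banach space, and since the norm comes from the inner product $\langle\cdot,\cdot\rangle_{\widetilde H^s}$ verified above to be a genuine inner product, $\widetilde H^s$ is then a Hilbert space; however, since Proposition~\ref{DENNUO:AS0} is itself deferred to an appendix and its proof is described as delicate, the self-contained Fatou argument above is the cleaner route to record here.
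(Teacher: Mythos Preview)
Your proposal is correct and follows essentially the same approach as the paper: both use~\eqref{EMB1} to reduce to the completeness of $H^s(\R,\R^n)$, extract an a.e.-convergent subsequence, and then apply Fatou's Lemma to the Cauchy estimate in order to control the $L$-weighted term. The only cosmetic difference is the order of the two concluding steps---the paper first verifies $q\in\widetilde H^s$ via Fatou on $\|q^{j_k}\|_{\widetilde H^s}^2$ and then proves convergence, whereas you prove convergence of the differences first and deduce membership---but the substance is identical.
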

\begin{proof}
For any~$(q, u)\in  \widetilde H^s\times  \widetilde H^s$, we consider the map
\[
(q, u)\mapsto\langle q, u\rangle_{ \widetilde H^s} :=\iint_{\R^2} \frac{(q(x)-q(y))(u(x)-u(y))}{|x-y|^{1+2s}} \,dx\, dy +\int_{\R} L(x)q(x)\cdot u(x) \,dx.
\]
It is easy to check that~$\langle \cdot, \cdot\rangle_{ \widetilde H^s}$ is a scalar product and it induces the norm defined in~\eqref{NORMH}. We show that~$ \widetilde H^s$ is complete with respect to this norm.

To this end, let~$q^j$ denote a Cauchy sequence in~$ \widetilde H^s$ and take~$\epsilon>0$. Thus, there exists~$\overline{n}_{\epsilon}\in\N$ such that if~$j$, $m\ge\overline{n}_{\epsilon}$, then
\begin{equation}\label{CAUCHY1}
\epsilon\ge \|q^j -q^m\|^2_{ \widetilde H^s}\ge \min\{\alpha, 1\} \|q^j -q^m\|^2_{H^s(\R, \R^n)},
\end{equation}
where the last inequality is a consequence of~\eqref{EMB1}. 

Now, since~$H^s(\R, \R^n)$ is complete, there exists~$q\in H^s(\R, \R^n)$ such that~$q^j\to q$ in~$H^s(\R, \R^n)$ as~$j\to +\infty$. Moreover, we can extract a subsequence~$q^{j_k}$ such that~$q^{j_k}\to q$ a.e. in~$\R$ as~$k\to+\infty$.  

We claim that~$q\in \widetilde H^s$. Indeed, by the hypothesis in~\eqref{LMAT} and Fatou's Lemma,
\[
\begin{split}
&\iint_{\R^2} \frac{|q(x)-q(y)|^2}{|x-y|^{1+2s}} \,dx\, dy +\int_\R L(x) q(x)\cdot q(x) \,dx\\
&\qquad\le \liminf_{k\to +\infty} \left(\iint_{\R^2} \frac{|q^{j_k}(x)-q^{j_k}(y)|^2}{|x-y|^{1+2s}}\,dx\, dy+\int_{\R} L(x) q^{j_k}(x)\cdot q^{j_k}(x) \,dx \right)\\
&\qquad=\liminf_{k\to +\infty} \|q^{j_k}\|^2_{ \widetilde H^s}\\
&\qquad\le \liminf_{k\to +\infty} \Big(\|q^{j_k}-q^{\overline{n}_1}\|_{ \widetilde H^s} +\|q^{\overline{n}_1}\|_{ \widetilde H^s}\Big)^2\\
&\qquad\le \Big(1+ \|q^{\overline{n}_1}\|_{ \widetilde H^s}\Big)^2\\
&\qquad <+\infty,
\end{split}
\]
where we also used the first inequality in~\eqref{CAUCHY1} with~$\epsilon=1$. This proves that~$q\in \widetilde H^s$.

To conclude the proof of Lemma~\ref{LemmaHilbert} it remains to show that~$q^m\to q$ in~$ \widetilde H^s$
as~$m\to+\infty$. For this, let~$m\ge\overline{n}_{\epsilon}$. Thus, by~\eqref{CAUCHY1}, the assumption in~\eqref{LMAT} and Fatou's Lemma, we get that
\[
\begin{split}
\epsilon&\ge \liminf_{k\to +\infty} \left(\iint_{\R^2} \frac{|(q^m-q^{j_k})(x)-(q^m-q^{j_k})(y)|^2}{|x-y|^{1+2s}}\,dx\, dy+\int_{\R} L(x) (q^m -q^{j_k})(x)\cdot (q^m-q^{j_k})(x) \,dx \right)\\
&\ge \iint_{\R^2} \liminf_{k\to +\infty} \frac{|(q^m-q^{j_k})(x)-(q^m-q^{j_k})(y)|^2}{|x-y|^{1+2s}}\,dx\, dy\\
&\qquad\qquad + \int_{\R} \liminf_{k\to +\infty} \left(L(x) (q^m -q^{j_k})(x)\cdot (q^m-q^{j_k})(x)\right) \,dx \\
&= \iint_{\R^2} \frac{|(q^m-q)(x)-(q^m-q)(y)|^2}{|x-y|^{1+2s}}\,dx\, dy+\int_{\R} L(x) (q^m -q)(x)\cdot (q^m-q)(x) \,dx \\
&= \|q^m-q\|^2_{ \widetilde H^s},
\end{split}
\]
namely~$q^m\to q$ in~$ \widetilde H^s$ as~$m\to +\infty$, as desired.
\end{proof}

\section{Proof of Proposition~\ref{DENNUO:AS0}}\label{DENNUO:AS0ap}

To prove Proposition~\ref{DENNUO:AS0}, we observe that
the completion of~$C^\infty_0(\R,\R^n)$
with respect to the norm of~$\widetilde H^s$ is obviously contained in~$\widetilde H^s$
and therefore we only need to establish the opposite inclusion.
That is, given~$q\in \widetilde H^s$, our objective is to construct a sequence of functions~$q_\e\in C^\infty_0(\R,\R^n)$ such that
\begin{equation}\label{DENNUO:AS0.b}
\lim_{\e\searrow0}\|q-q_\e\|_{ \widetilde H^s}=0
\end{equation}
For this scope, we consider first a sequence of functions~$\tau_R\in C^\infty_0([-R-1,R+1],[0,1])$
such that~$\tau_R=1$ in~$[-R,R]$ and~$|\tau_R'|\le 2$ and we define, for each~$j\in\{1,\dots,n\}$,
\begin{equation}\label{DENNUO:AS0.bacr} q_{R,j}(x):= \tau_R(x)\, q_j(x).\end{equation}
We observe that
\begin{equation}\label{DENNUO:AS0.cd}
{\mbox{$q_{R,j}$ is supported in~$[-R-1,R+1]$.}}
\end{equation}
Besides, we know (see e.g.~\cite[Lemma~1.63]{MR4567945}) that
\begin{equation}\label{DENNUO:AS0.c}
\lim_{R\to+\infty} [q_j-q_{R,j}]_s=0.
\end{equation}

Moreover,
\begin{eqnarray*}
&&\lim_{R\to+\infty} \int_\R L(x)\big(q(x)-q_R(x)\big)\cdot\big(q(x)-q_R(x)\big)\,dx\\&&\qquad
=\lim_{R\to+\infty} \int_{\R\setminus[-R,R]} \big(1-\tau_R(x)\big)^2
L(x)q(x)\cdot q(x)\,dx\\&&\qquad\le\lim_{R\to+\infty} \int_{\R\setminus[-R,R]} 
L(x)q(x)\cdot q(x)\,dx\\&&\qquad=0.
\end{eqnarray*}
{F}rom this and~\eqref{DENNUO:AS0.c} it follows that
\begin{equation*}
\lim_{R\to+\infty} \|q-q_{R}\|_{ \widetilde H^s}=0.
\end{equation*}
In particular, given~$\e>0$, we take~$R_\e>0$ sufficiently large such that
\begin{equation}\label{DENNUO:AS0.d}
\|q-q_{R_\e}\|_{ \widetilde H^s}\le\e.
\end{equation}

Our objective is now to take a smooth approximation of~$q_{R_\e}$. For this, we consider
a mollification sequence. Namely, given~$\eta\in C^\infty_0([-1,1],[0,+\infty))$ with unit mass, we define, for~$\delta\in(0,1)$,
$$ \eta_\delta(x):=\frac1\delta \eta\left(\frac{x}\delta\right)\qquad{\mbox{and}}\qquad
q_{\delta,\e,j}:=q_{R_\e,j}*\eta_\delta.$$
We stress that
\begin{equation}\label{DENNUO:AS0.cdu}
{\mbox{$q_{\delta,\e,j}$ is of class~$C^\infty$ and supported in~$[-R_\e-2,R_\e+2]$.}}\end{equation}
due to~\eqref{DENNUO:AS0.cd}.

We know (see e.g. equation~(1.38) and the subsequent comment in~\cite{MR4567945}) that
\begin{equation}\label{DENNUO:AS0.cef}
\lim_{\delta\searrow0} [q_{R_\e,j}-q_{\delta,\e,j}]_s=0.
\end{equation}

In view of~\eqref{DENNUO:AS0.cd} and~\eqref{DENNUO:AS0.cdu}, we also remark that
\begin{equation}\label{DENNUO:AS0.bacr2}
\begin{split}&
\int_\R L(x)\big(q_{R_\e}(x)-q_{\delta,\e}(x)\big)\cdot\big(q_{R_\e}(x)-q_{\delta,\e}(x)\big)\,dx\\&\qquad=\int_{[-R_\e-2,R_\e+2]} L(x)\big(q_{R_\e}(x)-q_{\delta,\e}(x)\big)\cdot\big(q_{R_\e}(x)-q_{\delta,\e}(x)\big)\,dx\\
&\qquad\le
\sup_{[-R_\e-2,R_\e+2]}|L|\,\big\|q_{R_\e}(x)-q_{\delta,\e}(x)\big\|_{L^2(\R,\R^n)}^2.
\end{split}\end{equation}

Also, owing to~\eqref{DENNUO:AS0.bacr}, we have that~$\|q_{R_\e,j}(x)\|_{L^2(\R)}\le\|q_{j}(x)\|_{L^2(\R)}<+\infty$. Therefore (see e.g.~\cite[Theorem~9.6]{MR3381284}) we conclude that
$$ \lim_{\delta\searrow0}\big\|q_{R_\e,j}(x)-q_{\delta,\e,j}(x)\big\|_{L^2(\R)}=0.$$

Combining this information and~\eqref{DENNUO:AS0.bacr2}, we gather that
$$ \lim_{\delta\searrow0}
\int_\R L(x)\big(q_{R_\e}(x)-q_{\delta,\e}(x)\big)\cdot\big(q_{R_\e}(x)-q_{\delta,\e}(x)\big)\,dx=0,$$
which, together with~\eqref{DENNUO:AS0.cef}, gives that
\begin{equation*}
\lim_{\delta\searrow0} \|q_{R_\e}-q_{\delta,\e}\|_{\widetilde H^s}=0.
\end{equation*}

As a result, we can take~$\delta_\epsilon$ sufficiently small such that~$\|q_{R_\e}-q_{\delta_\epsilon,\e}\|_{\widetilde H^s}\le\epsilon$. Thus, using the short notation~$q_\epsilon:=q_{\delta_\epsilon,\e}$ (which, by~\eqref{DENNUO:AS0.cdu}, is
a smooth and compactly supported function) 
and recalling~\eqref{DENNUO:AS0.d}, we conclude that
$$ \|q-q_{\e}\|_{ \widetilde H^s}=\|q-q_{\delta_\epsilon,\e}\|_{ \widetilde H^s}\le
\|q-q_{R_\e}\|_{ \widetilde H^s}
+
\|q_{R,\e}-q_{\delta_\epsilon,\e}\|_{ \widetilde H^s}
\le2\e.$$
This establishes~\eqref{DENNUO:AS0.b} and completes the proof of Proposition~\ref{DENNUO:AS0}.

\section{Proof of formula~\eqref{f847t5467tyreoghrwoli987654}}\label{f847t5467tyreoghrwoli987654SEC}

In this appendix we provide the proof of formula~\eqref{f847t5467tyreoghrwoli987654}.

To this end, we observe that
$$ \sup_{j\in\N}[q^j]_s<+\infty\qquad
{\mbox{and}}\qquad  \sup_{j\in\N}\int_{\R}L(x)q^j(x)\cdot q^j(x)\,dx<+\infty.$$
Consequently (see~\eqref{LMAT} and e.g.~\cite[Proposition~3.6]{guidagalattica}), we have that
$$ \sup_{j\in\N}\|(-\Delta)^{\frac{s}2}q^j\|_{L^2(\R,\R^n)}<+\infty\qquad
{\mbox{and}}\qquad  \sup_{j\in\N}\|q^j\|_{L^2(\R,\R^n)}<+\infty.$$
Therefore, we can suppose that
there exist~$g$, $q^\star\in L^2(\R,\R^n)$ such that,
up to subsequences, $(-\Delta)^{\frac{s}2}q^j$ weakly converges
to~$g$ and~$q^j$ weakly converges to~$q^\star$
in~$L^2(\R,\R^n)$, as~$j\to+\infty$.

In particular, recalling also the convergence in~\eqref{conv2},
for every bounded function~$\psi:\R\to\R^n$ with compact support, we have that
$$ \int_\R \overline q(x)\cdot \psi(x)\,dx=\lim_{j\to+\infty}
\int_\R q^j(x)\cdot \psi(x)\,dx=\int_\R q^\star(x)\cdot \psi(x)\,dx.$$
This shows that
\begin{equation}\label{OSJLDMCDPMS-23e}
q^\star=\overline q.\end{equation}
Furthermore, for every~$\phi\in C^\infty_0(\R,\R^n)$,
\begin{equation}\label{cww214157fv346rt38rf24o}\begin{split}
&\int_\R g(x)\cdot \phi(x)\,dx=
\lim_{j\to+\infty}\int_\R(-\Delta)^{\frac{s}2}q^j(x)\cdot \phi(x)\,dx
\\&\qquad
=\lim_{j\to+\infty}\int_\R q^j(x)\cdot (-\Delta)^{\frac{s}2}\phi(x)\,dx
=\int_\R q^\star(x)\cdot (-\Delta)^{\frac{s}2}\phi(x)\,dx.
\end{split}\end{equation}
Since~$g$ and~$q^\star$ belong to~$L^2(\R,\R^n)$, by the density
of~$C^\infty_0(\R,\R^n)$ in~$H^s(\R,\R^n)$,
this holds true for all~$\phi\in H^s(\R,\R^n)$.

Now, exploiting the weak convergence in~\eqref{WEAK1}
(and again~\cite[Proposition~3.6]{guidagalattica}),
we see that, for every~$\varphi\in C^\infty_0(\R,\R^n)$,
\begin{equation*}
\begin{split}
&\int_{\R^2} q(x)\cdot (-\Delta)^{s}\varphi(x)
\,dx+\int_\R L(x) q(x)\cdot\varphi(x) \,dx\\
&\qquad=
\int_{\R^2} (-\Delta)^{\frac{s}2}q(x)\cdot (-\Delta)^{\frac{s}2}\varphi(x)
\,dx+\int_\R L(x) q(x)\cdot\varphi(x) \,dx\\
&\qquad=c_s \iint_{\R^2} \frac{(q(x)-q(y))\cdot(\varphi(x)-\varphi(y))}{|x-y|^{1+2s}} \,dx\, dy +\int_\R L(x) q(x)\cdot\varphi(x) \,dx\\
&\qquad=\lim_{j\to+\infty}
c_s \iint_{\R^2} \frac{(q^j(x)-q^j(y))\cdot(\varphi(x)-\varphi(y))}{|x-y|^{1+2s}} \,dx\, dy +\int_\R L(x) q^j(x)\cdot\varphi(x) \,dx\\
&\qquad =\lim_{j\to+\infty}
\int_{\R^2} (-\Delta)^{\frac{s}2}q^j(x)\cdot (-\Delta)^{\frac{s}2}\varphi(x)
\,dx+\int_\R L(x) q^j(x)\cdot\varphi(x) \,dx\\
&\qquad=
\int_{\R^2} g(x)\cdot (-\Delta)^{\frac{s}2}\varphi(x)
\,dx+\int_\R L(x) q^\star(x)\cdot\varphi(x) \,dx
.\end{split}
\end{equation*}
Hence, using~\eqref{cww214157fv346rt38rf24o} with~$\phi:=(-\Delta)^{\frac{s}2}\varphi$, we find that, for every~$\varphi\in C^\infty_0(\R,\R^n)$,
\begin{eqnarray*}&& \int_{\R^2} q(x)\cdot (-\Delta)^{s}\varphi(x)
\,dx+\int_\R L(x) q(x)\cdot\varphi(x) \,dx\\&&\qquad
=\int_{\R^2} q^\star(x)\cdot (-\Delta)^{s}\varphi(x)
\,dx+\int_\R L(x) q^\star(x)\cdot\varphi(x) \,dx.
\end{eqnarray*}

Setting~$Q:=q-\overline q$,
the above observation and~\eqref{OSJLDMCDPMS-23e} yield that, for any~$\varphi\in C^\infty_0(\R,\R^n)$,
\begin{equation}\label{f8o4erytilro4poiuytr0987654}
\int_\R Q(x)\cdot \Big((-\Delta)^s \varphi(x)+ L(x) \varphi(x)\Big)\,dx=0.\end{equation}

Now we take a standard mollification sequence~$\eta_\varepsilon\in C^\infty_0(\R,\R^n)$. Since~$Q\in L^2(\R,\R^n)$, we know (see e.g.~\cite[Theorem~9.6]{MR3381284}) that
\begin{equation}\label{LAONKDIPOHOIDGUPIGBIO1KEDF}
{\mbox{$Q*\eta_\varepsilon$ converges to~$Q$ in~$L^2(\R,\R^n)$ as~$\varepsilon\searrow0$.}}
\end{equation}
We take~$p\in\R$ to be a point in this set of convergence and set
\begin{equation}\label{LAONKDIPOHOIDGUPIGBIO1KEDFlowdfenv}
\eta_{\varepsilon,p}(x):=\eta_\varepsilon(p-x).\end{equation}

Also, thanks to Lax-Milgram Theorem (see e.g.~\cite[Corollary~5.8]{MR2759829}),
there exists~$\phi_{\varepsilon,p}\in H^s_\sharp$ such that, in the dual sense of~$\widetilde H^s$,
$$ (-\Delta)^s \phi_{\varepsilon,p}+ L \phi_{\varepsilon,p}= \eta_{\varepsilon,p}.$$
That is,
\begin{equation}\label{OJLmsdPo0iljwfeffghsdfFuqjwdmf}\begin{split}&
c_s \iint_{\R^2} \frac{(Q(x)-Q(y))\cdot(\phi_{\varepsilon,p}(x)-\phi_{\varepsilon,p}(y))}{|x-y|^{1+2s}} \,dx\, dy +\int_\R L(x) Q(x)\cdot\phi_{\varepsilon,p}(x) \,dx
\\&\qquad\qquad=\int_\R Q(x)\,\eta_{\varepsilon,p}(x)\,dx.\end{split}
\end{equation}

Now we employ Proposition~\ref{DENNUO:AS0} and we find
\begin{equation}\label{f8o4erytilro4poiuytr0987654.b}
\phi_{\delta,\varepsilon,p}\in C^\infty_0(\R,\R^n)\end{equation} such that
$$ \lim_{\delta\searrow0}\|\phi_{\varepsilon,p}-\phi_{\delta,\varepsilon,p}\|_{\widetilde H^s}=0.$$
In particular,
\begin{eqnarray*}&&
\lim_{\delta\searrow0}c_s \iint_{\R^2} \frac{(Q(x)-Q(y))\cdot(\phi_{\delta,\varepsilon,p}(x)-\phi_{\delta,\varepsilon,p}(y))}{|x-y|^{1+2s}} \,dx\, dy +\int_\R L(x) Q(x)\cdot\phi_{\delta,\varepsilon,p}(x) \,dx\\&&\qquad=
c_s \iint_{\R^2} \frac{(Q(x)-Q(y))\cdot(\phi_{\varepsilon,p}(x)-\phi_{\varepsilon,p}(y))}{|x-y|^{1+2s}} \,dx\, dy +\int_\R L(x) Q(x)\cdot\phi_{\varepsilon,p}(x) \,dx.
\end{eqnarray*}

This and~\eqref{OJLmsdPo0iljwfeffghsdfFuqjwdmf} return that
\begin{equation}\label{f8o4erytilro4poiuytr09876540ojdf}\begin{split}&
\lim_{\delta\searrow0}c_s \iint_{\R^2} \frac{(Q(x)-Q(y))\cdot(\phi_{\delta,\varepsilon,p}(x)-\phi_{\delta,\varepsilon,p}(y))}{|x-y|^{1+2s}} \,dx\, dy +\int_\R L(x) Q(x)\cdot\phi_{\delta,\varepsilon,p}(x) \,dx\\&\qquad\qquad=
\int_\R Q(x)\,\eta_{\varepsilon,p}(x)\,dx.\end{split}
\end{equation}

Furthermore, by virtue of~\eqref{f8o4erytilro4poiuytr0987654.b}, we can use~\eqref{f8o4erytilro4poiuytr0987654} with~$\varphi:=\phi_{\delta,\varepsilon,p}$ and we see that
\begin{eqnarray*}
&& c_s \iint_{\R^2} \frac{(Q(x)-Q(y))\cdot(\phi_{\delta,\varepsilon,p}(x)-\phi_{\delta,\varepsilon,p}(y))}{|x-y|^{1+2s}} \,dx\, dy +\int_\R L(x) Q(x)\cdot\phi_{\delta,\varepsilon,p}(x) \,dx\\&&\qquad
=
\int_\R Q(x)\cdot \Big((-\Delta)^s \phi_{\delta,\varepsilon,p}(x)+ L(x) \phi_{\delta,\varepsilon,p}(x)\Big)\,dx\\&&\qquad=0.\end{eqnarray*}
{F}rom this and~\eqref{f8o4erytilro4poiuytr09876540ojdf} it follows that
$$ \int_\R Q(x)\,\eta_{\varepsilon,p}(x)\,dx=0.$$

Hence, in light of~\eqref{LAONKDIPOHOIDGUPIGBIO1KEDF} and~\eqref{LAONKDIPOHOIDGUPIGBIO1KEDFlowdfenv},
\begin{eqnarray*}
0=\lim_{\varepsilon\searrow0}\int_\R Q(x)\,\eta_{\varepsilon,p}(x)\,dx
=\lim_{\varepsilon\searrow0}\int_\R Q(x)\,\eta_\varepsilon(p-x)\,dx
=\lim_{\varepsilon\searrow0} Q*\eta_\varepsilon(p)=Q(p).
\end{eqnarray*}
Therefore~$Q$ vanishes almost everywhere, which entails the desired result.

\section{Proof of~\eqref{wjknellospazio}}\label{appendixboh}

The aim of this appendix is to check that the claim in~\eqref{wjknellospazio}
holds true. For this, we establish the following statement, of which~\eqref{wjknellospazio} will be an obvious consequence.

\begin{proposition}\label{proppartepositiva}
Let~$c>0$ and~$q\in\widetilde{H}^s$. Let~$u:\R\to\R^n$ be defined by
\[
u(x):=\big((q_1 (x)-c)^+,\dots,  (q_n (x)-c)^+\big).
\]
Then, $u\in\widetilde{H}^s$.
\end{proposition}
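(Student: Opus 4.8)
The plan is to verify directly that $u$ has finite norm in $\widetilde H^s$, which by~\eqref{NORMH} amounts to the two separate bounds
\[
[u]_s<+\infty\qquad\text{and}\qquad \int_\R L(x)\,u(x)\cdot u(x)\,dx<+\infty ,
\]
and I would establish these in turn, the first being routine and the second being the real content.

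First I would handle the Gagliardo seminorm. The key point is that, for each fixed $c>0$, the scalar map $r\mapsto (r-c)^+$ is $1$-Lipschitz on $\R$; hence, componentwise,
\[
|u_i(x)-u_i(y)|\le |q_i(x)-q_i(y)|\qquad\text{for all }x,y\in\R,\ i\in\{1,\dots,n\}.
\]
Squaring, summing over $i$, dividing by $|x-y|^{1+2s}$ and integrating over $\R^2$ then yields $[u]_s^2\le[q]_s^2<+\infty$. This is essentially the one-dimensional instance of the cutoff estimate already proved in Lemma~\ref{lemma_taglio}, so I expect it to be entirely mechanical.

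For the weighted term, the first observation is the pointwise bound $0\le u_i(x)\le |q_i(x)|$, valid for a.e.\ $x\in\R$ and every $i$: indeed $u_i(x)=0$ on $\{q_i\le c\}$, while on $\{q_i>c\}$ one has $0<u_i(x)=q_i(x)-c<q_i(x)=|q_i(x)|$, where the hypothesis $c>0$ enters in an essential way (it also shows that each $u_i$ is supported in $\{q_i>c\}$, a set of finite Lebesgue measure since $q\in L^2(\R,\R^n)$ by~\eqref{EMB1}). I would then combine this with the structural features of $L$ --- symmetry and positive definiteness, so that $L(x)(\cdot)\cdot(\cdot)$ is a positive semidefinite bilinear form (whence $|L_{ij}(x)|\le (L_{ii}(x)L_{jj}(x))^{1/2}$ and the Cauchy--Schwarz/parallelogram inequalities as in~\eqref{kjhgfds12345678909876543} are available), and, in the situation in which the statement is applied (the setting of Theorem~\ref{THDMPV}), nonnegativity of all the entries of $L$ together with $u_i\ge 0$ --- so as to dominate $L(x)u(x)\cdot u(x)=\sum_{i,j}L_{ij}(x)u_i(x)u_j(x)$ by a quantity whose integral over $\R$ is finite precisely because $q\in\widetilde H^s$, i.e.\ because $\int_\R L(x)q(x)\cdot q(x)\,dx<+\infty$. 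Controlling the interaction between the coefficient $L$ --- which by~\eqref{AUTOVAL} is unbounded at infinity --- and the truncation is exactly where the argument is delicate, and this is the step I regard as the main obstacle; the natural device is to split $q$ into its truncated part $u$ and the remaining ``small'' part and apply the weighted Cauchy--Schwarz inequality~\eqref{kjhgfds12345678909876543} to the corresponding bilinear expression. Once both bounds are in hand, one concludes $\|u\|_{\widetilde H^s}^2=[u]_s^2+\int_\R L(x)u(x)\cdot u(x)\,dx<+\infty$, so that $u\in\widetilde H^s$; in particular, applying this with the truncation levels used in Section~\ref{KSPldMFSw} gives~\eqref{wjknellospazio}.
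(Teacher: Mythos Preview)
Your treatment of the Gagliardo seminorm is correct and coincides with the paper's: the $1$-Lipschitz property of $r\mapsto(r-c)^+$ gives $|u_i(x)-u_i(y)|\le|q_i(x)-q_i(y)|$, hence $[u]_s\le[q]_s$.

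For the weighted term your plan has a gap. The ``remaining part'' in the splitting you propose is $r_j:=q_j-u_j=\min(q_j,c)$, and this is \emph{not} small in any relevant sense: it equals $q_j$ on $\{q_j\le c\}$ and can be arbitrarily negative, so there is no a~priori reason for $\int_\R L(x)\,r(x)\cdot r(x)\,dx$ to be finite, and the weighted Cauchy--Schwarz inequality~\eqref{kjhgfds12345678909876543} therefore cannot close the estimate. Likewise the pointwise bound $0\le u_i\le|q_i|$ is insufficient on its own, since for a merely positive-definite matrix the quadratic form $v\mapsto L(x)v\cdot v$ is not monotone under componentwise shrinking of $|v_i|$; the off-diagonal interactions are precisely the issue, and your sketch does not explain how they are controlled.

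The paper's argument is different and does not pass through such a splitting. It writes $u_iu_j=(q_i-c)(q_j-c)$ on $\{u_i>0\}\cap\{u_j>0\}$, expands the product, and (after symmetrising in $i,j$ using $L_{ij}=L_{ji}$) obtains
\[
\sum_{i,j}\int_\R L_{ji}\,u_iu_j
=\sum_{i,j}\int_{\{u_i>0\}\cap\{u_j>0\}}\! L_{ji}\,q_iq_j
\;-\;c\sum_{i,j}\int_{\{u_i>0\}\cap\{u_j>0\}}\! L_{ji}\,(2q_j-c).
\]
On the domain of integration $q_j>c$, so $2q_j-c>0$; together with the standing hypothesis $L_{ji}\ge0$ from Theorem~\ref{THDMPV}, the second sum is nonnegative and is discarded. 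The remaining term is then compared with $\int_\R Lq\cdot q<+\infty$. Thus the mechanism is an explicit expansion plus a sign argument exploiting the nonnegativity of each individual entry $L_{ji}$, not a Cauchy--Schwarz decomposition; you have the right hypotheses in hand but not the right device.
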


\begin{proof}
We will prove that
\begin{equation}\label{normafinita}
\|u\|_{\widetilde{H}^s} <+\infty.
\end{equation}
Recalling~\eqref{NORMH}, we see that
\[
\begin{split}
\|u\|_{\widetilde{H}^s} &= [u]^2_s +\int_{\R} L(x) u(x)\cdot u(x) \,dx\\
&=c_s \iint_{\R^2} \sum_{j=1}^n \frac{|u_j(x)-u_j(y)|^2}{|x-y|^{1+2s}} \,dx\, dy + \int_{\R} \sum_{i,j=1}^n  L_{ji}(x) u_i(x) u_j(x)\,dx,
\end{split}
\]
where~$u_j=(q_j -c)^+$.

We now show that
\begin{eqnarray} \label{csnOIBo0o0U}
&& \iint_{\R^2}\sum_{j=1}^n \frac{|u_j(x)-u_j(y)|^2}{|x-y|^{1+2s}} \,dx\, dy <+\infty\\ \label{qcbijo0o0n}
\mbox{and } \quad && \int_{\R}\sum_{i,j=1}^n  L_{ji}(x) u_i(x) u_j(x) \,dx <+\infty.
\end{eqnarray}
Regarding~\eqref{csnOIBo0o0U}, we have that
\[ |u_j(x)-u_j(y)|=\big| (q_j(x) -c)^+ - (q_j(y) -c)^+\big|\le\big| (q_j(x) -c) - (q_j(y) -c)\big|=|q_j(x)-q_j(y)|,
\] from which~\eqref{csnOIBo0o0U} plainly follows.

Moreover, if~$u_j(x)>0$, then~$q_j(x)>c$ and therefore~$2q_j(x)-c>c>0$.
Consequently,
since~$L(x)$ has nonnegative entries, we see that
\begin{equation}\label{bcio0o0ljk}
\begin{split}
\sum_{i,j=1}^n \int_{\R} L_{ji}(x) u_j(x) u_i(x) \,dx &= \sum_{i,j=1}^n \,\int_{  \{u_j>0\}\cap\{u_i>0\}  } L_{ji}(x) (q_j(x)-c) (q_i(x)-c) \,dx\\
&= \sum_{i,j=1}^n\, \int_{  \{u_j>0\}\cap\{u_i>0\}  } L_{ji}(x) q_j(x) q_i(x) \,dx\\
&\qquad- c \sum_{i,j=1}^n \,\int_{  \{u_j>0\}\cap\{u_i>0\}  } L_{ji}(x) (2q_j(x)-c) \,dx\\
&\le \sum_{i,j=1}^n \int_{\R} L_{ji}(x) q_j(x) q_i(x) \,dx\\
&<+\infty,
\end{split}
\end{equation}
namely~\eqref{qcbijo0o0n} holds true.

{F}rom~\eqref{csnOIBo0o0U} and~\eqref{qcbijo0o0n} we obtain~\eqref{normafinita}, as desired.
\end{proof}

\section{An interpolation inequality}\label{interpappe}

We provide the following interpolation result.

\begin{proposition}
Let~$\theta\in (0, 1)$ and~$p$, $q$, $r\in [1, +\infty)$. If~$u\in L^p(\R, \R^n)$ and~$v\in L^q(\R, \R^n)$, then
\[
\left\Vert |u|^{\frac{p(1-\theta)}{r}} \, |v|^{\frac{\theta q}{r}} \right\Vert^r_{L^r(\R, \R^n)}\le \|u\|^{p(1-\theta)}_{L^p(\R, \R^n)} \, \|v\|^{q\theta}_{L^q(\R, \R^n)}.
\]

Furthermore, if~$1\le p < q<+\infty$, $r=p(1-\theta) +q\theta$ and~$u\in L^p(\R, \R^n)\cap L^q(\R, \R^n)$, then
\begin{equation}\label{INTERP1}
\|u\|^r_{L^r(\R, \R^n)} \le \|u\|^{p(1-\theta)}_{L^p(\R, \R^n)} \, \|u\|^{q\theta}_{L^q(\R, \R^n)}.
\end{equation}
\end{proposition}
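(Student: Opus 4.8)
The plan is to obtain both inequalities from a single application of H\"older's inequality with the conjugate exponents $\frac{1}{1-\theta}$ and $\frac{1}{\theta}$, which are admissible since $\theta\in(0,1)$ and $(1-\theta)+\theta=1$.

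First I would rewrite the left-hand side of the first inequality by unwinding the definition of the $L^r$ norm: since $|u|$ and $|v|$ denote the Euclidean norms of $u(x),v(x)\in\R^n$, the map $x\mapsto |u(x)|^{p(1-\theta)}|v(x)|^{\theta q}$ is a nonnegative scalar function and
\[
\left\Vert |u|^{\frac{p(1-\theta)}{r}} \, |v|^{\frac{\theta q}{r}} \right\Vert^r_{L^r(\R, \R^n)}=\int_{\R} |u(x)|^{p(1-\theta)}\,|v(x)|^{\theta q}\,dx .
\]
Then I would apply H\"older's inequality to the two factors $|u|^{p(1-\theta)}\in L^{1/(1-\theta)}(\R)$ and $|v|^{\theta q}\in L^{1/\theta}(\R)$ (the membership being immediate, as $p(1-\theta)\cdot\frac{1}{1-\theta}=p$ and $\theta q\cdot\frac1\theta=q$), obtaining
\[
\int_{\R} |u(x)|^{p(1-\theta)}\,|v(x)|^{\theta q}\,dx\le \left(\int_{\R} |u(x)|^{p}\,dx\right)^{1-\theta}\left(\int_{\R} |v(x)|^{q}\,dx\right)^{\theta}=\|u\|^{p(1-\theta)}_{L^p(\R, \R^n)}\,\|v\|^{\theta q}_{L^q(\R, \R^n)},
\]
which is exactly the first claim (and in particular shows the left-hand side is finite whenever the right-hand side is).

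For the second inequality I would simply specialize $v:=u$. The point is that when $r=p(1-\theta)+q\theta$ the two exponents appearing on the left satisfy $\frac{p(1-\theta)}{r}+\frac{\theta q}{r}=1$, so that $|u|^{\frac{p(1-\theta)}{r}}\,|u|^{\frac{\theta q}{r}}=|u|$ pointwise and hence $\bigl\Vert |u|^{\frac{p(1-\theta)}{r}}|u|^{\frac{\theta q}{r}}\bigr\Vert_{L^r(\R,\R^n)}^r=\|u\|_{L^r(\R,\R^n)}^r$. Plugging this into the first inequality (with $v=u$) yields~\eqref{INTERP1}; the hypothesis $1\le p<q<+\infty$ guarantees $r\in(p,q)\subset[1,+\infty)$ and, together with $u\in L^p(\R,\R^n)\cap L^q(\R,\R^n)$, ensures that all the quantities involved are finite.

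I do not expect any real obstacle here: the only points requiring a line of care are verifying that $\tfrac{1}{1-\theta}$ and $\tfrac{1}{\theta}$ are indeed conjugate H\"older exponents and that the scalar functions $|u|^{p(1-\theta)}$, $|v|^{\theta q}$ lie in the respective Lebesgue spaces, both of which are straightforward bookkeeping.
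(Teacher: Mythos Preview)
Your proposal is correct and follows essentially the same approach as the paper: both apply H\"older's inequality with conjugate exponents $\tfrac{1}{1-\theta}$ and $\tfrac{1}{\theta}$ to the factorization $|u|^{p(1-\theta)}|v|^{q\theta}$, and then specialize to $v=u$ with $r=p(1-\theta)+q\theta$ for the second claim.
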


\begin{proof}
By using the H\"older inequality with conjugate exponents~$1/(1-\theta)$ and~$1/\theta$, we obtain that
\[
\begin{split}
\left\Vert |u|^{\frac{p(1-\theta)}{r}} \, |v|^{\frac{\theta q}{r}} \right\Vert^r_{L^r(\R, \R^n)} =&\int_{\R} |u|^{p(1-\theta)} \, |v|^{q\theta}\, dx\\
&\le \left(\int_\R |u|^p \,dx \right)^{1-\theta} \left(\int_\R |v|^q \,dx\right)^\theta\\
&= \|u\|^{p(1-\theta)}_{L^p(\R, \R^n)} \, \|v\|^{q\theta}_{L^q(\R, \R^n)},
\end{split}
\]
as desired.

Moreover, if~$u\in L^p(\R, \R^n)\cap L^q(\R, \R^n)$, one is allowed to take~$u=v$. With this choice, if~$1\le p < q<+\infty$ and~$r=p(1-\theta) +q\theta$, we have that~\eqref{INTERP1} is satisfied.
\end{proof}

\end{appendix}

\section*{Acknowledgments}
All the authors are members of the Australian Mathematical Society (AustMS). CS
is member of INdAM-GNAMPA.
This work has been supported by the Australian Laureate Fellowship FL190100081 and
the Australian Future Fellowship FT230100333.
CS also acknowledges the support of the Juan de la Cierva Fellowship (grant number JDC2023-050365-I).

\vfill

\end{document}